\title{Realising all countable groups as quasi-isometry groups}
\author{Paula Heim, Joseph MacManus, and Lawk Mineh}
\date{First draft: 9 January 2026. This version: 4 February 2026.}
\DeclareMathOperator{\QI}{QI}
\DeclareMathOperator{\dHaus}{Haus}
\DeclareMathOperator{\dist}{d}
\DeclareMathOperator{\Aut}{Aut}
\DeclareMathOperator{\Isom}{Isom}
\DeclareMathOperator{\id}{id}
\DeclareMathOperator{\hull}{hull}
\DeclareMathOperator{\lk}{lk}
\DeclareMathOperator{\In}{In}
\newcommand{\R}{\mathbb{R}}
\newcommand{\Z}{\mathbb{Z}}
\newcommand{\N}{\mathbb{N}}
\newcommand{\X}{\mathbf{X}}
\newcommand{\Y}{\mathbf{Y}}
\newcommand{\HH}{\mathbb{H}\mathbf{H}}
\newcommand{\into}{\hookrightarrow}
\DeclarePairedDelimiter\abs{\lvert}{\rvert}% good abs
\DeclarePairedDelimiter\norm{\lVert}{\rVert}% good norm
\let\oldabs\abs
\def\abs{\@ifstar{\oldabs}{\oldabs*}}
\let\oldnorm\norm
\def\norm{\@ifstar{\oldnorm}{\oldnorm*}}
\newcommand{\overbar}[1]{\mkern 3mu\overline{\mkern-3mu#1\mkern-1mu}\mkern 1mu}
\newcommand{\myitem}[1]{%
\item[#1]\protected@edef\@currentlabel{#1}%
}
\newtheorem{theorem}{Theorem}[section]% theorem counter resets every \subsection
\newtheorem*{theorem*}{Theorem}
\newtheorem*{question*}{Question}
\newtheorem*{claim*}{Claim}
\newtheorem{proposition}[theorem]{Proposition}
\newtheorem{lemma}[theorem]{Lemma}
\newtheorem{corollary}[theorem]{Corollary}
\theoremstyle{definition}
\newtheorem{definition}[theorem]{Definition}
\newtheorem{example}[theorem]{Example}
\newtheorem{remark}[theorem]{Remark}
\newtheorem{convention}[theorem]{Convention}
\begin{document}

\begin{abstract}
    Given any countable group $G$, we construct uncountably many quasi-isometry classes of proper geodesic metric spaces with quasi-isometry group isomorphic to \(G\).
    Moreover, if the group \(G\) is a hyperbolic group, the spaces we construct are hyperbolic metric spaces.
    
    We make use of a rigidity phenomenon for quasi-isometries exhibited by many symmetric spaces, called strong quasi-isometric rigidity. 
    Our method involves the construction of new examples of strongly quasi-isometrically rigid spaces, arising as graphs of strongly quasi-isometrically rigid rank-one symmetric spaces.
\end{abstract}

\maketitle

\section*{Introduction}

Quasi-isometries are among the principal morphisms in the study of large-scale structure of metric spaces, and their use is fundamental in many areas of geometry and geometric group theory.
Just as one considers the group of isometries of a metric space as a natural group of automorphisms, one can consider the \emph{quasi-isometry group} associated to a metric space.
This is the group of equivalence classes of self-quasi-isometries of a metric space, taken up to finite distance in the supremum metric.
The primary goal of this article is to address the question of which groups may be realised as quasi-isometry groups.

The quasi-isometry groups of many familiar metric spaces are quite wild, and their general theory is poorly understood.
For instance, we have an understanding of \(\QI(\R)\): it is infinite-dimensional \cite{gromov2006rigidity}, left-orderable and virtually a direct product \cite{ye2023group}, centreless \cite{chakraborty2019center}, and contains -- for example -- continuum-rank free groups and Thompson's group \(F\) \cite{sankaran2006homeomorphisms}.
Beyond this, little is known; see \cite{zhao2024group} for a survey on \(\QI(\R)\) and \cite{mitra2019embedding, bhowmik2024combinatorial} for some results on \(\QI(\R^n)\) for \(n > 1\).
The quasi-isometry groups of various solvable groups have also been computed \cite{Farb_Mosher,FEW,Wortman}.
Of course, quasi-isometries of spaces induce isomorphisms on quasi-isometry groups.

Most unbounded geodesic spaces seem to have tremendously large and complicated quasi-isometry groups.
On the other hand, there are somewhat pathological metric spaces with trivial quasi-isometry groups, such as the set of factorials \(\{n! : n \in \N\}\) equipped with the induced metric as a subset of \(\R\), though this space is not even coarsely connected.
To the best of our knowledge there were previously no known examples of any metric spaces with non-trivial finite quasi-isometry groups.
We show that the maximum diversity is in fact achieved among countable groups.

\begin{restatable}{alphtheorem}{main}\label{thm:main-theorem}
Let $G$ be a countable group. Then there exists uncountably many quasi-isometry classes of proper geodesic metric spaces $X$ with 
$
G \cong \QI(X).
$
Moreover, if $G$ is a hyperbolic group, then we may take every such $X$ to be hyperbolic.
\end{restatable}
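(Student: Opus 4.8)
The plan is to realise $G$ as the quasi-isometry group of a carefully assembled space built out of rank-one symmetric spaces, exploiting strong quasi-isometric rigidity so that self-quasi-isometries are forced to be close to honest isometries, and hence controlled by automorphisms of a combinatorial object we can prescribe. Concretely, I would first fix a connected (multi)graph $\Gamma$ whose automorphism group is $G$ --- for countable $G$ this is a classical Sabidussi/de~Groot--type construction, and one can moreover arrange $\Gamma$ to be locally finite or at least of bounded geometry if needed. Then I would ``thicken'' $\Gamma$ into a proper geodesic metric space $X_\Gamma$ by gluing a copy of a fixed strongly quasi-isometrically rigid rank-one symmetric space $M$ (for the hyperbolic case, something like $\mathbb{H}\mathbf{H}^2$ or a quaternionic/Cayley hyperbolic space, whose quasi-isometries are uniformly close to isometries by the Pansu--type rigidity results the paper alludes to) along each edge, attaching via totally geodesic subspaces or horoball-like regions so that the pieces are coarsely distinguishable and the incidence pattern of pieces recovers $\Gamma$. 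The uncountably many quasi-isometry classes should come from a parameter one can vary --- e.g. the ``length'' or isometry type of the edge spaces, or a sequence of scaling/distortion parameters along the ends --- without changing $\QI(X_\Gamma)$ or the (coarse) automorphism group of the underlying structure.

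The heart of the argument is a rigidity/splitting statement, presumably proved earlier in the paper, saying that any self-quasi-isometry of $X_\Gamma$ permutes the $M$-pieces (coarsely), on each piece is uniformly close to an isometry of $M$, and respects the gluing loci; this is the step where strong quasi-isometric rigidity of $M$ is used, together with the fact that the complementary regions are ``invisible'' to large-scale geometry or are themselves rigid. Granting this, one gets a homomorphism $\QI(X_\Gamma) \to \Aut(\Gamma) \cong G$. Injectivity follows because a quasi-isometry fixing the combinatorial pattern and acting trivially on each piece must be bounded distance from the identity (each piece has no nontrivial ``inner'' quasi-isometries moving it a bounded amount that aren't captured, because $\Isom(M)$ embeds faithfully and the pieces overlap enough to rigidify the gluing). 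Surjectivity is arranged by construction: each automorphism of $\Gamma$ extends to a genuine isometry of $X_\Gamma$ permuting the pieces, since we glued isometric copies of $M$ along isometric loci in a symmetric fashion. For the hyperbolic refinement, one must check that the resulting $X_\Gamma$ is Gromov hyperbolic --- this should follow from a combination theorem (treating $X_\Gamma$ as a tree/graph of hyperbolic spaces with quasiconvex, in fact totally geodesic, edge spaces, à la Bestvina--Feighn) provided $G$ itself is a hyperbolic group so that $\Gamma$ can be taken with the appropriate finiteness, or more simply by building $X_\Gamma$ inside a single hyperbolic space.

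To get \emph{uncountably} many quasi-isometry classes, I would introduce a real (or binary-sequence) parameter into the construction --- for instance, replace the single model $M$ by a sequence of rescaled copies $M_{t_i}$ whose pattern of scaling factors $(t_i)_{i \in \mathbb{N}}$ is dictated by a subset $S \subseteq \mathbb{N}$, attached along a fixed end of $\Gamma$ (or along a ray added for this purpose). Distinct subsets $S$ should yield non-quasi-isometric spaces because the asymptotic pattern of ``bottleneck scales'' is a quasi-isometry invariant (detected, say, via separation profiles, the growth of cut-point structure, or the behaviour of the divergence function along that end), while the automorphism group is unaffected because the extra ray is attached at a vertex whose $\Gamma$-stabiliser already acts compatibly. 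One then checks that this decoration does not interfere with the rigidity statement: a self-quasi-isometry still must preserve the decorated end and act on it in a scale-preserving way, hence still descends to $\Aut(\Gamma)$. The main obstacle I anticipate is precisely this interplay --- ensuring that the gluing and the uncountable-parameter decoration are ``rigid enough'' that no exotic self-quasi-isometries appear (so the map to $G$ is injective) while simultaneously being ``flexible enough'' that every element of $G$ is realised and that varying the parameter genuinely changes the quasi-isometry type; pinning down a clean quasi-isometry invariant that separates the uncountably many spaces, and verifying the combination theorem hypotheses in the hyperbolic case, are where the real work lies.
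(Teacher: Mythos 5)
Your overall skeleton --- realise $G\cong\Aut(\Gamma)$ for a graph $\Gamma$ via a de~Groot/Sabidussi--Frucht construction, thicken $\Gamma$ into a space assembled from copies of a strongly QI-rigid rank-one symmetric space, and use the rigidity of the pieces to force every self-quasi-isometry to coarsely permute them and hence to descend to $\Aut(\Gamma)$ --- is exactly the strategy of the paper (its Theorem~\ref{thm:strong-qi-rigidity-theorem} plus the construction in Section~\ref{sec:construction}). Two caveats on the assembly itself: the paper does \emph{not} glue along totally geodesic subspaces or horoball-like regions, but along geodesic \emph{rays} asymptotic to a finite boundary set with trivial pointwise stabiliser; this is not cosmetic, since the argument that quasi-isometries must respect the decomposition (the ``link bottlenecked'' condition) rests on the edge spaces \emph{not} coarsely separating the vertex spaces, and totally geodesic hyperplanes or horospheres in a rank-one symmetric space do coarsely separate, so with your gluing loci the key step ``any self-quasi-isometry permutes the $M$-pieces'' would need a genuinely different (Schwartz-pattern-type) argument that you do not supply. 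The trivial-stabiliser condition on the gluing locus is likewise what makes the map $\QI(X_\Gamma)\to\Aut(\Gamma)$ injective, and you only gesture at it.

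The genuine gap is in your mechanism for uncountably many quasi-isometry classes. Attaching a decorated ray of rescaled copies $M_{t_i}$ ``along a fixed end of $\Gamma$ (or along a ray added for this purpose)'' is incompatible with keeping $\QI(X)\cong G$: in Frucht-type constructions the action of $\Aut(\Gamma)\cong G$ on vertices is free (the graphs are vertex-free), so no vertex or end is invariant under all of $G$, and decorating a single one collapses the symmetry group of the resulting object to a vertex stabiliser --- typically the trivial group --- so both surjectivity onto $G$ and the intended isomorphism fail. Doing the decoration $G$-equivariantly (the same rescaled tail at every vertex) could in principle be repaired, but then you must re-prove that the scale pattern is a quasi-isometry invariant of the glued space (all rescalings of $M$ are bi-Lipschitz to $M$, so this is delicate) and that the decorations introduce no new quasi-isometries; you flag this as ``where the real work lies'' but do not resolve it. The paper avoids this entirely: its Frucht variant (Theorem~\ref{thm:frucht}) already produces \emph{uncountably many pairwise non-isomorphic} graphs $\Gamma$ with $\Aut(\Gamma)\cong G$ (via tag gadgets indexed by binary sequences), and item (3) of Theorem~\ref{thm:conclusion-of-rigidity-section} shows that a quasi-isometry $\abs{\X_\Gamma}\to\abs{\X_\Delta}$ forces $\Gamma\cong\Delta$, so uncountability of QI classes is automatic with a single, undeformed model space per graph. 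For the hyperbolic refinement your appeal to a Bestvina--Feighn-type combination theorem is reasonable in spirit, but the paper instead proves directly (Theorem~\ref{thm:hyperbolicity_of_graph_is_equivalent_to_hyperbolicity_of_graph_of_spaces}) that the realisation is hyperbolic if and only if $\Gamma$ is, with $\Gamma$ hyperbolic supplied by the Frucht variant when $G$ is a hyperbolic group; either way this step needs the graph, not just $G$, to be hyperbolic, which your sketch acknowledges only in passing.
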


The above result fits into a long history of so-called \emph{representation problems}: given a class of objects \(\mathcal C\), can one determine which groups \(G\) can be realised as \(\operatorname{Aut}(X)\) for an object \(X \in \mathcal C\)?
An early instance of this question was posed by K\"onig \cite{konig2001theorie}, who asked whether any group can be realised as the automorphism group of a graph.
K\"onig's question was settled for finite groups by Frucht in 1939 \cite{frucht1939herstellung}, with an elegant and simple construction, now appearing in many standard texts on graph theory.
Frucht's result was later extended to all groups independently by de Groot \cite{de1959groups} and Sabidussi \cite{sabidussi1960graphs}, and will play a large role in the proof of Theorem~\ref{thm:main-theorem}.

We give a brief and non-exhaustive account of similar representation problems from geometry. 
Every finite group can be realised as the isometry group of a closed hyperbolic manifold in every dimension: see Greenberg for dimension 2 \cite{greenberg1974maximal}, Kojima for dimension 3 \cite{kojima1988isometry}, and Belolipetsky--Lubotzky for higher dimensions \cite{belolipetsky2005finite}.
In the dimension 2 case, this was extended to all countable groups independently by Winkelmann \cite{winkelmann2001realizing} and Allcock \cite{allcock2006hyperbolic}. 
Remarkably, there also exists a single surface (of infinite type), such that varying the Riemannian metric can produce any countable group as the corresponding isometry group \cite{aougab2021isometry}.
Frigerio and Martelli have also shown that any countable group arises as the mapping class group of a hyperbolic \(3\)-manifold \cite{martelli_frig}.

Our strategy will be to exploit a rigidity phenomenon for quasi-isometries in order to reduce to a setting where we can apply a variant of Frucht's theorem.
We say that a metric space \(X\) is \emph{strongly quasi-isometrically rigid} if the natural map \(\Isom(X) \to \QI(X)\) is a surjection.
That is to say, every quasi-isometry of \(X\) is close to an isometry \(X\).
Strongly quasi-isometrically rigid spaces form one of the few classes of metric spaces for which the quasi-isometry group admits a clear description, provided one understands the isometry group.
Note that essentially all spaces that are known to be strongly quasi-isometrically rigid satisfy a slightly stronger, uniform property: see Definition~\ref{def:strong_qi_rigid}.

Strong quasi-isometric rigidity seems to be an intrinsic property of most symmetric spaces.
Notably, Pansu showed that the octonionic plane \(\mathbb{O}\mathbf{H}^2\) and the quaternionic hyperbolic spaces \(\HH^n\) are strongly quasi-isometrically rigid, as long as \(n \geq 2\) \cite{pansu1989metriques}.
Kleiner and Leeb further showed that irreducible higher-rank symmetric spaces of noncompact type and thick irreducible Euclidean buildings with cocompact affine Weyl group are also strongly quasi-isometrically rigid \cite{kleiner1997rigidity}, and Bourdon--Pajot and Xie extended this to some hyperbolic buildings \cite{bourdon2000rigidity,xie2006quasi}.
When \(n \geq 3\), the Cayley graphs of maximal non-arithmetic non-uniform lattices in \(n\)-dimensional rank one symmetric spaces and universal covers of hyperbolic \(n\)-manifolds with non-empty totally geodesic boundary also enjoy this property \cite{schwartz}.
We also mention a beautiful, but somewhat exceptional, example of a strongly quasi-isometrically rigid space due to Kleiner and Kapovich, that is the Cayley graph of a carefully constructed hyperbolic group \cite{KK}.

As an essential step in the proof of the main theorem above, we construct new examples of strongly quasi-isometrically rigid spaces.
These arise from the following theorem, which states that well-behaved graphs of uniformly strongly quasi-isometrically rigid spaces are themselves quasi-isometrically rigid.
We can view this as a sort of combination theorem for strong quasi-isometric rigidity.

\begin{restatable}{alphtheorem}{strong-qi-rigidity}\label{thm:strong-qi-rigidity-theorem}
    Let \(\X\) be a graph of spaces. Suppose that \(\X\) is uniformly hyperbolic, link bottlenecked, link rigid, locally congruent, and has unbounded edge spaces. If the vertex spaces of \(\X\) are uniformly strongly quasi-isometrically rigid, then so is the realisation of \(\X\).
\end{restatable}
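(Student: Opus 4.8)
The plan is to take an arbitrary $(L,A)$-quasi-isometry $\phi\colon X\to X$ of the realisation $X:=|\X|$ and produce an isometry lying at distance from $\phi$ bounded in terms of $L$, $A$ and the fixed data of $\X$; this uniform control is precisely what the conclusion (uniform strong quasi-isometric rigidity) demands, so throughout one works up to uniformly bounded distance and Hausdorff distance.

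\textbf{Step 1: the pattern is coarsely canonical.} The first and, I expect, hardest step is to show that the decomposition of $X$ into its vertex subspaces $\{V_v\}$ and edge subspaces $\{E_e\}$ is a coarse invariant of $X$. Using uniform hyperbolicity together with the link bottlenecked hypothesis, one should give a purely coarse-geometric characterisation of which subsets of $X$ coincide, up to uniformly bounded Hausdorff distance, with an edge space: roughly, the edge spaces are the ``thinnest'' unbounded subsets whose bounded neighbourhoods coarsely separate $X$ into pieces that remain deep (and here the unboundedness of the edge spaces is essential, since bounded separating subsets would be coarsely invisible), while the vertex spaces are recovered as the coarse components of the complement of all edge spaces. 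It follows that $\phi$ coarsely permutes the two families: for each $v$ there is a vertex $\bar\phi(v)$ with $\dHaus(\phi(V_v),V_{\bar\phi(v)})$ bounded, compatibly for edges, and this assembles into an automorphism $\bar\phi$ of the underlying graph of $\X$. A natural route, mirroring existing quasi-isometric rigidity theorems for trees of spaces, is to pass to the Gromov boundary $\partial X$ and argue that the limit sets of the edge spaces form the canonical collection of separating subsets on which the induced homeomorphism $\partial\phi$ must act; the main obstacle is carrying this out with constants depending only on $(L,A)$ and the data of $\X$.

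\textbf{Step 2: rigidity on vertex spaces, consistently.} Restricting $\phi$ to a vertex space $V_v$ gives, by Step 1, a quasi-isometry onto a bounded neighbourhood of $V_{\bar\phi(v)}$ that coarsely matches the patterns of incident edge spaces. As the vertex spaces are uniformly strongly quasi-isometrically rigid, $\phi|_{V_v}$ is boundedly close to an isometry $\psi_v\colon V_v\to V_{\bar\phi(v)}$; the link rigid hypothesis is what ensures $\psi_v$ may be chosen to carry the pattern of edge spaces incident to $v$ onto that of $\bar\phi(v)$ exactly as prescribed by $\bar\phi$. Because each edge space is unbounded, $\psi_v$ induces a well-defined isometry on every incident edge space $E_e$, and one then uses local congruence -- which I read as saying the gluing identifies isometric germs of the two vertex spaces along each edge space -- to check that the independently produced isometries $\{\psi_v\}$ can be taken mutually consistent along shared edge spaces. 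This consistency is delicate because strong rigidity only pins each $\psi_v$ down up to bounded distance, so unboundedness of the edge spaces and local congruence must be used in tandem to promote these locally defined approximate isometries to globally coherent ones.

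\textbf{Step 3: assembly and closeness.} A coherent family $\{\psi_v\}$ lying over $\bar\phi$ and compatible with all the gluing maps of $\X$ assembles, invoking local congruence once more, into a bijective isometry $\psi\colon X\to X$ of the realisation. Finally $\dist(\phi,\psi)$ is bounded: the two maps are uniformly close on each vertex space by Step 2, and the vertex spaces coarsely cover $X$, so they are globally close, with every constant depending only on $(L,A)$ and the fixed data of $\X$. Hence $\Isom(X)\to\QI(X)$ is surjective with uniform control, that is, $X$ is uniformly strongly quasi-isometrically rigid, as required.
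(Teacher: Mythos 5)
Your overall skeleton (quasi-isometries coarsely permute the pieces, inducing a graph automorphism; then use rigidity of the vertex spaces) matches the paper, but both of your key steps have genuine gaps. In Step 1, your proposed coarse characterisation of the pattern is unsound: you want to recognise edge spaces as unbounded subsets whose neighbourhoods coarsely separate \(\abs\X\), but nothing in the hypotheses makes edge spaces coarsely separating --- in the paper's own application the edge spaces are geodesic rays glued into copies of \(\HH^2\), and a ray separates nothing; moreover the boundary route is unavailable because \(\abs\X\) is not assumed hyperbolic (it is hyperbolic only when the underlying graph \(\Gamma\) is, which is not part of the hypotheses of this theorem). The paper instead works with \emph{vertex} spaces directly via the link bottleneck property: if \(\varphi(X_v)\) strayed far from a single vertex space, then pulling back a \emph{link} of the target (the union of all incident edge spaces inside a vertex space, which does separate that vertex space from the rest) produces a subset of \(X_v\) quasi-isometrically embedded in a link and coarsely separating \(X_v\) into two deep components, contradicting (LB) (Lemma~\ref{lem:vertex-spaces-map-to-vertex-spaces}); coarse preservation of edge cylinders is then deduced afterwards, since an edge cylinder is the intersection of unit neighbourhoods of its two adjacent vertex spaces (Lemmas~\ref{lem:intersections-of-nbhds-of-vertex-spaces} and~\ref{lem:qi-almost-fixes-edge-spaces-new}). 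Unboundedness of edge spaces is used only to see that the induced vertex bijection preserves adjacency, not to detect the edge spaces coarsely.

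In Steps 2--3 the assembly scheme does not go through as described, and you have the roles of the two remaining hypotheses reversed. Uniform strong QI-rigidity pins each \(\psi_v\) down only up to bounded error; neither link rigidity nor unboundedness of the edge spaces gives you a way to correct \(\psi_v\) so that it carries the incident edge-space images \emph{exactly} onto the prescribed ones, let alone to make \(\psi_u\) and \(\psi_v\) agree on a shared edge space, so the family \(\{\psi_v\}\) cannot simply be glued into an isometry of \(\abs\X\). The paper never glues these approximating isometries. Instead it proves that \(\Phi\colon\QI(\X)\to\Isom(\Gamma)\) is an isomorphism: local congruence (LC) --- which is exactly the statement that for every graph automorphism \(f\) there exist isometries \(X_v\to X_{f(v)}\) matching the edge patterns on the nose --- is what produces an exact lift \(\chi(f)\in\Isom(\X)\) and hence surjectivity of \(\Phi\) (Proposition~\ref{prop:LC-gives-surjective}); link rigidity (LR) is what kills the kernel: if \(\Phi(\varphi)=\id_\Gamma\), then on each vertex space the isometry approximating \(\varphi\) coarsely preserves every incident edge space, hence is the identity by (LR), so \(\varphi\) is uniformly boundedly close to \(\id_{\abs\X}\) (Proposition~\ref{prop:LR-gives-phi-injective}). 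Given an arbitrary quasi-isometry \(\varphi\), one then compares it with the genuine isometry \(\chi(\Phi(\varphi))\), and the kernel computation shows they are uniformly close --- this factorisation is the missing idea that replaces your problematic consistency-and-gluing argument.
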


We direct the reader to Section~\ref{sec:gos} for a precise definition of graph of spaces in this context, and to Sections~\ref{sec:uniformly-hyperbolic-gos} and~\ref{sec:bottlenecks} for definitions of the terms appearing in the theorem statement.
We give a brief overview of this list of adjectives here.
A graph of spaces is said to be \emph{uniformly hyperbolic} if the vertex spaces are all hyperbolic with the same hyperbolicity constant, and the edge spaces in some sense diverge uniformly quickly from one another in each vertex space.
\emph{Link bottlenecked} essentially means that the edge spaces do not coarsely separate the vertex spaces, and \emph{link rigid} means that any isometry of a vertex space that coarsely fixes its incident edge spaces is trivial.
Finally, \emph{local congruence} means that, for any two vertices in the same orbit of an automorphism of the underlying graph, the corresponding vertex spaces and their configuration of incident edge spaces are identical.
The properties of being uniformly hyperbolic and link bottlenecked together imply that quasi-isometries of the graph of spaces coarsely respect the graph structure, so that we may use strong quasi-isometric rigidity of the vertex spaces to promote quasi-isometries to piecewise isometries of the whole space.
Link rigidity and local congruence then ensure that the isometry group is both rich enough and well-behaved enough that we can coherently glue these piecewise isometries together into a global isometry.

This paper is structured as follows. 
Section~\ref{sec:prelims} contains a collection of preliminary material. 
In Section~\ref{sec:frucht}, we discuss a strengthening of the aforementioned theorem of Frucht. 
In Section~\ref{sec:HH2}, we construct a sets of points in the boundary of a visibility manifold with a certain rigidity property. %, of which non-positively curved symmetric spaces are examples. 
The next three sections are devoted to graphs of metric spaces: Section~\ref{sec:gos} contains definitions and basic properties, Section~\ref{sec:uniformly-hyperbolic-gos} introduces uniformly hyperbolic graphs of spaces and explores their properties, and in Section~\ref{sec:bottlenecks} we obtain control over the quasi-isometry groups of graphs of spaces to obtain Theorem~\ref{thm:strong-qi-rigidity-theorem}. 
Section~\ref{sec:construction} includes an explicit construction of a graph of spaces to which Theorem~\ref{thm:strong-qi-rigidity-theorem} applies, and the proof of Theorem~\ref{thm:main-theorem}.
Finally, Section~\ref{sec:hyperbolicity} is dedicated to understanding geodesics in uniformly hyperbolic graphs of spaces, which is essential for the hyperbolicity claim of Theorem~\ref{thm:main-theorem}.

\subsection*{Acknowledgements}

The second author was supported by the Additional Funding Programme for Mathematical Sciences, delivered by EPSRC (EP/V521917/1) and the Heilbronn Institute for Mathematical Research. The third author was supported by the European Research Council (SATURN, 101076148) and the Deutsche Forschungsgemeinschaft  (EXC-2047/1, 390685813). 

The authors would like to thank the Isaac Newton Institute for Mathematical Sciences, Cambridge, for support and hospitality during the programme \textit{Non-positive curvature and applications}, where much of the work on this paper was undertaken. This programme was supported by EPSRC grant EP/Z000580/1.

\section{Preliminaries}\label{sec:prelims}

We begin by standardising our notation and terminology.

\subsection{Metric notions}

Let $(X,\dist_X)$ be a metric space. 
Write $\Isom(X)$ for the group of isometries $f \colon X \to X$. 
Given $r \geq 0$, $x \in X$, we write $B_X(x;r)$ for the closed ball of radius $r$ centred at $x$. 
Given $A \subset X$, we define $B_X(A;r)$ as the union of all balls \(B_X(x;r)\) with \(x \in A\). 
If $A, B \subset X$, we write $\dist_X(A,B)$ to mean the infimal distance between $A$ and $B$, i.e. 
\[
    \dist_X(A, B) = \inf_{a \in A, \  b \in B} \dist_X(a,b). 
\]
If $A = \{x\}$ is a singleton, we abuse notation and write $ \dist_X(x,B) := \dist_X(\{x\},B)$. 
We denote by $\dHaus_X(A,B)$ the \emph{Hausdorff distance} in \(X\) between $A$ and $B$. That is,
\[
    \dHaus_X(A,B) = \inf \{r > 0 : \text{$A \subset B_X(B;r)$ and $B \subset B_X(A;r)$}\}. 
\]
For any set \(Y\), denote the supremum metric on functions $f, g \colon Y \to X$ as
\[
    \dist_\infty(f, g) = \sup_{y \in Y} \dist_X(f(y), g(y)), 
\]
noting that this metric can take infinite values in general.

\begin{definition}[Quasi-isometry]
    Let $\lambda \geq 1$ and \(c \geq 0\). A map $\varphi \colon X \to Y$ between metric spaces is called a \emph{\((\lambda,c)\)-quasi-isometric embedding} if
    $$
        \frac 1 \lambda \dist_X(x,y) - c \leq \dist_Y(\varphi(x), \varphi(y)) \leq \lambda \dist_X(x,y) + c
    $$
    for all $x, y \in X$. 
    If there is \(K \geq 0\) such that for all $y \in Y$ there exists $x \in X$ such that $\dist_Y(y, \varphi(x)) \leq K$, we say that $\varphi$ is a \emph{\(K\)-coarsely surjective}.
    
    A \emph{\((\lambda,c)\)-quasi-isometry} is a \((\lambda,c)\)-quasi-isometry embedding that is \(c\)-coarsely surjective.
    A map \(\varphi \colon X \to Y\) is simply called a \emph{quasi-isometry} if it is a \((\lambda,c)\)-quasi-isometry for some \(\lambda \geq 1\) and \(c \geq 0\).
    If $\varphi \colon X \to Y$, $\psi : Y \to X$ are quasi-isometries, we call $\psi$ a \emph{quasi-inverse} to $\varphi$ if there is some \(Q\geq 0\) with $\dist_\infty(\psi \circ \varphi, \id_X) \leq Q$ and $\dist_\infty(\varphi \circ \psi, \id_X) \leq Q$.
\end{definition}

It is a standard fact that for every $(\lambda,c)$-quasi-isometry $\varphi \colon X \to Y$, there exists another $(\lambda,c')$-quasi-isometry $\psi \colon Y \to X$ with \(\dist_\infty(\psi \circ \varphi, \id_X) \leq Q\), where $Q$ and $c'$ depend only on $\lambda$ and $c$. We call any such function a \emph{quasi-inverse} to \(\varphi\).

\begin{definition}[Quasi-geodesics]
    Let \(I \subseteq \R\) be a closed interval, bounded or unbounded, and \(X\) an arbitrary metric space.
    A \emph{\((\lambda,c)\)-quasi-geodesic} in \(X\) is a \((\lambda,c)\)-quasi-isometric embedding \(\varphi \colon I \to X\).
    Similarly to quasi-isometries, we sometimes omit mention of the particular constants.
    We may often abuse notation and identify a quasi-geodesic with its image in \(X\).
    Further, we will call such \(\varphi\) a \emph{\((\lambda,c)\)-quasi-geodesic segment, ray} or \emph{line} when \(I\) is a bounded interval, a half-bounded interval, and the entire real line respectively. 
    
    As a special case, an \emph{geodesic} in \(X\) will for us be an isometric embedding \(I \to X\) (i.e. a \((1,0)\)-quasi-geodesic).
    Given two points \(x, y \in X\), we will often denote by \([x,y]\) a choice of geodesic in \(X\) with endpoints \(x\) and \(y\).
\end{definition}

We define the equivalence relation on quasi-isometries \(\varphi, \psi\) by setting \(\varphi \sim \psi\) if and only if $\dist_\infty(\varphi, \psi)$ is finite. 
Given metric space $X$ and \(Y\), let
$$
\QI(X,Y) = \faktor{\{\text{quasi-isometries $\varphi \colon X \to Y$}\}}{\sim}. 
$$
It is straightforward to check that composition of maps gives rise to a well-defined group operation on this set.
We write \(\QI(X)\) as shorthand for \(\QI(X,X)\), and we call this the \emph{quasi-isometry group of $X$}. 
Of course, every isometry of \(X\) is a quasi-isometry, so there is a map from the isometry group of \(X\) to its quasi-isometry group.
In general, this map need neither be injective nor surjective. 
The property of surjectivity is of particular interest, as it means that the quasi-isometries of \(X\) behave in a very rigid way.

\begin{definition}[Strongly QI-rigid space]
\label{def:strong_qi_rigid}
    We say that a metric space $X$ is \emph{strongly QI-rigid} if the natural map \(\Isom(X) \to \QI(X)\) is a surjection.

    We say that \(X\) is \emph{uniformly strongly QI-rigid} if for all \(\lambda \geq 1, c \geq 0\), there is \(\mu \geq 0\) such that for any \((\lambda,c)\)-quasi-isometry \(\varphi \colon X \to X\), there is an isometry \(f \in \Isom(X)\) with \(\dist_{\infty}(\varphi,f) \leq \mu\).
\end{definition}

We recall the notion of coarse connectedness for a metric space.

\begin{definition}[Coarse connectedness]
    Let \((X,\dist)\) be a metric space, \(\kappa > 0\) a constant, and \(x, y \in X\).
    A \emph{\(\kappa\)-path} between \(x\) and \(y\) is a sequence of points \((x_0, \dots, x_n)\) in \(X\) for which \(x_0 = x, x_n = y,\) and \(\dist(x_{i-1},x_i) \leq \kappa\) for every \(i = 1, \dots, n\).
    We say that \(X\) is \emph{\(\kappa\)-coarsely connected} if there is a \(\kappa\)-path between any two points of \(X\).
\end{definition}

The following lemma is a well-known observation about coarse separation of subsets (cf. \cite[Lemma 2.3]{bensaid2024coarseseparationlargescalegeometry}).
Important for us here is the uniformity of the constants involved: we include a proof to account for these details.

\begin{lemma}
\label{lem:effective_coarse_separation}
    For every \(\lambda \geq 1\) and \(c \geq 0\) there is a constant  \(\kappa_0=\kappa_0(\lambda, c) \geq 1\) such that  for any \(\kappa \geq \kappa_0\) 
    there are $\rho=\rho(\lambda, c, \kappa)$ with $\underset{\kappa\to\infty}{\lim}\rho(\lambda, c, \kappa)=\infty$ and \(L = L(\kappa) \geq \rho\)
    such that the following is true.
    
    Let \(\varphi \colon X \to X'\) be a \((\lambda,c)\)-quasi-isometry of metric spaces.
    Suppose that \(Y \subseteq X\) is \(\lambda\)-coarsely connected, \(a, b \in Y\), and \(Z \subseteq X\) are such that \(a\) and \(b\) lie in different \(\kappa\)-coarsely connected components of \(Y - B_{X}(Z;\kappa)\) and 
    \[\dist_{X}(a,Z) > \lambda L + \lambda c  \quad \mathrm{ and } \quad \dist_{X}(b,Z) > \lambda L + \lambda c.\]
    Then \(\varphi(a)\) and \(\varphi(b)\) lie in different \(\rho\)-coarsely connected components of the set \(\varphi(Y) - B_{X'}(\varphi(Z);L)\).
\end{lemma}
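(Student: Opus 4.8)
The plan is to argue by contradiction, pulling a short path back from the target to the source \emph{directly} through $\varphi$ — no quasi-inverse is needed, since the relevant path lives entirely inside $\varphi(Y)$. First one calibrates the constants. Given $\lambda \geq 1$ and $c \geq 0$, set $\kappa_0 := 2\lambda(c+1)$, and for $\kappa \geq \kappa_0$ put $\rho := \kappa/(2\lambda)$ and $L := \lambda\kappa + c$. A direct check confirms these meet every inequality the rest of the argument consumes: $\rho \geq 1$, $\rho \to \infty$ as $\kappa \to \infty$, $\lambda(\rho + c) \leq \kappa$, $\lambda\kappa + c \leq L$, and $L \geq \rho$.

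The next step is to place $\varphi(a)$ and $\varphi(b)$ in $\varphi(Y) - B_{X'}(\varphi(Z);L) =: S'$. For every $z \in Z$, the lower quasi-isometry bound gives $\dist_{X'}(\varphi(a), \varphi(z)) \geq \frac{1}{\lambda}\dist_X(a,z) - c \geq \frac{1}{\lambda}\dist_X(a,Z) - c > L$, where the last inequality is exactly the hypothesis $\dist_X(a,Z) > \lambda L + \lambda c$ (with a little room to spare, so infima stay strictly above $L$). Hence $\dist_{X'}(\varphi(a), \varphi(Z)) > L$, i.e.\ $\varphi(a) \notin B_{X'}(\varphi(Z);L)$, and symmetrically for $b$; since $a, b \in Y$, both images lie in $S'$, and the conclusion is at least well posed.

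Now suppose, for contradiction, that $\varphi(a)$ and $\varphi(b)$ lie in the same $\rho$-coarsely connected component of $S'$, witnessed by a $\rho$-path $(\varphi(a) = w_0, \dots, w_n = \varphi(b))$ with every $w_i \in S' \subseteq \varphi(Y)$ (we may take $n \geq 1$, padding with $w_1 = w_0$ if $\varphi(a) = \varphi(b)$). Choose preimages $y_i \in Y$ with $\varphi(y_i) = w_i$, taking $y_0 = a$ and $y_n = b$; this is legitimate even when some $w_i$ coincide. I claim $(y_0, \dots, y_n)$ is a $\kappa$-path in $Y - B_X(Z;\kappa)$ joining $a$ to $b$, which contradicts the hypothesis. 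Indeed, consecutive distances satisfy $\dist_X(y_{i-1}, y_i) \leq \lambda\bigl(\dist_{X'}(w_{i-1}, w_i) + c\bigr) \leq \lambda(\rho + c) \leq \kappa$ by the lower quasi-isometry bound; and if some $y_i$ had $\dist_X(y_i, Z) \leq \kappa$, the upper bound would force $\dist_{X'}(w_i, \varphi(Z)) \leq \lambda\kappa + c \leq L$, contradicting $w_i \notin B_{X'}(\varphi(Z);L)$. So every $y_i$ lies in $Y - B_X(Z;\kappa)$, and the path joins $a = y_0$ to $b = y_n$ there, contradicting that $a$ and $b$ lie in distinct $\kappa$-coarsely connected components.

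No step here is genuinely hard; the care is entirely in the calibration. One must reconcile the two opposing demands on $\rho$ — small enough (forcing roughly $\rho \leq \kappa/\lambda - c$) that a $\rho$-path upstairs relaxes to a $\kappa$-path downstairs, yet still unbounded as $\kappa \to \infty$ — while simultaneously choosing $L$ large enough (roughly $L \geq \lambda\kappa + c$) that points of $Y$ within $\kappa$ of $Z$ are pushed into the $L$-neighbourhood of $\varphi(Z)$, all the while keeping $L \geq \rho$ and $\kappa_0$ dependent only on $\lambda$ and $c$. The two distance hypotheses $\dist_X(a,Z), \dist_X(b,Z) > \lambda L + \lambda c$ are precisely the strength needed to keep $\varphi(a)$ and $\varphi(b)$ out of $B_{X'}(\varphi(Z);L)$; the $\lambda$-coarse connectedness of $Y$ is what makes the separation hypothesis upstairs the intrinsic one, and is the property one would invoke to relocate preimages back into $Y$ if one instead phrased the pull-back through a quasi-inverse of $\varphi$ rather than through $\varphi$ itself.
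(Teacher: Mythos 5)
Your proof is correct, and it follows the same overall strategy as the paper's -- pull a short path between \(\varphi(a)\) and \(\varphi(b)\) back to the source and contradict the \(\kappa\)-separation hypothesis -- but the pull-back mechanism is genuinely different and in fact cleaner. The paper composes with a quasi-inverse \(\psi\), which forces the extra constant \(Q\) into \(\kappa_0\), \(\rho\) and \(L\), requires appending \(a\) and \(b\) to the pulled-back sequence \((\psi(x_0),\dots,\psi(x_n))\) to compensate for \(\dist_\infty(\psi\varphi,\id_X)\le Q\), and quietly ignores the fact that the points \(\psi(x_i)\) need not lie in \(Y\) even though the separation hypothesis concerns components of \(Y - B_X(Z;\kappa)\). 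You avoid all of this by observing that the offending \(\rho\)-path lies in \(\varphi(Y)\), so you may simply choose preimages \(y_i \in Y\) with \(\varphi(y_i)=w_i\); the lower quasi-isometry bound then gives \(\dist_X(y_{i-1},y_i)\le\lambda(\rho+c)\le\kappa\), and membership of some \(y_i\) in \(B_X(Z;\kappa)\) (i.e.\ a witness \(z\in Z\) with \(\dist_X(y_i,z)\le\kappa\)) would push \(w_i\) into \(B_{X'}(\varphi(Z);\lambda\kappa+c)=B_{X'}(\varphi(Z);L)\), a contradiction. Your calibration \(\kappa_0=2\lambda(c+1)\), \(\rho=\kappa/(2\lambda)\), \(L=\lambda\kappa+c\) checks out against all the inequalities you use, and the initial verification that \(\varphi(a),\varphi(b)\notin B_{X'}(\varphi(Z);L)\) is exactly what the hypotheses \(\dist_X(a,Z),\dist_X(b,Z)>\lambda L+\lambda c\) provide. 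What the direct pull-back buys: no dependence on \(Q\), no endpoint corrections, the path stays inside \(Y\) where the hypothesis lives, and the \(\lambda\)-coarse connectedness of \(Y\) is never needed (it is only relevant if one routes through a quasi-inverse, as you note); what the paper's version buys is essentially nothing beyond matching the quasi-inverse formalism it reuses later, so your variant is a legitimate simplification rather than a gap.
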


\begin{proof}
    Let \(\psi \colon X' \to X\) be a quasi-inverse of \(\varphi\), so that there is \(Q \geq 0\) depending only on the quasi-isometry constants of \(\varphi\) with \(\dist_\infty(\psi \circ \varphi,\id_X) \leq Q\).
    We may suppose that \(\psi\) is also a \((\lambda,c)\)-quasi-isometry. 
    Write \(\rho_0 = \lambda^2 + c\), and note that \(\varphi(Y)\) is \(\rho_0\)-coarsely connected.
    We further write \(\rho'_0 = \lambda \rho_0 + c\).
    Define \(\kappa_0 = \rho'_0 + Q\), let $\kappa\geq \kappa_0$ and set $\rho = \frac{1}{\lambda} (\kappa-c-Q)$, $\rho^\prime = \lambda\rho+c$ and \(L = \lambda \kappa + c + Q\).
    It is clear that $\rho$ diverges as $\kappa$ tends to infinity.
    
    Let \(p = (x_0, \dots, x_n)\) be a \(\rho\)-path in \(\varphi(Y)\) whose endpoints are \(\varphi(a)\) and \(\varphi(b)\), which exists by $\rho_0$-connectedness of~$Y$.
    Then \(\psi(p) = (\psi(x_0), \dots, \psi(x_n))\) is a \(\rho'\)-path whose endpoints are \(\psi(\varphi(a))\) and \(\psi(\varphi(b))\).
    Since \(\psi\) is quasi-inverse to \(\varphi\), we obtain a \(\kappa\)-path \((a,\psi(x_0), \dots, \psi(x_n),b)\) from \(a\) and \(b\) in \(Y\).
    By assumption, \(a\) and \(b\) lie in different \(\kappa\)-coarse components of \(Y - B_X(Z;\kappa)\), so this path contains a point \(z \in Y\) with \(\dist_X(z,Z) \leq \kappa\).
    The assumptions imply that \(a\) and \(b\) are at a distance of greater than \(\kappa\) from \(Z\), so we must have \(z = \psi(x_i)\) for some \(i = 0, \dots, n\).
    But then by the fact that \(\varphi\) is a \((\lambda,c)\)-quasi-isometry and the choice of \(L\),
    \[
        \dist_{X'}(x_i,\varphi(Z)) \leq \lambda \kappa + c + Q = L.
    \]
    This means that any \(\rho\)-path in \(Y\) from \(\varphi(a)\) to \(\varphi(b)\) passes \(L\)-close to \(\varphi(Z)\).
    Since \(a\) and \(b\) are a distance greater than \(\lambda L+ \lambda c\) from \(\varphi(Z)\), the points \(\varphi(a)\) and \(\varphi(b)\) are a distance greater than \(L\) from \(\varphi(Z)\).
    Therefore \(\varphi(a)\) and \(\varphi(b)\) lie in distinct \(\rho\)-coarsely connected components of \(Y - B_{X'}(\varphi(Z);L)\) as required.
\end{proof}

\subsection{Hyperbolic metric spaces}

We here recall the definition of a hyperbolic metric space, along with some useful facts.

\begin{definition}[Gromov product]
    Let \((X,\dist)\) be a metric space, and \(x, y, z \in X\) be points.
    The \emph{Gromov product} of \(x\) and \(y\) with respect to \(z\) is
    \[
        \langle x, y \rangle_z = \frac{1}{2}\Big(\dist(x,z) + \dist(y,z) - \dist(x,y)\Big).
    \]
    If \(A, B \subseteq X\) are subspaces, we write
    \[
        \langle A, B \rangle_z = \sup\{\langle a, b \rangle_z \, | \, a \in A, b \in B \} \in [0,\infty] 
    \]
\end{definition}

\begin{definition}[Thin triangles]
    Let \(\Delta\) be a geodesic triangle with vertices \(x, y\), and \(z\) in a metric space \(X\), and let \(\delta \geq 0\).
    Call \(T_\Delta\) the tripod with leg lengths \(\langle x, y \rangle_z, \langle x,z \rangle_y,\) and \(\langle y,z \rangle_z\).
    There is a unique map \(\varphi \colon \Delta \to T_\Delta\) such that \(x, y\), and \(z\) map to the extremal vertices of \(T_\Delta\) and \(\varphi\) restricts to an isometry on each side of \(\Delta\).
    We say \(\Delta\) is \emph{\(\delta\)-thin} if \(\operatorname{diam}\varphi^{-1}(\{t\}) \leq \delta\) for all \(t \in T_\Delta\).
\end{definition}

\begin{definition}[Hyperbolic metric space]
    Let \(X\) be a geodesic metric space.
    If there is \(\delta \geq 0\) such that every geodesic triangle in \(X\) is \(\delta\)-thin, we say that \(X\) is a \emph{\(\delta\)-hyperbolic metric space}. 
\end{definition}

\begin{remark}
\label{rem:four_point}
    It is well-known that \(\delta\)-hyperbolicity of a space \(X\) implies the following four point condition: for all \(x, y, z, w \in X\)
    \[
        \min\{ \langle x, y \rangle_w, \langle y,z \rangle_w \} \leq \langle x,z \rangle_w + 2\delta.
    \]
\end{remark}

\begin{remark}
\label{rem:hyp_constant_cat-1}
    It is well known that the real hyperbolic plane \(\R \mathbf{H}^2\) is \(\delta\)-hyperbolic with \(\delta = \log(3)\).
    Any \(\operatorname{CAT}(-1)\) space is therefore also \(\log(3)\)-hyperbolic.
\end{remark}

The following lemma is essentially a restating of the thin triangles property in a form that is convenient for our later use.

\begin{lemma}
\label{lem:basically_thin_triangles}
    Let \((X,\dist)\) be a \(\delta\)-hyperbolic metric space, \(x, y, z \in X\).
    Any geodesic \([y,z]\) contains a point \(t\) with \(\dist_X(t,x) \leq \langle y,z \rangle_x + \delta\).
\end{lemma}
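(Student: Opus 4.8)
The plan is to stage the entire statement inside a single geodesic triangle and then quote the thin triangles definition essentially verbatim. First I would choose geodesics \([x,y]\) and \([x,z]\) — these exist since \(X\) is geodesic — and form the geodesic triangle \(\Delta\) whose three sides are \([x,y]\), \([x,z]\), and the given geodesic \([y,z]\). Let \(\varphi \colon \Delta \to T_\Delta\) be the canonical comparison map to the tripod, as in the definition of \(\delta\)-thinness, and let \(c\) be the central vertex of \(T_\Delta\). A direct computation from the definition of \(T_\Delta\) shows that the leg of \(T_\Delta\) running from the extremal vertex \(\varphi(x)\) to \(c\) has length exactly
\[
\tfrac{1}{2}\big(\dist_X(x,y) + \dist_X(x,z) - \dist_X(y,z)\big) = \langle y,z\rangle_x.
\]

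Next I would pull \(c\) back along two different sides of \(\Delta\). Since \(\varphi\) restricts to an isometry on each side, the side \([y,z]\) contains a point \(t\) with \(\varphi(t) = c\), and the side \([x,y]\) contains a point \(p\) with \(\varphi(p) = c\). As \(\varphi\) is isometric on \([x,y]\) and sends \(x\) to an extremal vertex lying at distance \(\langle y,z\rangle_x\) from \(c\), we conclude \(\dist_X(x,p) = \langle y,z\rangle_x\). Now both \(t\) and \(p\) lie in \(\varphi^{-1}(\{c\})\), so \(\delta\)-thinness of \(\Delta\) yields \(\dist_X(t,p) \le \operatorname{diam}\varphi^{-1}(\{c\}) \le \delta\). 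The triangle inequality then gives
\[
\dist_X(x,t) \le \dist_X(x,p) + \dist_X(p,t) \le \langle y,z\rangle_x + \delta,
\]
which is exactly the claim, and the point \(t\) was chosen on the prescribed geodesic \([y,z]\).

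I expect no real obstacle here; the statement is a routine repackaging of thin triangles. The only points that require a moment's care are that the Gromov product \(\langle y,z\rangle_x\) is a function of the pairwise distances alone, so the auxiliary choices of \([x,y]\) and \([x,z]\) do not affect the target quantity, and that the comparison leg emanating from \(x\) really has length \(\langle y,z\rangle_x\) rather than one of the other two Gromov products appearing in the tripod — both of which are immediate from the definitions already recorded in the paper.
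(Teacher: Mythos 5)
Your argument is correct and is essentially identical to the paper's proof: both pull back the tripod's central vertex to points on \([x,y]\) and \([y,z]\), note the point on \([x,y]\) lies at distance \(\langle y,z\rangle_x\) from \(x\), and conclude by \(\delta\)-thinness plus the triangle inequality. No issues.
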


\begin{proof}
    Let \(\varphi \colon \Delta \to T_\Delta\) be the canonical map from the triangle \(\Delta\) with vertices \(x, y, z\) to the associated tripod \(T_\Delta\).
    Call \(\ast \in T_\Delta\) the middle point of the tripod.
    Take \(s \in [x,y]\) and \(t \in [y,z]\) to be the unique points of \([x,y]\) and \([y,z]\) in the preimage \(\varphi^{-1}(\{\ast\})\) respectively.
    By definition, \(\dist(x,s) = \langle y,z \rangle_x\) and, since \(\Delta\) is \(\delta\)-thin, \(\dist(s,t) \leq \delta\).
    The lemma follows now by applying the triangle inequality.
\end{proof}

We will use the following statement about closest point projections.

\begin{lemma}
\label{lem:triangle_with_projection}
    Let \((X,\dist)\) be a \(\delta\)-hyperbolic metric space, \(Y \subseteq X\) a subspace, \(\pi \colon X \to Y\) a closest point projection.
    For any \(x \in X\) and \(y \in Y\) we have
    \[
        \langle x,y \rangle_{\pi(x)} \leq \delta
    \]
    and
    \[
        \dHaus_X([x,y], [x,\pi(x)] \cup [\pi(x),y]) \leq 2\delta.
    \]
\end{lemma}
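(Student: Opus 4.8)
The plan is to prove the two assertions in turn, with the Gromov product bound carrying the real content and the Hausdorff estimate following mechanically. Write $p = \pi(x)$ throughout, and recall that, since $p$ realises the distance from $x$ to $Y$, we have $\dist_X(x,t) \geq \dist_X(x,p)$ for every $t \in Y$; in particular this holds for $t$ on the geodesic $[p,y]$, which lies in $Y$ as $Y$ is convex. For the first inequality I would start from the elementary identity
\[
    \langle x,y \rangle_p + \langle p,y \rangle_x = \dist_X(x,p),
\]
which is immediate from the definition of the Gromov product. Applying Lemma~\ref{lem:basically_thin_triangles} to the geodesic $[p,y]$ with base point $x$ yields a point $t \in [p,y] \subseteq Y$ with $\dist_X(t,x) \leq \langle p,y \rangle_x + \delta$. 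Since $\dist_X(x,p) \leq \dist_X(x,t)$, the identity above gives $\langle x,y \rangle_p = \dist_X(x,p) - \langle p,y \rangle_x \leq \delta$, as wanted.

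For the Hausdorff bound, one inclusion is free: $\delta$-thinness of the triangle with vertices $x,p,y$ says precisely that any point of $[x,y]$ maps, under the comparison map to the tripod, to a point also hit by $[x,p]$ or by $[p,y]$, and preimages of points have diameter at most $\delta$; hence $[x,y] \subseteq B_X([x,p] \cup [p,y]; \delta)$. For the reverse inclusion, take $z \in [x,p] \cup [p,y]$ and estimate, using the first part of the lemma,
\[
    \dist_X(x,z) + \dist_X(z,y) \leq \dist_X(x,p) + \dist_X(p,y) = \dist_X(x,y) + 2\langle x,y \rangle_p \leq \dist_X(x,y) + 2\delta,
\]
so that $\langle x,y \rangle_z \leq \delta$. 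Applying Lemma~\ref{lem:basically_thin_triangles} to the geodesic $[x,y]$ with base point $z$ now produces a point of $[x,y]$ within $\langle x,y \rangle_z + \delta \leq 2\delta$ of $z$. Thus $[x,p] \cup [p,y] \subseteq B_X([x,y]; 2\delta)$, and combining the two inclusions gives $\dHaus_X([x,y],[x,p]\cup[p,y]) \leq 2\delta$.

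The only genuinely delicate point is the first inequality, and the subtlety there is entirely in legitimately invoking the closest point projection: the argument needs the geodesic $[p,y]$ (or at least the single point $t$ it produces) to remain inside $Y$, which is where convexity of $Y$ enters — without it the Gromov product bound can fail. Granting that, the rest is bookkeeping with the Gromov product together with two applications of Lemma~\ref{lem:basically_thin_triangles}, once with base point $x$ for the first part and once with a running base point $z$ for the second.
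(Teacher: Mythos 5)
Your proof is correct and takes essentially the same route as the paper: both arguments rest on the thin-triangle/tripod picture for the triangle $x,\pi(x),y$ together with the closest-point property applied to a point of $[\pi(x),y]$, and your second part merely repackages the paper's step of trimming the two legs of length $\langle x,y\rangle_{\pi(x)}\le\delta$ as a Gromov-product estimate plus a second application of Lemma~\ref{lem:basically_thin_triangles}, arriving at the same constant $2\delta$. Your convexity caveat is well observed, and it applies equally to the paper's own proof: there the inequality $\dist(a,\pi(x))\le\dist(a,b)$ is justified by $\dist(x,b)\ge\dist(x,\pi(x))$, which requires the point $b\in[\pi(x),y]$ to lie in $Y$, exactly as your point $t$ must. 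For a completely general subspace the first inequality can in fact fail (take $X=\R$, $Y=\{0,10\}$, $x=4$, $y=10$: then $\pi(x)=0$ and $\langle x,y\rangle_{\pi(x)}=4$ while $\delta=0$), so some hypothesis of this kind is genuinely needed; in the paper's applications $Y$ is the image of an edge space, which is only $\delta$-quasiconvex in general (Remark~\ref{rem:edge_pieces_quasiconvex}), so strictly speaking the statement should carry a (quasi)convexity assumption with the corresponding constant absorbed into the bounds. This is an issue with the statement rather than with your argument, which matches the paper's.
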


\begin{proof}
    By hyperbolicity of $X$ there exist $a\in [x,\pi(x)]$ and $b \in [\pi(x),y]$ with $\dist(a, b)\leq \delta$ and \(\dist(a,\pi(x)) = \dist(b,\pi(x)) = \langle x,y\rangle_{\pi(x)}\). 
    By the fact that~$\pi$ is a closest point projection and \(a\) lies on a geodesic between \(x\) and \(\pi(x)\), we have \[\langle x ,y \rangle_{\pi(x)} = \dist(b,\pi(x)) = \dist(a, \pi(x))\leq \dist(a,b) \leq \delta.\]
    By the thinness of triangles, we also have that $\dHaus_X([x, y], [x, a] \cup [b, y])\leq \delta$, from which it follows that $\dHaus_{X}([x, y], [x, \pi(x)]\cup [\pi(x), y])\leq 2\delta$.
\end{proof}

A hyperbolic metric space admits a natural bordification \(\partial X\) called the \emph{Gromov boundary}.
Among proper spaces, this coincides with the \emph{geodesic boundary}, defined as equivalence classes of geodesic rays considered up to bounded Hausdorff distance; see \cite[Chapter 11]{drutukapovich} for details.
The Gromov product on a proper hyperbolic metric space extends naturally to its Gromov boundary.
Given a (bi-)infinite geodesic \(\gamma \colon I \to X\) in a proper hyperbolic space \(X\), its \emph{endpoints} are the points each of its subrays represents in \(\partial X\).

A core feature of hyperbolic metric spaces is the stability of quasi-geodesics.
This is encapsulated in the following, commonly known as the Morse Lemma.

\begin{lemma}[{\cite[Theorem 11.72 and Lemma~11.105]{drutukapovich}}]
\label{lem:morse_lemma}
    Let \((X,\dist)\) be a proper \(\delta\)-hyperbolic metric space, \(\lambda \geq 1\) and \(c \geq 0\).
    There is \(M = M(\lambda,c,\delta) \geq 0\) such that for any \((\lambda,c)\)-quasi-geodesic \(\varphi \colon I \to X\), any geodesic \(\gamma \colon J \to X\) with the same endpoints as \(\varphi\) satisfies
    \[
        \dHaus_X(\varphi(I),\gamma(J)) \leq M.
    \]
    Here, endpoints can be points in~$X$ or \(\partial X\).
\end{lemma}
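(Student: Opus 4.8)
The plan is to run the classical proof of stability of quasi-geodesics, taking care that every constant depends only on $\lambda$, $c$, and $\delta$. First I would \emph{tame} the quasi-geodesic: given a $(\lambda,c)$-quasi-geodesic $\varphi\colon I\to X$, let $\varphi'$ be the path that agrees with $\varphi$ on the integer points of $I$ (and on the finite endpoints of $I$, if any) and runs along a geodesic between consecutive such points. A routine verification shows that $\varphi'$ is a continuous $(\lambda,c')$-quasi-geodesic with the same endpoints as $\varphi$, that $\dHaus_X(\varphi(I),\varphi'(I))\le\lambda+c$, and -- crucially -- that $\varphi'$ has \emph{controlled length}: there are $k_1=k_1(\lambda,c)$ and $k_2=k_2(\lambda,c)$ with $\ell(\varphi'|_{[s,t]})\le k_1\,\dist_X(\varphi'(s),\varphi'(t))+k_2$ for all $s\le t$ in $I$. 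It therefore suffices to prove the bound for $\varphi'$ and then enlarge $M$ by $\lambda+c$. I treat the case $I=[a,b]$ first.

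The conceptual core is the exponential divergence estimate: in a $\delta$-hyperbolic geodesic space, any rectifiable path $\sigma$ joining the endpoints of a geodesic $[x,y]$ satisfies $\dist_X(w,\sigma)\le\delta\log_2\ell(\sigma)+1$ for every $w\in[x,y]$. This follows from $\delta$-thinness (Lemma~\ref{lem:basically_thin_triangles}) by induction on $\lceil\log_2\ell(\sigma)\rceil$: subdivide $\sigma$ at its arclength midpoint $\sigma_m$, use thinness of the triangle $x,y,\sigma_m$ to move $w$ to within $\delta$ of a point of $[x,\sigma_m]$ or $[\sigma_m,y]$, and apply the inductive hypothesis to the corresponding half of $\sigma$. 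Granting this, I would fix $x_0$ on $\gamma$ attaining $D:=\max_{x\in\gamma}\dist_X(x,\varphi'(I))$ (the maximum exists as $\gamma$ is a compact segment), take $y,z\in\gamma$ at distance $2D$ from $x_0$ on either side -- or an endpoint of $\gamma$ if nearer, which presents no difficulty since the endpoints lie on $\varphi'(I)$ and so are automatically at distance at least $D$ from $x_0$ -- choose $y',z'\in\varphi'(I)$ within $D$ of $y,z$, and form the path $\sigma$ running $y\to y'$, then along $\varphi'$ from $y'$ to $z'$, then $z'\to z$. Controlled length gives $\ell(\sigma)\le(2+6k_1)D+k_2$, while every point of $\sigma$ lies at distance at least $D$ from $x_0$: the $\varphi'$-portion because $\dist_X(x_0,\varphi'(I))=D$, and the two geodesic segments because $y,z$ are at distance $2D$ from $x_0$ and those segments have length at most $D$. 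Applying the exponential estimate to $\sigma$ and the sub-geodesic $[y,z]\subseteq\gamma$ forces $D\le\delta\log_2((2+6k_1)D+k_2)+1$, which bounds $D$ by some $D_0=D_0(\lambda,c,\delta)$; hence $\gamma\subseteq B_X(\varphi'(I);D_0)$.

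For the reverse inclusion I would use controlled length again, but via a connectedness argument. Given $x=\varphi'(t)$, write $\alpha=\varphi'|_{[a,t]}$ and $\beta=\varphi'|_{[t,b]}$, whose images have $p:=\varphi'(a)$ and $q:=\varphi'(b)$ as endpoints; both $p,q$ lie on $\gamma$. The sets of parameters $u$ with $\gamma(u)\in B_X(\alpha([a,t]);D_0)$ and with $\gamma(u)\in B_X(\beta([t,b]);D_0)$ are closed, cover the domain of $\gamma$ (by the inclusion just shown), and contain the two endpoints of that domain respectively; by connectedness they meet, so some $\gamma(u^*)$ lies within $D_0$ of $\varphi'(r_1)$ with $r_1\le t$ and within $D_0$ of $\varphi'(r_2)$ with $r_2\ge t$. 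Then $\dist_X(\varphi'(r_1),\varphi'(r_2))\le 2D_0$, so $\ell(\varphi'|_{[r_1,r_2]})\le 2k_1D_0+k_2$, and since $t\in[r_1,r_2]$ we get $\dist_X(x,\gamma)\le\dist_X(x,\varphi'(r_1))+D_0\le(2k_1+1)D_0+k_2$. This proves the finite case with $M$ depending only on $\lambda,c,\delta$. The cases where $I$ is unbounded or the endpoints lie in $\partial X$ follow by a standard compactness argument: exhaust $\varphi$ by finite sub-quasi-geodesics, apply the finite case to each together with a geodesic joining its endpoints, and use properness and the Arzel\`a--Ascoli theorem to extract a locally uniform limit that is a geodesic with the prescribed endpoints in $X\cup\partial X$, across which the uniform bound $M$ persists.

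I expect the main obstacle to be the bookkeeping in the taming step and, above all, checking that the exponential divergence estimate -- and hence $D_0$, and hence $M$ -- genuinely depends only on $\lambda$, $c$, $\delta$: the geometric content of each step is light, but the claim of a uniform $M=M(\lambda,c,\delta)$ means none of the constants may be allowed to drift. The compactness argument for the boundary cases is routine given properness, which is essentially the only place the hypothesis that $X$ is proper enters.
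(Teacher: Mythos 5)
The paper does not prove this lemma at all---it is quoted directly from Drutu--Kapovich---so there is no in-paper argument to compare against; your proof is the classical one (taming the quasi-geodesic, the exponential divergence estimate via thin triangles, the connectedness argument for the reverse inclusion, and an exhaustion plus Arzel\`a--Ascoli argument using properness for unbounded domains and ideal endpoints), and it is essentially correct with all constants depending only on $\lambda$, $c$, $\delta$. The one point you should make explicit is in the unbounded case: the limiting procedure yields \emph{some} geodesic with the prescribed (possibly ideal) endpoints lying within $M$ of $\varphi$, and to get the bound for an \emph{arbitrary} geodesic $\gamma$ with those endpoints you should invoke the standard fact that any two geodesics in a proper $\delta$-hyperbolic space with the same endpoints in $X \cup \partial X$ are at Hausdorff distance bounded by a function of $\delta$ alone (thinness of ideal triangles), which only enlarges $M$ by a $\delta$-dependent amount.
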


For $Y\subseteq X$, where $X$ is a $\delta$-hyperbolic metric space, we denote by $\Lambda Y$ the \emph{limit set} of~$Y$. We will need the following lemma about the limit sets of coarsely separating sets in hyperbolic spaces. 

\begin{lemma}\label{lem:limit_sets_of_coarse_components_are_separated}
    Let $X$ be a $\delta$-hyperbolic metric space, let $Z \subseteq X$ be a subset and let $U$ and $V$ be different coarse components of $X-Z$ for some $\rho\geq 0$. Then $\Lambda U \cap \Lambda V \subseteq \Lambda Z$. 
\end{lemma}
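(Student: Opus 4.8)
The plan is to argue directly with sequences and the sequential boundary $\partial X$; in particular no properness of $X$ will be needed. Fix a basepoint $o \in X$ and suppose $\xi \in \Lambda U \cap \Lambda V$, so that we must show $\xi \in \Lambda Z$. By definition of the limit sets there are sequences $(u_n)$ in $U$ and $(v_n)$ in $V$ with $u_n \to \xi$ and $v_n \to \xi$ in $\partial X$; in terms of the Gromov product this says $\langle u_n, u_m \rangle_o \to \infty$, $\langle v_n, v_m \rangle_o \to \infty$ as $n, m \to \infty$, and, since both sequences represent $\xi$, also $\langle u_n, v_m \rangle_o \to \infty$ as $n, m \to \infty$. (If $\rho = 0$ the $\rho$-coarse components of $X - Z$ are singletons, so $U$ and $V$ are bounded and $\Lambda U = \Lambda V = \emptyset$; so we may assume $\rho > 0$.)

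The first step is to find, for each $n$, a point $z_n$ lying both on a geodesic $[u_n, v_n]$ and on (a bounded neighbourhood of) $Z$. Pick any geodesic $[u_n, v_n]$ and subdivide it into consecutive points at pairwise distance at most $\rho$; this gives a $\rho$-path from $u_n$ to $v_n$. Were this path to avoid $Z$ --- or $B_X(Z;\rho)$, according to the convention used for a coarse component of $X - Z$ --- it would witness that $u_n$ and $v_n$ lie in a common $\rho$-coarse component, contradicting the hypothesis. Hence we may take $z_n \in [u_n, v_n]$ with $z_n \in Z$ (respectively $\dist_X(z_n, Z) \leq \rho$).

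The second step is to show $(z_n)$ also converges to $\xi$, from which $\xi \in \Lambda Z$ follows (using that $\Lambda(B_X(Z;\rho)) \subseteq \Lambda Z$, in case $z_n$ only lies near $Z$). The crux is the elementary monotonicity of the Gromov product along a geodesic: since $z_n$ lies on $[u_n, v_n]$, expanding the definitions and using $\dist_X(u_n, z_n) + \dist_X(z_n, v_n) = \dist_X(u_n, v_n)$ gives
\[
    \langle u_n, z_n \rangle_o = \langle u_n, v_n \rangle_o + \langle o, v_n \rangle_{z_n} \geq \langle u_n, v_n \rangle_o,
\]
and symmetrically $\langle v_n, z_n \rangle_o \geq \langle u_n, v_n \rangle_o$, so $\langle u_n, z_n \rangle_o, \langle v_n, z_n \rangle_o \to \infty$. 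Feeding this into the four-point condition of Remark~\ref{rem:four_point},
\[
    \langle z_n, u_m \rangle_o \geq \min\{ \langle z_n, u_n \rangle_o,\, \langle u_n, u_m \rangle_o \} - 2\delta,
\]
whose right-hand side tends to $\infty$ as $n, m \to \infty$; the same argument gives $\langle z_n, z_m \rangle_o \to \infty$. Hence $(z_n)$ converges in $\partial X$ and is equivalent to $(u_n)$, so it represents $\xi$; as each $z_n$ lies in (a bounded neighbourhood of) $Z$, this yields $\xi \in \Lambda Z$.

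I do not foresee a real obstacle here. Hyperbolicity is used only through the four-point condition, and the rest is soft. The one place deserving care is the reduction in the first step: one must fix the precise meaning of a ``$\rho$-coarse component of $X - Z$'' and then track the resulting bounded error through to the end --- harmless, since passing from $Z$ to $B_X(Z;\rho)$ does not change the limit set.
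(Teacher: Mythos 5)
Your proof is correct and takes essentially the same route as the paper: choose sequences in $U$ and $V$ converging to the common boundary point, note that each geodesic $[u_n,v_n]$ must meet $Z$ (or a $\rho$-neighbourhood of it) because its endpoints lie in different $\rho$-coarse components, and conclude that the resulting points $z_n$ converge to the same boundary point. The only difference is that you spell out details the paper leaves implicit, namely extracting $z_n$ by subdividing the geodesic into a $\rho$-path and verifying $z_n\to\xi$ via monotonicity of the Gromov product along $[u_n,v_n]$ together with the four-point condition.
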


\begin{proof}
    Let \(\rho \geq 0\) be a constant such that \(U\) and \(V\) are in different \(\rho\)-coarse components of \(X - Z\).
    Let $z \in \partial U \cap \partial V$, so there are sequences $(u_n) \subseteq U$ and $(v_n) \subseteq V$ with \(u_n, v_n \to z\) as \(n \to \infty\).
    Since $U$ and $V$ are different $\rho$-coarse components of \(X - Z\), for every $n\in\N$ there is $z_n\in [u_n, v_n] \cap B(Z;\rho)$.
    But then the sequence of $(z_n)$ also converges to $z$, so that \(z \in \partial Z\).
\end{proof}

%%%%%%%%%%%%%%%%%%%%%%555
%      NEW SECTION      %
%%%%%%%%%%%%%%%%%%%%%%555

\section{Frucht's theorem}\label{sec:frucht}

In this section we will discuss Frucht's theorem. 
We will need a particular variant for countable groups which produces uncountably many regular graphs of arbitrary degree, and the main goal of this section is to prove this. 
This can essentially be pieced together from proofs already present in the literature, but we include our own for the sake of self-containment.
We first introduce some terminology. 

\begin{definition}[Vertex-free and edge-free graphs]
    A graph $\Gamma$ is said to be \emph{vertex-free} if the stabiliser of every vertex in $\Aut(\Gamma)$ is trivial. Similarly, $\Gamma$ is said to be \emph{edge-free} if the setwise stabiliser of every pair $\{u,v\}$ of adjacent vertices is trivial.
\end{definition}

We will mostly be interested in vertex-free graphs, but it is possible for us to ensure our constructed graphs are also edge-free. 
A vertex-free graph $\Gamma$ is edge-free if and only if $\Aut(\Gamma)$ acts \emph{without edge inversions}. 
That is, no isometry of $\Gamma$ transposes the two endpoints of any edge.

\begin{definition}[Labelled graphs]
    Let $\Gamma$ be a graph.
    Given $e \in E(\Gamma)$ with endpoints $x, y \in V(\Gamma)$, an \emph{orientation} of $e$ is a choice of endpoint. 
    A \emph{partial orientation} of $\Gamma$ is a choice of orientation for a subset of the edges. Edges without a choice of orientation are called \emph{undirected}, and those with an orientation as \emph{directed}. 
    
    Let $S$ be a set. An \emph{$S$-labelling} of $\Gamma$ is a partial orientation of $\Gamma$ together with an assignment of an element of $S$ to every edge. 
    We call \(S\) the labelling set. 
    A graph with an $S$-labelling is called a \emph{labelled graph}.

    We define the \emph{label-preserving automorphism group}, denoted $\Aut_{\ell}(\Gamma)$, as the subgroup of $\Aut(\Gamma)$ which preserves the labelling of $\Gamma$. 
\end{definition}

Cayley graphs are the prototypical examples of labelled graphs.

\begin{definition}[Cayley graph]\label{def:cay graph}
    Let $G$ be a group, and $S \subset G - \{1\}$ a generating set. 
    Assume further that if \(s, s^{-1} \in S\), then \(s\) has order 2. 
    The \emph{Cayley graph} of $G$ with respect to $S$, denoted $\Gamma_{G,S}$, is the simplicial, labelled graph with:
    \begin{enumerate}
        \item $V(\Gamma_{G,S}) = G$;

        \item for every $g \in G$, $s \in S$ not of order 2, there is an edge between $g$ and $gs$, labelled by $s$, directed from $g$ to $gs$;

        \item for every $g \in G$, $s \in S$ of order 2, there is an undirected edge between $g$ and $gs$ labelled by $s$, undirected. 
    \end{enumerate}
\end{definition}

\begin{remark}
\label{rem:auts_of_labelled_cayley_graph}    
    The group of labelled automorphisms of a Cayley graph is exactly the original group.
    This is straightforward to see by the action of the group on its Cayley graph by (left) translation.
\end{remark}

The following is the central construction of the section.
It associates to each 3-regular labelled graph another 3-regular graph, whose automorphism group is the same as the group of label-preserving automorphisms of the original graph.

\begin{proposition}
\label{prop:labelled_aut->aut}
    Let $\Gamma_0$ be a 3-regular vertex-free labelled graph. 
    Then there exist uncountably many connected, simplicial, 3-regular, vertex-free, edge-free graphs $\Gamma$ with \[\Aut(\Gamma) \cong \Aut_\ell(\Gamma_0).\]
    Moreover, if \(\Gamma_0\) is hyperbolic, then \(\Gamma\) is also hyperbolic.
\end{proposition}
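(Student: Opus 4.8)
The plan is to run a Frucht-style argument: replace each labelled edge of $\Gamma_0$ by a finite rigid ``gadget'' recording its label and orientation, arrange everything to stay $3$-regular, and then hang an infinite rigid tail off each gadget to manufacture a continuum of pairwise non-isomorphic examples. Enumerate the labels occurring in $\Gamma_0$ as $s_1, s_2, \dots$ and, to each label $s$, associate a finite, connected, simplicial graph $G_s$ with \emph{trivial automorphism group} and exactly three vertices of degree $2$ --- two \emph{ports} and a \emph{tail-port} --- all other vertices having degree $3$; arrange the $G_{s_i}$ to be pairwise non-isomorphic and each locally recognisable in the eventual graph (for instance let $G_{s_i}$ contain a short cycle of length $100+i$, a configuration made not to occur elsewhere). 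For a directed edge $e = \{u,v\}$ of $\Gamma_0$ with label $s$, oriented $u \to v$, delete $e$ and join $u$ to the first port of $G_s$ and $v$ to the second; since $G_s$ has no automorphisms the two ports are graph-theoretically distinguishable, so the orientation is recorded, and all vertices involved regain degree $3$. This introduces no loops or multiple edges, even if $\Gamma_0$ has some.

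Undirected edges require more care. Because $\Gamma_0$ is merely assumed vertex-free, it may carry an automorphism inverting an undirected edge $\{u,v\}$, and this automorphism must extend to the new graph \emph{without any fixed vertex or inverted edge}, lest the vertex-free and edge-free conclusions fail. For $s$ of order $2$ we therefore take $G_s$ to be a ``cycle-type'' gadget --- e.g.\ a hexagonal prism with antipodally symmetric rigidifying decorations --- admitting a fixed-point-free, edge-inversion-free involution $\sigma_s$ that generates $\Aut(G_s)$ and interchanges the two antipodal ports (with the tail-ports arranged to respect $\sigma_s$), and attach $u$ and $v$ to these ports. Let $\Gamma^{\circ}$ denote the graph obtained so far. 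One shows $\Aut(\Gamma^{\circ}) \cong \Aut_\ell(\Gamma_0)$: every $g \in \Aut_\ell(\Gamma_0)$ preserves labels and orientations and so extends canonically over the gadgets (via the evident identifications of gadgets with a common label, acting through $\sigma_s$ on the gadget of an inverted undirected edge), giving an injection $\Aut_\ell(\Gamma_0) \hookrightarrow \Aut(\Gamma^{\circ})$; conversely, since the gadgets are finite, rigid, pairwise non-isomorphic, and recognisable, any automorphism of $\Gamma^{\circ}$ permutes them label-for-label while respecting ports and orientation, hence collapses to an element of $\Aut_\ell(\Gamma_0)$ which, by rigidity, determines it. The same bookkeeping shows $\Gamma^{\circ}$ is vertex-free and edge-free: any automorphism with a fixed vertex or an inverted edge either restricts to a non-trivial automorphism of some gadget having a fixed vertex or inverted edge --- impossible, the rigid gadgets having trivial automorphism group and $\sigma_s$ being free and inversion-free --- or descends to the same phenomenon in $\Gamma_0$, contradicting vertex-freeness or the label/orientation data.

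Next, observe that $\Gamma^{\circ}$ is obtained from $\Gamma_0$ by replacing each edge with a gadget of uniformly bounded size, so the collapse map $\Gamma^{\circ} \to \Gamma_0$ is a quasi-isometry and $\Gamma^{\circ}$ is hyperbolic whenever $\Gamma_0$ is. To produce uncountably many examples, pick $\xi \in \{0,1\}^{\N}$ and glue to each tail-port a copy of an infinite rooted graph $T_\xi$, built as a rigid chain of finite rigid trivalent blocks whose $n$-th block is one of two fixed non-isomorphic blocks according to $\xi_n$, beginning with a distinguished block carrying a signature found nowhere else. Each $T_\xi$ is rigid, trivalent away from its root, and quasi-isometric to a ray, so the resulting graph $\Gamma_\xi$ is again connected, simplicial, $3$-regular, vertex-free, and edge-free (the rigid tails introduce no new symmetries or inversions), satisfies $\Aut(\Gamma_\xi) \cong \Aut_\ell(\Gamma_0)$, and --- being $\Gamma^{\circ}$ with uniformly hyperbolic pieces attached at cut vertices --- is hyperbolic whenever $\Gamma_0$ is. Since $\xi$ can be read off the isomorphism type of the tails of $\Gamma_\xi$, the $\Gamma_\xi$ are pairwise non-isomorphic, so there are continuum-many of them.

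I expect the main obstacle to be combinatorial bookkeeping rather than conceptual difficulty: one must exhibit gadgets that are simultaneously degree-$3$-preserving, rigid relative to their ports, pairwise non-isomorphic, locally recognisable inside the assembled graph, and --- for undirected edges --- equipped with a free, edge-inversion-free involution swapping their ports; and one must verify the recognisability claims carefully enough that the identification of $\Aut(\Gamma_\xi)$ and the vertex-/edge-free statements genuinely go through. The hyperbolicity assertions, by contrast, reduce to the standard facts that uniformly bounded modifications of a graph are quasi-isometries and that gluing uniformly hyperbolic graphs along single points preserves hyperbolicity.
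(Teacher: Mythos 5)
Your overall strategy is the one the paper uses: replace each labelled, oriented edge of $\Gamma_0$ by a rigid recognisable gadget, use a gadget with a free, inversion-free involution for undirected labels, recover $\Aut_\ell(\Gamma_0)$ by showing automorphisms permute gadgets label-for-label, and get uncountably many examples from rigid infinite pieces indexed by $\{0,1\}^{\N}$. The group-theoretic half (the isomorphism $\Aut(\Gamma)\cong\Aut_\ell(\Gamma_0)$, vertex- and edge-freeness, and pairwise non-isomorphism of the $\Gamma_\xi$) is sound and parallels the paper, the only cosmetic difference being that the paper varies an injection of the label set into $\{0,1\}^{\N}$ while you fix the label gadgets and vary a common tail $T_\xi$.

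The gap is in the geometric half, and it comes from encoding labels in \emph{finite gadgets of growing size}. The proposition places no finiteness hypothesis on the label set, and in the paper's application it is genuinely countably infinite (the blow-up of a Cayley graph over an infinite generating set carries integer labels as well as generator labels). Your gadgets $G_{s_i}$ are pairwise non-isomorphic finite graphs containing cycles of length $100+i$, so their sizes are unbounded; hence the assertion that "$\Gamma^{\circ}$ is obtained from $\Gamma_0$ by replacing each edge with a gadget of uniformly bounded size, so the collapse map is a quasi-isometry" is false in general, and the hyperbolicity claim does not follow. Worse, on the natural reading of your gadgets (a long induced cycle used as a recognisable signature, with no designed shortcutting inside the gadget), a cycle of length $100+i$ meeting the rest of the graph through only a few attachment points yields geodesic triangles of fatness comparable to $i$, so $\Gamma$ fails to be hyperbolic even when $\Gamma_0$ is a tree. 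One could try to rescue this by building $3$-regular gadgets that are uniformly hyperbolic while still containing distinguishing features of unbounded size, but then one also needs an argument that gluing infinitely many such pieces (of unbounded diameter, so not a quasi-isometry to $\Gamma_0$) preserves hyperbolicity; neither step is in your sketch. The paper sidesteps all of this by encoding the label not in a finite graph of growing size but in the rung pattern of an infinite ladder ("tag") assembled from two \emph{fixed} bounded pieces — a single edge and one copy of the Frucht graph — via an injection of the labels into $\{0,1\}^{\N}$. Every block of the resulting graph is then at most $5$-hyperbolic, and hyperbolicity of $\Gamma$ follows from the block decomposition (a graph is $\delta$-hyperbolic if and only if each of its blocks is), rather than from a quasi-isometry to $\Gamma_0$, which indeed cannot exist once infinite pieces are attached. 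If you either restrict to finite label sets (weaker than the proposition as stated) or rebuild your gadgets as uniformly hyperbolic, uniformly recognisable pieces in this style and invoke a block-type combination argument, the rest of your proof goes through.
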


\begin{proof}
    Consider the set $\{0,1\}^{\N}$ of one-way infinite binary sequences. 
    for each sequence $\sigma \in \{0,1\}^{\N}$, we construct a \emph{tag} graph, denoted $T_\sigma$, as depicted in Figure~\ref{fig:tag-construction}. 
    The construction is a sort of ladder graph with two types of rungs, whose decoration is determined by the binary sequence \(\sigma\). 
    The rungs corresponding to \(1\)s will be single edges, while those corresponding to \(0\)s will be a copy of the \emph{Frucht graph}, which is a 3-regular graph with trivial automorphism group \cite[Theorem 2.3]{frucht1949graphs}. 

    All tags are 3-regular except for a single leaf. 
    As such, any automorphism of a tag graph \(T_\sigma\) must preserve the single leaf and so also the sequence of rungs in the tag.
    Since the Frucht graph has no non-trivial automorphisms, this implies that \(T_\sigma\) has trivial automorphism group unless \(\sigma\) is the constant sequence of \(1\)s. 
    Finally, note that by the same reasoning, $T_\sigma \cong T_{\sigma'}$ implies that $\sigma = \sigma'$. 

    \begin{figure}[ht]
        \centering \input{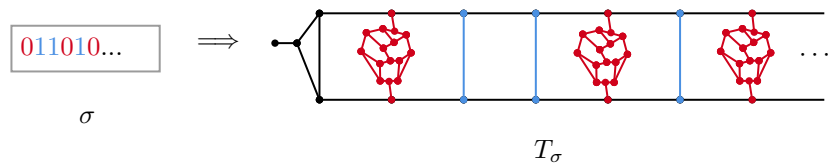}
        \caption{Constructing the tag $T_\sigma$.}
        \label{fig:tag-construction}
    \end{figure}

    Consider the set $L$ of labels appearing in $\Gamma_0$, which is countable. 
    Fix some injection $h \colon L \into \{0,1\}^{\N}$ such that no $\ell \in L$ maps to the constant sequence of \(1\)s. 

    We now blow up $\Gamma_0$ to another graph $\Gamma$ as follows. 
    Given an edge $e \in E(\Gamma_0)$ with directed label $\ell \in L$, we replace this edge with the gadget depicted in Figure~\ref{fig:directed-case}. 
    Given an edge labelled by an undirected label replace this edge with the gadget depicted in Figure~\ref{fig:undirected-case}. 
    
    Now any automorphism \(\varphi\) of \(\Gamma\) must send tags of the form \(T_{h(a)}\) to tags of the same form.
    As such, \(\varphi\) preserves the gadgets of \(\Gamma\).
    Moreover, the only automorphism each gadget allows is an involutive automorphism, exactly when the label is undirected.
    Therefore \(\varphi\) gives rise to a labelled graph automorphism of \(\Gamma_0\).
    Conversely, each label preserving automorphism of \(\Gamma_0\) trivially defines an automorphism of \(\Gamma\).

    As noted, each gadget has no non-trivial automorphisms beside the obvious involution of gadgets corresponding to undirected labels, and these automorphisms fix no vertex or edges.
    Therefore any vertex of \(\Gamma\) fixed by an automorphism must correspond to a vertex of \(\Gamma_0\) fixed by a label-preserving automorphism.
    Since \(\Gamma_0\) was vertex free, though, there are no such vertices, so \(\Gamma\) is vertex-free.
    Further, \(\Gamma\) is edge-free, since the only label-preserving automorphisms of \(\Gamma_0\) invert an edge with an undirected label.
    
    Recall now that a \emph{block} is a maximal connected subgraph with no disconnecting vertices.
    Every graph admits a canonical decomposition into a tree of blocks (see, for example, \cite[\S3.1]{diestel2025graph}).
    It is straightforward consequence that a graph is \(\delta\)-hyperbolic if and only if each of its blocks are \(\delta\)-hyperbolic.
    Each block of the gadgets is at most \(5\)-hyperbolic, so \(\Gamma\) is hyperbolic whenever \(\Gamma_0\) is.

    \begin{figure}[htbp]
        \begin{minipage}[t]{.5\textwidth}
            \centering \input{figures/edge-gadget-directed}
            \captionof{figure}{}
            \label{fig:directed-case}
        \end{minipage}
        \begin{minipage}[t]{.5\textwidth}
            \centering \input{figures/edge-gadget-undirected}
            \captionof{figure}{}
            \label{fig:undirected-case}
        \end{minipage}
    \end{figure}

    Finally, note that if we chose a different labelling $h' :  L \into \{0,1\}^{\N}$ with image disjoint to that of $h$, then we would necessarily get a different graph. Since there are uncountably many such injections with pairwise disjoint images, we are done.
\end{proof}

An additional blow-up construction allows us to go from 3-regular graphs to \(d\)-regular graphs for any \(d > 3\).
We are able to do this blow up in such a way to preserve the large-scale geometric structure of the graph.

\begin{lemma}
\label{lem:3-reg->d-reg}
    Let $\Gamma$ be a 3-regular connected, simplicial, vertex-free, edge-free graph, $d > 3$. 
    Then there exists a $d$-regular, connected, simplicial, edge-free graph $\Gamma^{(d)}$ such that $\Aut(\Gamma) \cong \Aut(\Gamma^{(d)})$, and $\Gamma$ is quasi-isometric to $\Gamma^{(d)}$.

    Moreover, if $\Delta$ is another 3-regular connected, simplicial, edge-free graph such that $\Gamma^{(d)}\cong\Delta^{(d)}$, then $\Gamma \cong \Delta$.  
\end{lemma}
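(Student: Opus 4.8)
The plan is to treat odd and even $d$ by different devices: for odd $d$ I will attach a fixed finite rigid gadget at every vertex, and for even $d$ I will pass to an iterated line graph, which reduces the statement to a strictly smaller regularity. The induction terminates since the even case $\Gamma^{(d)} = L\big(\Gamma^{(d/2+1)}\big)$ (with $\Gamma^{(3)} := \Gamma$ and $L$ the line graph) replaces $d$ by $d/2+1 < d$, and $d/2 + 1 \geq 3$, so eventually one lands on $d = 3$ or on an odd value.

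For odd $d \geq 5$ (so that $d-3$ is even), the key input is a finite simple, connected, $2$-connected graph $G_d$ with $\Aut(G_d) = 1$, having $d-3$ distinguished \emph{boundary} vertices of degree $d-1$ and all remaining vertices of degree $d$. Such a $G_d$ exists by a Frucht-type argument — for instance, delete a matching of $(d-3)/2$ edges from a $2$-connected rigid $d$-regular graph, chosen so as not to create symmetry; note that the vertex count is automatically even since $d$ is odd. Then define $\Gamma^{(d)}$ by gluing a disjoint copy $G_v$ of $G_d$ onto each vertex $v$ of $\Gamma$, joining $v$ by a single new edge to each of the $d-3$ boundary vertices of $G_v$. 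This makes $\Gamma^{(d)}$ $d$-regular, connected and simple, and because the copies are pairwise disjoint and meet the rest only at their attaching vertices, distances between original vertices are unchanged and $V(\Gamma)$ is coarsely dense, so the inclusion $V(\Gamma) \hookrightarrow V(\Gamma^{(d)})$ is a quasi-isometry.

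The heart of the odd case is that $V(\Gamma)$ is recognisable intrinsically: $x \in V(\Gamma)$ if and only if $\Gamma^{(d)} - x$ has a connected component isomorphic to $G_d$. Indeed, deleting an original vertex $v$ splits off its gadget $G_v \cong G_d$ (its boundary vertices meet the rest only through $v$), whereas every other component of $\Gamma^{(d)} - v$ contains at least three original vertices together with their gadgets — using $3$-regularity and simplicity of $\Gamma$ — hence has more than $|G_d|$ vertices; and deleting a gadget vertex leaves $\Gamma^{(d)}$ connected, using $2$-connectivity of $G_d$ and $d - 3 \geq 2$. Since this description is automorphism-invariant and the induced subgraph on $V(\Gamma)$ is exactly $\Gamma$, restriction gives $\Aut(\Gamma^{(d)}) \to \Aut(\Gamma)$; it is injective (an automorphism fixing $V(\Gamma)$ pointwise fixes each $G_v$ setwise, hence restricts to $\Aut(G_d) = 1$) and surjective (an automorphism of $\Gamma$ extends over the gadgets via the canonical identifications, with no ordering ambiguity since $v$ is joined to \emph{all} boundary vertices of $G_v$). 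The same description shows an isomorphism $\Gamma^{(d)} \cong \Delta^{(d)}$ carries $V(\Gamma)$ onto $V(\Delta)$, so $\Gamma \cong \Delta$; and $\Gamma^{(d)}$ is again vertex-free and edge-free, the only new edges being of the form $\{v, b\}$ with $b$ boundary, and inverting such an edge would fix $v$.

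For even $d \geq 4$ I would set $\Gamma^{(d)} = L\big(\Gamma^{(d/2+1)}\big)$. Since the line graph of a $k$-regular graph is $(2k-2)$-regular, $\Gamma^{(d)}$ is $d$-regular, connected, simple, and quasi-isometric to $\Gamma^{(d/2+1)}$ via the map sending an edge to one of its endpoints, hence quasi-isometric to $\Gamma$ by induction. As $\Gamma^{(d/2+1)}$ is a connected regular graph on at least five vertices, Whitney's theorem identifies $\Aut\big(L(\Gamma^{(d/2+1)})\big)$ with $\Aut(\Gamma^{(d/2+1)}) \cong \Aut(\Gamma)$, and gives $L(\Gamma^{(d/2+1)}) \cong L(\Delta^{(d/2+1)}) \Rightarrow \Gamma^{(d/2+1)} \cong \Delta^{(d/2+1)} \Rightarrow \Gamma \cong \Delta$; the exceptional pair $\{K_3, K_{1,3}\}$ and the small exceptions to the automorphism statement never occur here because all the graphs involved are regular of degree $\geq 3$ with many vertices. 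Finally $L(H)$ is edge-free whenever $H$ is vertex-free and edge-free — an automorphism of $L(H)$ inverting an edge corresponds to an automorphism of $H$ fixing a vertex and transposing two of its neighbours, impossible when $H$ is vertex-free — and similarly $L(H)$ is vertex-free, so the induction hypotheses propagate.

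I expect the main obstacle to be precisely the parity phenomenon that forces this split: a degree count shows that a gadget attached to a single degree-$3$ vertex must absorb $d-3$ half-edges while remaining internally $d$-regular, which is impossible when $d$ is even, so a uniform gadget construction cannot work and the line-graph detour is unavoidable. The remaining effort — verifying the intrinsic recognition of $V(\Gamma)$ in the odd case, checking that Whitney's exceptional graphs are genuinely avoided, and carrying the vertex-free and edge-free properties through the recursion — is routine given the care indicated above.
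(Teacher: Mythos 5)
Your proposal is correct in outline but takes a genuinely different route from the paper. The paper uses one construction for all $d>3$: every vertex of $\Gamma$ is \emph{replaced} by a copy of a single gadget $P$, built from a $K_d$ with three attachment vertices $u_1,u_2,u_3$ of degree $d-1$ and with pairwise non-isomorphic asymmetric $(d-1)$-regular graphs hung on $(d-1)$-ary trees at the remaining $d-3$ vertices; $\Aut(P)$ permutes $\{u_1,u_2,u_3\}$ $3$-transitively and fixes everything else, and $\Gamma$ is recovered intrinsically as the graph of $d$-cliques $C_d(\Gamma^{(d)})$, which yields $\Aut(\Gamma^{(d)})\cong\Aut(\Gamma)$ and the reconstruction statement simultaneously. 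You instead keep $V(\Gamma)$, append a rigid gadget at each vertex for odd $d$, recognise $V(\Gamma)$ via components of vertex-deleted subgraphs, and handle even $d$ by iterated line graphs together with Whitney's theorem (legitimately applicable here, since all graphs in your recursion are connected, regular of degree at least $3$ and large, far from the small exceptions). Your parity observation is sharp, and it in fact bites the paper's own gadget: for even $d$ a finite graph with exactly three vertices of degree $d-1$ and all others of degree $d$ violates the handshake lemma, and correspondingly the auxiliary $(d-1)$-regular graphs of odd order $(d-1)^{n_i}$ do not exist, so the paper's recipe as written needs a parity repair for even $d$; your line-graph device genuinely does that work. (Your stronger claim that a gadget approach is ``unavoidable'' for even $d$ is an overstatement -- one could, for instance, also place gadgets on edges -- but nothing rests on it.) The propagation of vertex- and edge-freeness through $L(\cdot)$, the quasi-isometry claims, and the recognition, extension and reconstruction arguments in the odd case are all sound.

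The one genuine gap is the existence of your gadget $G_d$ in the odd case. You need a finite, connected, $2$-connected, \emph{asymmetric} graph with exactly $d-3$ vertices of degree $d-1$ and all remaining vertices of degree $d$, and your justification -- delete a matching of $(d-3)/2$ edges from a rigid $2$-connected $d$-regular graph, ``chosen so as not to create symmetry'' -- is an assertion, not an argument: deleting edges can destroy $2$-connectivity (fixable by starting from a $d$-connected $d$-regular graph, since removing $t$ edges lowers vertex-connectivity by at most $t$), and, more seriously, deleting a matching from an asymmetric graph can create automorphisms, so asymmetry of $G_d$ must be proved. Rigidity really is load-bearing in your scheme: if $\Aut(G_d)\neq 1$, the restriction map $\Aut(\Gamma^{(d)})\to\Aut(\Gamma)$ has nontrivial kernel, and the obvious symmetric candidates (e.g.\ a $K_d$-based gadget with $d-3$ interchangeable boundary vertices) fail exactly there. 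A clean repair stays within your framework: for odd $d$ the paper's building blocks exist, so hang $d-3$ \emph{pairwise non-isomorphic} asymmetric pendant pieces (rooted $(d-1)$-ary trees over pairwise non-isomorphic asymmetric $(d-1)$-regular graphs) off each vertex $v$, with one edge from $v$ to each root; pairwise non-isomorphy replaces asymmetry of a single $G_d$, and your recognition and extension arguments go through verbatim. Finally, your stated reason for edge-freeness of $\Gamma^{(d)}$ (``inverting such an edge would fix $v$'') is garbled, but the correct argument is immediate from what you set up: the recognition of $V(\Gamma)$ is automorphism-invariant, so no automorphism can exchange an original vertex with a gadget vertex, and the remaining cases reduce to vertex- and edge-freeness of $\Gamma$ via injectivity of the restriction map.
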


\begin{proof}
    Let $\Gamma$ be a 3-regular graph, and $d > 3$. We will construct a new $d$-regular augmented graph $\Gamma^{(d)}$ as follows. 

    Let $A_1, A_2, \ldots, A_{d-3}$ be pairwise non-isomorphic $(d-1)$-regular graphs of order a power of ${d-1}$, and with trivial automorphism group.  Such graphs are known to be generic among regular graphs by probabilistic arguments \cite[Corollary 1.2]{kim2002asymmetry}.
    Any \(d\)-clique of a \((d-1)\)-regular graph is a complete connected component, which has non-trivial automorphism group.
    Hence no \(A_i\) contains a \(d\)-clique.
    Consider now a copy of the complete graph $K_d$ on $d$ vertices, and choose three distinguished vertices $u_1, u_2, u_3$.
    We label the other vertices \(v_1, \dots, v_{d-3}\).
    
    For each $i = 1, \ldots, d-3$, let $n_i > 0$ be such that $A_i$ has order $(d-1)^{n_i}$.
    We define the graph $A_i'$  as the full rooted $(d-1)$-ary tree of depth $n_i$, adding edges to the ${(d-1)}^{n_i}$ leaves so that the subgraph of $A_i'$ induced by the leaves is precisely $A_i$. 
    The root vertex $w_i \in V(A_i')$ is the only vertex of \(A_i'\) with degree $d-1$, and every other vertex has degree $d$.
    Note that since \(A_1,\dots,A_n\) were chosen pairwise non-isomorphic, \(A_1',\dots,A_n'\) are pairwise non-isomorphic. 
    We construct a new graph \(P\) as the union of \(K_d\) and \(A_1', \dots, A_{d-3}'\), with an extra edge between \(v_i\) and the root \(w_i\) of \(A_i'\) for each \(i = 1, \dots, d-3\).
    Now every vertex of \(P\) has degree \(d\), except for \(u_1, u_2,\) and \(u_3\), which have degree $d-1$.

    We claim that $\Aut(P)$ acts 3-transitively on the vertices \(u_1,u_2,\) and \(u_3\), and that every other vertex of $P$ is fixed by every automorphism. 
    We first note that $\Aut(P)$ setwise fixes \(\{u_1,u_2,u_3\}\), as these are the only vertices of degree other than $d$. 
    Moreover, since \(\Aut(K_d) = \operatorname{Sym}(V(K_d))\) acts \(d\)-transitively on its vertex set, $\Aut(P)$ acts 3-transitively on \(\{u_1,u_2,u_3\}\), by permuting these vertices and their incident edges, and acting trivially elsewhere. 
    
    We must show that every other vertex in $P$ is fixed by every automorphism. 
    Let $\varphi \in \Aut(P)$.
    Since none of the \(A_i\) contain \(d\)-cliques, the base copy of \(K_d\) is the only $d$-clique in $P$.
    This copy of \(K_d\) must therefore be fixed by $\varphi$. 
    Since \(\varphi\) permutes \(u_1,u_2,\) and \(u_3\}\), it must also permute the remaining vertices \(v_1, \dots, v_{d-3}\) of \(K_d\). 
    Therefore, \(\varphi\) must permute the copies of \(A_i'\) in \(P\).
    However, the graphs \(A_1', \dots, A_n'\) are pairwise non-isomorphic, so this induced permutation must be trivial. 
    Thus \(\varphi\) restricts to a root-fixing automorphism on \(A_i'\) for each \(i = 1,\dots,n\).
    Any non=trivial root-fixing automorphism of \(A_i'\) restricts to a non-trivial automorphism of the copy of \(A_i\) in \(A_i'\).
    Since we chose \(A_i\) to be a graph with no non-trivial automorphisms, \(\varphi\) must restrict to a trivial automorphism on \(A_i'\).
    Therefore \(\varphi\) fixes every vertex of \(P\) except \(u_1, u_2,\) and \(u_3\).

    For each vertex \(v \in V(\Gamma)\), choose a bijection \(\varphi_v\) between \(\{u_1, u_2, u_3\}\) and the 3 edges incident to \(v\).
    We now form $\Gamma^{(d)}$ by taking a copy of a copy $P_v$ of $P$ for every vertex \(v \in V(\Gamma)\), and joining the copy of \(u_i\) in \(P_v\) to the copy of \(u_j\) in \(P_w\) whenever \(v\) and \(w\) are adjacent and \(\varphi_v(u_i) = \varphi_w(u_j)\). 
    The resulting graph is $d$-regular, and the map collapsing each copy of \(P\) to a single vertex is a quasi-isometry to $\Gamma$, as \(P\) is finite. 
    
    To conclude the proof, consider the \emph{graph of $d$-cliques} of \(\Gamma^{(d)}\), denoted $C_d(\Gamma^{(d)})$.
    The vertices of \(C_d(\Gamma^{(d)})\) are the \(d\)-cliques of \(\Gamma^{(d)}\), and two vertices are joined by an edge if their corresponding \(d\)-cliques contain adjacent vertices.
    Since automorphisms of graphs permute its \(d\)-cliques, there is always a homomorphism 
    \begin{equation}
    \label{eq:aut_clique_graph}
        F \colon \Aut(\Gamma^{(d)}) \to \Aut(C_d(\Gamma^{(d)})).
    \end{equation}
    
    If \(\varphi \in \ker F\), then \(\varphi\) fixes every clique of \(\Gamma^{(d)}\) and the adjacency relations between them.
    This means that it restricts to an automorphism on each copy of \(P\) in \(\Gamma^{(d)}\).
    Moreover, \(\varphi\) must fix each copy of the vertices \(u_1,u_2,\) and \(u_3\), as these determine the adjacency of cliques.
    We saw that every automorphism of \(P\) also fixes every vertex besides \(u_1, u_2,\) and \(u_3\), so \(\varphi\) must be trivial.
    Hence \(F\) is injective.

    Let us show $F$ is surjective. 
    Since $\Aut(P)$ acts 3-transitively on the \(\{u_1,u_2,u_3\}\), any automorphism of $C_d(\Gamma^{(d)})$ lifts to an automorphism of $\Gamma^{(d)}$. 
    It follows that $F$ is an isomorphism.

    Now observe that any \(d\)-clique of \(\Gamma^{(d)}\) is contained in a copy of \(P\).
    Each copy of \(P\) in turn has a unique \(d\)-clique, which contains copies of the vertices \(u_1, u_2, \) and \(u_3\).
    By construction, any two such cliques are adjacent in \(C_d(\Gamma^{(d)})\) if and only if the copies of \(P\) they lie in correspond to adjacent vertices of \(\Gamma\).
    Therefore, we have an isomorphism \(\Gamma \cong C_d(\Gamma^{(d)})\), and thus an isomorphism $\Aut(\Gamma^{(d)}) \cong \Aut(\Gamma)$ by (\ref{eq:aut_clique_graph}).
    This also shows that $\Gamma^{(d)} \cong \Delta^{(d)}$ implies $\Gamma \cong C_d(\Gamma^{(d)}) \cong C_d(\Delta^{(d)}) \cong \Delta$.  
    
    To conclude, we show that \(\Gamma^{(d)}\) is edge-free.
    Suppose that \(\varphi\) is an automorphism of \(\Gamma^{(d)}\) fixing an edge,
    If \(\varphi\) fixes an edge in a copy of \(P\), then it must fix that copy of \(P\) and, therefore, its unique \(d\)-clique.
    But then \(F(\varphi)\) fixes a vertex of \(C_d(\Gamma^{(d)}) \cong \Gamma\), which is vertex-free.
    Similarly, if \(\varphi\) fixes an edge adjoining two copies of \(P\), then \(F(\varphi)\) fixes an edge of \(C_d(\Gamma^{(d)}) \cong \Gamma\), which is edge-free.
    Hence \(\Gamma^{(d)}\) is edge-free.
\end{proof}

We now prove our variant of Frucht's theorem, the main result of this section.

\begin{theorem}\label{thm:frucht}
    Let $G$ be a countable group, and $d \geq 3$. Then there exist uncountably many $d$-regular, simplicial, connected, edge-free graphs $\Gamma$ such that $\Aut(\Gamma) \cong G$, up to isomorphism. 
    Moreover, if $G$ is a hyperbolic group, then we may additionally take every such $\Gamma$ to be hyperbolic.
\end{theorem}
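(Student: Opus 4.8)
The plan is to reduce, in two steps, to the situation already handled by Proposition~\ref{prop:labelled_aut->aut} and Lemma~\ref{lem:3-reg->d-reg}. Concretely, I will first exhibit a \(3\)-regular, vertex-free \emph{labelled} graph \(\Gamma_0\) with \(\Aut_\ell(\Gamma_0) \cong G\), chosen to be hyperbolic whenever \(G\) is hyperbolic; Proposition~\ref{prop:labelled_aut->aut} then upgrades this to uncountably many unlabelled \(3\)-regular graphs with automorphism group \(G\), and Lemma~\ref{lem:3-reg->d-reg} raises the degree to an arbitrary \(d > 3\).

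To build \(\Gamma_0\), start from a Cayley graph. Fix a generating set \(S\) of \(G\) containing no element \(s^{-1}\) with \(s \in S\) of order \(>2\), taken finite when \(G\) is finitely generated --- in particular when \(G\) is hyperbolic. By Remark~\ref{rem:auts_of_labelled_cayley_graph}, \(\Gamma_{G,S}\) is a connected simplicial labelled graph with \(\Aut_\ell(\Gamma_{G,S}) \cong G\) acting freely on vertices by left translation; it is locally finite when \(S\) is finite, and a hyperbolic metric space when \(G\) is hyperbolic. Now apply a degree-reduction gadget: at each vertex \(g\) the incident half-edges are indexed by a set \(T\) independent of \(g\); replace \(g\) by a path \(R_g = r^g_0, r^g_1, \dots\) (infinite, or of length bounded in terms of \(|S|\) if \(S\) is finite), realise the half-edge in slot \(t_i\) at \(g\) as an edge incident to \(r^g_i\), and --- crucially, with \emph{no} subdivision, so that edge-inverting elements of \(G\) do not acquire fixed vertices --- let a Cayley edge between \(g\) and \(gs\) occupying slots \(t_i\) at \(g\) and \(t_j\) at \(gs\) become the single edge \(\{r^g_i, r^{gs}_j\}\). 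Finally attach small finite rigid graphs, via single half-edges, to the vertices not yet of degree \(3\) (a distinguished one at each \(r^g_0\), filler ones elsewhere), chosen with pairwise disjoint label sets and canonically oriented, and label each ray edge \(r^g_i r^g_{i+1}\) by a symbol depending on \(i\).

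One then verifies the routine points: \(\Gamma_0\) is connected, simplicial and \(3\)-regular; a label-preserving automorphism must carry ray edges to ray edges and hence each \(R_g\) to some \(R_{g'}\) respecting the index, whence --- by compatibility with the edges realising the half-edges of \(\Gamma_{G,S}\) --- it is left translation by an element of \(G\), so \(\Aut_\ell(\Gamma_0) \cong G\) acts freely on vertices and \(\Gamma_0\) is vertex-free; and when \(S\) is finite the whole gadget replacing each vertex is uniformly bounded, so the map collapsing it is a quasi-isometry \(\Gamma_0 \to \Gamma_{G,S}\) and \(\Gamma_0\) is hyperbolic whenever \(G\) is. (The case \(G = 1\), \(S = \emptyset\), is covered by the same recipe.) Applying Proposition~\ref{prop:labelled_aut->aut} to \(\Gamma_0\) produces uncountably many non-isomorphic connected, simplicial, \(3\)-regular, vertex-free, edge-free graphs \(\Gamma\) with \(\Aut(\Gamma) \cong \Aut_\ell(\Gamma_0) \cong G\), hyperbolic when \(G\) is; this settles \(d = 3\). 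For \(d > 3\), apply Lemma~\ref{lem:3-reg->d-reg} to each such \(\Gamma\) to obtain a \(d\)-regular, connected, simplicial, edge-free graph \(\Gamma^{(d)}\) quasi-isometric to \(\Gamma\) with \(\Aut(\Gamma^{(d)}) \cong \Aut(\Gamma) \cong G\); hyperbolicity is quasi-isometry invariant, and the injectivity clause of Lemma~\ref{lem:3-reg->d-reg} (namely \(\Gamma^{(d)} \cong \Delta^{(d)}\) implies \(\Gamma \cong \Delta\)) guarantees that distinct \(\Gamma\) yield distinct \(\Gamma^{(d)}\), so we still have uncountably many isomorphism classes.

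The main obstacle is the degree reduction of the second paragraph: one must simultaneously make the graph exactly \(3\)-regular and simplicial, block the appearance of any new label-preserving automorphism (hence the index-dependent ray labels and the rigid, canonically oriented, label-disjoint attached graphs), avoid introducing vertices fixed by edge-inverting automorphisms (hence attaching Cayley edges directly rather than subdividing them), and keep the per-vertex gadget uniformly bounded in the finitely generated case so that hyperbolicity is preserved.
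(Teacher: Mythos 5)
Your proposal is correct and follows essentially the same route as the paper: equivariantly blow up a labelled Cayley graph of $G$ into a $3$-regular, vertex-free labelled graph $\Gamma_0$ with $\Aut_\ell(\Gamma_0)\cong G$ (quasi-isometric to $G$ when $S$ is finite), then feed it to Proposition~\ref{prop:labelled_aut->aut} and, for $d>3$, to Lemma~\ref{lem:3-reg->d-reg}, invoking its injectivity clause to retain uncountably many isomorphism classes. The only differences are cosmetic: your degree-fixing gadget (a ray with rigid, label-distinguished attachments) replaces the paper's line/path vertex blow-up, and it lets you absorb the small groups $\abs{G}\le 3$ into the same recipe, which the paper instead handles separately via tags and the Frucht graphs with automorphism groups of order $1$, $2$, $3$.
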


\begin{proof}
    We first deal with some exceptional small cases.
    Let \(\sigma \in \{0,1\}^\N\) be an infinite binary sequence that is not the constant sequence of \(1\)s.
    If \(G\) is trivial, we take \(\Gamma_0\) to be a copy of the tag \(T_\sigma\) from the proof of Proposition~\ref{prop:labelled_aut->aut}, with a copy of the Frucht graph attached to the leaf.
    If \(G\) is the group of order two (respectively, three), take \(\Gamma_0\) to be two (respectively, three) copies of \(T_\sigma\) with the leaf vertices joined to a vertex-free and edge-free \(3\)-regular graph whose automorphism group has order \(2\) (respectively, \(3\)) as in \cite[Theorem 2.4]{frucht1949graphs} (respectively, \cite[Theorem 3.1]{frucht1949graphs}).

    In each case, \(\Gamma_0\) is vertex-free, \(3\)-regular, satisfies \(\Aut(\Gamma_0) \cong G\), and is quasi-isometric to a tree, which is hyperbolic.
    Moreover, there are uncountably many pairwise non-isomorphic tags, so there are uncountably many such \(\Gamma_0\).
    Applying Lemma~\ref{lem:3-reg->d-reg} then yields the theorem in these cases.
    
    Suppose now that \(\abs{G} \geq 4\).
    We will construct a labelled graph $\Gamma_0$ such that: 
    \begin{enumerate}
        \item \(\Gamma_0\) is 3-regular and vertex-free;
        \item \(\Aut_\ell(\Gamma_0)\) is isomorphic to \(G\);
        \item if \(G\) is finitely generated, then \(\Gamma_0\) is quasi-isometric to \(G\).
    \end{enumerate}

    Let $S \subset G - \{1\}$ be a generating set containing at least \(3\) elements, with the property that if $s, s^{-1} \in S$ then \(s\) has order 2. 
    Such an $S$ exists, since one may take any generating set and delete one of every inverse pair that appears in the set.
    Let $I \subset S$ denote the subset containing all order 2 elements, and let $T = S - I$. 
    
    Let $\Gamma_{G,S}$ be the labelled Cayley graph of \(G\) with respect to \(S\), as described in Definition~\ref{def:cay graph}. 
    As in Remark~\ref{rem:auts_of_labelled_cayley_graph}, \(\Aut_\ell(\Gamma_{G,S})\) is isomorphic to \(G\). 
    We now modify $\Gamma_{G,S}$ as follows to make it 3-regular. To do so, we must consider two cases separately. 
    
    Firstly, let us consider the case where $S$ is countably infinite.
    Since $G$ is countable, fix a choice of bijection $f \colon \Z \to G$. 
    We then blow up each vertex in $\Gamma_{G,S}$ by replacing it with a copy of the real line, with incoming edge attached at each integer point, where the order is determined by the bijection \(f\). 
    We also label these new edges with directed labels, where the edge from $n$ to $n+1$ is equipped with a directed label of the integer $n$; see  Figure~\ref{fig:blow-up}. 
    We call the resulting labelled graph $\Gamma_0$.
    
    Now any label-preserving automorphism of \(\Gamma_0\) must permute the copies of \(P\).
    Moreover, the choice of labelling on \(P\) ensures that the group of label-preserving automorphisms of $\Gamma_0$ is isomorphic to $G$. 

    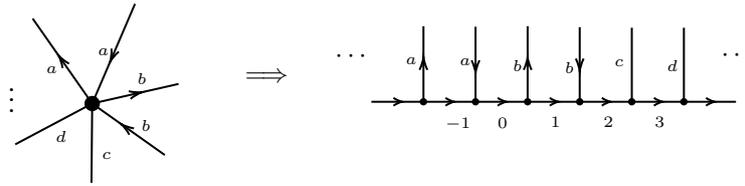
\begin{figure}[h]
        \centering
        \tikzset{every picture/.style={line width=0.75pt}} %set default line width to 0.75pt        

\begin{tikzpicture}[x=0.75pt,y=0.75pt,yscale=-1,xscale=1]
%uncomment if require: \path (0,300); %set diagram left start at 0, and has height of 300

%Straight Lines [id:da6572816989006656] 
\draw    (173.4,120.89) -- (143.69,78.15) ;
\draw [shift={(156.49,96.56)}, rotate = 55.2] [color={rgb, 255:red, 0; green, 0; blue, 0 }  ][line width=0.75]    (6.56,-1.97) .. controls (4.17,-0.84) and (1.99,-0.18) .. (0,0) .. controls (1.99,0.18) and (4.17,0.84) .. (6.56,1.97)   ;
%Straight Lines [id:da23070178995863466] 
\draw    (173.4,120.89) -- (194.77,70.75) ;
\draw [shift={(182.28,100.05)}, rotate = 293.09] [color={rgb, 255:red, 0; green, 0; blue, 0 }  ][line width=0.75]    (6.56,-1.97) .. controls (4.17,-0.84) and (1.99,-0.18) .. (0,0) .. controls (1.99,0.18) and (4.17,0.84) .. (6.56,1.97)   ;
%Straight Lines [id:da09281030798177348] 
\draw    (173.4,120.89) -- (216.5,111.02) ;
\draw [shift={(198.46,115.15)}, rotate = 167.11] [color={rgb, 255:red, 0; green, 0; blue, 0 }  ][line width=0.75]    (6.56,-1.97) .. controls (4.17,-0.84) and (1.99,-0.18) .. (0,0) .. controls (1.99,0.18) and (4.17,0.84) .. (6.56,1.97)   ;
%Straight Lines [id:da5616102792119824] 
\draw    (173.4,120.89) -- (209.62,146.78) ;
\draw [shift={(187.76,131.16)}, rotate = 35.56] [color={rgb, 255:red, 0; green, 0; blue, 0 }  ][line width=0.75]    (6.56,-1.97) .. controls (4.17,-0.84) and (1.99,-0.18) .. (0,0) .. controls (1.99,0.18) and (4.17,0.84) .. (6.56,1.97)   ;
%Straight Lines [id:da7422111495491882] 
\draw    (173.4,120.89) -- (173.03,160.75) ;
%Straight Lines [id:da5925485204069539] 
\draw    (173.4,120.89) -- (135,141.43) ;
%Straight Lines [id:da8388864336297633] 
\draw    (173.4,120.89) ;
\draw [shift={(173.4,120.89)}, rotate = 0] [color={rgb, 255:red, 0; green, 0; blue, 0 }  ][fill={rgb, 255:red, 0; green, 0; blue, 0 }  ][line width=0.75]      (0, 0) circle [x radius= 3.35, y radius= 3.35]   ;
%Straight Lines [id:da4624838881739448] 
\draw    (312.75,120) -- (338.75,120) ;
\draw [shift={(338.75,120)}, rotate = 0] [color={rgb, 255:red, 0; green, 0; blue, 0 }  ][fill={rgb, 255:red, 0; green, 0; blue, 0 }  ][line width=0.75]      (0, 0) circle [x radius= 1.34, y radius= 1.34]   ;
\draw [shift={(328.15,120)}, rotate = 180] [color={rgb, 255:red, 0; green, 0; blue, 0 }  ][line width=0.75]    (4.37,-1.96) .. controls (2.78,-0.92) and (1.32,-0.27) .. (0,0) .. controls (1.32,0.27) and (2.78,0.92) .. (4.37,1.96)   ;
%Straight Lines [id:da7123191839848297] 
\draw    (338.75,120) -- (364.75,120) ;
\draw [shift={(364.75,120)}, rotate = 0] [color={rgb, 255:red, 0; green, 0; blue, 0 }  ][fill={rgb, 255:red, 0; green, 0; blue, 0 }  ][line width=0.75]      (0, 0) circle [x radius= 1.34, y radius= 1.34]   ;
\draw [shift={(354.15,120)}, rotate = 180] [color={rgb, 255:red, 0; green, 0; blue, 0 }  ][line width=0.75]    (4.37,-1.96) .. controls (2.78,-0.92) and (1.32,-0.27) .. (0,0) .. controls (1.32,0.27) and (2.78,0.92) .. (4.37,1.96)   ;
%Straight Lines [id:da4851352530374241] 
\draw    (364.75,120) -- (390.75,120) ;
\draw [shift={(390.75,120)}, rotate = 0] [color={rgb, 255:red, 0; green, 0; blue, 0 }  ][fill={rgb, 255:red, 0; green, 0; blue, 0 }  ][line width=0.75]      (0, 0) circle [x radius= 1.34, y radius= 1.34]   ;
\draw [shift={(380.15,120)}, rotate = 180] [color={rgb, 255:red, 0; green, 0; blue, 0 }  ][line width=0.75]    (4.37,-1.96) .. controls (2.78,-0.92) and (1.32,-0.27) .. (0,0) .. controls (1.32,0.27) and (2.78,0.92) .. (4.37,1.96)   ;
%Straight Lines [id:da2096432586419522] 
\draw    (390.75,120) -- (416.75,120) ;
\draw [shift={(416.75,120)}, rotate = 0] [color={rgb, 255:red, 0; green, 0; blue, 0 }  ][fill={rgb, 255:red, 0; green, 0; blue, 0 }  ][line width=0.75]      (0, 0) circle [x radius= 1.34, y radius= 1.34]   ;
\draw [shift={(406.15,120)}, rotate = 180] [color={rgb, 255:red, 0; green, 0; blue, 0 }  ][line width=0.75]    (4.37,-1.96) .. controls (2.78,-0.92) and (1.32,-0.27) .. (0,0) .. controls (1.32,0.27) and (2.78,0.92) .. (4.37,1.96)   ;
%Straight Lines [id:da16766079030258496] 
\draw    (416.75,120) -- (442.75,120) ;
\draw [shift={(442.75,120)}, rotate = 0] [color={rgb, 255:red, 0; green, 0; blue, 0 }  ][fill={rgb, 255:red, 0; green, 0; blue, 0 }  ][line width=0.75]      (0, 0) circle [x radius= 1.34, y radius= 1.34]   ;
\draw [shift={(432.15,120)}, rotate = 180] [color={rgb, 255:red, 0; green, 0; blue, 0 }  ][line width=0.75]    (4.37,-1.96) .. controls (2.78,-0.92) and (1.32,-0.27) .. (0,0) .. controls (1.32,0.27) and (2.78,0.92) .. (4.37,1.96)   ;
%Straight Lines [id:da4863524088725728] 
\draw    (442.75,120) -- (468.75,120) ;
\draw [shift={(468.75,120)}, rotate = 0] [color={rgb, 255:red, 0; green, 0; blue, 0 }  ][fill={rgb, 255:red, 0; green, 0; blue, 0 }  ][line width=0.75]      (0, 0) circle [x radius= 1.34, y radius= 1.34]   ;
\draw [shift={(458.15,120)}, rotate = 180] [color={rgb, 255:red, 0; green, 0; blue, 0 }  ][line width=0.75]    (4.37,-1.96) .. controls (2.78,-0.92) and (1.32,-0.27) .. (0,0) .. controls (1.32,0.27) and (2.78,0.92) .. (4.37,1.96)   ;
%Straight Lines [id:da4911326146455218] 
\draw    (468.75,120) -- (494.75,120) ;
\draw [shift={(484.15,120)}, rotate = 180] [color={rgb, 255:red, 0; green, 0; blue, 0 }  ][line width=0.75]    (4.37,-1.96) .. controls (2.78,-0.92) and (1.32,-0.27) .. (0,0) .. controls (1.32,0.27) and (2.78,0.92) .. (4.37,1.96)   ;
%Straight Lines [id:da2686203671920585] 
\draw    (338.75,120) -- (338.75,82) ;
\draw [shift={(338.75,97.4)}, rotate = 90] [color={rgb, 255:red, 0; green, 0; blue, 0 }  ][line width=0.75]    (6.56,-1.97) .. controls (4.17,-0.84) and (1.99,-0.18) .. (0,0) .. controls (1.99,0.18) and (4.17,0.84) .. (6.56,1.97)   ;
%Straight Lines [id:da02014733195507845] 
\draw    (364.75,120) -- (364.75,82.25) ;
\draw [shift={(364.75,105.73)}, rotate = 270] [color={rgb, 255:red, 0; green, 0; blue, 0 }  ][line width=0.75]    (6.56,-1.97) .. controls (4.17,-0.84) and (1.99,-0.18) .. (0,0) .. controls (1.99,0.18) and (4.17,0.84) .. (6.56,1.97)   ;
%Straight Lines [id:da14568715926257036] 
\draw    (416.75,82.5) -- (416.75,120) ;
\draw [shift={(416.75,104.85)}, rotate = 270] [color={rgb, 255:red, 0; green, 0; blue, 0 }  ][line width=0.75]    (6.56,-1.97) .. controls (4.17,-0.84) and (1.99,-0.18) .. (0,0) .. controls (1.99,0.18) and (4.17,0.84) .. (6.56,1.97)   ;
%Straight Lines [id:da03903395684220512] 
\draw    (390.75,120) -- (390.75,82) ;
\draw [shift={(390.75,97.4)}, rotate = 90] [color={rgb, 255:red, 0; green, 0; blue, 0 }  ][line width=0.75]    (6.56,-1.97) .. controls (4.17,-0.84) and (1.99,-0.18) .. (0,0) .. controls (1.99,0.18) and (4.17,0.84) .. (6.56,1.97)   ;
%Straight Lines [id:da0799720208400666] 
\draw    (442.75,82.75) -- (442.75,120) ;
%Straight Lines [id:da3634781541808306] 
\draw    (468.75,82.75) -- (468.75,120) ;

% Text Node
\draw (130,103.4) node [anchor=north west][inner sep=0.75pt]    {$\vdots $};
% Text Node
\draw (149,100.65) node [anchor=north west][inner sep=0.75pt]  [font=\tiny]  {$a$};
% Text Node
\draw (174.75,91.4) node [anchor=north west][inner sep=0.75pt]  [font=\tiny]  {$a$};
% Text Node
\draw (194.5,103.9) node [anchor=north west][inner sep=0.75pt]  [font=\tiny]  {$b$};
% Text Node
\draw (196.5,127.65) node [anchor=north west][inner sep=0.75pt]  [font=\tiny]  {$b$};
% Text Node
\draw (176.75,143.65) node [anchor=north west][inner sep=0.75pt]  [font=\tiny]  {$c$};
% Text Node
\draw (153.75,133.15) node [anchor=north west][inner sep=0.75pt]  [font=\tiny]  {$d$};
% Text Node
\draw (293.25,92.65) node [anchor=north west][inner sep=0.75pt]    {$\cdots $};
% Text Node
\draw (486,92.15) node [anchor=north west][inner sep=0.75pt]    {$\cdots $};
% Text Node
\draw (328.5,96.15) node [anchor=north west][inner sep=0.75pt]  [font=\tiny]  {$a$};
% Text Node
\draw (355.5,96.4) node [anchor=north west][inner sep=0.75pt]  [font=\tiny]  {$a$};
% Text Node
\draw (382,97.65) node [anchor=north west][inner sep=0.75pt]  [font=\tiny]  {$b$};
% Text Node
\draw (408,98.4) node [anchor=north west][inner sep=0.75pt]  [font=\tiny]  {$b$};
% Text Node
\draw (432.75,97.15) node [anchor=north west][inner sep=0.75pt]  [font=\tiny]  {$c$};
% Text Node
\draw (459,97.4) node [anchor=north west][inner sep=0.75pt]  [font=\tiny]  {$d$};
% Text Node
\draw (348.25,126.4) node [anchor=north west][inner sep=0.75pt]  [font=\tiny]  {$-1$};
% Text Node
\draw (374.25,126.4) node [anchor=north west][inner sep=0.75pt]  [font=\tiny]  {$0$};
% Text Node
\draw (400.75,126.15) node [anchor=north west][inner sep=0.75pt]  [font=\tiny]  {$1$};
% Text Node
\draw (427.25,125.9) node [anchor=north west][inner sep=0.75pt]  [font=\tiny]  {$2$};
% Text Node
\draw (452.75,125.9) node [anchor=north west][inner sep=0.75pt]  [font=\tiny]  {$3$};
% Text Node
\draw (248.25,103.9) node [anchor=north west][inner sep=0.75pt]    {$\Longrightarrow $};

\end{tikzpicture}
        \caption{Blowing up $\Gamma_{G,S}$ to a 3-regular graph $\Gamma_0$, in the case where $S$ is countably infinite.}
        \label{fig:blow-up}
    \end{figure}

    Now suppose instead that $S$ is finite. 
    Let $k$ denote the degree of $\Gamma_{G,S}$.
    If \(k = 3\), then we are done, so suppose \(k \geq 4\).
    We now form $\Gamma_0$ by equivariantly blowing up each vertex $v$ into a path $P$ of length $k-3$, adjoining exactly one of the edges incident upon $v$ to each internal vertex in this copy of $P$, and two to each of the edges. 
    We equivariantly label the edges of each copy of $P$ with the numbers $1$ through $k-3$.
    Again call the resulting labelled graph $\Gamma_0$ for this case. 
    As before, the group of label-preserving automorphisms of $\Gamma_0$ is isomorphic to $G$. 
    Moreover, in this case, collapsing each copy of \(P\) yields a quasi-isometry from $\Gamma_0$ to $\Gamma_{G,S}$.

    To conclude the proof, we apply Proposition~\ref{prop:labelled_aut->aut} to obtain uncountably many \(3\)-regular vertex- and edge-free graphs whose automorphism groups are isomorphic to \(\Aut_\ell(\Gamma_0) \cong G\).
    The proposition tells us that the graphs are hyperbolic whenever \(\Gamma_0\) is, and therefore, as \(\Gamma_0\) is quasi-isometric to \(G\), whenever \(G\) is hyperbolic.
    Finally, applying Lemma~\ref{lem:3-reg->d-reg} to each member of this uncountable family of graphs gives us the required family of graphs.
\end{proof}

\section{Boundary sets with trivial stabiliser}\label{sec:HH2}

In this section, we construct a finite set in the visual boundary \(\partial M\) of certain non-positively curved manifolds \(M\) with trivial pointwise stabiliser in \(\Isom(M)\). 
These sets will determine the configuration of edge spaces in our graph of groups construction in Section~\ref{sec:construction}.
A general setting this construction makes sense in is the class of visibility manifolds.

\begin{definition}
    A complete, simply connected and non-positively curved Riemannian manifold~$M$ of dimension at least two is called a \emph{visibility manifold} if for every two distinct points~$x, y\in\partial M$ there exists a bi-infinite geodesic~$\gamma$ with $\gamma(\infty)=x$ and $\gamma(-\infty)=y$.
\end{definition}

Note that every simply connected symmetric space of non-compact type is a visibility manifold~\cite{eberlein1996geometry}.

Roughly speaking, the idea in constructing our set of boundary points is to pick the vertices of an ideal polytope that is large enough to not be contained in any proper geodesically complete and totally geodesic submanifold.
As such, there will be no non-trivial isometries stabilising its vertices. 
For this purpose, let us introduce the following terminology.

\begin{definition}
    Let~$M$ be a visibility manifold, and let $Z\subseteq \partial M$ be a set of cardinality at least two. We define \emph{hull} of \(Z\), denoted~$\hull(Z)$, to be the minimal geodesically complete and totally geodesic submanifold of~$M$ whose limit set contains~$Z$.
\end{definition}

Note that $\hull (Z)$ does not generally coincide with the convex hull of~$Z$, but is rather the minimal totally geodesic submanifold containing its convex hull.
We see that an isometry stabilising a set~$\Omega$ with the property that its hull spans the entire visibility manifold must be the identity.

\begin{lemma}\label{lem:Omega-has-trivial-point-stabiliser-visibility}
    Let~$M$ be a visibility manifold.
    If \(\Omega \subseteq \partial M\) is a subset with \(\hull(\Omega) = M\), then \(\Omega\) has trivial pointwise stabiliser in~$\Isom(M)$.
\end{lemma}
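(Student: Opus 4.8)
The plan is to show that, for any $f\in\Isom(M)$ fixing every point of $\Omega$, the fixed-point set $N:=\mathrm{Fix}(f)=\{p\in M:f(p)=p\}$ is a nonempty, geodesically complete, totally geodesic submanifold of $M$ whose limit set contains $\Omega$; then minimality of $\hull(\Omega)$ forces $\hull(\Omega)\subseteq N$, and since $\hull(\Omega)=M$ we get $N=M$, i.e.\ $f=\id$. A first observation is that for distinct $a,b\in\partial M$ any geodesically complete totally geodesic submanifold with $a,b$ in its limit set has dimension at least $1$, so a minimal such submanifold has dimension $\le 1<\dim M$; hence $\hull(\{a,b\})\ne M$, and the hypothesis $\hull(\Omega)=M$ forces $\Omega$ to contain three distinct points $x,y,z$.

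The first substantive step is to rule out $f$ acting parabolically or axially, i.e.\ to produce a fixed point in $M$. Since $f$ fixes each $\omega\in\Omega$, it shifts the Busemann function at $\omega$ by a constant, $b_\omega\circ f=b_\omega+c_\omega$, and $c_\omega$ is independent of the normalisation of $b_\omega$. For distinct $\xi,\eta\in\Omega$ the visibility hypothesis provides a bi-infinite geodesic from $\xi$ to $\eta$; along it $b_\xi+b_\eta$ is affine with opposite slopes, hence constant, so the convex function $b_\xi+b_\eta$ is bounded below on $M$. Comparing $\inf_M(b_\xi+b_\eta)$ with $\inf_M\big((b_\xi+b_\eta)\circ f\big)=\inf_M(b_\xi+b_\eta)+(c_\xi+c_\eta)$ gives $c_\xi+c_\eta=0$; applying this to the three pairs in $\{x,y,z\}$ yields $c_x=c_y=c_z=0$. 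In particular $f$ preserves the Busemann functions $b_x$ and $b_y$, hence the horoballs $\{b_x\le 0\}$ and $\{b_y\le 0\}$, where I normalise so that both horoballs contain a chosen point of the geodesic joining $x$ and $y$. Their intersection $S$ is then nonempty, closed and convex, and crucially \emph{bounded}: in a visibility manifold the closure of a horoball in $M\cup\partial M$ meets $\partial M$ only in its centre, so $\overline S\cap\partial M\subseteq\{x\}\cap\{y\}=\varnothing$ and $S$ is compact. As $f$ preserves the bounded set $S$, the Cartan fixed-point theorem yields a fixed point of $f$, so $N=\mathrm{Fix}(f)\ne\varnothing$.

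I expect this second step to be the main obstacle: it is the place where the visibility assumption is genuinely used beyond mere existence of connecting geodesics, and the statement would be false without it (for instance it fails for $\RH^2\times\R$, where horoballs limit onto much larger subsets of the boundary and translations of the $\R$-factor fix a rich boundary set without being trivial). I would either cite the relevant properties of Busemann functions and horoballs in visibility manifolds from a standard reference (e.g.\ Eberlein, \cite{eberlein1996geometry}), together with the Cartan fixed-point theorem for complete non-positively curved spaces, or argue directly in the cases at hand (when $M$ is Gromov hyperbolic the claim about horoball closures follows from the four-point inequality for Gromov products).

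Granting $N\ne\varnothing$, the remaining steps are routine. The fixed-point set of an isometry of a Hadamard manifold is a closed totally geodesic submanifold; uniqueness of geodesics in $M$ makes $N$ convex, hence connected, and $N$ is geodesically complete because any geodesic of $M$ extending a segment lying in $N$ agrees with its $f$-image on that segment and therefore lies in $N$. To see $\Omega\subseteq\Lambda N$, fix $\omega\in\Omega$ and $p\in N$: the geodesic ray from $p$ to $\omega$ is carried by $f$ to a geodesic ray from $f(p)=p$ to $f(\omega)=\omega$, and by uniqueness of the geodesic ray from a point to a boundary point in non-positive curvature this ray is fixed pointwise by $f$; hence it lies in $N$ and $\omega\in\Lambda N$. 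Thus $N$ is a geodesically complete totally geodesic submanifold with $\Lambda N\supseteq\Omega$, so $\hull(\Omega)\subseteq N$, and therefore $N=M$, which gives $f=\id$. Since $f$ was an arbitrary element of the pointwise stabiliser of $\Omega$, this stabiliser is trivial.
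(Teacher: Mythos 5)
Your argument is correct in outline, and it takes a genuinely different route from the paper's. The paper settles the lemma essentially by two citations: after the same initial observation that $\hull(\Omega)=M$ forces $|\Omega|\ge 3$, it invokes the classification of isometries of visibility manifolds (the induced boundary map fixes $0$, $1$, $2$, or infinitely many points), concludes that $f$ is elliptic, and then quotes the fact that its fixed-point set is a complete totally geodesic submanifold containing all geodesics between points of $\Omega$, so minimality of the hull finishes. You instead prove ellipticity by hand (Busemann cocycles $c_\omega$, compactness of the intersection of two horoballs, Cartan's fixed-point theorem) and then run the same endgame; this is more self-contained, at the cost of the horoball fact you flag. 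That fact does hold for all visibility manifolds, not only Gromov hyperbolic ones (visibility does not imply Gromov hyperbolicity): since any two distinct boundary points are joined by a geodesic, their Tits angle is at least $\pi$, and the asymptotic slope of $b_\xi$ along a ray asymptotic to $\zeta\ne\xi$ equals $-\cos\bigl(\min\{\pi,\angle_T(\xi,\zeta)\}\bigr)=1$, so $b_\xi\to+\infty$ along any such ray; together with the fact that a closed convex set contains every ray from one of its points to one of its limit points, this is exactly your claim, and it is the statement you would cite from Eberlein. Alternatively you can bypass horoballs entirely: since $c_x=c_y=0$, the isometry $f$ preserves the parallel set $\{b_x+b_y=\min\}\cong C\times\R$ and acts trivially on the $\R$-factor; $C$ is bounded, because an unbounded closed convex cross-section would yield a flat half-plane, which a visibility manifold cannot contain (its boundary arc carries distinct points at Tits angle less than $\pi$, which then could not be joined by a geodesic); Cartan applied to $C$ gives the interior fixed point.

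One small repair is needed in your second step: constancy of $b_\xi+b_\eta$ along one connecting geodesic does not by itself bound this convex function below on $M$ (an affine function on $\R^2$ is constant along a line and unbounded below). The correct justification is to normalise both Busemann functions along the connecting geodesic $\gamma$, so that $b_\xi(p)+b_\eta(p)=\lim_{t\to\infty}\bigl(\dist(p,\gamma(t))+\dist(p,\gamma(-t))-2t\bigr)\ge 0$ by the triangle inequality; this bounded-below statement is all your infimum comparison requires, and the rest of your argument (the convexity, completeness and limit-set properties of $\mathrm{Fix}(f)$, and the appeal to minimality of $\hull(\Omega)$) goes through as written.
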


\begin{proof}	
    First we observe that the condition~$\hull(\Omega)=M$ implies that~$\Omega$ has at least three points, for otherwise~$\hull{\Omega}$ is a geodesic with endpoints the two points of~$\Omega$.
    
    Suppose $f$ is an isometry of~$M$ whose induced map \(\partial f\) on $\partial M$ fixes~$\Omega$ pointwise.
    By well-known facts about isometries of visibility manifolds (see e.g.~\cite{bgs}), \(\partial f\) fixes 0, 1, 2, or infinitely many points of \(\partial M\). 
    As \(\partial f\) fixes \(\Omega\) and \(\abs\Omega \geq 3\), it must be that \(\partial f\) fixes infinitely many points of~$\partial M$.
    Thus $f$ is an elliptic isometry. 
    
    By~\cite[Lemma~6.3]{bgs}, the fixed point set of~$f$ in~$M$ is a complete totally geodesic submanifold containing all geodesics between pairs of points in~$\Omega$.
    By the lemma hypothesis, this must be~$M$. It follows that~$f$ is the identity map.
\end{proof}

There are finite subsets in the boundary of a visibility manifold satisfying the condition of the above lemma, whose cardinality depends only on the dimension of the space.

\begin{lemma}\label{lemma:choosing-omega-visibility}
    Let~$M$ be a visibility manifold or $M=\R$. Then there is a set of $\Omega\subset \partial M$ with~$|\Omega|\leq \dim M+1$ such that $\hull(\Omega)=M$.
\end{lemma}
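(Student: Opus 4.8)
The statement to prove is Lemma~\ref{lemma:choosing-omega-visibility}: in a visibility manifold $M$ (or $M = \R$) there is a subset $\Omega \subset \partial M$ with $|\Omega| \leq \dim M + 1$ and $\hull(\Omega) = M$. The case $M = \R$ is immediate, since $\partial \R$ consists of two points and $\hull$ of these two points is the whole line. So assume $M$ is a visibility manifold of dimension $n \geq 2$. The strategy is to build $\Omega$ greedily, one point at a time, arguing that as long as the current hull is a proper submanifold we can enlarge it strictly by adjoining a suitable boundary point, and that each such enlargement increases the dimension of the hull by at least one — so the process terminates after at most $n$ steps, having started with a pair of points whose hull is a geodesic (dimension $1$).

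\textbf{Key steps.} First I would observe that if $N \subsetneq M$ is a proper, geodesically complete, totally geodesic submanifold, then there is a point $z \in \partial M \setminus \Lambda N$: indeed $\Lambda N = \partial N$ is a proper closed subset of $\partial M$ (a proper totally geodesic submanifold of a complete simply connected non-positively curved manifold cannot have full limit set — e.g.\ pick a point $p \in M \setminus N$ and a geodesic ray from $\pi_N(p)$ through $p$; its endpoint is not in $\Lambda N$, using that $N$ is convex and the visibility/CAT$(0)$ geometry forces geodesics with both endpoints in $\Lambda N$ to lie in $N$). Then $\hull(\Lambda N \cup \{z\})$ is a geodesically complete totally geodesic submanifold strictly containing $N$, hence of strictly larger dimension. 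Second, I would start the induction: pick any two distinct points $x_0, x_1 \in \partial M$; by the visibility property there is a bi-infinite geodesic $\gamma$ joining them, and $\gamma$ is a geodesically complete totally geodesic submanifold of dimension $1$ with $x_0, x_1$ in its limit set, so $\hull(\{x_0,x_1\})$ has dimension $1$. Third, iterate: having chosen $\Omega_k = \{x_0, \dots, x_k\}$ with $N_k := \hull(\Omega_k)$ of dimension $\geq k$, if $N_k = M$ we stop; otherwise by the first step choose $x_{k+1} \in \partial M \setminus \Lambda N_k$ and set $\Omega_{k+1} = \Omega_k \cup \{x_{k+1}\}$, so that $N_{k+1} = \hull(\Omega_{k+1}) \supsetneq N_k$ has dimension $\geq k+1$. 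Since $\dim N_k \leq n$, this must reach $N_k = M$ for some $k \leq n$, giving $\Omega = \Omega_k$ with $|\Omega| = k+1 \leq n+1 = \dim M + 1$.

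\textbf{Main obstacle.} The delicate point is the first step: verifying that a proper geodesically complete totally geodesic submanifold $N$ genuinely has proper limit set, and that adjoining an exterior boundary point strictly enlarges the hull while keeping its dimension finite and bounded by $\dim M$. This rests on standard facts from comparison geometry — that totally geodesic submanifolds of Hadamard manifolds are convex and closed, that $\hull(Z)$ is well-defined (an intersection of such submanifolds is again one, so a minimal one exists), and that $\Lambda N = \partial N \subsetneq \partial M$ for proper $N$. I would cite \cite{bgs} (or \cite{eberlein1996geometry}) for the existence and basic properties of totally geodesic hulls and for the description of limit sets of convex submanifolds, rather than reprove them. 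One should also remark that $\hull(Z)$ is independent of any ambient choices and that the chain $N_1 \subsetneq N_2 \subsetneq \cdots$ of totally geodesic submanifolds has strictly increasing dimensions, which is the only quantitative input needed to pin down the bound $|\Omega| \leq \dim M + 1$.
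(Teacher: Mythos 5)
Your proposal is correct, but it takes a genuinely different route from the paper. You build $\Omega$ by an upward greedy chain: start with two boundary points (hull a geodesic, dimension $1$), and while the hull $N_k$ is proper, adjoin a boundary point outside $\Lambda N_k$, noting that the hull strictly grows and hence its dimension strictly increases (if the dimensions were equal, $N_k$ would be open and closed in the connected $N_{k+1}$), so the process stops within $\dim M - 1$ additional steps. The paper instead argues by induction on dimension: it takes a proper geodesically complete totally geodesic submanifold $N$ of \emph{maximal} dimension, proves that $N$ is itself a visibility manifold (or $\R$) so the inductive hypothesis furnishes $\Omega'$ with $\hull(\Omega')=N$, and then adds a single point $\omega\in\partial M-\partial N$; maximality of $N$ immediately forces $\hull(\Omega'\cup\{\omega\})=M$. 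Your approach buys the ability to skip the heredity step (you never need intermediate hulls to be visibility manifolds), at the cost of needing an exterior boundary point for an arbitrary proper hull, which you supply by projecting an exterior point $p$ to $N$ and shooting a ray through $p$; the paper only needs such a point for the maximal $N$ and gets it from the dimension count $\dim N<\dim M$. Both arguments lean on the same implicit foundations (well-definedness of $\hull$, and that a proper inclusion of geodesically complete totally geodesic submanifolds forces a dimension drop), so the level of rigour is comparable. One small correction: the fact you should invoke for your ray argument is not that bi-infinite geodesics with both endpoints in $\Lambda N$ lie in $N$, but that a geodesic ray issuing from a point of the closed convex set $N$ towards a point of $\Lambda N$ stays in $N$ (limit of segments $[\pi_N(p),c_n]$ with $c_n\in N$); with that phrasing the contradiction with $p\notin N$ is immediate.
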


\begin{proof}
    We proceed by induction on the dimension.
    By definition, a visibility manifold has dimension at least~$2$, so the base case is that $M=\R$. 
    In this case, $\partial M$ only has two points, and the geodesic spanning both of them is the entire line. 

    Now let \(n > 1\), and suppose the statement holds for visibility manifolds of dimension at most~$n-1$.
    Let~$N \subset M$ be a geodesically complete and totally geodesic proper submanifold of maximal dimension. 
    Such a submanifold always exists since every bi-infinite geodesic is a complete totally geodesic submanifold.
    By geodesic completeness we have $\dim N<\dim M$. 
    
    We claim that~$N$ is also a visibility manifold, so that the induction hypothesis applies.
    Let ~$x, y\in\partial N$ and choose $o\in N$ and geodesic rays $\gamma_x, \gamma_y$ based at \(o\) with $\gamma_x(\infty)=x, \gamma_y(\infty)=y$. 
    Consider the sequence of geodesic segments $\gamma_t=[\gamma_x(t),\gamma_y(t)]$, and note that $\gamma_t\subset N$ for every~$t$ as~$N$ is totally geodesic. 
    By completeness, the geodesic~$\gamma$ that $(\gamma_t)_t$ subconverges to is contained in~$N$.
    Necessarily, we have that $\gamma(\infty)=x$ and $\gamma(-\infty)=y$. 
    It follows that~$N$ is a visibility manifold.
    
    Now let~$\Omega^\prime \subset \partial N$ be a set of boundary points with~$\hull(\Omega^\prime)=N$ with \(\abs{\Omega'} \leq \dim N + 1\). 
    Take any point $\omega\in\partial M- \partial N$, which exists since \(\dim N < \dim M\), and set $\Omega=\Omega^\prime\cup \{\omega\}$. 
    It follows that \(\abs{\Omega} \leq \dim N+2 \leq \dim M + 1\).
    By definition, $\hull(\Omega)$ is a totally geodesic submanifold and contains \(N \subseteq \hull(\Omega')\).
    Since \(\omega \notin \partial N\) and \(\hull(\Omega)\) is geodesically complete, \(\dim \hull(\Omega) > \dim N\).
    As \(N\) was a proper submanifold with these properties of maximal dimension, we must have that \(\hull(\Omega) = M\).
\end{proof}

\begin{proposition}\label{prop:finite-set-with-trivial-pstab-visibility}
    Let~$M$ be a visibility manifold.
    Then there exists a finite set $\Omega\subset \partial M$ with $|\Omega| = n+1$ that has trivial pointwise stabiliser in~$\Isom(M)$.
\end{proposition}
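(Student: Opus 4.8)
The plan is to combine the two preceding lemmas, with a small amount of padding to obtain the exact cardinality. Write $n = \dim M$, and recall that $n \geq 2$ since $M$ is a visibility manifold. First I would apply Lemma~\ref{lemma:choosing-omega-visibility} to obtain a set $\Omega_0 \subseteq \partial M$ with $\hull(\Omega_0) = M$ and $\abs{\Omega_0} \leq n+1$.

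Next I would enlarge $\Omega_0$ to a set $\Omega$ with $\Omega_0 \subseteq \Omega \subseteq \partial M$ and $\abs{\Omega} = n+1$. This is possible because the visual boundary $\partial M$ of a Hadamard manifold of dimension $n$ is homeomorphic to the sphere $S^{n-1}$, which is infinite as $n \geq 2$; in particular $\partial M$ has at least $n+1$ points, so there is room to adjoin new points to $\Omega_0$ until the cardinality is exactly $n+1$.

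It remains to check that $\hull(\Omega) = M$, after which Lemma~\ref{lem:Omega-has-trivial-point-stabiliser-visibility} immediately gives that $\Omega$ has trivial pointwise stabiliser in $\Isom(M)$. For this I would invoke the monotonicity of the hull operation: if $Z \subseteq Z'$, then $\hull(Z')$ is a geodesically complete and totally geodesic submanifold whose limit set contains $Z$, so by minimality of $\hull(Z)$ we have $\hull(Z) \subseteq \hull(Z')$ (this is precisely the inclusion used in the proof of Lemma~\ref{lemma:choosing-omega-visibility}). Applying this with $Z = \Omega_0$ and $Z' = \Omega$ yields $M = \hull(\Omega_0) \subseteq \hull(\Omega) \subseteq M$, so $\hull(\Omega) = M$ as required. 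There is essentially no obstacle here: the mathematical content lies entirely in Lemmas~\ref{lem:Omega-has-trivial-point-stabiliser-visibility} and~\ref{lemma:choosing-omega-visibility}, and the only points requiring care are the exact-cardinality bookkeeping and the (routine) monotonicity of $\hull$.
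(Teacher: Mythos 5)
Your proof is correct and follows essentially the same route as the paper: apply Lemma~\ref{lemma:choosing-omega-visibility} to get a set of size at most $n+1$ with full hull, then pad with extra boundary points to reach exactly $n+1$, and invoke Lemma~\ref{lem:Omega-has-trivial-point-stabiliser-visibility}. The only (cosmetic) difference is in the padding step: you verify that $\hull(\Omega) = M$ is preserved via monotonicity of $\hull$, whereas the paper leans on the even more immediate observation that the pointwise stabiliser of a superset is contained in that of a subset; both are fine.
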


\begin{proof}
    Lemmas~\ref{lemma:choosing-omega-visibility} and~\ref{lem:Omega-has-trivial-point-stabiliser-visibility} imply the existence of a set of boundary points~$\Omega$ with trivial pointwise stabiliser in~$\Isom(M)$. If~$\Omega$ has cardinality less than $\dim M +1$, we may add rays in our favourite directions, which does not change the properties of~$\Omega$.
\end{proof}

\begin{remark}
    Given a particular visibility manifold, it may be possible to choose such a set~$\Omega$ with strictly less than $\dim M +1$ elements.
    This can be done by analysing its totally geodesic submanifolds in more detail. 
    However, for our purposes, this non-sharp statement suffices.
\end{remark}

%%%%%%%%%%%%%%%%%%%%%%%%%%%%%%%%%%%%%%%%5
%%%%%%%%%%%%%%%%%%%%%%%%%%%%%%%%%%%%%%%%%
%%%%%%%%%%%%%%%%%%%%%%%%%%%%%%%%%%%%%%%%%

\section{Graphs of metric spaces}\label{sec:gos}

In this section, we will define our notion of a graph of metric spaces, which is an adaptation of a construction already appearing in Scott--Wall \cite[p. 155]{scott1979topological}.

Let $\Gamma$ be a graph. In this paper, every graph will be undirected and simplicial. That is, \(\Gamma\) will have no loops and no double-edges. 
Denote by $V(\Gamma)$ its vertex set, and $E(\Gamma)$ its set of edges. 
Given any graph \(\Gamma\), we will always fix an orientation on its edge set, so that each edge \(e \in E(\Gamma)\) has a fixed initial vertex \(\iota(e)\) and terminal vertex \(\tau(e)\).
As an abuse of notation, we will write \(\bar e\) to denote the edge \(e \in E(\Gamma)\) with reverse orientation, so that \(\iota(\bar e) = \tau(e)\) and \(\tau(\bar e) = \iota(e)\). 
For a vertex $v\in V(\Gamma)$ we denote by~$\In(v)$ the set of edges incident to the vertex~$v$, that is, either \(\iota(e) = v\) or \(\tau(e) = v\). 

\begin{definition}[Graph of spaces]
    A \emph{graph of metric spaces} $\mathbf X = (\Gamma,X_-,Y_-, \alpha_-)$ consists of the following data:
\begin{enumerate}
    \item a connected, simplicial graph $\Gamma$; 

    \item for every $v \in V(\Gamma)$, a geodesic metric space $X_v$;

    \item for every $e \in E(\Gamma)$, a length metric space $Y_e$; 

    \item for every $e \in E(\Gamma)$, a pair of isometric embeddings $\alpha_e \colon Y_e \to X_{\tau(e)}$ and \(\alpha_{\bar e} \colon Y_e \to X_{\iota(e)}\). 
\end{enumerate}
\end{definition}
For brevity, we will often refer to a graph of metric spaces as simply a `graph of spaces' throughout; the presence of additional metric structure is always implicit.
Following Scott--Wall \cite{scott1979topological}, we construct a realisation of a graph of metric spaces.

\begin{definition}[Realisation of a graph of metric spaces]
    Let \(\X = (\Gamma,X_-,Y_-,\alpha_-)\) be a graph of metric spaces.    
    Let $A$ denote the disjoint union 
    \[
        A = \bigsqcup_{v \in V(\Gamma)} X_v \sqcup \bigsqcup_{e \in E(\Gamma)} Y_e \times [0,1].
    \]
    Let $\sim$ be the equivalence relation on $A$ generated by relations of the form
    \[
        (x,0) \sim \alpha_e(x), \ (x,1) \sim \alpha_{\bar e}(x),
    \]
    where $e \in E(\Gamma)$, $x \in Y_e$. 
    The \emph{realisation} of \(\X\) is then the quotient space $\abs\X =A / \sim$. 

    We will refer to the image of each $X_v$ in $|\X|$ as a \emph{vertex space}, and images of the cylinders $Y_e \times [0,1]$ as \emph{edge cylinders}.
    Edge cylinders will always be equipped with the \(\ell_2\)-metric corresponding to the product.
    We will often write $Z_e = Y_e \times [0,1]$ to denote an edge cylinder, and we call the subspace \(Y_e \times (0,1)\) the \emph{interior of \(Z_e\)}.
    A subset of \(\X\) is called a \emph{piece of \(\X\)} if it is equal to either \(X_v\) for some \(v \in V(\Gamma)\) or \(Z_e\) for some \(e \in E(\Gamma)\).
\end{definition}

Note that, as \(\Gamma\) is simplicial, the projection map \(A \to \abs\X\) described above restricts to a topological embedding on pieces of \(\X\).
Hence we may view these as genuine subspaces of \(\abs\X\).

We now describe how to metrise the realisation of a graph of metric spaces. 
This construction is roughly based on \cite[\S~I.7]{bridson2013metric}. 

\begin{definition}[String]
\label{def:string}
    Let \(\X = (\Gamma,X_-,Y_-,\alpha_-)\) be a graph of metric spaces, and let \(x, y \in \abs\X\).
    A \emph{string} in $\mathbf X$ between $x$ and $y$ is a finite sequence $S = (x_0, \ldots, x_{n})$, $x_0 = x$, $x_n = y$, together with a choice of piece $P_i\subset |\mathbf X|$ containing both $x_{i-1}$and $x_{i}$ for each $i = 1, \ldots, n$. 
    The \emph{length} of $S$, denoted $\ell(S)$, is defined as 
    \[
        \ell(S) = \sum_{i=i}^{m} \dist_{P_i}(x_{i-1}, x_{i}).
    \]
\end{definition}
 
\begin{definition}[Intrinsic metric]
\label{def:gos_pseudo_metric}
    Let \(\X\) be a graph of metric spaces.
    We call the function \(\dist_\X \colon \abs\X \times \abs\X \to \R\) defined by
    \begin{equation*}
        \dist_{\X}(x,y) = \inf\{\ell(S) : \text{$S$ is a string with endpoints $x$ and $y$}\}.
    \end{equation*}
    the \emph{intrinsic metric on \(\abs\X\)}.
\end{definition}

\begin{remark}
\label{rem:subgraph_of_spaces}
    Suppose that \(\X = (\Gamma,X_-,Y_-,\alpha_-)\) is a graph of spaces and \(\Gamma' \subseteq \Gamma\) is a subgraph.
    Writing \(\X' = (\Gamma',X_-,Y_-,\alpha_-)\), there is a canonical map of realisations \(\abs{\X'} \to \abs\X\), which is a topological embedding.
    Moreover, any string in \(\X'\) maps to a string of the same length in \(\X\) under this embedding, so that \(\dist_\X \leq \dist_{\X'}\) on the image of \(\abs{\X'}\) in \(\abs\X\).
\end{remark}

The function \(\dist_\X\) defined above is easily seen to be a pseudo-metric, at least: lengths of strings are always non-negative and the triangle inequality follows by concatenating strings.
We will see shortly that it is in fact a metric.

\begin{definition}[Reduced string]
    Let \(\X\) be a graph of metric spaces.
    A string \(S\) in \(\X\) is called \emph{reduced} if the pieces of \(S\) alternate between vertex spaces and edge cylinders, and any consecutive triple of pieces are pairwise distinct.
\end{definition}

Note that it is not necessary for consecutive terms in a reduced string to be distinct.
It is straightforward to see that the distance between two points can be realised as the infimum of lengths of reduced strings.

\begin{lemma}
\label{lem:reduced_string}
    Let \(\X\) be a graph of metric spaces.
    For any \(x, y \in \abs{\X}\), we have
    \[
        \dist_{\X}(x,y) = \inf \{ \ell(S) : S \textnormal{ is a \textbf{reduced} string between } x \textnormal{ and }y\}.
    \]
\end{lemma}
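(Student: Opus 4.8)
The plan is to show that any string can be shortened, or at worst kept the same length, by successively performing two kinds of local moves until it becomes reduced, so that the infimum over all strings equals the infimum over reduced strings. Since every reduced string is a string, the inequality $\dist_\X(x,y) \leq \inf\{\ell(S) : S \text{ reduced}\}$ is immediate from Definition~\ref{def:gos_pseudo_metric}; the content is the reverse inequality, which follows if we can associate to each string $S$ a reduced string $S'$ with the same endpoints and $\ell(S') \leq \ell(S)$.

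First I would observe that without loss of generality we may assume no two consecutive pieces $P_i, P_{i+1}$ of $S$ are equal: if $P_i = P_{i+1}$, then $x_{i-1}, x_i, x_{i+1}$ all lie in the single piece $P_i$, and we may delete the intermediate point $x_i$, replacing the two subsegments by a single segment in $P_i$; by the triangle inequality inside the metric space $P_i$ this does not increase the length. Repeating this finitely often, we arrive at a string whose consecutive pieces are distinct. The remaining obstruction to being reduced is a consecutive triple $P_{i-1}, P_i, P_{i+1}$ in which $P_{i-1} = P_{i+1}$ (the definition of reduced demands that any three consecutive pieces be pairwise distinct, and also that pieces alternate between vertex spaces and edge cylinders — but note that once consecutive pieces are distinct and we are in a graph of spaces, a piece adjacent to a vertex space must be an edge cylinder and vice versa, since two distinct vertex spaces never intersect and two distinct edge cylinders meet only along vertex spaces; so alternation is automatic). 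The key move: if $P_{i-1} = P_{i+1} =: P$ while $P_i \neq P$, then $P_i$ is a piece meeting $P$ in two points $x_{i-1}$ and $x_{i+1}$, both of which therefore lie in $P$; we delete $x_i$ and the piece $P_i$, joining $x_{i-1}$ to $x_{i+1}$ directly inside $P$. Again, by the triangle inequality in $P$, $\dist_P(x_{i-1}, x_{i+1}) \leq \dist_{P_i}(x_{i-1}, x_i) + \dist_{P_i}(x_i, x_{i+1})$, so the length does not increase. (Here I would note that $P \cap P_i$ is precisely an image of an edge space $Y_e$ or of $Y_e \times \{0\}$ or $\{1\}$, but we do not even need to identify it — we only need that both points lie in $P$.)

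The remaining point is termination: each move strictly decreases the number of pieces in the string, so after finitely many moves no further move applies, and the resulting string is reduced. I would spell this out via induction on the number of pieces $n$: if $S$ is already reduced we are done; otherwise one of the two moves above applies and produces a string with strictly fewer pieces and no greater length, to which the inductive hypothesis applies. Taking infima over all strings on the left and noting every string dominates some reduced string on the right gives the claimed equality.

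The main obstacle — really the only subtlety — is making sure the "collapse a $P_{i-1}=P_{i+1}$ sandwich" move is legitimate, i.e. that $x_{i-1}$ and $x_{i+1}$ genuinely both lie in the single piece $P = P_{i-1} = P_{i+1}$ so that $\dist_P(x_{i-1},x_{i+1})$ makes sense. This is exactly the defining property of a string: $x_{i-1} \in P_{i-1} = P$ and $x_{i+1} \in P_{i+1} = P$. Since pieces are genuine subspaces of $\abs\X$ (as noted after the definition of realisation, the projection $A \to \abs\X$ embeds each piece), the restricted metric $\dist_P$ is well-defined and the triangle inequality holds there, so no care beyond bookkeeping is needed. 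A secondary small point worth a sentence is why alternation between vertex spaces and edge cylinders comes for free once consecutive pieces are distinct: two distinct vertex spaces $X_v, X_w$ are disjoint in $\abs\X$ (as $\Gamma$ is simplicial, there is no edge contributing an overlap, and vertex spaces are glued only through edge cylinders), so a point cannot lie in two consecutive distinct vertex spaces; similarly the interiors of two distinct edge cylinders are disjoint and they meet only inside vertex spaces, ruling out two consecutive distinct edge cylinders.
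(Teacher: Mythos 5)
Your overall strategy — pass from an arbitrary string to a reduced one of no greater length by local collapsing moves — is the same as the paper's, but two of your justifications have genuine gaps. First, the inequality driving your ``sandwich'' move is not an instance of the triangle inequality in $P$: its left-hand side is measured in $P$ while the deleted segments are measured in a different piece (and in fact one of the two deleted segments lives in $P_{i+1}=P$, not $P_i$, so the bookkeeping is also off), and a triangle inequality only compares distances within one piece. What you actually need is that for points $a,b \in P \cap P_i$ one has $\dist_P(a,b) \le \dist_{P_i}(a,b)$. This holds (with equality) precisely because each $\alpha_e$ is an isometric embedding and the cylinders carry the $\ell_2$ product metric, so distances on the overlap $\alpha_e(Y_e)$, identified with $Y_e\times\{0\}$ or $Y_e\times\{1\}$, agree whether measured in the vertex space or in the cylinder. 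Your parenthetical claim that you ``do not even need to identify'' the overlap is exactly backwards: this comparison is the one substantive input, it is the ingredient the paper's proof cites, and if the $\alpha_e$ were merely topological or coarse embeddings the move could increase length.

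Second, alternation between vertex spaces and edge cylinders is not automatic once consecutive pieces are distinct. Two distinct edge cylinders $Z_e$ and $Z_{e'}$ can share points: their boundary copies of $Y_e$ and $Y_{e'}$ are glued into a common vertex space, and nothing forbids $\alpha_e(Y_e)\cap\alpha_{e'}(Y_{e'})\neq\emptyset$ — indeed in the paper's own construction in Section 7 all nine edge rays at a vertex share the basepoint $x_0$. So a string may legitimately choose consecutive pieces to be two distinct edge cylinders, and your procedure can terminate at a string that is not reduced. The repair is cheap: insert a repeated point carried by the intervening vertex space, which costs nothing since the distance from a point to itself is zero; this is exactly the ``adding redundant entries'' step in the paper's proof, which your argument omits. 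With these two fixes your proof coincides with the paper's.
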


\begin{proof}
    For each string \(S\), there is a reduced string \(S'\) with the same endpoints, obtained by deleting entries of \(S\) and then, if needed, adding redundant entries.
    Moreover, each edge space is isometrically embedded in its adjacent vertex spaces, so \(\ell(S') \leq \ell(S)\) by applying the triangle inequality in each piece.
\end{proof}

On small scales, the intrinsic metric on the realisation of a graph of spaces resembles the metric on individual pieces.

\begin{lemma}
\label{lem:local_gos_metric_equals_piece_metric}
    Let \(\X\) be a graph of metric spaces and let \(P\) be a piece of \(\X\).
    If \(x, y \in P\) are such that \(\dist_\X(x,y) < 1\), then \(\dist_\X(x,y) = \dist_P(x,y)\).
\end{lemma}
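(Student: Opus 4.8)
The plan is to show that any reduced string $S$ between $x$ and $y$ with $\ell(S) < 1$ must consist of a single piece, namely $P$ itself, so that $\ell(S) \geq \dist_P(x,y)$; combined with the trivial bound $\dist_\X(x,y) \leq \dist_P(x,y)$ and Lemma~\ref{lem:reduced_string}, this gives the claim. First I would fix $x, y \in P$ with $\dist_\X(x,y) < 1$ and take an arbitrary reduced string $S = (x_0, \dots, x_n)$ with endpoints $x$ and $y$ and $\ell(S) < 1$. The key geometric observation is that passing through an edge cylinder $Z_e = Y_e \times [0,1]$ is ``expensive'': a point in the interior $Y_e \times (0,1)$ lies at the product $\ell_2$-distance at least $\min\{t, 1-t\}$ from the two ends $Y_e \times \{0\}$ and $Y_e \times \{1\}$, where the two gluing maps $\alpha_e, \alpha_{\bar e}$ identify these ends with subsets of the vertex spaces $X_{\tau(e)}, X_{\iota(e)}$.

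The heart of the argument is to rule out that $S$ uses more than one piece. Suppose toward a contradiction that $S$ passes through at least two distinct pieces. Since $S$ is reduced, consecutive pieces alternate between vertex spaces and edge cylinders, so in particular at least one piece $P_i$ is an edge cylinder $Z_e$. Now either both endpoints $x_{i-1}, x_i$ of that step lie in $Z_e$, or one of them is an endpoint of $S$. If $x = x_0 \in P$ is a vertex space but some later vertex $x_j$ lies in a different vertex space, then the string must cross from one end of some intervening edge cylinder $Z_e$ to the other end; the corresponding contiguous sub-string therefore starts on $Y_e \times \{0\}$ (or $\{1\}$) and ends on $Y_e \times \{1\}$ (or $\{0\}$), all within $Z_e$ with its $\ell_2$-product metric, and hence has length at least $1$ (the distance between the two boundary copies of $Y_e$ in $Y_e \times [0,1]$). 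This already forces $\ell(S) \geq 1$, a contradiction. The remaining possibility is that $S$ dips into the interior of an edge cylinder $Z_e$ and comes back out the same end, or that $P$ itself is an edge cylinder and $S$ leaves it. In every such case one can extract a contiguous sub-string which departs from a boundary copy $Y_e \times \{0\}$ of an edge cylinder, reaches a point at product-coordinate $t \in (0,1)$, and either returns to the same boundary or proceeds to the opposite one; its length is at least $2\min\{t,1-t\}$ in the first instance and at least $1$ in the second. The delicate bookkeeping is to handle the case where the interior excursion is short — but then reducedness fails, or one can replace that portion of $S$ by a shorter sub-string staying in a single vertex space, contradicting that $S$ is already reduced of minimal configuration; in any event, one concludes $S$ cannot leave $P$ without incurring length $\geq 1$.

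Thus every reduced string of length $< 1$ between $x$ and $y$ lies entirely in $P$, whence $\ell(S) \geq \dist_P(x,y)$; taking the infimum over such strings and invoking Lemma~\ref{lem:reduced_string} gives $\dist_\X(x,y) \geq \dist_P(x,y)$, while $\dist_\X(x,y) \leq \dist_P(x,y)$ is immediate from the one-term string $(x,y)$ with piece $P$. I expect the main obstacle to be the careful case analysis around excursions into the interior of an edge cylinder: one must be sure that a reduced string cannot ``cheat'' by briefly entering $Y_e \times (0,1)$ and that the $\ell_2$-product metric on $Z_e$ genuinely penalises any transit between the two boundary faces by a full unit of length, which is exactly where the normalisation ``$< 1$'' enters. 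Everything else is an application of the triangle inequality piece by piece.
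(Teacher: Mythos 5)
Your proposal is correct and follows essentially the same route as the paper: restrict to reduced strings of length less than \(1\), use that traversing an edge cylinder between its two boundary faces costs at least \(1\) in the \(\ell_2\) product metric, and conclude via reducedness that such a string cannot leave the piece, so its length is at least \(\dist_P(x,y)\). The only caveat --- shared with the paper's own write-up --- is that such a string need not literally consist of the single piece \(P\) (it may still use adjacent edge-cylinder pieces meeting only one boundary face, or another piece containing both endpoints), but since the attaching maps are isometric embeddings those steps measure the same distances as in \(P\), so the inequality \(\ell(S) \geq \dist_P(x,y)\) you actually need still holds.
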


\begin{proof}
    Let \(\varepsilon > 0\) and let \(S\) be a reduced string from \(x\) to \(y\) with \(\ell(S) < \dist_\X(x,y) + \varepsilon\).
    Since \(\dist_\X(x,y) < 1\), we may assume without loss of generality that \(\ell(S) < 1\).
    Recall that if \(Z = Y \times [0,1]\) is an edge cylinder, \(\dist_Z((a,0),(b,1)) \geq 1\) for any \(a,b \in Y\).
    Hence the string \(S\) does not contain two points on opposite sides of an edge cylinder.
    In particular, each of the points of \(S\) belong to a single vertex space \(X\) and its adjacent edge cylinders.
    
    Since the pieces of \(S\) alternate between edge cylinders and vertex spaces, every other piece of \(S\) must be \(X\).
    As \(S\) is reduced and the first and last pieces of \(S\) are both \(P\), this implies that \(S\) has exactly one piece.
    This means that \(S = (x, y)\), and so \(\ell(S) = \dist_P(x,y)\).
    Hence \(\dist_P(x,y) < \dist_\X(x,y) = \epsilon\) as required.
\end{proof}

\begin{lemma}
\label{lem:pseudometric_is_a_metric}
    Let \(\X\) be a graph of metric spaces.
    The intrinsic metric \(\dist_\X\) is a metric.
\end{lemma}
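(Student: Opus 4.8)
The plan is to produce a $1$‑Lipschitz map from $\abs\X$ onto the metric realisation of the underlying graph, and to use it to reduce the separation axiom to the case of two points lying in a common piece, where Lemma~\ref{lem:local_gos_metric_equals_piece_metric} applies. Since we have already noted that $\dist_\X$ is a pseudometric, it suffices to show that $\dist_\X(x,y)=0$ implies $x=y$. The key observation is that projecting away the "fibre directions" of vertex spaces and edge cylinders cannot decrease distances, and collapses precisely to single pieces.

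Concretely, I would first let $\lvert\Gamma\rvert$ denote the topological realisation of $\Gamma$ with every edge isometric to $[0,1]$, equipped with the associated path metric $\dist_{\lvert\Gamma\rvert}$; as $\Gamma$ is connected this is a genuine metric space. Define $\pi\colon\abs\X\to\lvert\Gamma\rvert$ by sending each vertex space $X_v$ to the vertex $v$, and each edge cylinder $Z_e=Y_e\times[0,1]$ to the edge $e$ via $(y,t)\mapsto$ the point at arc length $t$ from $\tau(e)$. One checks on the defining relations $(y,0)\sim\alpha_e(y)\in X_{\tau(e)}$ and $(y,1)\sim\alpha_{\bar e}(y)\in X_{\iota(e)}$ that $\pi$ is well defined. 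Crucially, $\pi^{-1}(v)=X_v$ for each vertex $v$, while $\pi^{-1}(p)=Y_e\times\{t\}\subseteq Z_e$ whenever $p$ is the point at arc length $t\in(0,1)$ on an edge $e$; so every point‑preimage of $\pi$ is contained in a single piece of $\X$.

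Next I would check that $\pi$ does not increase the length of any string. Let $S=(x_0,\dots,x_n)$ be a string with associated pieces $P_1,\dots,P_n$. If $P_i=X_v$ then $\pi(x_{i-1})=\pi(x_i)=v$, so $\dist_{\lvert\Gamma\rvert}(\pi(x_{i-1}),\pi(x_i))=0\le\dist_{P_i}(x_{i-1},x_i)$. If $P_i=Z_e$, write $x_{i-1}=(a,s)$ and $x_i=(b,t)$; then $\dist_{Z_e}(x_{i-1},x_i)=\sqrt{\dist_{Y_e}(a,b)^2+(s-t)^2}\ge |s-t|\ge\dist_{\lvert\Gamma\rvert}(\pi(x_{i-1}),\pi(x_i))$, the last step because the subpath of $e$ between the two images has length $|s-t|$. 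Summing and applying the triangle inequality in $\lvert\Gamma\rvert$ gives $\dist_{\lvert\Gamma\rvert}(\pi(x),\pi(y))\le\ell(S)$, hence $\dist_{\lvert\Gamma\rvert}(\pi(x),\pi(y))\le\dist_\X(x,y)$ after taking the infimum over strings.

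Finally I would conclude: if $\dist_\X(x,y)=0$ then $\pi(x)=\pi(y)=:p$, so by the previous paragraph $x$ and $y$ lie in $\pi^{-1}(p)$, which is contained in a single piece $P$ of $\X$ (either $X_v$ for a vertex $v$, or $Z_e$ for an edge $e$). Since $0=\dist_\X(x,y)<1$, Lemma~\ref{lem:local_gos_metric_equals_piece_metric} yields $\dist_P(x,y)=\dist_\X(x,y)=0$; as each $X_v$ is a geodesic metric space and each $Y_e$ a metric space, $\dist_P$ is a genuine metric in both cases, whence $x=y$. The only point that needs genuine care is the reduction step itself — namely verifying that $\pi$ is well defined and that its point‑preimages really do sit inside individual pieces, so that Lemma~\ref{lem:local_gos_metric_equals_piece_metric} can be invoked; the remaining estimates are routine.
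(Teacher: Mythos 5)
Your proof is correct, and it takes a genuinely different route from the paper. The paper proceeds by a direct case analysis on the locations of the two points $x,y$ with $\dist_\X(x,y)=0$: it separately considers whether $x$ (or $y$) lies in the interior of an edge cylinder or whether both lie in vertex spaces, and in each case it chooses a small enough $\varepsilon$ so that a reduced string of length less than $\varepsilon$ cannot escape a single piece, at which point Lemma~\ref{lem:local_gos_metric_equals_piece_metric} applies. You instead construct the $1$-Lipschitz collapse map $\pi\colon\abs\X\to|\Gamma|$, verify the Lipschitz estimate piece by piece on strings, and use the simple but useful fact that $\pi$-fibres lie inside individual pieces to deduce in one stroke that $\dist_\X(x,y)=0$ forces $x,y$ into a common piece, after which the same Lemma~\ref{lem:local_gos_metric_equals_piece_metric} finishes. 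This is cleaner in that it unifies the paper's case distinction and dispenses with the careful choice of $\varepsilon$; it also anticipates and is closely related to the projection $\Pi\colon\abs\X\to|\Y|$ that the paper later constructs in the proof of Lemma~\ref{lemma:graph_is_isometrically_embedded}, so you have effectively pulled that tool forward. The one ingredient you correctly flag as needing care --- that $\pi$ descends to the quotient and has fibres contained in single pieces --- does hold, and it relies on $\Gamma$ being simplicial (so that the projection $A\to\abs\X$ embeds pieces and the edges of $|\Gamma|$ meet only at endpoints), an assumption the paper makes and also uses implicitly in its own argument.
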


\begin{proof}
    To verify that \(\dist_\X\) is a metric, we need only check that it is positive definite.
    Let \(x\) and \(y\) be points of \(\abs{\X}\) and suppose that \(\dist_{\X}(x,y) = 0\).
    By Lemma~\ref{lem:reduced_string}, \(\dist_{\X}(x,y)\) is the infimum of lengths of reduced strings between \(x\) and \(y\).

    Suppose \(x\) is contained in the interior of an edge cylinder \(Z \cong Y \times (0,1)\), so that \(x = (a,r)\) for some \(a \in Y\) and \(r \in (0,1)\).
    Let \(\varepsilon = \max\{r,1-r\}\) and let \(S = (x_0, \dots, x_n)\) be a reduced string between \(x_0 = x\) and \(x_n = y\) with \(\ell(S) < \varepsilon\). 
    By the choice of \(\varepsilon\), it must be that \(x_i \in Z\) for all \(i = 1, \dots, n\).
    Lemma~\ref{lem:local_gos_metric_equals_piece_metric} then implies \(\dist_Z(x,y) = \dist_\X(x,y) = 0\).
    As \(\dist_Z\) is a metric, we must have \(x = y\),
    Similarly, if \(y\) is in the interior of an edge cylinder, we also have \(x = y\).

    Consider now the remaining case, that \(x\) and \(y\) are both contained in vertex spaces.
    Now let \(S = (x_0, \dots, x_n)\) be a reduced string between \(x_0 = x\) and \(x_n = y\) with \(\ell(S) < 1\). 
    It must be that \(x\) and \(y\) belong to some vertex space \(X\), since the edge cylinders have width \(1\).
    Again Lemma~\ref{lem:local_gos_metric_equals_piece_metric} implies \(\dist_X(x,y) = \dist_\X(x,y) = 0\), so \(x = y\) as required.
\end{proof}

We describe a general construction that will be useful in describing paths in a graph of spaces.

\begin{definition}[Path and string maps]
\label{def:path_str}
    Let \(\X = (\Gamma,X_-,Y_-.\alpha_-)\) be a graph of spaces. 
    Denote by \(\mathcal{S}_\X\) the set of strings in \(X\) and \(\mathcal{P}_\X\) the set of continuous paths in \(\abs\X\) whose image meets the interior of finitely many pieces of \(\X\).
    We define the map \(\operatorname{Str}_\X \colon \mathcal{P}_\X \to \mathcal{S}_\X\) as follows.
    Given a continuous path \(p \colon I \to \abs\X\) in \(\mathcal{P}_\X\), where \(I\) is a closed interval, let \(t_0, \dots, t_n\) be the partition of \(I\) that is minimal with respect to the property that \(p|_{(t_{i-1},t_i)}\) is contained in the interior of exactly one piece \(P_i\) of \(\X\) for each \(i = 1, \dots, n\).
    Then we define the string
    \[
        \operatorname{Str}_\X(p) = (p(t_0), \dots, p(t_n)).
    \]
    with the given piece decomposition \(P_1, \dots, P_n\).
    
    For each piece \(P\) of \(\X\) and each pair of points \(x, y \in P\), fix a choice of geodesic \([x,y]\).
    We define the map \(\operatorname{Path}_\X \colon \mathcal{S_\X \to \mathcal{P}_\X}\) as follows.
    Suppose that \(S = (x_0,\dots,x_n)\) is a string in \(\X\) and, for each \(i = 1, \dots, n\), let \(P_i\) denote the choice of piece of \(\X\) containing \(x_{i-1}\) and \(x_i\).
    Then define \(\operatorname{Path}_\X(S) \colon [0,\ell(S)] \to \abs\X\) as the path that is the concatenation of the \(P_i\)-geodesics \([x_{i-1},x_i]\) over \(i = 1, \dots, n\).
\end{definition}

\begin{remark}
    If there is \(\delta \geq 0\) such that all vertex spaces of \(\X\) are \(\delta\)-hyperbolic, then any two geodesics in a piece with the same endpoints will be a Hausdorff distance of \(\delta+1\) from one another.
    In such a situation, \(\operatorname{Path}_\X(S)\) is independent of the choice of geodesics made in the definition of \(\operatorname{Path}_\X\) given a string \(S\), up to a Hausdorff distance of \(\delta+1\).
    Adding \(\delta+1\) to constants in the relevant statements, therefore, we may make any choice of geodesics that is convenient to us in each given situation.
    We will use this fact without reference in the remainder of the paper.
\end{remark}

Under some basic assumptions on the underlying graph and vertex and edge spaces, this intrinsic metric makes the realisation into a geodesic space in an effective manner.
We will invoke these assumptions fairly often in the remainder of the paper, so we collect them here.

\begin{convention}\label{convention:assumptions_for_hyperbolicity_proof}
    Given a graph of spaces \(\X = (\Gamma,X_-,Y_-,\alpha_-)\), we will assume that the graph $\Gamma$ is locally finite, and \(X_v\) and \(Y_e\) are complete and locally compact for each \(v \in V(\Gamma)\) and \(e \in E(\Gamma)\).
\end{convention}

\begin{proposition}
\label{prop:graph-of-spaces-is-proper-geodesic}
    Let \(\X = (\Gamma,X_-,Y_-,\alpha_-)\) be a graph of metric spaces.
    Under Convention~\ref{convention:assumptions_for_hyperbolicity_proof}, \(\abs\X\) is a proper geodesic space.
    
    Moreover, for any geodesic \(p\) in \(\abs\X\), there is a reduced string \(S_p\) whose points lie on \(p\), and \(p\) is a concatenation of geodesics between the terms of \(S_p\), each of which lie in the interior of a single piece of \(\X\).
\end{proposition}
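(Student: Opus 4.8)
The plan is to prove the two assertions separately: first that $|\X|$ is a proper geodesic space, by verifying the hypotheses of the Hopf--Rinow theorem, and then the structural description of geodesics, by analysing how a geodesic passes between pieces.

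\textbf{Properness and geodesicity.} I would check that $|\X|$ is a complete, locally compact length space. It is a length space because $\dist_\X(x,y)\le\length(\gamma)$ holds trivially for every path $\gamma$ from $x$ to $y$, while conversely, for a string $S$ with endpoints $x,y$, the path $\operatorname{Path}_\X(S)$ from Definition~\ref{def:path_str} has $|\X|$-length at most $\ell(S)$ (each of its segments is a geodesic of a piece $P_i$, and $\dist_\X\le\dist_{P_i}$ on $P_i$), so by Lemma~\ref{lem:reduced_string} one gets $\dist_\X(x,y)=\inf_\gamma\length(\gamma)$. For local compactness I would show that closed balls of radius $<\tfrac12$ are compact. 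By Convention~\ref{convention:assumptions_for_hyperbolicity_proof} and Hopf--Rinow applied inside each piece, every $X_v$ and every $Z_e=Y_e\times[0,1]$ is proper, and each $\alpha_e(Y_e)$ is closed (being a complete subspace). Given $x$ and $r<\tfrac12$, local finiteness of $\Gamma$ together with the fact that leaving the pieces incident to $x$ costs at least $1$ (one must cross a whole edge cylinder) confines $B_{|\X|}(x;r)$ to a finite union of pieces; within such a piece $P$ the part of the ball it contributes is either $B_P(x;r)$ (when $x\in P$, by Lemma~\ref{lem:local_gos_metric_equals_piece_metric}) or, when $P=Z_e$ and $x$ lies off the wall $L$ of $Z_e$ glued to the relevant vertex space $X_v$, a set of the form $B_{Z_e}(K;r)$ with $K=B_{X_v}(x;r)\cap L$ compact -- since every string from $x$ into $\operatorname{int}(Z_e)$ crosses $L$, any $y\in B_{|\X|}(x;r)\cap\operatorname{int}(Z_e)$ lies within $r$ of $K$ in $Z_e$. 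In every case this is a closed neighbourhood of a compact set in a proper space, hence compact, so $B_{|\X|}(x;r)$ is compact. Completeness then follows since a Cauchy sequence eventually lies in such a ball, and Hopf--Rinow gives that $|\X|$ is proper and geodesic.

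\textbf{Structure of a geodesic.} Let $p\colon[0,\ell]\to|\X|$ be a geodesic. Its image is compact, and each point of $|\X|$ has a neighbourhood meeting only finitely many piece-interiors (one for interior-of-cylinder points, finitely many for points of a vertex space, by local finiteness), so $p$ meets finitely many piece-interiors. I would then partition $[0,\ell]$ into the maximal subintervals on which $p$ stays in a single piece (assigning boundary runs to a neighbour). The crucial point is that a subsegment of $p$ contained in $Z_e=Y_e\times[0,1]$ is a geodesic of the product (being a $|\X|$-geodesic lying in $Z_e$, with the inclusion locally isometric by Lemma~\ref{lem:local_gos_metric_equals_piece_metric} on a fine subdivision), hence has monotone height coordinate; in particular a subsegment entering and leaving $\operatorname{int}(Z_e)$ on the same wall stays on that wall, lies in the adjacent vertex space, and merges into the neighbouring run. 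Thus, apart from the initial and terminal runs, every visit of $p$ to $\operatorname{int}(Z_e)$ is a monotone wall-to-wall crossing of length $\ge1$, so there are finitely many per cylinder and finitely many in total; and since distinct vertex spaces are disjoint in $|\X|$, runs in distinct vertex spaces must be separated by a cylinder crossing, so the partition $0=t_0<\dots<t_n=\ell$ is finite. Writing $P_i$ for the piece containing $p|_{[t_{i-1},t_i]}$, the points of $S_p=(p(t_0),\dots,p(t_n))$ lie on $p$, each $p|_{[t_{i-1},t_i]}$ is a geodesic of $P_i$ (by the same fine-subdivision argument), and after inserting a degenerate vertex-space term whenever $p$ passes instantaneously between two cylinders the pieces alternate between vertex spaces and edge cylinders.

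\textbf{The main obstacle.} What remains -- and where I expect the real work -- is to verify that $S_p$ is genuinely reduced, i.e.\ that no consecutive triple of pieces repeats. Given the monotonicity of heights, the only way this could fail is if $p$ crosses some $Z_e$ twice with a nondegenerate detour through an adjacent vertex space in between. To exclude this I would first establish the (standard) fact that the vertex spaces, equivalently the gluing loci, are isometrically embedded in $|\X|$; this needs a length-comparison argument, by induction on the combinatorial distance travelled, showing that any excursion off $X_v$ returning to $X_v$ has length at least the direct distance in $X_v$ between its endpoints. Granting this, if $p$ crossed $Z_e$ monotonically downward, detoured, and crossed back upward, a reflection argument across the midheight slice $Y_e\times\{\tfrac12\}$ produces a competitor that is strictly shorter than $p$ (the strictness coming from $\sqrt{t^2+\tfrac14}>t$), contradicting that $p$ is a geodesic. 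This undistortion lemma, and combining it with the product geometry of the edge cylinders, is the technical heart of the statement.
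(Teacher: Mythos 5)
Your first half is essentially the paper's own argument: Hopf--Rinow after checking that \(\abs\X\) is a complete, locally compact length space (your extra care with the length-space property and with small balls is fine), and your run decomposition of a geodesic, the observation that a subsegment of a geodesic lying in a piece is a geodesic of that piece (via Lemma~\ref{lem:local_gos_metric_equals_piece_metric} on fine subdivisions), and the monotone-height argument in edge cylinders are all correct -- indeed more detailed than the paper, which disposes of reducedness in a single sentence.

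The gap is the lemma you single out as the technical heart: that the vertex spaces (equivalently the gluing loci) are isometrically embedded in \(\abs\X\), to be proved by showing that every excursion off \(X_v\) returning to \(X_v\) is at least as long as the direct \(X_v\)-distance between its endpoints. This is false at this level of generality. For example, let \(\Gamma\) be a triangle on vertices \(u,v,w\), let all three vertex spaces be Euclidean planes and all three edge spaces be rays \([0,\infty)\), and glue \(Y_{uv}\) into \(X_u\) along \(\{(t,0):t\geq 0\}\) and into \(X_v\) along \(\{(t,0):t\geq 0\}\), glue \(Y_{vw}\) into \(X_v\) along \(\{(t,1):t\geq 0\}\) and into \(X_w\) along \(\{(t,1):t\geq 0\}\), and glue \(Y_{uw}\) into \(X_w\) along \(\{(t,0):t\geq 0\}\) and into \(X_u\) along \(\{(-t,0):t\geq 0\}\). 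Then \((T,0)\) and \((-T,0)\) lie in \(X_u\) at \(X_u\)-distance \(2T\), but their distance in \(\abs\X\) is at most \(5\) for every \(T\), via a path crossing the three cylinders; this is exactly why the paper only proves a \((1,k)\)-quasi-isometric embedding of vertex spaces, and only under uniform hyperbolicity (Proposition~\ref{prop:vertex_spaces_qi_embed}). So the induction you propose cannot succeed. Fortunately you do not need it: reducedness only forbids the two \emph{immediate} backtrack patterns \((X_v,Z_e,X_v)\) and \((Z_e,X_v,Z_e)\) (eventual returns to a vertex space, as in the example, are perfectly compatible with a reduced string). The first pattern is already excluded by your monotone-height observation. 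For the second, the detour between the two crossings of \(Z_e\) is a single run in \(X_v\), hence an \(X_v\)-geodesic between two wall points \(\alpha_e(y_2),\alpha_e(y_3)\), of length \(d_{Y_e}(y_2,y_3)\) because \(\alpha_e\) is an isometric embedding; since \(\alpha_{\bar e}\) is \emph{also} an isometric embedding, the same distance is available along the far wall inside the opposite vertex space, and your Minkowski-type strictness estimate \(\sqrt{A^2+1}+B+\sqrt{C^2+1} > A+B+C \geq d_{Y_e}(y_1,y_4) \geq \dist_\X\) of the two endpoints on the far wall produces a strictly shorter competitor, contradicting geodesity. Replacing the global undistortion claim by this purely local wall-transfer comparison closes your argument.
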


\begin{proof}
    Recall that the Hopf--Rinow theorem tells us that being complete and locally compact implies being proper and geodesic, among the class of length metric spaces \cite[Proposition~I.3.7]{bridson2013metric}.
    Hence we need only verify the former properties.
    Let \(x \in \abs\X\) be a point.
    As \(\Gamma\) is locally finite, \(B = B_\X(x;1)\) intersects at most finitely many pieces of \(\X\).
    Thus the closure \(\overbar{B}\) is the union of finitely many closed and bounded subsets in pieces of \(\X\).
    Again by the Hopf--Rinow theorem each piece of \(\X\) is proper, so each of these finitely many subsets are compact.
    Hence \(\overbar{B}\) is compact, and so \(x\) has a compact neighbourhood.

    To see that \(\abs\X\) is complete, let \((a_n)_{n \in \N}\) be a Cauchy sequence in \(\abs\X\).
    For each \(n \in \N\), let \(P_n\) be a piece of \(\X\) containing \(a_n\).
    There is \(N \in \N\) such that for \(n, m \geq N\), we have \(\dist_\X(a_n,a_m) < 1\).
    This implies that for any \(n \geq N\), the terms \(a_n\) and \(a_N\) lie in adjacent pieces.
    As \(\Gamma\) is locally finite, there are only finitely many pieces adjacent to \(P_N\), so we may pass to a subsequence \((a_{n_i})_{i\in\N}\) whose terms eventually lie in the same piece \(P\).
    By Lemma~\ref{lem:local_gos_metric_equals_piece_metric}, this subsequence is also Cauchy in \(P\) and, hence converges in \(P\), as \(P\) is complete.
    It follows that \(\abs\X\) is complete, since any Cauchy sequence with a convergent subsequence converges.

    For the latter claim, suppose that \(p \colon I \to \abs\X\) is a geodesic, where \(I = [a,b]\) is a closed interval.
    Since \(p\) is continuous, its image \(p(I)\) is a compact subset of \(\abs\X\).
    As \(\abs\X\) is a complete metric space, \(p(I)\) is totally bounded.
    In particular, there is a finite cover of \(p(I)\) by subsets whose diameter is at most \(1/2\), say.
    Now since \(\Gamma\) is locally finite, each of these subsets meets at most finitely many pieces of \(\X\).
    Therefore \(p(I)\) meets the interior of at most finitely many pieces of \(\X\).
    Thus \(S_p = \operatorname{Str}_\X(p)\) is defined, and we may take it to be reduced as \(p\) is geodesic and the edge spaces of \(\X\) are isometrically embedded in their adjacent vertex spaces.
    Moreover, each subpath of \(p\) is also geodesic, so that its subpaths between the finitely many terms of \(S\) are geodesics lying in pieces of \(\X\), as required.
\end{proof}

%%%%%%%%%%%%%%%%%%%%%%%%%%%%%%%%%%%%%%%%%%%%%
%%%%%%%%%%%%%%%%%%%%%%%%%%%%%%%%%%%%%%%%%%%%%
%%%%%%%%%%%%%%%%%%%%%%%%%%%%%%%%%%%%%%%%%%%%%

\section{Uniformly hyperbolic graphs of spaces}\label{sec:uniformly-hyperbolic-gos}

\subsection{Definition and basic properties}
We are particularly interested in graphs of hyperbolic metric spaces that are assembled in a uniformly regular way. We will call these \emph{uniformly hyperbolic} graphs of spaces. 
In this section we will see that in such graphs of spaces, geodesics are well-behaved.
As a consequence, these spaces satisfy auspicious metric properties such as quasiconvexity of the vertex spaces.

\begin{definition}(Uniformly hyperbolic graph of spaces)
    Let \(\X = (\Gamma,X_-,Y_-,\alpha_-)\) be a graph of metric spaces, \(\delta \geq 0\), and \(C \geq 0\).
    We say that \(\X\) is \emph{\((\delta,C)\)-uniformly hyperbolic} if:
    \begin{enumerate}
        \item \(X_v\) is \(\delta\)-hyperbolic for every \(v \in V(\Gamma)\);
        \item for any edge \(e \in E(\Gamma)\), there is \(y_e \in Y_e\) such that for any distinct edge \(e' \in E(\Gamma)\) we have
        \begin{enumerate}
            \item \(\langle \alpha_e(Y_e), \alpha_{e'}(Y_{e'}) \rangle_{\alpha_e(y_e)} \leq C\) if \(\tau(e') = \tau(e)\);
            \item \(\langle \alpha_{\bar{e}}(Y_e), \alpha_{e'}(Y_{e'}) \rangle_{\alpha_{\bar{e}}(y_e)} \leq C\) if \(\tau(e') = \iota(e)\).
        \end{enumerate}   
        Moreover, for any \(e, e' \in E(\Gamma)\) with \(\tau(e) = \tau(e')\), we have \(\alpha_e(y_e) = \alpha_{e'}(y_{e'})\).
    \end{enumerate}
    The last condition implies that for any given vertex space~$X_v$ there is a unique point $y_v=\alpha_{e}(y_e)$, which does not depend on the choice of incident edge~$e$, satisfying condition~(2). We will call this point the \emph{basepoint} of~$X_v$.
\end{definition}

The second condition should be seen as a sort of acylindricity condition. It essentially states that the images of distinct edge spaces diverge from one another uniformly quickly in the vertex spaces.

\begin{remark}
\label{rem:edge_pieces_quasiconvex}
    Let \(\X = (\Gamma,X_-,Y_-,\alpha_-)\) be a \((\delta,C)\)-uniformly hyperbolic graph of spaces.
    Since each edge space isometrically embeds into its adjacent vertex spaces, each edge space is also \(\delta\)-hyperbolic, so long as it is a geodesic space.
    
    Moreover, by hyperbolicity, any two geodesics joining the same two points of \(X_v\) are \(\delta\)-Hausdorff close in \(X_v\).
    Since each \(\alpha_e\) is an isometric embedding, this implies that \(\alpha_e(Z)\) is \(\delta\)-quasiconvex in \(X_{\tau(e)}\) for each connected component \(Z \subseteq Y_e\).
\end{remark}

The realisation of a uniformly hyperbolic graph of spaces contains a naturally embedded copy of its underlying graph \(\Gamma\), as the union of subsets \(\{y_e\} \times [0,1]\) ranging over \(e \in E(\Gamma)\), where the points \(y_e\) are given by uniform hyperbolicity.
It is immediate from Lemma~\ref{lem:local_gos_metric_equals_piece_metric} that this embedding is a local isometry.
We see that this copy of \(\Gamma\) is in fact globally isometrically embedded.

\begin{lemma}\label{lemma:graph_is_isometrically_embedded}
    Let \(\X = (\Gamma,X_-,Y_-,\alpha_-)\) be a $(\delta, C)$-uniformly hyperbolic graph of spaces, and 
    consider the subspace
    \begin{equation*}
        \abs\Y =\underset{e \in E(\Gamma)}{\bigcup} \{y_e\} \times [0,1] \subset \abs\X, 
    \end{equation*}
    where \(y_e\) are the points given by uniform hyperbolicity.
    Then~\(|\mathbf{Y}|\) equipped with the subspace metric \(\dist_\mathbf{Y}\) induced from~$|\X|$ is a geodesic and convex subspace of~$|\X|$ and isometric to the geometric realisation~\(|\Gamma|\) of~$\Gamma$.
\end{lemma}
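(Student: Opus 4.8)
The plan is to build an explicit \(1\)-Lipschitz retraction \(r \colon \abs{\X} \to \abs{\Y}\) onto the embedded graph and to extract both the metric identification and the convexity statement from it. Define \(r\) piecewise: on each vertex space \(X_v\) let \(r\) be the constant map with value the basepoint \(y_v\), and on each edge cylinder \(Z_e = Y_e \times [0,1]\) let \(r(a,t) = (y_e,t)\). First I would check that \(r\) descends to the quotient \(\abs{\X} = A/{\sim}\): a point in the overlap of \(Z_e\) with \(X_{\tau(e)}\) is represented both as \((a,0)\) and as \(\alpha_e(a)\), and the two formulas give \((y_e,0)\) and \(y_{\tau(e)}\), which agree because \((y_e,0) \sim \alpha_e(y_e) = y_{\tau(e)}\) by the basepoint property (and symmetrically at the \(t=1\) end). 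Each \(r|_{X_v}\) is constant and each \(r|_{Z_e}\) is orthogonal projection onto the second factor, so \(r\) is \(1\)-Lipschitz on every piece and maps every piece into itself; applying \(r\) termwise to a string \(S\) then yields a string \(r(S)\) with the same piece decomposition and \(\ell(r(S)) \le \ell(S)\), whence \(r\) is globally \(1\)-Lipschitz. Finally, \(r\) fixes \(\abs{\Y}\) pointwise.

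Next I would identify the subspace metric on \(\abs{\Y}\) with the path metric on \(\abs{\Gamma}\), under the evident identification of \(\abs{\Gamma}\) with \(\abs{\Y}\) sending a vertex \(v\) to \(y_v\) and an edge \(e\) to the arc \(\{y_e\}\times[0,1]\) (distinct such arcs meeting only at the points \(y_v\), again by the basepoint property, and each carrying the standard unit metric as a subset of the product metric on \(Z_e\)). For one inequality, a shortest edge-path in \(\abs{\Gamma}\) between two points of \(\abs{\Y}\) corresponds under this identification to a string in \(\X\) of equal length, so \(\dist_{\X}(x,y) \le \dist_{\abs{\Gamma}}(x,y)\). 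Conversely, for any string \(S\) from \(x\) to \(y\) in \(\X\), the string \(r(S)\) lies in \(\abs{\Y}\), has \(\ell(r(S)) \le \ell(S)\), and its consecutive points lie in common pieces and so trace out a path in \(\abs{\Gamma}\) of length \(\ell(r(S))\); hence \(\ell(S) \ge \dist_{\abs{\Gamma}}(x,y)\), and taking the infimum gives \(\dist_{\X}(x,y) \ge \dist_{\abs{\Gamma}}(x,y)\). Thus \((\abs{\Y},\dist_{\Y})\) is isometric to \(\abs{\Gamma}\). Since a connected metric graph with unit edge lengths is a geodesic metric space, \(\abs{\Y}\) is geodesic, and any geodesic of \((\abs{\Y},\dist_{\Y})\) realises the \(\dist_{\X}\)-distance between its endpoints, hence is a geodesic of \(\abs{\X}\); this yields the first two claims.

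For convexity I must show that every geodesic \(\gamma \colon [0,D] \to \abs{\X}\) with \(\gamma(0),\gamma(D) \in \abs{\Y}\) lies in \(\abs{\Y}\). Using Proposition~\ref{prop:graph-of-spaces-is-proper-geodesic}, write \(\gamma\) as a concatenation of piece-geodesics along a reduced string \(S_\gamma = (z_0,\dots,z_n)\) with pieces \(P_1,\dots,P_n\), so that \(\ell(S_\gamma) = D\). Since \(r(S_\gamma)\) is a string between \(\gamma(0)\) and \(\gamma(D)\), we have \(D = \dist_{\X}(\gamma(0),\gamma(D)) \le \ell(r(S_\gamma)) \le \ell(S_\gamma) = D\), so equality holds throughout and \(\dist_{P_i}(r(z_{i-1}),r(z_i)) = \dist_{P_i}(z_{i-1},z_i)\) for every \(i\). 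On a vertex-space piece this forces \(z_{i-1} = z_i\), so \(\gamma\) is stationary there; on an edge-cylinder piece \(Z_e\) it forces \(z_{i-1}\) and \(z_i\) to have the same \(Y_e\)-coordinate, so the corresponding segment of \(\gamma\) is ``vertical''. Starting from \(z_0 \in \abs{\Y}\) and propagating along the alternating sequence of pieces — verticality keeps the \(Y_e\)-coordinate fixed across each cylinder, and stationarity on each vertex piece forces transition points to inherit that coordinate, which must therefore equal \(y_e\) and so map to a basepoint \(y_v\) — one concludes inductively that every \(z_i\) lies in \(\abs{\Y}\) and every segment of \(\gamma\) lies in an arc \(\{y_e\}\times[0,1] \subseteq \abs{\Y}\) or is a point. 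Hence \(\gamma \subseteq \abs{\Y}\).

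The main obstacle is this final propagation argument: it requires careful bookkeeping of edge orientations and of which boundary component of each cylinder is glued to which vertex space, and it is exactly here that the acylindricity clause of uniform hyperbolicity enters, via the fact that every edge-space image in a vertex space passes through the common basepoint. Checking that \(r\) is well defined across the gluings, while routine, is likewise essential and must be done with care.
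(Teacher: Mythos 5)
Your construction is essentially the paper's own: the retraction \(r\) is precisely the projection \(\Pi\) used there (constant equal to \(y_v\) on each \(X_v\), \((z,t)\mapsto(y_e,t)\) on each \(Z_e\)), and your identification of \((\abs\Y,\dist_\Y)\) with \(\abs\Gamma\) plays the role of the paper's map \(\Psi\). The metric comparison and geodesicity are fine — the paper obtains the lower bound by counting traversed edge cylinders, you obtain it by pushing strings through \(r\); both are correct, and yours is arguably cleaner.

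The one genuine issue is in the convexity step. You invoke Proposition~\ref{prop:graph-of-spaces-is-proper-geodesic} to decompose an arbitrary geodesic of \(\abs\X\) with endpoints in \(\abs\Y\) into a finite reduced string of piece-geodesics, but that proposition is proved only under Convention~\ref{convention:assumptions_for_hyperbolicity_proof} (locally finite \(\Gamma\), complete and locally compact pieces), which is not a hypothesis of this lemma; uniform hyperbolicity alone does not guarantee that a geodesic meets only finitely many pieces, so the string \(S_\gamma\) on which your equality-case and propagation argument runs need not exist in the stated generality. The paper avoids this by working with arbitrary rectifiable paths: \(\Pi\) is length non-increasing on pieces and \emph{strictly} length decreasing on any subsegment lying in a single piece that meets both \(\abs\Y\) and its complement; since any two points of \(\abs\Y\) are already joined by a geodesic inside \(\abs\Y\) realising their \(\dist_\X\)-distance, no geodesic between them can contain such a segment, hence all geodesics stay in \(\abs\Y\). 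Your equality-case idea can be repaired in the same spirit (apply it to subsegments of the geodesic inside individual pieces rather than to a global string decomposition), but as written it proves the convexity claim only under the extra standing assumptions. Two smaller points you should spell out: in an edge cylinder with the \(\ell_2\)-metric, equality of \(\dist_{Z_e}(z_{i-1},z_i)\) with the vertical displacement forces not only equal \(Y_e\)-coordinates of the endpoints but that the geodesic between them is the vertical segment; and the propagation uses \(\abs\Y\cap X_v=\{y_v\}\), which is exactly the basepoint clause of uniform hyperbolicity.
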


\begin{proof}
    Define a map
    \begin{align*}
        \Phi\colon V(\Gamma)&\to \{(y_e,1):\, e\in E(\Gamma)\},\\
        v&\mapsto (y_e,1),  \text{ where }e\text{ is such that }v=\tau(e).
    \end{align*}
    Note that~$\Phi(v)$ exists for every vertex $v\in V(\Gamma)$ because~$\Gamma$ is connected.
    The definition of uniform hyperbolicity implies that $\Phi(v)$ is well defined, for whenever we have two edges~$e$ and~$e^\prime$ with $\tau(e)=\tau(e^\prime)$ then $\alpha_e(y_e) = \alpha_{e^\prime}(y_{e^\prime})$ in \(X_{\tau(e)}\). 
    In other words, $(y_e, 1)=(y_{e^\prime}, 1)$ in~$|\X|$.

    This can be extended to a map $\Psi\colon |\Gamma|\to |\mathbf{Y}|$.
    Let \(e \in E(\Gamma)\) be an edge with \(\iota(e) = u\) and \(\tau(e) = v\).
    We map a point~$t \in [0,1]$ in the interior of the realisation of edge \(e\) to $(y_e, t) \in Z_e$. 
    The map~$\Psi$ is a continuous bijection. 
    We will now show that~$\Psi$ is an isometry.

    Let $\varepsilon>0$, let $x, y\in |\mathbf{Y}|$ and let $p\subset |\mathbf{X}|$ be a path from~$x$ to~$y$ such that $\ell(p) \leq d_{\X}(x, y)+\varepsilon$. 
    As auxiliary terminology, we will say that $p$ \emph{traverses an edge cylinder}~$Z_e$ whenever $p\cap Z_e$ is connected and $p\cap X_{i(e)}\neq \emptyset, p\cap X_{\tau(e)}\neq \emptyset$. 
    Of course, \[\ell(p)\geq \# \{e\in E(\Gamma) : p \text{ traverses }Z_e\},\] so that when $x, y\in \Psi(V(\Gamma))$ we can already conclude that \[d_{\mathbf{Y}}(x,y)=d_{\mathbf{X}}(x,y) \geq d_{\Gamma}(\Psi^{-1}(x),\Psi^{-1}(y)) - \varepsilon.\]
    In the general case one needs to add the distance to the first and last vertex, which by construction of~$\Psi$ is at least as large as the corresponding distance in the graph. The same inequality of distances follows.
    As \(\varepsilon > 0\) was arbitrary and \(\Psi\) is a bijection, we have \(\dist_\Y(\Psi(u),\Psi(v)) \geq \dist_\Gamma(u,v)\) for all \(u,v \in |\Gamma|\).

    On the other hand, for every pair of points $\Psi(u), \Psi(v)$ in~$|\mathbf{Y}|$, let~$p$ be the geodesic in~$|\Gamma|$ from $u$ to $v$. 
    Then $\Psi(p)$ is a path from~$\Psi(u)$ to~$\Psi(v)$ whose length is equal to $\ell_\Gamma(p)$, so that $d_{\mathbf{Y}}(\Psi(u),\Psi(v)) \leq d_\Gamma (u, v)$. 
    Thus \(\Psi\) is an isometry between \(\abs\Gamma\) and \(\Y\).
    As \(\abs\Gamma\) is geodesic, so $|\mathbf{Y}|$ is geodesic. 

    It remains to show that $|\mathbf{Y}|$ is convex. We define a projection 
    \begin{align*}
        \Pi\colon |\X|&\to |\mathbf{Y}|\\  x&\mapsto \begin{cases}
            \alpha_e(y_e) & \text{ if }x\in X_{\tau(e)},\\
            (y_e, t) & \text{ if }x=(z,t)\in Z_e.
        \end{cases}  
    \end{align*}
    Now~$\Pi$ is continuous and length non-increasing on pieces, so that $\ell(p)\geq \ell(\Pi(p))$ for any rectifiable path \(p\) in \(\X\).
    Further, whenever~$p$ intersects a piece~$P$ in a segment~$q$ such that $q\cap |\mathbf{Y}|\neq\emptyset$ and $q\cap (|\X|- |\mathbf{Y}|)\neq\emptyset$, then $\ell_P(q)>\ell_P(\Pi(q))$, so the above length inequality is strict. 
    We already know that any two points $x, y\in |\mathbf{Y}|$ can be joined by a geodesic contained entirely in~$|\mathbf{Y}|$, so every other geodesic also has to be contained entirely in~$|\mathbf{Y}|$.
\end{proof}

\subsection{Geodesics in uniformly hyperbolic graphs of spaces}\label{subsection:geodesics_in_uniformly_hyperbolic_graphs_of_spaces}
This subsection is devoted to understanding geodesics in uniformly hyperbolic graphs of spaces.
We will show that geodesics in a uniformly hyperbolic graph of spaces trace out quasi-geodesics in the underlying graph.
We begin by analysing strings whose lengths are close to realising the distances between points.
It turns out that such strings cannot travel too deep into any vertex piece, besides the start and end terms.

\begin{lemma}
\label{lem:almost_geodesics_stay_close_to_basepoint}
    Let \(\X = (\Gamma,X_-,Y_-,\alpha_-)\) be a \((\delta,C)\)-uniformly hyperbolic graph of spaces, and let \(x, y \in \abs{\X}\).
    There is \(D = D(\delta,C) \geq 0\) such that for any \(\varepsilon > 0\), the following holds.
    
    Let \(S = (x_0, \dots, x_m)\) be a reduced string in \(\X\) with \(\ell(S) \leq \dist_\X(x_0,x_m) + \varepsilon\).
    For each \(i = 1, \dots, m\), write \(P_i\) for the piece of \(\X\) containing \(x_{i-1}\) and \(x_i\) and suppose that \(P_1\) and \(P_m\) are vertex spaces of \(\X\).
    If \(P_i = X_v\) is a vertex space then the following is true:
    \begin{itemize}
        \item if \(2 \leq i \leq m-3\), then \(x_i \in B_{X_v}(y_v;D+\varepsilon)\); and
        \item if \(4 \leq i \leq m-1\), then \(x_{i-1} \in B_{X_v}(y_v;D+\varepsilon)\).
    \end{itemize}
    where \(y_{v}\) is the basepoint of \(X_v\) given by the definition of uniform hyperbolicity.
\end{lemma}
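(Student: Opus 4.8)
The plan is to exploit the acylindricity condition (2) in the definition of uniform hyperbolicity, which controls the Gromov products of distinct edge-space images at the basepoint. Fix a vertex piece \(P_i = X_v\) occurring strictly in the interior of the reduced string, so that \(P_{i-1}\) and \(P_{i+1}\) are edge cylinders \(Z_e\), \(Z_{e'}\) with \(e \ne e'\) (reducedness forbids \(e = e'\) among consecutive triples). The points \(x_{i-1}\) and \(x_i\) lie on \(\alpha_e(Y_e)\) and \(\alpha_{e'}(Y_{e'})\) respectively (strictly speaking on the copies of the edge spaces inside \(X_v\) that meet \(Z_{i-1}\), \(Z_{i+1}\)), and these images pass uniformly quickly away from the basepoint \(y_v\): condition (2) gives \(\langle \alpha_e(Y_e), \alpha_{e'}(Y_{e'}) \rangle_{y_v} \leq C\). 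The strategy is: (a) show that \(x_i\) is \emph{close to the basepoint} \(y_v\), because otherwise the string could be shortened by re-routing through \(y_v\); (b) quantify "close" in terms of \(\delta\), \(C\), and the excess \(\varepsilon\).

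First I would observe that the terms \(x_{i-1}\) and \(x_i\) lie on the edge cylinders \(Z_{i-1}, Z_{i+1}\) which attach to \(X_v\) at the subsets \(\alpha_{e}(Y_e)\) and \(\alpha_{e'}(Y_{e'})\). The portion of \(S\) from \(x_{i-1}\) to \(x_i\) has length at least \(\dist_{X_v}(x_{i-1},x_i)\), and one may construct an alternative string that goes from \(x_{i-1}\) to \(y_v\) to \(x_i\) inside \(X_v\), then re-attaches to the same cylinders; its length through \(X_v\) is \(\dist_{X_v}(x_{i-1},y_v) + \dist_{X_v}(y_v,x_i)\). Near-minimality of \(\ell(S)\) forces
\[
\dist_{X_v}(x_{i-1},y_v) + \dist_{X_v}(y_v,x_i) \leq \dist_{X_v}(x_{i-1},x_i) + \varepsilon,
\]
which rearranges to \(\langle x_{i-1}, x_i \rangle_{y_v} \leq \varepsilon/2\). (Here one must be a little careful: re-routing through \(y_v\) and back requires that the cylinders still attach, which they do since \(\alpha_e(Y_e)\) is connected and contains \(y_v\) — this is exactly why the basepoint lies in every incident edge-space image; alternatively one uses that the attaching maps \(\alpha_e\) are isometric embeddings and the relevant string just needs its successive terms to share a piece, so inserting \(y_v\) between \(x_{i-1}\) and \(x_i\) within \(X_v\) is always legal.) Now combine this with the acylindricity bound. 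Since \(x_{i-1} \in \alpha_e(Y_e)\), \(x_i \in \alpha_{e'}(Y_{e'})\) and \(\langle \alpha_e(Y_e), \alpha_{e'}(Y_{e'})\rangle_{y_v} \leq C\), we get \(\langle x_{i-1}, x_i\rangle_{y_v} \leq C\) for free — but this does not directly bound \(\dist(x_i, y_v)\). To get that, apply the four-point condition (Remark~\ref{rem:four_point}) in \(X_v\) to \(x_{i-1}, x_i, y_v\) together with a further term, or more cleanly: the \emph{next} vertex-piece detour argument shows \(x_i\) must be reachable cheaply from \(y_v\); combined with the divergence of \(\alpha_e(Y_e)\) and \(\alpha_{e'}(Y_{e'})\), a point on both "far" from \(y_v\) would violate \(\langle \cdot,\cdot\rangle_{y_v} \le C\). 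Concretely: if \(\dist_{X_v}(x_i, y_v)\) were large, then since \(x_i\) also lies near the geodesic \([x_{i-1}, x_i]\) and \(\langle x_{i-1},x_i\rangle_{y_v}\) is tiny, the geodesic \([y_v, x_i]\) fellow-travels \([y_v, x_{i-1}]\) for a distance roughly \(\dist(x_i, y_v) - \varepsilon\); but that geodesic also fellow-travels \(\alpha_{e'}(Y_{e'})\) (quasiconvexity, Remark~\ref{rem:edge_pieces_quasiconvex}) and \([y_v,x_{i-1}]\) fellow-travels \(\alpha_e(Y_e)\), contradicting \(\langle \alpha_e(Y_e),\alpha_{e'}(Y_{e'})\rangle_{y_v} \le C\) once \(\dist(x_i,y_v) > C + O(\delta)\). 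This yields \(x_i \in B_{X_v}(y_v; D+\varepsilon)\) with \(D = D(\delta,C)\) (something like \(D = 2C + 100\delta\) after bookkeeping).

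The statement about \(x_{i-1}\) in the range \(4 \leq i \leq m-1\) follows by the symmetric argument applied at the other end of the vertex piece: when \(4 \le i\), the piece \(P_{i-1} = X_v\) has both its predecessor \(P_{i-2}\) and successor \(P_i\) being edge cylinders attached via distinct edges, and the analysis of \(x_{i-1}\) (now playing the role of "the deeper term" of the piece) is identical. The index restrictions \(2 \le i \le m-3\) and \(4 \le i \le m-1\) are precisely what guarantee two \emph{distinct} edge cylinders flank the relevant term (so that the acylindricity hypothesis, which needs \(e \ne e'\), applies) while also leaving the terminal pieces \(P_1, P_m\) — where the string's endpoints live and may be arbitrarily deep — outside the scope of the claim. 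I expect the main obstacle to be the careful case analysis ensuring the flanking pieces are genuinely distinct edge cylinders and correctly tracking which of \(x_{i-1}, x_i\) is forced near \(y_v\); the metric estimate itself is a routine application of thin triangles, the four-point inequality, and quasiconvexity of edge-space images once the combinatorial setup is pinned down.
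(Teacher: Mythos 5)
There is a genuine gap in your re-routing step. You claim that near-minimality of \(\ell(S)\) forces \(\dist_{X_v}(x_{i-1},y_v) + \dist_{X_v}(y_v,x_i) \leq \dist_{X_v}(x_{i-1},x_i) + \varepsilon\), but this does not follow. The left-hand side equals \(\dist_{X_v}(x_{i-1},x_i) + 2\langle x_{i-1},x_i\rangle_{y_v}\), so inserting \(y_v\) as a waypoint between \(x_{i-1}\) and \(x_i\) always \emph{increases} the length of the string; near-minimality only asserts that no competing string is shorter than \(\ell(S)-\varepsilon\), which puts no constraint on one that is already longer. You do then fall back on \(\langle x_{i-1},x_i\rangle_{y_v} \leq C\), which is correct (it is precisely the acylindricity hypothesis), but the salvage argument then runs the Gromov product the wrong way: a \emph{small} Gromov product \(\langle x_{i-1},x_i\rangle_{y_v}\) means that \([y_v,x_{i-1}]\) and \([y_v,x_i]\) fellow-travel only for a distance of order \(\langle x_{i-1},x_i\rangle_{y_v}\), at most about \(C\), and then \emph{diverge}; it does not force them to fellow-travel for \(\dist(x_i,y_v) - \varepsilon\), and the contradiction you seek never materialises. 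Indeed nothing in the constraints you invoke precludes \(x_{i-1}\) lying at or near \(y_v\) while \(x_i\) is arbitrarily far from \(y_v\) but still on \(\alpha_{e'}(Y_{e'})\). So a shortcut confined to a single vertex piece cannot recover the conclusion.

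What the paper actually does is shortcut through \emph{two} consecutive vertex pieces, which is where the genuine savings arise. Writing \(P_i = X_v\) and \(P_{i+2} = X_{v'}\), the acylindricity hypothesis together with Lemma~\ref{lem:basically_thin_triangles} produce points \(s\) on the geodesic \([x_{i-1},x_i]\) in \(X_v\) and \(t\) on \([x_{i+1},x_{i+2}]\) in \(X_{v'}\) with \(\dist(s,y_v),\ \dist(t,y_{v'}) \leq C+\delta\). Replacing the stretch \(x_{i-1}\to x_i \to x_{i+1} \to x_{i+2}\) of \(S\) by \(x_{i-1}\to s \to y_v \to y_{v'}\to t \to x_{i+2}\) gives a string \(S'\) with
\[
\ell(S) - \ell(S')\ \geq\ \dist_{X_v}(y_v,x_i) + \dist_{X_{v'}}(y_{v'},x_{i+1}) - (4C + 4\delta + 1),
\]
because the two new legs \(s\to y_v\) and \(y_{v'}\to t\) cost at most \(C+\delta\) each, the geodesic subsegments they replace have length at least \(\dist(y_v,x_i)-(C+\delta)\) and \(\dist(y_{v'},x_{i+1})-(C+\delta)\) respectively, and the cylinder crossing \(y_v\to y_{v'}\) costs exactly \(1\). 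Combining with \(\ell(S')\geq\dist_\X(x_0,x_m)\geq\ell(S)-\varepsilon\) gives the bound with \(D = 4C+4\delta+1\). This is also the real role of the index restrictions, which you misidentify: the reduced-string condition alone already yields distinct flanking edge cylinders, so \(2\leq i\leq m-3\) is not needed for that. Rather, \(i\leq m-3\) ensures that \(P_{i+2}\) exists and is itself flanked on the far side by a further edge cylinder \(P_{i+3}\), so the same thin-triangles argument can be applied in \(X_{v'}\); the restriction \(4\leq i\leq m-1\) for the second bullet plays the mirrored role with respect to \(P_{i-2}\) and \(P_{i-3}\).
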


\begin{proof}
    Define \(D = 4C + 4\delta + 1\).
    We will show the first inclusion in the statement, for the latter will follow from an exactly symmetrical argument.
    
    Let \(2 \leq i \leq m-3\) and suppose \(P_i\) corresponds to a vertex space \(X_v\) as in the statement.
    Since \(S\) is a reduced string, \(P_{i-1}\), \(P_{i}\), and \(P_{i+1}\) are distinct pieces of \(\X\).
    Thus, \(P_{i-1}\) and \(P_{i+1}\) correspond to distinct edge cylinders \(Z_e\) and \(Z_{e'}\) incident on \(X_v\).
    As \(i \leq m-3\) and \(S\) is reduced, \(P_{i+2}\) is a vertex space \(X_{v'}\) distinct from \(X_v\).
    We will first consider the former case.
    
    Let \(y_v \in X_v\) be the basepoint of \(X_v\) given by uniform hyperbolicity.
    Lemma~\ref{lem:basically_thin_triangles} gives us a point \(s\) on the \(X_v\)-geodesic \([x_i,x_{i+1}]\) such that
    \begin{equation}
    \label{eq:dist_from_basepoint_in_Xv}
        \dist_{X}(s,y_v) \leq C + \delta.
    \end{equation}
    Moreover, observe that \(i+2 \leq m-1\), and so \(P_{i+3}\) must be an edge cylinder adjacent to \(P_{i+2}\).
    Applying the same argument as above to \(P_{i+2}\), there is a point \(t\) on the \(X_{v'}\)-geodesic \([x_{i+2},x_{i+3}]\) such that
    \begin{equation}
    \label{eq:dist_from_basepoint_in_Xv'}
        \dist_{X}(t,y_{v'}) \leq C + \delta,
    \end{equation}
    similarly writing \(y_{v'}\) for the basepoint of \(X_v\).
    Define the string \[S' = (x_0, \dots x_i,s,y_v,y_{v'},t,x_{i+3}, \dots, x_m).\] 
    Now \(S'\) by definition has length 
    \begin{equation}
    \label{eq:len_of_shortened_string}
    \begin{split}
        \ell(S') = \ell(S) &- \dist_{P_i}(s,x_{i+1}) - \dist_{P_{i+1}}(x_{i+1},x_{i+2}) - \dist_{P_{i+2}}(x_{i+2},t) \\
            &+ \dist_{P_i}(s,y_v) + \dist_{P_{i+1}}(y_v,y_{v'}) + \dist_{P_{i+2}}(y_{v'},t).
    \end{split}
    \end{equation}
    
    The triangle inequality combined with (\ref{eq:dist_from_basepoint_in_Xv}) implies that
    \begin{equation*}
        \dist_{P_i}(s,y_v) - \dist_{P_i}(s,x_{i+1}) \leq 2C + 2\delta - \dist_{P_i}(x_{i+1},y_v)
    \end{equation*}
    and similarly (\ref{eq:dist_from_basepoint_in_Xv'}) gives that
    \[
        \dist_{P_{i+2}}(t,y_{v'}) - \dist_{P_{i+2}}(t,x_{i+2}) \leq 2C + 2\delta - \dist_{P_{i+2}}(x_{i+2},y_{v'}).
    \]
    As \(\dist_{P_{i+1}}(y_v,y_{v'}) = 1\) by definition, combining the above two equations with (\ref{eq:len_of_shortened_string}) and the lemma hypothesis yields
    \[\ell(S') \leq \dist_\X(x_0,x_m) - \dist_{P_i}(x_{i+1},y_v) - \dist_{P_{i+2}}(x_{i+2},y_{v'}) + \varepsilon + 4C + 4\delta + 1.\]
    Finally, we have \(\ell(S') \geq \dist_{\X}(x_0,x_m)\) by definition of the metric, so we obtain the inequality \(\dist_{P_i}(x_{i+1},y_v) \leq D + \varepsilon\).
\end{proof}

Reduced strings close to realising the distance between two points track a path through the underlying graph that is close to being a geodesic.

\begin{proposition}
\label{prop:almost_geodesics_give_graph_quasigeodesics}
    Let \(\X = (\Gamma,X_-,Y_-,\alpha_-)\) be a \((\delta,C)\)-uniformly hyperbolic graph of spaces. 
    There is a constant \(K = K(\delta,C) \geq 0\) such that the following is true.

    Let \(u, v \in V(\Gamma)\) and \(x \in X_u, y \in X_v\), and let \(\varepsilon > 0\).
    Suppose that \(S\) is a reduced string between \(x\) and \(y\) with \(\ell(S) \leq \dist_\X(x,y) + \varepsilon\).
    Write \(v_1, \dots, v_n \in V(\Gamma)\) for the sequence of vertices for which a term in \(S\) is contained in \(X_{v_i}\), in the order they appear.
    Then \[n \leq \dist_\Gamma(u,v) + K + 5\varepsilon.\]
    In particular, the path in \(\Gamma\) with vertices \(v_1, \dots, v_n\) is a \((1,K+5\varepsilon)\)-quasi-geodesic between \(u\) and \(v\) in \(\Gamma\).
\end{proposition}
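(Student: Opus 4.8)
The plan is to pit two estimates on $\ell(S)$ against one another. On one side there is the trivial observation that whenever $S$ crosses from one side of an edge cylinder to the other it picks up length at least $1$; a reduced string through $n$ distinct vertex pieces crosses $n-1$ such cylinders, and more usefully the same holds for any sub-string. On the other side, the real input is Lemma~\ref{lem:almost_geodesics_stay_close_to_basepoint}: because $S$ is within $\varepsilon$ of being length-minimising, every term of $S$ lying in a vertex piece $X_v$ other than the first two and last two is within $D+\varepsilon$ of the basepoint $y_v$, for a constant $D = D(\delta,C)$. Hence the ``bulk'' of $S$ is, up to a bounded additive error, a string between two points of the isometrically and convexly embedded copy $\abs\Y \cong \abs\Gamma$ provided by Lemma~\ref{lemma:graph_is_isometrically_embedded}, inside which the metric is just the graph metric of $\Gamma$. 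Comparing the two estimates will pin $n$ to $\dist_\Gamma(u,v)$ up to a bounded error.

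Concretely, I would first reduce to the case where the first and last pieces of $S$ are vertex spaces: if $x$ or $y$ happens to lie on an edge cylinder, prepend or append a trivial piece in the ambient vertex space, which since $\Gamma$ is simplicial leaves $S$ reduced and changes neither $\ell(S)$ nor the vertex sequence $v_1,\dots,v_n$. Writing $S = (x_0,\dots,x_m)$, reducedness and alternation then force $m = 2n-1$, with vertex pieces $P_{2j-1} = X_{v_j}$ and edge cylinders $P_{2j} = Z_{e_j}$ for an edge $e_j$ joining $v_j$ and $v_{j+1}$; in particular $v_1,\dots,v_n$ is an edge path from $u$ to $v$, so $\dist_\Gamma(u,v) \le n-1$. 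I would set $K = 2D+5$; since $K \ge 3$, the claimed inequality holds trivially for $n \le 3$, so assume $n \ge 4$.

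Now I would apply Lemma~\ref{lem:almost_geodesics_stay_close_to_basepoint} to the vertex pieces $X_{v_2}$ and $X_{v_{n-1}}$, which for $n \ge 4$ yields terms $x', x''$ of $S$ with $x' \in X_{v_2}$, $x'' \in X_{v_{n-1}}$, $\dist_{X_{v_2}}(x',y_{v_2}) \le D+\varepsilon$, $\dist_{X_{v_{n-1}}}(x'',y_{v_{n-1}}) \le D+\varepsilon$, chosen so that the sub-string $S''$ of $S$ running from $x'$ to $x''$ crosses exactly the $n-3$ edge cylinders $Z_{e_2},\dots,Z_{e_{n-2}}$; hence $\ell(S'') \ge n-3$. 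Splitting $S$ at $x'$ and $x''$, using that the two outer sub-strings have length at least $\dist_\X(x,x')$ and $\dist_\X(x'',y)$, and invoking $\ell(S) \le \dist_\X(x,y)+\varepsilon$ together with the triangle inequality gives $\ell(S'') \le \dist_\X(x',x'')+\varepsilon$. Finally, since $\dist_\X$ is dominated by the metric of each vertex space and agrees with the graph metric on the convex subset $\abs\Y$ under the correspondence $y_{v_j}\leftrightarrow v_j$, the triangle inequality yields $\dist_\X(x',x'') \le 2(D+\varepsilon) + \dist_\Gamma(v_2,v_{n-1}) \le 2D+2\varepsilon+\dist_\Gamma(u,v)+2$, where the last step uses that $v_2$ and $v_{n-1}$ are neighbours of $u$ and $v$. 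Chaining these bounds gives $n-3 \le 2D+\dist_\Gamma(u,v)+2+3\varepsilon$, i.e. $n \le \dist_\Gamma(u,v)+K+3\varepsilon$, which is stronger than claimed. The quasi-geodesic assertion then follows formally: the edge path $v_1,\dots,v_n$ has length $n-1$ and excess $n-1-\dist_\Gamma(u,v) \le K+5\varepsilon$, and excess is non-negative and superadditive under subdivision, so every sub-path has excess at most $K+5\varepsilon$, which says precisely that $i \mapsto v_{i+1}$ is a $(1, K+5\varepsilon)$-quasi-geodesic.

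The only genuinely substantive ingredient is Lemma~\ref{lem:almost_geodesics_stay_close_to_basepoint}, which is already established, so the work that remains is essentially bookkeeping: translating between the string index $i$ appearing in that lemma and the vertex-list index $j$, pinning down exactly which terms $x', x''$ to use and which edge cylinders $S''$ crosses, and verifying that the argument degenerates gracefully for the small values $n \le 3$ (and at the boundary $n = 4$).
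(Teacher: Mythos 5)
Your proof is correct, and in fact gives a slightly sharper constant ($K = 2D+5$ versus the paper's $K = 4C+4\delta+4D+5$), but the organisation is genuinely different from the paper's. Both proofs hinge on Lemma~\ref{lem:almost_geodesics_stay_close_to_basepoint}, but the paper works directly with the full string: it introduces auxiliary near-basepoint points $s$, $t$ via Lemma~\ref{lem:basically_thin_triangles} and then runs a fairly intricate chain of triangle inequalities to isolate the telescoping quantity $\sum_{i=2}^{n-2}\dist_{P_{2i}}(y_{v_i},y_{v_{i+1}}) = n-3$ and bound it by $\dist_\Gamma(u,v)$ plus errors. You instead isolate the sub-string $S''$ between the two terms $x_3, x_{m-3}$ that Lemma~\ref{lem:almost_geodesics_stay_close_to_basepoint} pins near basepoints, use the elementary ``substrings of $\varepsilon$-taut strings are $\varepsilon$-taut'' observation to control $\ell(S'')$ from above, lower-bound $\ell(S'')$ by $n-3$ by counting edge-cylinder crossings, and route the upper bound through the embedded copy $\abs\Y$ of $\Gamma$ from Lemma~\ref{lemma:graph_is_isometrically_embedded}. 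This modularity avoids the paper's bookkeeping, and your explicit superadditivity-of-excess argument for the ``in particular'' clause fills in a step the paper leaves implicit. Two small points worth fixing: (i) the claim that prepending a trivial vertex piece ``changes neither $\ell(S)$ nor the vertex sequence'' is slightly off -- what you actually want, and what is true, is that the modification lets you assume $P_1 = X_u$ and $P_m = X_v$ so that $v_1 = u$, $v_n = v$; and (ii) for bounding $\dist_\X(x_3,x_{m-3})$ you only need the easy inequality $\dist_\X(y_{v_2},y_{v_{n-1}}) \le \dist_\Gamma(v_2,v_{n-1})$, which follows from exhibiting a string through the edge cylinders along a $\Gamma$-geodesic, rather than the full force of Lemma~\ref{lemma:graph_is_isometrically_embedded}.
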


\begin{proof}
    Let \(S = (x_0, \dots, x_m)\) be a reduced string between \(x = x_0\) and \(y = x_n\) as in the statement, with \(\dist_\X(x,y) \leq \ell(S) + \varepsilon\).
    For each \(i = 1, \dots, m\), denote by \(P_i\) the piece of \(\X\) containing \(x_{i-1}\) and \(x_i\).
    As \(S\) is reduced, the pieces \(P_i\) alternate between vertex spaces and edge cylinders.
    Since \(P_1\) and \(P_m\) are both vertex spaces, this means that \(m\) must be odd.
    We thus have that \(m = 2n-1\) and \(P_{2i-1}\) is the vertex space \(X_{v_{i}}\) for each $i=1, \dots, n$.
    Without loss of generality we may take \(n \geq 3\), for otherwise setting \(K = 2\) gives us the statement trivially.
    
    Let \(y_{v_1}, \dots, y_{v_n}\) be the basepoints of vertex spaces \(X_{v_1}, \dots, X_{v_n}\), and \(D\) the constant given by Lemma~\ref{lem:almost_geodesics_stay_close_to_basepoint} applied to \(S\).
    We will use these points to obtain our estimate for \(n\).
    Take \(s \in P_3\) to be a point on a \(P_3\)-geodesic \([x_2,x_3]\) with 
    \begin{equation}
    \label{eq:dist_s_estimate}
        \dist_{P_3}(s,y_{v_{2}}) \leq C + \delta,    
    \end{equation}
    which exists by Lemma~\ref{lem:basically_thin_triangles}.
    Likewise, there is a point \(t \in P_{m-2}\) on \(P_{m-2}\)-geodesic \([x_{m-3},x_{m-2}]\) with
    \begin{equation}
    \label{eq:dist_t_estimate}
        \dist_{P_{m-2}}(t,y_{v_{n-1}}) \leq C + \delta.    
    \end{equation}
    By the definition of uniform hyperbolicity, we have \(\dist_{P_{2k}}(y_{v_k},y_{v_{k+1}}) = 1\) for each \(k= 1, \dots, n-1\). 

    Recall that every edge space of \(\X\) is isometrically embedded in its adjacent vertex spaces, so that
    \[
        \dist_{P_{2k}}(a,b) = \dist_{P_{2k-1}}(a,b) = \dist_{P_{2k+1}}(a,b) \quad \textnormal{ for all } k = 1, \dots, n-1
    \]
    whenever \(a, b \in P_{2k} = Y \times [0,1]\) are elements with the same second component, where \(Y\) is the edge space corresponding to the cylinder \(P_{2k}\).
    We will use this fact freely below without further reference.

    Observe that \(\dist_{P_1}(x_1,y_{v_1}) \leq \dist_{P_2}(x_1,x_2) + \dist_{P_2}(x_2,y_{v_2}) + \dist_{P_2}(y_{v_1},y_{v_2})\) by the triangle inequality.
    Moreover, \(\dist_{P_2}(x_2,y_{v_2}) \leq \dist_{P_3}(x_2,s) + \dist_{P_3}(s,y_{v_2})\).
    Similar estimates hold for \(\dist_{P_m}(x_{m-1},y_{v_n})\) and \(\dist_{P_{m-2}}(x_{m-2},y_{v_{n-1}})\).
    Combining these with (\ref{eq:dist_s_estimate}) and (\ref{eq:dist_t_estimate}), we obtain
    \begin{equation}
    \label{eq:x1y1_estimate}
    \begin{split}
        \dist_{P_1}(x_1,&y_{v_1}) \leq \dist_{P_2}(x_1,x_2) + \dist_{P_3}(x_2,s) + C + \delta + 1 \quad \textnormal{and}\\
        \dist_{P_m}(x_{m-1},y_{v_n}&) \leq \dist_{P_{m-1}}(x_{m-1},x_{m-2}) + \dist_{P_{m-2}}(x_{m-2},t) + C + \delta + 1.
    \end{split}
    \end{equation}

    By the definition of uniform hyperbolicity, we have that \(\dist_{\X}(y_{v_1},y_{v_n}) \leq \dist_\Gamma(u,v)\).
    Now by the triangle inequality, 
    \[
        \begin{split}
            \dist_{\X}(x_0,x_m) \leq &\dist_{\X}(x_0,x_1) + \dist_{\X}(x_0,y_{v_1}) \\
                &+ \dist_\X(y_{v_1},y_{v_n}) + \dist_{\X}(y_{v_n},x_{m-1}) + \dist_{\X}(x_{m-1},x_m).    
        \end{split}
    \]
    Together with the pair of inequalities in (\ref{eq:x1y1_estimate}), this gives us that
    \begin{equation*}
    \begin{split}
        \dist_{\X}(x_0,x_m) \leq &\dist_{P_1}(x_0,x_1) + \dist_{P_2}(x_1,x_2) + \dist_{P_3}(x_2,s) + \dist_\Gamma(u,v) \\
            &+ \dist_{P_{m-1}}(x_{m-1},x_{m-2}) + \dist_{P_{m-2}}(x_{m-2},t) + 2C + 2\delta + 1.
    \end{split}
    \end{equation*}
    By (\ref{eq:dist_s_estimate}) and Lemma~\ref{lem:almost_geodesics_stay_close_to_basepoint}, we can also see that \(\dist_{P_3}(x_2,s) \leq \dist_{P_3}(x_2,x_3) + C + \delta + D + \varepsilon\).
    Again, a symmetrical estimate holds for \(\dist_{P_{m-2}}(x_{m-2},t)\).
    Therefore, we can deduce from the previous inequality that
    \begin{equation}
    \label{eq:dist_of_endpoints_in_base_vertex}
        \dist_{\X}(x_0,x_m) \leq \ell(S) - \sum_{i=4}^{m-3}\dist_{P_i}(x_{i-1},x_i) + \dist_\Gamma(u,v) + 4C + 4\delta + 2D + 2\varepsilon + 2.
    \end{equation}
    
    On the other hand, \(\ell(S) \leq \dist_{\X}(x_0,x_m) + \varepsilon\) by assumption, so that (\ref{eq:dist_of_endpoints_in_base_vertex}) yields
    \begin{equation}
    \label{eq:sum_of_xi_lens}
        \sum_{i=4}^{m-3}\dist_{P_i}(x_{i-1},x_i) \leq \dist_\Gamma(u,v) + 4C + 4\delta + 2D + 3\varepsilon + 2.
    \end{equation}
    Repeated applications of the reverse triangle inequality to the left hand side give us a lower bound on the left hand side sum:
    \[
        \sum_{i=2}^{n-2}\dist_{P_{2i}}({y_{v_i},y_{v_{i+1}}}) - \dist_{P_3}(x_3,y_{v_2}) - \dist_{P_{m-2}}(x_{m-3},y_{v_{n-1}}) \leq \sum_{i=4}^{m-3}\dist_{P_i}(x_{i-1},x_i).
    \]
    Combined with (\ref{eq:sum_of_xi_lens}) and Lemma~\ref{lem:almost_geodesics_stay_close_to_basepoint}, we have
    \[
        \sum_{i=2}^{n-2}\dist_{P_{2i}}(y_{v_i},y_{v_{i+1}}) \leq \dist_\Gamma(u,v) + 4C + 4\delta + 4D + 5\varepsilon + 2.
    \]
    By definition, the sum on the left is simply equal to \(n-3\).
    Setting \(K = 4C + 4\delta + 4D + 5\) hence gives us the required inequality.
\end{proof}

\subsection{Local metric properties}
We will now explore the consequences of the machinery developed above.
Firstly, we see that the vertex spaces in a uniformly hyperbolic graph of spaces are quasi-isometrically embedded.

\begin{proposition}
\label{prop:vertex_spaces_qi_embed}
    Let \(\X = (\Gamma,X_-,Y_-,\alpha_-)\) be a \((\delta,C)\)-uniformly hyperbolic graph of spaces satisfying Convention~\ref{convention:assumptions_for_hyperbolicity_proof}.
    There is a constant \(k = k(\delta,C) \geq 0\) such that the inclusion \(X_v \hookrightarrow \abs\X\) is a \((1,k)\)-quasi-isometric embedding for all \(v \in V(\Gamma)\).
\end{proposition}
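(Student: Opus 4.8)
The plan is to show that the inclusion $X_v \hookrightarrow \abs\X$ is a $(1,k)$-quasi-isometric embedding by combining Proposition~\ref{prop:almost_geodesics_give_graph_quasigeodesics} with Lemma~\ref{lem:almost_geodesics_stay_close_to_basepoint}. The upper bound $\dist_\X(x,y) \leq \dist_{X_v}(x,y)$ is immediate, since any geodesic in $X_v$ is a string in $\X$, so the content is the lower bound: we must produce $k = k(\delta,C)$ with $\dist_{X_v}(x,y) \leq \dist_\X(x,y) + k$ for all $x,y \in X_v$.

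First I would fix $x, y \in X_v$ and $\varepsilon > 0$, and take a reduced string $S = (x_0, \dots, x_m)$ between $x = x_0$ and $y = x_m$ with $\ell(S) \leq \dist_\X(x,y) + \varepsilon$; by Proposition~\ref{prop:graph-of-spaces-is-proper-geodesic} (or just Lemma~\ref{lem:reduced_string}) this is harmless, and we may assume $P_1$ and $P_m$ are vertex spaces. Apply Proposition~\ref{prop:almost_geodesics_give_graph_quasigeodesics}: the sequence of vertices $v_1, \dots, v_n$ visited by $S$ forms a $(1, K+5\varepsilon)$-quasi-geodesic in $\Gamma$ from $v$ to $v$. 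But a quasi-geodesic loop has bounded length, so $n \leq K + 5\varepsilon + C'$ for a universal constant; in particular the number of pieces $m = 2n-1$ appearing in $S$ is bounded by a constant depending only on $\delta, C$ (after letting $\varepsilon \to 0$, say $\varepsilon \le 1$).

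The key step is then to estimate $\dist_{X_v}(x,y)$ directly from $S$. Since $m$ is bounded, it suffices to bound the "detour" $S$ makes outside $X_v$. By Lemma~\ref{lem:almost_geodesics_stay_close_to_basepoint}, every intermediate term $x_i$ of $S$ lying in a vertex space $X_{v_j}$ with $j \ne 1, n$ lies within $D + \varepsilon$ of the basepoint $y_{v_j}$; and the endpoints $x, y$ themselves lie in $X_{v_1} = X_{v_n} = X_v$. The idea is that $S$ can be "rerouted" through the basepoints: replace the portion of $S$ that leaves $X_v$ by a short path back to $y_{v_1}$, across the (bounded-length) chain of edge cylinders to $y_{v_n} = y_{v_1}$, and back. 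Concretely, $\dist_{X_v}(x,y) \leq \dist_{X_v}(x, y_v) + \dist_{X_v}(y_v, y) $, and using the string $S$ together with the basepoint estimates one bounds $\dist_{X_v}(x,y_v) \le \dist_\X(x,y_v) + (\text{const})$ — in fact $\dist_\X(x,y_v) \le \ell(S) + (\text{const depending on } m, D, C, \delta)$ since the initial subpath of $S$ from $x$ to the last term in $X_{v_1}$ has length at most $\ell(S)$ plus the distance from that term to $y_v$, which is $\le D + \varepsilon$ by the lemma. Since $m$ is bounded, all these error terms aggregate into a single constant $k = k(\delta, C)$, and letting $\varepsilon \to 0$ gives $\dist_{X_v}(x,y) \leq \dist_\X(x,y) + k$.

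The main obstacle I anticipate is the bookkeeping in the last step: one has to be careful that "rerouting through basepoints" genuinely bounds $\dist_{X_v}(x,y)$ (an intrinsic distance \emph{in the vertex space}) and not merely $\dist_\X(x,y)$, and that the basepoints $y_{v_1}$ and $y_{v_n}$ really coincide in $\abs\X$ when $v_1 = v_n = v$ — which they do, being the unique basepoint $y_v$ of $X_v$. The cleanest route is probably to observe that, because $m$ is uniformly bounded, the terms $x_1$ (the second term of $S$, in $X_v$) and $x_{m-1}$ (the penultimate term, in $X_v$) satisfy $\dist_{X_v}(x_1, y_v) \le \ell(S) + (\text{const})$ and similarly for $x_{m-1}$ — using the chain of edge-cylinder segments of total length $\le m$ together with the $D+\varepsilon$ estimates — whence $\dist_{X_v}(x,y) \le \dist_{X_v}(x,x_1) + \dist_{X_v}(x_1, x_{m-1}) + \dist_{X_v}(x_{m-1}, y) \le \ell(S) + 2 \cdot 2(D + \varepsilon) + (\text{const})$, and this is $\le \dist_\X(x,y) + k$ after absorbing constants.
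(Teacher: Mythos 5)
There is a genuine gap in the key step, which you yourself flag as the anticipated obstacle but do not actually resolve. You need to bound $\dist_{X_v}(x_1, y_v)$, a distance \emph{inside the vertex space}, and your proposed justification — following ``the chain of edge-cylinder segments of total length $\le m$ together with the $D+\varepsilon$ estimates'' — produces a path in $\abs\X$ through other vertex spaces and edge cylinders. That only bounds $\dist_\X(x_1, y_v)$, and bounding $\dist_{X_v}$ by $\dist_\X$ plus a constant is exactly what the proposition is asking you to prove. So the argument is circular at precisely the point you identify as delicate.

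The mechanism that makes the estimate work is the isometric embedding of edge spaces: $x_1$ and $y_v$ both lie in the image of $Y_e$ in $X_v$, so $\dist_{X_v}(x_1, y_v)$ equals the $Y_e$-distance, which equals the distance in the \emph{next} vertex space $X_{v_2}$ between the corresponding points (one of which is the basepoint $y_{v_2}$). That distance is then controlled by the Gromov-product condition in the definition of uniform hyperbolicity together with Lemma~\ref{lem:basically_thin_triangles} — yielding
$\dist_{X_v}(x_1, y_v) \leq \dist_{P_2}(x_1,x_2) + \dist_{P_3}(x_2,x_3) + C + \delta + 1$,
which is precisely the inequality labelled~(\ref{eq:x1y1_estimate}) in the paper. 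This is the step your sketch is missing.

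Once you have that estimate, two further remarks. First, the bound on $m$ coming from Proposition~\ref{prop:almost_geodesics_give_graph_quasigeodesics} is unnecessary: (\ref{eq:x1y1_estimate}) and its mirror image at the other end of the string already give $\dist_{X_v}(x,y_v) + \dist_{X_v}(y,y_v) \leq \ell(S) + 2\delta + 2C + 2$, and the triangle inequality finishes the proof regardless of how long the string is. Invoking the quasi-geodesic proposition to cap $m$ is extra machinery that doesn't buy you anything. Second, the paper's proof is exactly this: it takes a geodesic $p$ between $x, x' \in X_v$ leaving $X_v$, applies the estimate from~(\ref{eq:x1y1_estimate}) to the associated reduced string, and sets $k = 2\delta + 2C + 2$ — no bound on the string length appears. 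Your route, once repaired, would converge with the paper's, but with a detour.
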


\begin{proof}
    Let \(v \in V(\Gamma)\) and \(x, x' \in X_v\).
    By definition \(\dist_\X(x,x') \leq \dist_{X_v}(x,x')\).
    Suppose then that \(\dist_\X(x,x') < \dist_{X_v}(x,x')\).
    Since \(\abs\X\) is a geodesic space by Proposition~\ref{prop:graph-of-spaces-is-proper-geodesic}, this means that there is a geodesic \(p\) in \(\abs\X\) between \(x\) and \(x'\) which is not contained in \(X_v\).
    It follows as in (\ref{eq:x1y1_estimate}) from the proof of Proposition~\ref{prop:almost_geodesics_give_graph_quasigeodesics} that
    \begin{equation*}
        \dist_{X_v}(x,y_v) + \dist_{X_v}(x',y_v) \leq \ell(p) + 2\delta + 2C + 2,
    \end{equation*}
    where \(y_v\) denotes the basepoint of \(X_v\).
    By the triangle inequality, this implies 
    \[
        \dist_{X_v}(x,x') - 2\delta - 2C - 2 \leq \ell(p) = \dist_{\X}(x,x')
    \]
    so that setting \(k = 2\delta + 2C + 2\) completes the proof.
\end{proof}

Another consequence is that the vertex spaces in a uniformly hyperbolic graph of spaces are uniformly quasiconvex.
This is, of course, an application of Proposition~\ref{prop:almost_geodesics_give_graph_quasigeodesics} to the case that the quasi-geodesic in the underlying graph is a loop.

\begin{proposition}
\label{prop:vertex_spaces_quasiconvex}
    Let \(\X = (\Gamma,X_-,Y_-,\alpha_-)\) be a \((\delta,C)\)-uniformly hyperbolic graph of spaces satisfying Convention~\ref{convention:assumptions_for_hyperbolicity_proof}. 
    Then there is a constant \(\sigma = \sigma(\delta,C) \geq 0\) such that each vertex space of \(\X\) is \(\sigma\)-quasiconvex in $\abs \X$.
\end{proposition}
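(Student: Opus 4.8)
The plan is to read off quasiconvexity of the vertex spaces directly from Proposition~\ref{prop:almost_geodesics_give_graph_quasigeodesics}: the new ingredient is only that a geodesic of \(\abs\X\) whose two endpoints lie in a common vertex space \(X_v\) traces out a quasi-geodesic \emph{loop} in \(\Gamma\), which is therefore short. Coupling this with Lemma~\ref{lem:almost_geodesics_stay_close_to_basepoint} (string terms are pinned near the vertex-space basepoints), Lemma~\ref{lemma:graph_is_isometrically_embedded} (those basepoints lie on an isometrically embedded, convex copy of \(\Gamma\) inside \(\abs\X\)), and the acylindricity hypothesis built into uniform hyperbolicity, I expect to extract a bound \(\sigma\) depending only on \(\delta\) and \(C\).

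First I would fix \(v \in V(\Gamma)\), points \(x, x' \in X_v\), and a geodesic \(p\) of \(\abs\X\) between them, which exists by Proposition~\ref{prop:graph-of-spaces-is-proper-geodesic}. That proposition also supplies a reduced string \(S = (x_0, \dots, x_m)\) with \(x_0 = x\), \(x_m = x'\), whose terms lie on \(p\) and for which \(p\) is the concatenation of the piece-geodesics \([x_{i-1},x_i] \subseteq P_i\). Using Lemma~\ref{lem:local_gos_metric_equals_piece_metric} and additivity of length, each such subpath is in fact a geodesic of its piece realising \(\dist_{P_i}(x_{i-1},x_i) = \dist_\X(x_{i-1},x_i)\), so that \(\ell(S) = \dist_\X(x,x')\); after at worst inserting a redundant vertex-space term at each end we may also take \(P_1 = P_m = X_v\). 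Applying Proposition~\ref{prop:almost_geodesics_give_graph_quasigeodesics} with the error term \(\varepsilon\) taken at most \(1\), the vertices \(v_1 = v, v_2, \dots, v_n = v\) met by \(S\) satisfy \(n \le \dist_\Gamma(v,v) + K + 5 = K + 5\); hence \(m = 2n-1\), the number of pieces \(p\) meets, is bounded in terms of \(\delta, C\) alone, and every piece met by \(p\) lies at combinatorial distance at most \(K+5\) from \(v\).

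Next I would bound \(\dist_\X(z, X_v)\) for an arbitrary \(z\) on \(p\), say \(z \in [x_{j-1},x_j] \subseteq P_j\). If \(P_j = X_v\) this is \(0\); if \(P_j\) is an edge cylinder incident to \(v\) then \(z\) is within \(1\) of the face of that cylinder contained in \(X_v\) (this disposes of \(P_2\) and \(P_{m-1}\)). Otherwise \(P_j\) lies at combinatorial distance between \(1\) and \(K+5\) from \(v\), and I would control the segment \([x_{j-1},x_j]\) as follows. For a vertex-space piece \(P_j = X_w\): by Lemma~\ref{lem:almost_geodesics_stay_close_to_basepoint} each endpoint of the segment is either within \(D+\varepsilon\) of the basepoint \(y_w\) of \(X_w\), or lies in a cylinder incident to \(v\) and so within \(1\) of \(X_v\); in the first situation hyperbolicity of \(X_w\) keeps the whole segment within \(2(D+\varepsilon)+\delta\) of \(y_w\), and \(\dist_\X(y_w, X_v) \le \dist_\X(y_w, y_v) = \dist_\Gamma(w,v) \le K+5\) by Lemma~\ref{lemma:graph_is_isometrically_embedded}. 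In the second situation (the first and last vertex spaces visited outside \(X_v\)), \([x_{j-1},x_j]\) runs between a point on the \(\delta\)-quasiconvex image (Remark~\ref{rem:edge_pieces_quasiconvex}) of a cylinder incident to \(v\) — whose \emph{entire} image lies within \(1\) of \(X_v\) — and a point within \(D+\varepsilon\) of \(y_w\); here the acylindricity hypothesis and Lemma~\ref{lem:basically_thin_triangles} force \([x_{j-1},x_j]\) to track \([x_{j-1},y_w] \cup [y_w,x_j]\), which hugs the union of that edge-space image and a bounded ball about \(y_w\), both of which lie within a bounded distance of \(X_v\). A point of a cylinder not incident to \(v\) lies within \(1\) of an adjacent vertex space and is handled by the previous cases. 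Collecting these finitely many uniformly bounded contributions produces \(\sigma = \sigma(\delta,C)\) with \(p \subseteq B_\X(X_v;\sigma)\).

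The hard part will be the bookkeeping in this last step for the first and last vertex spaces visited outside \(X_v\): there Lemma~\ref{lem:almost_geodesics_stay_close_to_basepoint} controls only one endpoint of the relevant piece-geodesic, so one must genuinely combine quasiconvexity of edge-space images, the Gromov-product/acylindricity hypothesis, and thin triangles to see that the segment still stays near \(X_v\) (this is also where the care around edge orientations and the precise identification of basepoints is needed). Everything else is a routine assembly of the lemmas already proved in this section.
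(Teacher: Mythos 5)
Your proposal is correct and takes essentially the same route as the paper's own proof: decompose the geodesic into its reduced string via Proposition~\ref{prop:graph-of-spaces-is-proper-geodesic}, apply Proposition~\ref{prop:almost_geodesics_give_graph_quasigeodesics} to the loop \(v_1 = v_n = v\) to bound the number of pieces, pin the intermediate string terms to basepoints using Lemma~\ref{lem:almost_geodesics_stay_close_to_basepoint} together with thin triangles and the embedded copy of \(\Gamma\), and treat the first and last vertex spaces outside \(X_v\) via \(\delta\)-quasiconvexity of the incident edge-space image (which contains both the relevant string term and the basepoint) plus the unit width of the edge cylinders. The only cosmetic difference is your appeal to acylindricity in that last step, where plain \(\delta\)-thinness of triangles in the vertex space already suffices, acylindricity entering only through the two cited results.
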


\begin{proof}
    Let \(D = D(\delta,C) \geq 0\) be the constant of Lemma~\ref{lem:almost_geodesics_stay_close_to_basepoint}, and \(K = K(\delta,C) \geq 0\) the constant of Proposition~\ref{prop:almost_geodesics_give_graph_quasigeodesics}.
    We will show that each vertex space of \(\X\) is \(\sigma\)-quasiconvex with \(\sigma = K + D + 1\).
    
    By Proposition~\ref{prop:graph-of-spaces-is-proper-geodesic}, \(\abs\X\) is a geodesic space.
    Let $v\in V(\Gamma)$ and let \(p \colon I \to \abs\X\) be a geodesic with endpoints \(x, y \in X_v\).
    Proposition~\ref{prop:graph-of-spaces-is-proper-geodesic} tells us that these is a reduced string \(S_p = (x_0, \dots, x_m)\) whose terms lie on \(p\), and \(p\) is a concatenation of geodesics \([x_{i-1},x_i]\) in the interiors of pieces joining points of \(S_p\).
    Let \(v_1, \dots, v_n \in V(\Gamma)\) be the sequence of vertices for which the points of \(S_p\) are contained in \(X_{v_i}\).
    As \(S_p\) is reduced, we have \(m = 2n-1\).
    Of course, \(v_1 = v_n = v\), so by Proposition~\ref{prop:almost_geodesics_give_graph_quasigeodesics}, we have \(n \leq K\).

    Write \(P_1, \dots, P_m\) for the pieces of \(S_p\), so \(X_{v_i} = P_{2i-1}\) for \(i = 1, \dots, n\).
    Let \(y_3, \dots, y_{m-2}\) be the points obtained from Lemma~\ref{lem:almost_geodesics_stay_close_to_basepoint}.
    Write \(Z_{e_i} = Y_{e_i} \times [0,1]\) for the edge cylinder such that \(\iota(e_i) = v_i\) and \(\tau(e_i) = v_{i+1}\) for \(i = 1, \dots, n-1\).
    We write \(y_1 = \alpha_{\bar{e_1}}(y_{e_1})\), where \(y_{e_1} \in Y_{e_1}\) is provided by uniform hyperbolicity.

    Now let \(i = 1, \dots, m\) and let \(z\) be a point on the geodesic subsegment \([x_{i-1},x_i]\) of \(p\).
    If \(i = 1\) or \(i = m\) then \(z \in X_v\) trivially, so suppose otherwise.
    If \(i\) is odd, then \(z\) is contained in a \(\delta\)-neighbourhood of \([x_{i-1},y_i] \cup [x_i,y_i]\) by uniform hyperbolicity of \(\X\).
    Moreover, by Lemma~\ref{lem:almost_geodesics_stay_close_to_basepoint}, \(x_i\) is at distance of at most \(D\) from \(y_i\) if \(i \leq m-3\), and similarly \(x_{i-1}\) from \(y_i\) if \(i \geq 4\).
    Hence, in these cases, \(z\) is contained in a \(\sigma\)-neighbourhood of \(X_v\).
    As the edge cylinders are equipped with the \(\ell^2\)-metric and have width \(1\), this inclusion also holds when \(i\) is even and in the same range.
    Finally, the cases that \(i = 3\) and \(i = m-2\) are dealt with by noting that \([x_2,y_3]\) and \([x_{m-2},y_{m-2}]\) are contained in a \(1\)-neighbourhood of \(X_v\), since they are contained in edge pieces \(P_2\) and \(P_{m-1}\) which are adjacent to \(P_1 = P_m = X_v\).
    This proves that \(X_v\) is \(\sigma\)-quasiconvex, as required.
\end{proof}

\begin{remark}
\label{rem:vertex_space_plus_link_quasiconvex}
    With only superficial modifications to the proof, the above also yields that the subspace of \(\abs\X\) consisting of a vertex space \(X_v\) and its adjacent edge cylinder -- that is, \(B_\X(X_v;1)\) -- is also \(\sigma\)-quasiconvex.
    Increasing the value of \(\sigma\) by 1, the same proof shows that \(X_u \cup Z_e \cup X_v\) is \(\sigma\)-quasiconvex when \(X_u\) and \(X_v\) are adjacent vertex spaces joined by edge cylinder \(Z_e\).
\end{remark}

We are also able to analyse the intersections of neighbourhoods of pairs of pieces.
We consider the cases that both the pieces in question are components of an edge space, or are both vertex spaces. 

\begin{lemma}
\label{lem:intersection_of_edge_piece_nbhds}
    Let \(\X\) be a \((\delta,C)\)-uniformly hyperbolic graph of spaces.
    For any \(r \geq 0\), there is \(R = R(r,\delta,C) \geq 0\) such that for any distinct edge pieces \(Z_e\) and \(Z_{e'}\) of \(\X\), the set
    \[
        B_\X(Z_e;r) \cap B_\X(Z_{e'};r)
    \]
    has diameter at most \(R\).
\end{lemma}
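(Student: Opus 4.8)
The plan is to reduce the statement to an assertion inside the hyperbolic vertex spaces, where uniform hyperbolicity controls the Gromov products of edge-space images, and then transport the resulting bound back to \(\abs\X\) using the quasi-isometric embedding and isometric-embedding facts established above. The engine is the following elementary observation, valid in \emph{any} metric space \(X\): if \(A,B\subseteq X\) satisfy \(\langle A,B\rangle_p\leq C\) for some \(p\in X\), then \(B_X(A;t)\cap B_X(B;t)\subseteq B_X(p;C+2t)\). Indeed, if \(x\) lies in the intersection, pick \(a\in A\), \(b\in B\) within \(t\) of \(x\); then \(\dist(a,b)\leq 2t\) and \(\dist(a,p),\dist(b,p)\geq \dist(x,p)-t\), so
\[
    C\ \geq\ \langle a,b\rangle_p\ =\ \tfrac12\big(\dist(a,p)+\dist(b,p)-\dist(a,b)\big)\ \geq\ \dist(x,p)-2t .
\]

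First I would reduce to edge-space images. Since every point of an edge cylinder \(Z_e=Y_e\times[0,1]\) lies within \(\tfrac12\) of \(\alpha_e(Y_e)\cup\alpha_{\bar e}(Y_e)\), we get \(B_\X(Z_e;r)\subseteq B_\X(\alpha_e(Y_e);s)\cup B_\X(\alpha_{\bar e}(Y_e);s)\) with \(s:=r+1\), and similarly for \(e'\). Hence \(W:=B_\X(Z_e;r)\cap B_\X(Z_{e'};r)\) is contained in a union of at most four sets of the form \(W_{A,A'}:=B_\X(A;s)\cap B_\X(A';s)\), where \(A\) is the image of \(Y_e\) in one of the two vertex spaces incident to \(e\) and \(A'\) the image of \(Y_{e'}\) in one of the two incident to \(e'\); in particular each of \(A\), \(A'\) is a single edge-space image sitting in a single vertex space. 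It therefore suffices to bound each \(W_{A,A'}\) and then control how the four of them sit relative to one another.

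Next I would bound \(W_{A,A'}\), say with \(A\subseteq X_u\). For \(x\in W_{A,A'}\), let \(\pi_u(x)\in X_u\) be a closest-point projection of \(x\) onto \(X_u\) (it exists since \(X_u\) is proper by Convention~\ref{convention:assumptions_for_hyperbolicity_proof}); then \(\dist_\X(x,\pi_u(x))\leq s\), and by Proposition~\ref{prop:vertex_spaces_qi_embed} the point \(\pi_u(x)\) lies within \(2s+k\) of \(A\) in the metric of \(X_u\). If \(A'\subseteq X_u\) as well, uniform hyperbolicity gives \(\langle A,A'\rangle_{y_u}\leq C\) (after reversing edge orientations if needed, \(A,A'\) being images of the distinct edges \(e,e'\) in \(X_u\)), and the observation above applied inside \(X_u\) with \(t=2s+k\) places \(\pi_u(x)\), hence \(x\), within a constant of the basepoint \(y_u\). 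If instead \(A'\subseteq X_{u'}\) with \(u'\neq u\), then \(\pi_u(x)\) is within \(2s\) of \(X_{u'}\); by Proposition~\ref{prop:graph-of-spaces-is-proper-geodesic} a geodesic of \(\abs\X\) from \(\pi_u(x)\) to \(X_{u'}\) must leave \(X_u\) through some incident edge cylinder \(Z_f\), so \(\pi_u(x)\) is within \(2s+k\) in \(X_u\) of the image \(A_f\subseteq X_u\) of \(Y_f\); if \(f\neq e\) we conclude as in the same-vertex case with the pair \(A,A_f\), and if \(f=e\) we get \(u'=\iota(e)\) and rerun the argument projecting \(x\) into \(X_{u'}\) instead, now using \(e'\neq e\). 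In every case \(x\) lies within a constant \(D_1=D_1(r,\delta,C)\) of the basepoint of one of the four vertices \(\iota(e),\tau(e),\iota(e'),\tau(e')\). This different-vertex-space bookkeeping — verifying the exit cylinder is distinct from \(e\) (or \(e'\)), switching vertex spaces in the degenerate case, and tracking the orientation conventions in the definition of uniform hyperbolicity — is the step I expect to be the main obstacle, though none of it is deep.

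To assemble: if \(W\neq\emptyset\), the \(1\)-Lipschitz retraction \(\Pi\colon\abs\X\to\abs\Y\cong\abs\Gamma\) of Lemma~\ref{lemma:graph_is_isometrically_embedded} carries \(Z_e,Z_{e'}\) onto the edges \(e,e'\) of \(\Gamma\), so \(\dist_\Gamma(e,e')\leq 2r\) and the four vertices \(\iota(e),\tau(e),\iota(e'),\tau(e')\) are pairwise within \(2r+2\) in \(\Gamma\); since \(\Gamma\) embeds isometrically in \(\abs\X\) by the same lemma, their basepoints are pairwise within \(2r+2\) in \(\abs\X\). Given \(x_1,x_2\in W\subseteq\bigcup W_{A,A'}\), the previous paragraph places each \(x_i\) within \(D_1\) of one such basepoint, whence \(\dist_\X(x_1,x_2)\leq 2D_1+2r+2=:R\), a constant depending only on \(r,\delta,C\). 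This proves the lemma.
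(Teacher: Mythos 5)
Your underlying engine is the same as the paper's: uniform hyperbolicity bounds the Gromov product of two distinct edge-space images at the vertex-space basepoint, and Proposition~\ref{prop:vertex_spaces_qi_embed} lets you transfer between $\dist_\X$ and $\dist_{X_v}$. The elementary observation you state at the outset is correct and is essentially what the paper does in one line. The structural difference is the reduction: the paper first reduces to the case where $Z_e$ and $Z_{e'}$ are incident on a \emph{common} vertex space $X$, after which the Gromov product bound finishes immediately (no case split at all). You instead expand $W$ into a union of four sets $W_{A,A'}$ of edge-space images, which forces you to handle pairs $(A,A')$ lying in \emph{different} vertex spaces, and that is where the proof breaks.

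The concrete gap is in the subcase $A\subseteq X_u$, $A'\subseteq X_{u'}$ with $u\neq u'$. You project $x$ to $\pi_u(x)\in X_u$, take a geodesic to $X_{u'}$, and let $Z_f$ be the edge cylinder through which it exits $X_u$. When $f=e$ you assert ``we get $u'=\iota(e)$,'' but this does not follow: the geodesic from $\pi_u(x)$ to $X_{u'}$ has length up to $2s=2r+2$, so it may traverse several further pieces after leaving $X_u$ through $Z_e$, and $u'$ can be any vertex at graph distance up to $2s$ from $u$, not just the opposite endpoint of $e$. Your fallback (``rerun projecting into $X_{u'}$'') has the symmetric degenerate subcase $f'=e'$, and you do not rule out both degeneracies occurring simultaneously; in fact one can arrange $e=\{u,w\}$ and $e'=\{w,u'\}$ with $u\neq u'$ so that the first-exit cylinders are $Z_e$ and $Z_{e'}$ from either side, in which case neither run of your argument produces a pair of \emph{distinct} edge images in the same vertex space on which to apply the Gromov product bound. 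Since $W_{A,A'}$ need not be contained in the ``easy'' same-vertex-space piece of the union (a point $x$ near $A\subseteq X_u$ and $A'\subseteq X_{u'}$ need not be near the images of $e$ or $e'$ inside $X_w$), you cannot simply discard this case. The fix is to adopt the paper's reduction before splitting into $W_{A,A'}$: any path of length $\leq r$ from a point $x$ in the intersection to $Z_e$ passes through a vertex space $X_v$ incident on $e$, and the path from $x$ to $Z_{e'}$ then exits $X_v$ through some cylinder $Z''$ incident on $v$; replacing $Z_{e'}$ by $Z''$ puts you in the shared-vertex situation, after which the argument you give for the $u=u'$ case (which is correct) suffices, with no need for the four-way decomposition or the final $\Gamma$-retraction assembly.
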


\begin{proof}
    Let us first observe that the lemma reduces to the case that \(Z\) and \(Z'\) are both adjacent to the same vertex space \(X\).
    Indeed, any path joining a point in \(\abs{X}\) to a point in an edge cylinder must pass through a vertex space adjacent to this edge cylinder.
    It follows that \(B_\X(Z;r) \cap B_\X(Z';r) \subseteq B_\X(Z;r) \cap B_\X(Z'';r)\), where \(Z''\) shares an adjacent vertex space with \(Z\), since this observation applies to a path of length \(r\) joining a point in the former intersection to \(Z\).
    Hence without loss of generality suppose that \(Z\) and \(Z'\) are such.
    
    Let \(x\) and \(y\) be points in the given intersection.
    Then there are points \(x_Z, y_Z \in Z \cap X\) and \(x_{Z'}, y_{Z'} \in Z' \cap X\) at most \(r\) from \(x\) and \(y\) respectively.
    We can deduce
    \begin{equation}
    \label{eq:dist_xe_ye}
        \dist_{\X}(x,y) \leq \dist_\X(x_Z,y_Z) + 2r
    \end{equation}
    by the triangle inequality.

    Now uniform hyperbolicity gives us that there is \(z \in Z \cap Z'\) with \(\langle y_Z,y_{Z'}\rangle_{z} \leq C\).
    By assumption, we have \(\dist_{\X}(x_Z,x_{Z'}) \leq 2r\) and \(\dist_{\X}(y_Z,y_{Z'}) \leq 2r\).
    Applying Proposition~\ref{prop:vertex_spaces_qi_embed}, there is \(k = k(\delta,C) \geq 0\) such that the above implies \(\dist_{X}(x_Z,x_{Z'}) \leq 2r + k\) and \(\dist_{X}(y_Z,y_{Z'}) \leq 2\lambda r\).
    Combining these two inequalities implies
    \[
        \dist_{X}(x_Z,z) + \dist_{X}(x_{Z'},z) \leq 2C + 2r+k.
    \]
    An identical equation holds for \(y_Z\) and \(y_{Z'}\) in place of \(x_Z\) and \(x_{Z'}\) respectively.
    Hence, all four points \(x_Z, x_{Z'}, y_Z,\) and \(y_{Z'}\) lie in \(B_{X}(z;2C+2r+k)\).
    In particular,
    \begin{equation}
    \label{eq:dist_xe_ye_in_vertex}
        \dist_{X}(x_Z,y_Z) \leq 4C + 4\lambda r.
    \end{equation}

    Observing that \(\dist_{\X}(x_Z,y_Z) \leq \dist_{X}(x_Z,y_Z)\), equations (\ref{eq:dist_xe_ye}) and (\ref{eq:dist_xe_ye_in_vertex}) give us
    \[
        \dist_{\X}(x,y) \leq 4C + 4\lambda r + 2r,
    \]
    so that setting \(R = 4C + 4\lambda r + 2r\) completes the lemma.
\end{proof}

\begin{lemma}\label{lem:intersections-of-nbhds-of-vertex-spaces}
    Let \(\X = (\Gamma,X_-,Y_-,\alpha_-)\) be a \((\delta,C)\)-uniformly hyperbolic graph of spaces.
    For any \(r \geq 0\), there is \(R = R(r,\delta,C) \geq 0\) such that for any distinct vertex pieces \(X_u\) and \(X_v\) of \(\X\), then either 
    \begin{enumerate}
        \item \(\dist_\Gamma(u,v) = 1\) and any connected component of \(B_\X(X_u;r) \cap B_\X(X_v;r)\) has diameter at most \(R\) or is      contained in \(B_\X(Z_e;R)\), where \(e \in E(\Gamma)\) has endpoints \(u\) and \(v\) and \(Z_e \subseteq Y_e\) is an edge         cylinder; or
        \item \(\dist_\Gamma(u,v) > 1\) and \(B_\X(X_u;r) \cap B_\X(X_v;r)\) has diameter at most \(R\).
    \end{enumerate} 
\end{lemma}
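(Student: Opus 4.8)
The plan is to reduce the statement to a single estimate on points of the intersection. Given $x\in B_\X(X_u;r)\cap B_\X(X_v;r)$, fix a closest point $x_u\in X_u$ to $x$, so $\dist_\X(x,x_u)\leq r$ and $\dist_\X(x_u,X_v)\leq 2r$; any bound controlling $x_u$ then controls $x$. Fix a closest point $x_v'\in X_v$ to $x_u$ and a geodesic $q$ from $x_u$ to $x_v'$, of length $\rho=\dist_\X(x_u,X_v)\leq 2r$. By Proposition~\ref{prop:graph-of-spaces-is-proper-geodesic}, $q$ carries a reduced string; the vertex spaces $q$ meets trace a walk $u=w_0,w_1,\dots,w_\ell=v$ in $\Gamma$ with $\ell\geq\dist_\Gamma(u,v)$, and since each traversal of an edge cylinder has length at least $1$ we also get $\rho\geq\ell\geq\dist_\Gamma(u,v)$.

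The heart of the argument is the following claim: \emph{if $\ell\geq 2$ --- equivalently, $q$ meets a vertex space other than $X_u$ and $X_v$ --- then $\dist_{X_u}(x_u,o_u)\leq 3\rho+2C$}, where $o_u$ denotes the basepoint of $X_u$ (and $o_{w_1}$ that of $X_{w_1}$, etc.). To prove this, let $a$ and $b$ be the points at which $q$ enters and leaves the vertex space $X_{w_1}$. Because a reduced string has pairwise distinct consecutive triples of pieces, the edge cylinders on the two sides of $X_{w_1}$ correspond to \emph{distinct} edges incident to $w_1$, so part~(2) of the definition of uniform hyperbolicity gives $\langle a,b\rangle_{o_{w_1}}\leq C$ in $X_{w_1}$. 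Combining this with $\dist_{X_{w_1}}(a,b)\leq\rho$ (the subpath of $q$ from $a$ to $b$ is a geodesic contained in $X_{w_1}$, of length at most $\rho$) yields $\dist_{X_{w_1}}(a,o_{w_1})\leq 2C+\rho$. Now $a$ lies on the footprint in $X_{w_1}$ of the first edge cylinder $Z_f$ that $q$ crosses; since the basepoints $o_u$ and $o_{w_1}$ are the two endpoints of the core $\{y_f\}\times[0,1]$ of $Z_f$ and the isometric embedding $\alpha_f$ carries $y_f$ to $o_{w_1}$, transporting the previous inequality back across $Z_f$ to the point $c$ at which $q$ first leaves $X_u$ gives $\dist_{X_u}(c,o_u)\leq 2\rho+2C$; adding $\dist_{X_u}(x_u,c)\leq\rho$ proves the claim. (If the reduced string of $q$ begins with an edge-cylinder piece, i.e.\ $x_u$ already lies on a footprint, then $c=x_u$ and the bound only improves.)

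The conclusion follows quickly. In case~(2) we have $\ell\geq\dist_\Gamma(u,v)\geq 2$, so the claim applies to every $x$ in the intersection and gives $\dist_\X(x,o_u)\leq r+3\rho+2C\leq 7r+2C$ (using $\rho\leq 2r$); hence $B_\X(X_u;r)\cap B_\X(X_v;r)$ lies in the ball of radius $7r+2C$ about $o_u$ and has diameter at most $R:=14r+4C$. In case~(1), let $e$ be the unique edge with endpoints $u,v$; its cylinder $Z_e$ contains $o_u$, since $o_u$ is an endpoint of the core of $Z_e$. For $x$ in the intersection with closest point $x_u\in X_u$: if the walk associated to a geodesic $q$ from $x_u$ to $X_v$ has $\ell=1$ then $q$ crosses only $Z_e$, so $x_u$ lies within $\rho\leq 2r$ of the footprint of $Z_e$ in $X_u$; if $\ell\geq 2$ the claim gives $\dist_{X_u}(x_u,o_u)\leq 6r+2C$ and hence $\dist_\X(x_u,Z_e)\leq 6r+2C$. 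Either way $\dist_\X(x,Z_e)\leq 7r+2C\leq R$, so the whole intersection --- and thus every connected component of it --- is contained in $B_\X(Z_e;R)$.

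The step I expect to be the main obstacle is the bookkeeping inside the claim: fixing the edge orientations so that the basepoint of a vertex space is consistently identified with $\alpha_e(y_e)$ across each incident cylinder and therefore lies on that cylinder's core; verifying that the entering and exiting edges at the intermediate vertex $w_1$ are genuinely distinct (which is exactly what the reduced-string condition supplies, given that $\Gamma$ is simplicial); and handling cleanly the degenerate possibility that $q$'s reduced string begins or ends with an edge-cylinder piece. Everything else is routine once Proposition~\ref{prop:graph-of-spaces-is-proper-geodesic} (existence of geodesics and of their reduced-string decomposition) and Lemma~\ref{lemma:graph_is_isometrically_embedded} (the cores of the edge cylinders realise the embedded copy of $\Gamma$ joining consecutive basepoints) have been invoked.
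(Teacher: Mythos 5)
Your proof is correct, and it takes a genuinely different route from the paper's.

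The paper's proof is short: for $z$ in the intersection, choose paths $p_u,p_v$ of length at most $r$ to $X_u$ and $X_v$; either one of them meets $Z_e$ (so $z\in B_\X(Z_e;r)$), or otherwise together the two paths pass through two distinct edge cylinders $Z,Z'$, so $z\in B_\X(Z;r)\cap B_\X(Z';r)$, and then Lemma~\ref{lem:intersection_of_edge_piece_nbhds} supplies the diameter bound. The whole argument is a reduction to that earlier lemma about intersections of tubular neighbourhoods of edge cylinders.

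You instead work with a geodesic $q$ from a closest point $x_u\in X_u$ to $X_v$ and derive a Gromov-product estimate at the first intermediate vertex $w_1$ (using part (2) of the definition of uniform hyperbolicity together with the fact that the reduced string condition forces the two incident edges at $w_1$ to be distinct), exactly as one does in the proof of Lemma~\ref{lem:almost_geodesics_stay_close_to_basepoint}. Transporting back across $Z_{f_1}$ gives $\dist_{X_u}(x_u,o_u)\leq 3\rho+2C$. You have checked the key technical point correctly: the subpath of $q$ between $a$ and $b$ is a geodesic \emph{in} $X_{w_1}$ by Proposition~\ref{prop:graph-of-spaces-is-proper-geodesic}, so the Gromov product and the length bound combine as claimed, and the passage across the edge cylinder only costs an extra $\rho$. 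Your case split on $\ell=1$ vs. $\ell\geq 2$ then closes both parts of the lemma cleanly.

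Two remarks worth noting. First, your argument produces $R=7r+2C$ (case 1) and $R=14r+4C$ (case 2), which is independent of $\delta$; the lemma allows $\delta$-dependence so this is a mild improvement. Second, and more substantively, in case (1) you prove the stronger statement that the \emph{entire} intersection $B_\X(X_u;r)\cap B_\X(X_v;r)$ is contained in $B_\X(Z_e;R)$, not merely that each connected component is small or close to $Z_e$. This is in fact what is used implicitly later in Lemma~\ref{lem:qi-almost-fixes-edge-spaces-new}, so your formulation is arguably what the statement should have been. The trade-off is that you re-derive a chunk of the Gromov-product machinery rather than invoking Lemma~\ref{lem:intersection_of_edge_piece_nbhds} as a black box.
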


\begin{proof}
    Let \(z \in B_\X(X_u;r) \cap B_\X(X_v;r)\).
    Observe that there are paths \(p_u\) and \(p_v\) of length at most \(r\) joining \(z\) to a point of \(X_u\) and a point of \(X_v\) respectively.
    For the first statement, if either \(p_u\) or \(p_v\) intersect \(Z_e\), then \(z \in B_{\X}(Z_e;r)\) and we are done.
    Suppose otherwise, that then either \(p_u\) or \(p_v\) (or both) must pass through two distinct edge cylinders \(Z\) and \(Z'\).
    But then \(z\) belongs to a path component of \(B_\X(Z;r) \cap B_\X(Z';r)\), which has diameter at most \(R = R(r,\delta,C)\) by Lemma~\ref{lem:intersection_of_edge_piece_nbhds}.

    For the latter case, as \(\dist_{\Gamma}(u,v) > 1\), the paths \(p_u\) and \(p_v\) must pass through at least two distinct edge cylinders \(Z\) and \(Z'\).
    It follows that \(z \in B_{\X}(Z;r) \cap B_\X(Z';r)\), whence again applying Lemma~\ref{lem:intersection_of_edge_piece_nbhds} bounds the diameter by \(R\).
\end{proof}

%%%%%%%%%%%%%%%%%%%%%%%%%%%%%%%%%%%%%%%%%%%%%
%%%%%%%%%%%%%%%%%%%%%%%%%%%%%%%%%%%%%%%%%%%%%
%%%%%%%%%%%%%%%%%%%%%%%%%%%%%%%%%%%%%%%%%%%%%

\section{Strong QI-rigidity for graph of spaces}\label{sec:bottlenecks}

In this section we describe a series of additional technical properties pertaining to graphs of spaces, which will provide strong control over their quasi-isometry groups. 
The purpose of these properties is to guarantee strong QI-rigidity for the realisation of a graph of spaces; the results are summarised in the statement of Theorem~\ref{thm:conclusion-of-rigidity-section}.

\subsection{Link bottlenecked pairs}

The first condition we describe allows us to control what happens to vertex spaces under a quasi-isometry of graphs of spaces \(\X\) and \(\Y\).
Roughly speaking, the condition states that the edge spaces of \(\Y\) do not coarsely separate vertex spaces of \(\X\) into arbitrarily deep components.
Before we define it formally, it will be helpful introduce the following notation. 

\begin{definition}[Link of a vertex space]
    Let $\X = (\Gamma, X_-, Y_-, \alpha_-)$ be a graph of spaces. Let $v \in V(\Gamma)$. Then the \emph{$\X$-link} of $v$, denoted $\mathrm {lk}_\X(v)$, is defined as the space
    $$
    \mathrm {lk}_\X(v) := \bigcup_{e \in \mathrm{In(v)}} \alpha_e(Y_e),
    $$
    equipped with the subspace metric induced by the metric $\dist_{X_v}$ on $X_v$.
\end{definition}

\begin{definition}[Link bottlenecked pairs]
    Let 
    $$\X = (\Gamma, X_-, Y_-, \alpha_-), \ \ \Y = (\Delta, X'_-, Y'_-, \beta_-)
    $$
    be graphs of spaces. We say that the ordered pair $(\X, \Y)$ is \emph{link bottlenecked} if:
    \begin{equation}\tag{LB}\label{eq:LB}
        \parbox{4.3in}{
        for all $\lambda \geq 1$ and $c \geq 0$, there are $K \geq 0$ and $\rho \geq 0$ 
        such that for all $v \in V(\Gamma)$ and all subsets $Z \subset X_v$, if there exists $u \in V(\Delta)$ such that $Z$ admits a $(\lambda, c)$-quasi-isometric embedding into $\lk_\Y(u)$, then there exists a unique $\rho$-coarse component $U \subset X_v - Z$ with $U \not\subset B_{X_v}(Z; K)$.
        }
    \end{equation}
    If the pair $(\X, \X)$ is link bottlenecked, then we simply that say \emph{$\X$ is link bottlenecked}.    
    If both $\X$ and $\Y$ are link bottlenecked, and both pairs $(\X,\Y)$ and $(\Y,\X)$ are link bottlenecked, we call $(\X,\Y)$ a \emph{totally link bottlenecked pair}.
\end{definition}

\begin{example}
    We describe some basic examples. 
    \begin{enumerate}
        \item Consider a graph of spaces $\X$ where every vertex space is isometric to the Euclidean plane $\R^2$, and every $\X$-link is uniformly bounded in diameter. It is easy to verify that $\X$ satisfies (\ref{eq:LB}). 
    
        \item Given any graph $\Gamma$, one can form a trivial graph of spaces $\X$ where each vertex and edge space is just a single point, and so trivially $\abs\X$ is isometric to $\Gamma$. This is generally \textbf{not} link bottlenecked.
    \end{enumerate}
    
\end{example}

\begin{remark}\label{rmk:totally-lbp}
    The condition (\ref{eq:LB}) depends only on the geometry of the vertex spaces and their links. 
    Thus, if $\X$, $\Y$ are graphs of spaces such that all vertex spaces of $\X$ and $\Y$ are pairwise isometric, and all links across both $\X$ and $\Y$ are pairwise isometric, then the pair $(\X, \Y)$ is totally link bottlenecked if and only if $\X$ and $\Y$ are link bottlenecked. 
\end{remark}

We record some basic and useful consequences of being link bottlenecked.

\begin{lemma}\label{lem:vertex-space-one-ended}
    Let $\X = (\Gamma, X_-, Y_-, \alpha_-)$ be a link bottlenecked graph of spaces, where $\Gamma$ is not a single point. Then each vertex space $X_v$ is one-ended. 
\end{lemma}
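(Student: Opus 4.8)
The plan is to show that $X_v$ is coarsely connected and has exactly one coarse end, by playing the link bottleneck condition against the combinatorial structure of $\Gamma$. Since $\Gamma$ is connected and not a single point, $v$ has at least one incident edge $e$, hence $X_v$ contains the isometrically embedded copy $\alpha_e(Y_e)$ of an edge space; we will take $Z = \alpha_e(Y_e) \subseteq X_v$. First I would record that $X_v$, being a geodesic space, is $1$-coarsely connected. Then the key observation is that $Z$ admits a $(1,0)$-quasi-isometric embedding into $\lk_\X(v)$ itself — it is literally a subset of the link — so condition (\ref{eq:LB}) applied to the pair $(\X,\X)$ with $\lambda=1, c=0$ produces constants $K \geq 0$ and $\rho \geq 0$ (independent of $v$, though we only need a single $v$ here) such that there is a \emph{unique} $\rho$-coarse component $U \subseteq X_v - Z$ with $U \not\subseteq B_{X_v}(Z;K)$.

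Next I would use this uniqueness to pin down the end structure. Fix a basepoint in $X_v$ and consider, for large $r$, the complement $X_v - B_{X_v}(Z;r)$. Because $Z = \alpha_e(Y_e)$ is coarsely contained in $\lk_\X(v)$ and, more to the point, $Z$ is a \emph{coarsely bounded-width slice} sitting inside $X_v$ — note $\dist_\X$-neighbourhoods are being taken in $X_v$, which is proper — any unbounded sequence in $X_v$ eventually leaves $B_{X_v}(Z;r)$ for every $r$, or stays within bounded distance of $Z$; in the latter case it lies in $\bigcup_K B_{X_v}(Z;K)$, which by properness is a union of bounded sets, so it must be the former. Thus every neighbourhood-of-infinity of $X_v$ is carried, up to bounded error, by the $\rho$-coarse components of $X_v - Z$ that escape every $B_{X_v}(Z;K)$. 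Condition (\ref{eq:LB}) says there is exactly one such component $U$. Combining these, any two sequences going to infinity in $X_v$ are eventually contained in $U$, hence can be joined by a $\rho$-path (indeed, by paths, using that $X_v$ is geodesic and $U$ is a $\rho$-coarse component) outside arbitrarily large balls around the basepoint. This is precisely the statement that $X_v$ has one end; and it is nonempty as an end because $X_v$ must itself be unbounded — if $X_v$ were bounded then $\lk_\X(v)$ and every edge space at $v$ would be bounded, but then $Z$ would be bounded, $B_{X_v}(Z;K) = X_v$ for large $K$, contradicting the existence of $U \not\subseteq B_{X_v}(Z;K)$ unless $X_v$ is unbounded; so $X_v$ is unbounded and one-ended.

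The main obstacle I anticipate is the bookkeeping in the middle step: translating the single-coarse-component conclusion of (\ref{eq:LB}), which is phrased relative to the set $Z$, into a statement about ends relative to metric balls around a basepoint. One has to check that ``escapes every $B_{X_v}(Z;K)$'' and ``escapes every $B_{X_v}(o;r)$'' define the same filter of neighbourhoods of infinity up to bounded Hausdorff error — this is where properness of $X_v$ (from Convention~\ref{convention:assumptions_for_hyperbolicity_proof}, via Proposition~\ref{prop:graph-of-spaces-is-proper-geodesic} applied to the piece $X_v$, or simply the hypothesis that $X_v$ is complete and locally compact) is essential, since it guarantees $B_{X_v}(Z;K)$ is not all of $X_v$ for any finite $K$ once $X_v$ is unbounded, and that bounded sets are the ones not escaping to infinity. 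Once this identification is made, uniqueness of $U$ gives one-endedness directly, and I would phrase the final conclusion in whatever notion of end (coarse or geodesic) is standard for the paper, noting the equivalence for proper geodesic spaces.
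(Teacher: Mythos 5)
Your proof takes a genuinely different route from the paper's, and it has a gap that I think is fatal to the approach as written.

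The paper's argument hinges on a slick observation: \emph{any bounded set $Z \subseteq X_v$} trivially admits a $(1,\operatorname{diam} Z)$-quasi-isometric embedding into any nonempty link (send it to a point), so condition~(\ref{eq:LB}) may be invoked for \emph{arbitrary bounded separating sets}, not just for subsets sitting in $\lk_\X(v)$. Unboundedness then follows because taking $Z = X_v$ yields no escaping coarse component at all; and if $X_v$ had more than one end, one would take $Z$ to be a bounded set whose complement has two $\rho$-coarse components escaping to infinity, contradicting the uniqueness in~(\ref{eq:LB}). The choice of $Z$ is dictated by the end structure, not by the geometry of the link.

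You instead fix $Z = \alpha_e(Y_e)$ once and for all, apply~(\ref{eq:LB}) a single time, and try to read off the number of ends from the coarse components of $X_v - Z$. This runs into two problems. First, $\alpha_e(Y_e)$ need not be bounded — indeed in the paper's main construction (Section~\ref{sec:construction}) every edge space is a copy of $[0,\infty)$ — and then your assertion that $\bigcup_K B_{X_v}(Z;K)$ is ``a union of bounded sets'' fails: each $B_{X_v}(Z;K)$ is unbounded, a sequence escaping to infinity can stay in a bounded neighbourhood of $Z$ forever, and the filter $\{X_v - B_{X_v}(Z;K)\}_K$ is strictly coarser than the filter of complements of compacta. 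So controlling the coarse components of $X_v - Z$ says nothing about ends. Second, even if $Z$ happened to be bounded, applying~(\ref{eq:LB}) to one fixed $Z$ cannot count ends: a bi-infinite cylinder has two ends, yet deleting a small metric ball leaves a connected complement with a single unbounded coarse component. Ends are an inverse-limit notion and require information about a cofinal family of separating sets, which your argument does not produce. To repair your approach you would need the paper's trick — that every bounded set embeds into a link, and hence~(\ref{eq:LB}) applies to the bounded separating set witnessing multiple ends — at which point you have the paper's proof.
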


\begin{proof}
    If $X_v \subset \abs\X$ were bounded then it is trivially quasi-isometric to a subset of some link, which contradicts (\ref{eq:LB}). 
    Thus, no vertex space is bounded. 
    Let $\rho$ be the constant given by the definition~\eqref{eq:LB} for the coarseness of components.
    Suppose then that $X_v$ is unbounded and has more than one end. 
    Then there exists a bounded subset $Z \subset X_v$ such that $X_v - Z$ contains two $\rho$-coarse 
    components containing points arbitrarily far from \(Z\). 
    This also contradicts (\ref{eq:LB}). 
\end{proof}

\begin{lemma}\label{lem:LB-implies-vertex-space-bigger-than-link}
    Let $\X = (\Gamma, X_-, Y_-, \alpha_-)$ be a link bottlenecked graph of spaces. Let $v \in V(\Gamma)$. Then 
    $
    \dHaus_\X(X_v, \lk_\X(v)) = \infty
    $.
    
    Moreover, if $u, v \in V(\Gamma)$ are distinct vertices then
    $
    \dHaus_\X(X_v, X_u) = \infty
    $.
\end{lemma}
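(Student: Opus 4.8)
The plan is to argue both statements by contradiction using property~(\ref{eq:LB}) applied with $Z$ taken to be (a neighbourhood of) the link, respectively the other vertex space. First I would prove that $\dHaus_\X(X_v, \lk_\X(v)) = \infty$. Suppose instead that $\dHaus_\X(X_v, \lk_\X(v)) = D < \infty$. Since $\lk_\X(v) = \bigcup_{e \in \In(v)} \alpha_e(Y_e)$ is a union of images of edge spaces inside $X_v$, and each $\alpha_e$ is an isometric embedding, the natural inclusion $\lk_\X(v) \hookrightarrow \lk_\X(v)$ is of course a $(1,0)$-quasi-isometric embedding into a $\Y$-link (taking $\Y = \X$ and $u = v$). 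Now apply the definition of link bottlenecked with $Z = \lk_\X(v)$ itself: there are constants $K, \rho \geq 0$ and a unique $\rho$-coarse component $U \subseteq X_v - Z$ with $U \not\subseteq B_{X_v}(Z; K)$. But if $\dHaus_\X(X_v, \lk_\X(v)) = D$, then $X_v = B_{X_v}(Z; D)$, so \emph{every} subset of $X_v - Z$, and in particular every $\rho$-coarse component, is contained in $B_{X_v}(Z; D) \subseteq B_{X_v}(Z; K')$ for $K' = \max\{K, D\}$. This contradicts the existence of the deep component $U$ unless $X_v - Z$ has no such component at all — but the uniqueness clause of (\ref{eq:LB}) asserts one always exists. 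Hence $\dHaus_\X(X_v, \lk_\X(v)) = \infty$.

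For the second statement, suppose $u \neq v$ are distinct vertices of $\Gamma$ with $\dHaus_\X(X_v, X_u) = D < \infty$. Since $\Gamma$ is connected and simplicial with $u \neq v$, the vertex spaces $X_u$ and $X_v$ are disjoint subspaces of $\abs\X$, and any path from $X_v$ to $X_u$ must traverse at least one edge cylinder incident to $v$; in particular $X_v \subseteq B_\X(\lk_\X(v); 1)$ is false in general, so I instead argue directly. The key point is that $\dHaus_\X(X_v, X_u) = D$ forces $X_v \subseteq B_\X(X_u; D)$. I would like to conclude that $X_v$ is then coarsely contained in some link of $\X$, contradicting the first part. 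Concretely: a point of $X_v$ at distance $\leq D$ from $X_u$ lies within $D$ of the portion of $X_u$ reachable, and reaching $X_u$ from $X_v$ requires passing through $\lk_\X(v)$ (the union of edge cylinders at $v$). So every point of $X_v$ is within distance $D + 1$ of $\lk_\X(v)$, i.e. $\dHaus_\X(X_v, \lk_\X(v)) \leq D + 1 < \infty$, contradicting the first part of the lemma.

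More carefully, for the last step: if $x \in X_v$ and $\dist_\X(x, X_u) \leq D$, take a path $p$ in $\abs\X$ from $x$ to $X_u$ of length $\leq D + \varepsilon$; since $u \neq v$, the path $p$ must leave the vertex space $X_v$ and therefore meet the interior of some edge cylinder $Z_e$ with $e \in \In(v)$, hence it meets $\alpha_{e}(Y_e) \subseteq \lk_\X(v)$ after length at most $D + \varepsilon$ (indeed after entering $Z_e$ it has already passed through $\lk_\X(v) \cap Z_e$, which is the image of $Y_e$). Thus $\dist_\X(x, \lk_\X(v)) \leq D + \varepsilon$ for every $\varepsilon > 0$, giving $\dist_\X(x, \lk_\X(v)) \leq D$. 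Since $\lk_\X(v) \subseteq X_v$, this shows $\dHaus_\X(X_v, \lk_\X(v)) \leq D < \infty$, contradicting the first part. Therefore $\dHaus_\X(X_v, X_u) = \infty$.

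The main obstacle I anticipate is the bookkeeping in the second part: one must verify cleanly that a short path from $X_v$ to another vertex space genuinely passes through $\lk_\X(v)$ and not merely near it, which relies on the structure of the realisation (each edge cylinder $Z_e \cong Y_e \times [0,1]$ meets $X_v$ exactly in $\alpha_e(Y_e) \subseteq \lk_\X(v)$, via the gluing relations $(y,1) \sim \alpha_{\bar e}(y)$ or $(y,0)\sim \alpha_e(y)$). Once that structural fact is pinned down the Hausdorff-distance estimate is immediate, and the reduction to the first statement closes the argument. The first part itself is essentially a direct unwinding of~(\ref{eq:LB}) and should present no difficulty.
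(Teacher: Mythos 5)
Your approach for the second statement (reducing to the first via the observation that any short path from a point of $X_v$ to $X_u$ must first cross $\lk_\X(v)$) is sound, and the careful argument you give about paths crossing $\alpha_e(Y_e)$ is exactly the right structural fact. The problem is in the first statement.

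You apply (\ref{eq:LB}) with $Z = \lk_\X(v)$ and $(\lambda,c) = (1,0)$, obtaining constants $K$ and $\rho$, and then try to derive a contradiction by replacing $K$ with $K' = \max\{K,D\}$. This replacement is not licensed: in (\ref{eq:LB}) the constant $K$ is produced by the property once $\lambda$ and $c$ are fixed, and you do not get to enlarge it. If it happens that $D > K$, then (\ref{eq:LB}) merely asserts the existence of a unique $\rho$-coarse component $U$ lying somewhere in the shell $B_{X_v}(Z;D) \setminus B_{X_v}(Z;K)$, and this is perfectly compatible with $X_v \subseteq B_{X_v}(Z;D)$. No contradiction follows, so the argument as written does not close.

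The fix is to take $Z$ to be all of $X_v$ rather than just $\lk_\X(v)$. Your preliminary observation (made implicitly here and explicitly in your ``more careful'' treatment of the second part) that $\dist_\X(x,\lk_\X(v)) = \dist_{X_v}(x,\lk_\X(v))$ for $x \in X_v$ shows that $\dHaus_\X(X_v,\lk_\X(v)) = D < \infty$ forces $X_v \subseteq B_{X_v}(\lk_\X(v);D)$. Choosing, for each $x \in X_v$, a point $\varphi(x) \in \lk_\X(v)$ with $\dist_{X_v}(x,\varphi(x)) \leq D+1$ gives a $(1, 2D+2)$-quasi-isometric embedding of $X_v$ into $\lk_\X(v)$. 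So $Z = X_v$ is admissible in (\ref{eq:LB}) with $\lambda = 1$, $c = 2D+2$, and $u = v$. But $X_v - Z = \emptyset$ has no $\rho$-coarse components at all, so the existence clause of (\ref{eq:LB}) fails for every $K$ and $\rho$, which is the desired contradiction. (This is presumably what the paper means by ``immediately contradict''.) With this repair to the first statement, your second statement goes through unchanged.
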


\begin{proof}
    If these were not the case then we would immediately contradict (\ref{eq:LB}).
\end{proof}

We now give a technical, more quantitative version of Lemma~\ref{lem:LB-implies-vertex-space-bigger-than-link}, which states that (\ref{eq:LB}) implies that given any point in a vertex space, there is always a uniformly nearby point which is far away from the link.

\begin{lemma}\label{lem:not-trapped-by-link}
    Let $\X = (\Gamma, X_-, Y_-, \alpha_-)$ be a link bottlenecked graph of spaces.
    Then for all $D > 0$, there exists $r = r(D, \X)> 0$ such that for all $v \in V(\Gamma)$ and all $x \in X_v$ there exists $y \in X_v$ such that
    \begin{enumerate}
        \item $\dist_{X_v}(x,y) \leq r$,

        \item $\dist_{X_v}(y, \lk_\X(v)) > D$. 
    \end{enumerate}
\end{lemma}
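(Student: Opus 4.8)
The plan is to argue by contradiction, exploiting the uniformity built into condition \eqref{eq:LB} together with the fact that the links are (coarsely) the images of edge spaces. Fix $D > 0$ and suppose, for contradiction, that there is no such radius $r$. Then for every $n \in \N$ there is a vertex $v_n \in V(\Gamma)$ and a point $x_n \in X_{v_n}$ such that every point $y \in B_{X_{v_n}}(x_n; n)$ satisfies $\dist_{X_{v_n}}(y, \lk_\X(v_n)) \leq D$. In other words, the ball $B_{X_{v_n}}(x_n; n)$ is entirely contained in the $D$-neighbourhood $B_{X_{v_n}}(\lk_\X(v_n); D)$ of the link.

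The key observation is that this traps a large ball inside a neighbourhood of the link, and a neighbourhood of the link quasi-isometrically embeds into the link itself (with constants depending only on $D$, since the inclusion $\lk_\X(v) \hookrightarrow B_{X_v}(\lk_\X(v); D)$ is $D$-coarsely surjective and $1$-Lipschitz, hence a $(1, 2D)$-quasi-isometry onto its image, and a quasi-inverse exists with constants depending only on $D$). So first I would set $Z_n := X_{v_n} - B_{X_{v_n}}(x_n; n)$, the complement of the large ball; then $X_{v_n} - Z_n = B_{X_{v_n}}(x_n; n)$ admits a $(\lambda, c)$-quasi-isometric embedding into $\lk_\X(v_n)$ for $\lambda, c$ depending only on $D$. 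Apply \eqref{eq:LB} with these fixed constants: it yields $K \geq 0$ and $\rho \geq 0$, independent of $n$, such that $X_{v_n} - Z_n$ has a unique $\rho$-coarse component $U_n$ with $U_n \not\subset B_{X_{v_n}}(Z_n; K)$. But $X_{v_n} - Z_n = B_{X_{v_n}}(x_n; n)$, so every point of $X_{v_n} - Z_n$ lies within $K$ of $Z_n$ as soon as $n$ is large enough that the ball of radius $n$ around $x_n$ contains a point of $X_{v_n}$ at distance more than $K$ from $x_n$ but the outer shell of width $K$ of $B(x_n; n)$ forces $B(x_n; n) \subseteq B(Z_n; K)$ — more precisely, any $y \in B(x_n; n - K)$ could fail, so I should instead note that the entire set $X_{v_n} - Z_n = B(x_n;n)$ is within distance $\leq K+1$ of $Z_n$ provided $X_{v_n}$ is unbounded (which holds by Lemma~\ref{lem:LB-implies-vertex-space-bigger-than-link}, as $\dHaus_\X(X_v, \lk_\X(v)) = \infty$ precludes $X_{v_n}$ being a bounded set contained in a $D$-neighbourhood of its link). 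Hence no $\rho$-coarse component of $X_{v_n} - Z_n$ escapes $B_{X_{v_n}}(Z_n; K+1)$, contradicting the conclusion of \eqref{eq:LB} for $n > K+1$.

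The step I expect to need the most care is verifying that $B_{X_v}(\lk_\X(v); D)$ genuinely admits a quasi-isometric embedding into $\lk_\X(v)$ with constants depending only on $D$ and not on $v$. The cleanest route is: the nearest-point assignment $\pi \colon B_{X_v}(\lk_\X(v); D) \to \lk_\X(v)$ moves points by at most $D$, so it is $(1, 2D)$-coarsely Lipschitz and $(1, 2D)$-coarsely expanding is automatic from $\dist(\pi(a),\pi(b)) \geq \dist(a,b) - 2D$; one must be slightly careful that $\lk_\X(v)$ need not be connected, but $\pi$ of the ball lands in a bounded neighbourhood of $\lk_\X(v)$ within $X_v$ and the quasi-isometric embedding condition only concerns pairwise distances, so this is fine. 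Then the trapped ball $B_{X_v}(x; n)$ sits inside $B_{X_v}(\lk_\X(v); D)$ and thus composes to a $(\lambda, c)$-quasi-isometric embedding into $\lk_\X(v)$ with $\lambda, c$ functions of $D$ alone. With that in hand, the remaining bookkeeping about coarse components and the final size contradiction is routine, relying only on $X_v$ being unbounded, which Lemma~\ref{lem:LB-implies-vertex-space-bigger-than-link} (or Lemma~\ref{lem:vertex-space-one-ended}) supplies.
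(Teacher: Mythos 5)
There is a genuine gap, and it begins with a misapplication of~\eqref{eq:LB}. You set $Z_n := X_{v_n} - B_{X_{v_n}}(x_n;n)$ and then observe that $X_{v_n} - Z_n = B_{X_{v_n}}(x_n;n)$ quasi-isometrically embeds into the link. But~\eqref{eq:LB} has its hypothesis on $Z$ \emph{itself}: it is the subset $Z$ that must quasi-isometrically embed into a link, and the conclusion is then about coarse components of $X_v - Z$. Your $Z_n$ is the complement of a large ball, which certainly does not embed into a link, so the hypothesis of~\eqref{eq:LB} is simply not satisfied for your choice of $Z_n$ and the conclusion you invoke is unavailable. Even if you swap roles and take $Z_n = B_{X_{v_n}}(x_n;n)$ (which \emph{does} embed), you get no contradiction: $X_{v_n}$ is one-ended (Lemma~\ref{lem:vertex-space-one-ended}), so $X_{v_n} - B_{X_{v_n}}(x_n;n)$ has exactly one unbounded coarse component, and that component indeed escapes $B_{X_{v_n}}(Z_n;K)$ --- this is entirely consistent with~\eqref{eq:LB}, not a violation of it. Finally, the claimed inclusion $B_{X_{v_n}}(x_n;n)\subseteq B_{X_{v_n}}(Z_n;K+1)$ is false: since $X_{v_n}$ is geodesic and unbounded, $\dist_{X_{v_n}}(x_n, Z_n)$ is essentially $n$, which exceeds $K+1$ once $n$ is large. (Your observation that a $D$-neighbourhood of the link admits a $(1,2D)$-quasi-isometric embedding into the link via closest-point projection, uniformly in $v$, \emph{is} correct and is also used in the paper.)

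The missing idea is to choose $Z$ to be an \emph{annulus} rather than a ball or its complement. The paper sets $Z = S_r = \{z\in X_v : \dist_{X_v}(x,z)\in[r-\rho,r]\}$ with $\rho = \rho(1,2D)$ from~\eqref{eq:LB}. Under your contradiction hypothesis, $S_r$ lies in the $D$-neighbourhood of the link and hence $(1,2D)$-quasi-isometrically embeds into it. Now $X_v - S_r$ has at least two $\rho$-coarse components: the inner one containing $x$, which is bounded, and the outer one, which is unbounded since $X_v$ is unbounded. The outer component always escapes $B_{X_v}(S_r;K)$; and once $r - \rho > K$, the inner one does too, since $\dist_{X_v}(x, S_r) \geq r-\rho$. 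Two escaping components contradict the \emph{uniqueness} guaranteed by~\eqref{eq:LB}. This annulus trick is precisely what creates the bounded-but-escaping component that your ball/complement choices cannot produce.
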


\begin{proof}
    Suppose this were not the case. Then this means that there exists $D > 0$ such that for all $r > 0$, there exists $v \in V(\Gamma)$ and $x \in X_v$ such that for all $y \in B_{X_v}(x;r)$, we have that $\dist_{X_v}(y,\lk_\X(v)) \leq D$.
    Let $\rho = \rho(1,2D)$ be the constant given by the definition~\eqref{eq:LB}.
    For \(r > \rho \), consider the set
    $$
    S_r = \{z \in X_v : \dist_{X_v}(x,z) \in [r-\rho, r] \}.
    $$
    By our assumption, any closest point projection of $S_r$ to $\lk_\X(v)$ is a $(1,2D)$-quasi-isometric embedding into $\lk_\X(v)$. 
    However, $x$ lies in a bounded $\rho$-coarse component of $X_v - S_r$. 
    Since $X_v$ is unbounded by Lemma~\ref{lem:vertex-space-one-ended}, this contradicts (\ref{eq:LB}) if $r$ is chosen to be sufficiently large with respect to \(K(1,2D)\).  
\end{proof}

The key consequence of property (\ref{eq:LB}) is that the images of vertex spaces under quasi-isometries of link bottlenecked graphs of spaces are uniformly close to vertex spaces in the codomain.

\begin{lemma}\label{lem:vertex-spaces-map-to-vertex-spaces}
    Let $\X = (\Gamma, X_-, Y_-, \alpha_-)$, $\Y = (\Delta, X'_-, Y'_-, \beta_-)$ be uniformly hyperbolic graphs of spaces. 
    Suppose further that the pair $(\X,\Y)$ is totally link bottlenecked. Then for all $\lambda \geq 1$, $c \geq 0$, there exists $C = C(\lambda, c,\X,\Y) \geq 0$ such that for all $(\lambda, c)$-quasi-isometries $\varphi \colon \abs\X \to \abs\Y$ and all $v \in V(\Gamma)$, there exists a unique $u \in V(\Delta)$ such that 
    $$
    \dHaus_\Y(X'_u, \varphi(X_v)) \leq C. 
    $$
    In particular, every quasi-isometry \(\varphi \colon \abs\X \to \abs\Y\) induces a map \(\Phi(\varphi) \colon V(\Gamma) \to V(\Delta)\) on the vertices of the underlying graphs of \(\X\) and \(\Y\).
\end{lemma}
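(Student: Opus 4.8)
The plan is to show that the image of a vertex space $X_v$ under a quasi-isometry $\varphi$ must be coarsely contained in \emph{some} vertex space $X'_u$ of $\Y$, and then that it coarsely \emph{fills} $X'_u$; the two halves are symmetric, using that the pair $(\X,\Y)$ is \emph{totally} link bottlenecked. Throughout I will use the Morse Lemma (Lemma~\ref{lem:morse_lemma}), Proposition~\ref{prop:graph-of-spaces-is-proper-geodesic} (so both realisations are proper geodesic), Proposition~\ref{prop:vertex_spaces_quasiconvex} (vertex spaces are uniformly $\sigma$-quasiconvex), Lemma~\ref{lem:intersections-of-nbhds-of-vertex-spaces} (neighbourhoods of distinct vertex spaces meet in bounded sets unless they are adjacent, in which case the overlap lies in a bounded neighbourhood of the shared edge cylinder), and the effective coarse separation Lemma~\ref{lem:effective_coarse_separation}.

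\textbf{Step 1: $\varphi(X_v)$ lands coarsely in a single vertex space.} Fix a basepoint $o \in X_v$ and consider $\varphi(o) \in \abs\Y$; it lies within distance $1$ of some vertex space $X'_u$ of $\Y$. I claim $\dHaus_\Y(\varphi(X_v), B_\Y(X'_u; C_0)) < \infty$ for a uniform $C_0$. Suppose not: then $\varphi(X_v)$ contains points arbitrarily far (in $\abs\Y$) from $X'_u$. Using that $\abs\Y$ is a tree-like union of pieces — more precisely, any path from a point outside $B_\Y(X'_u;1)$ back to $X'_u$ must traverse an edge cylinder incident to $X'_u$ — and that distinct edge cylinders have neighbourhoods meeting in bounded diameter (Lemma~\ref{lem:intersection_of_edge_piece_nbhds}), I get points $a,b \in X_v$ whose $\varphi$-images are separated by a link $\lk_\Y(w)$ of $\Y$ into deep coarse components. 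Pulling this back through a quasi-inverse $\psi$ via Lemma~\ref{lem:effective_coarse_separation} produces a subset $Z \subset X_v$ which quasi-isometrically embeds into some link of $\X$ (here one uses that preimages of edge cylinders are coarsely links, by the Morse Lemma and quasiconvexity) and which separates $X_v$ into two deep coarse components. This contradicts $(\X,\X)$ being link bottlenecked, which gives a \emph{unique} deep $\rho$-coarse component of $X_v$ minus such a $Z$. Hence $\varphi(X_v)$ is coarsely trapped near one vertex space neighbourhood $B_\Y(X'_u;1)$; and since $\varphi(X_v)$ is unbounded while edge cylinders are $1$-thin, it is in fact coarsely contained in $X'_u$ itself (use one-endedness, Lemma~\ref{lem:vertex-space-one-ended}).

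\textbf{Step 2: $\varphi(X_v)$ coarsely fills $X'_u$.} Run the symmetric argument with the quasi-inverse $\psi \colon \abs\Y \to \abs\X$, which is a $(\lambda, c')$-quasi-isometry with $c'$ depending only on $\lambda, c$. By Step 1 applied to $(\Y,\X)$ (legitimate since $(\X,\Y)$ is totally link bottlenecked, so $(\Y,\X)$ is too), $\psi(X'_u)$ is coarsely contained in some vertex space $X_{v'}$ of $\X$. Since $\psi \circ \varphi$ is within bounded distance of $\id_{\abs\X}$ and $\varphi(X_v) \subseteq B_\Y(X'_u; C_0)$, we get $X_v \subseteq B_\X(\psi(X'_u); Q') \subseteq B_\X(X_{v'}; C_1)$ for uniform $Q', C_1$. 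By Lemma~\ref{lem:LB-implies-vertex-space-bigger-than-link}, distinct vertex spaces are infinitely Hausdorff apart, so $v' = v$, giving $\dHaus_\X(\psi(X'_u), X_v) \leq C_2$. Applying $\varphi$ and using coarse surjectivity of $\varphi \circ \psi$ near $\id$ yields $X'_u \subseteq B_\Y(\varphi(X_v); C_3)$. Combined with Step 1, $\dHaus_\Y(X'_u, \varphi(X_v)) \leq C$ for a constant $C = C(\lambda, c, \X, \Y)$.

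\textbf{Step 3: uniqueness and the induced map.} If $X'_u$ and $X'_{u'}$ both satisfied the bound, then $\dHaus_\Y(X'_u, X'_{u'}) \leq 2C < \infty$, forcing $u = u'$ by Lemma~\ref{lem:LB-implies-vertex-space-bigger-than-link}. Thus $u$ is unique, and setting $\Phi(\varphi)(v) := u$ defines the asserted map $\Phi(\varphi) \colon V(\Gamma) \to V(\Delta)$.

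\textbf{Main obstacle.} The delicate point is Step~1: identifying preimages (or images) of edge cylinders well enough to legitimately invoke the link-bottlenecked hypothesis. One must argue that a quasi-isometry coarsely carries a link (or a bounded-neighbourhood-of-edge-cylinder coarse separator) of one graph of spaces to a subset that quasi-isometrically embeds into a link of the other — this requires carefully combining the Morse Lemma, the uniform quasiconvexity of vertex spaces, and the controlled intersection pattern of piece-neighbourhoods (Lemmas~\ref{lem:intersection_of_edge_piece_nbhds} and~\ref{lem:intersections-of-nbhds-of-vertex-spaces}), keeping all constants depending only on $\lambda, c, \X, \Y$.
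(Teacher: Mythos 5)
Your overall three‑step structure (coarse containment, coarse surjectivity by symmetry, uniqueness) is the right skeleton and matches the paper's, but Step~1 has two genuine gaps.

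First, you are missing an intermediate claim that the paper proves first: that $\varphi(X_v)$ goes \emph{arbitrarily deep} into some vertex space $X'_u$ of $\Y$, i.e.\ for every $a>0$ it meets $X'_u - B_{X'_u}(\lk_\Y(u);a)$. You only observe that a single point $\varphi(o)$ lands within distance $1$ of some $X'_u$. That is too weak: your subsequent contradiction argument needs a point of $\varphi(X_v)$ that is deep inside $X'_u$ as well as one far away, so that $\lk_\Y(u)$ genuinely separates them. With only $\varphi(o)$ close to $X'_u$ the separation may fail, and you may have chosen the ``wrong'' $u$ altogether. The paper establishes the deep‑penetration claim using Lemma~\ref{lem:not-trapped-by-link} together with Lemma~\ref{lem:LB-implies-vertex-space-bigger-than-link}, and only then runs the separation argument.

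Second, and more seriously, your application of the link‑bottlenecked hypothesis goes in the wrong direction. You assert that the separating set $Z\subset X_v$ ``quasi‑isometrically embeds into some link of $\X$'' and try to justify this via the claim that ``preimages of edge cylinders are coarsely links.'' That claim is precisely the kind of statement this lemma is supposed to establish (it amounts to knowing that $\varphi$ coarsely respects the piece decomposition), so invoking it here is circular, and no Morse/quasiconvexity argument you've sketched proves it without already having the lemma. In fact no such detour is needed: $Z$ is constructed as $B_\X(\psi(\lk_\Y(u));L)\cap X_v$, so $\varphi(Z)$ lies at uniformly bounded Hausdorff distance from $\lk_\Y(u)$, and composing $\varphi|_Z$ with a nearest‑point projection onto $\lk_\Y(u)$ gives a $(\lambda,c+\lambda k)$-quasi‑isometric embedding $Z\into\lk_\Y(u)$, a link of $\Y$ — essentially for free. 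This is exactly why the definition of the pair $(\X,\Y)$ being link bottlenecked is asymmetric: it is tailored for subsets of $\X$-vertex spaces that embed into $\Y$-links, which is the only thing one can establish at this stage. Your ``Main obstacle'' paragraph does gesture at the right statement (``a subset that quasi‑isometrically embeds into a link of \emph{the other}''), but it contradicts what Step~1 actually says, and the claimed difficulty (Morse lemma, quasiconvexity, controlled intersections) is largely illusory once $Z$ is chosen as above. Fix both the missing deep‑penetration step and the direction of the link‑bottleneck application, and the rest of your plan (Steps 2 and 3) goes through as written.
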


\begin{proof}
    Let $\psi \colon \Y \to \X$ be a quasi-inverse to $\varphi$, so \(\dist_\infty(\id_\X, \psi\varphi), \dist_\infty(\id_\Y,\varphi\psi) \leq Q\) where \(Q\) depends only on \(\lambda\) and \(c\).
    After possibly increasing \(c\), we may assume \(\psi\) is also a \((\lambda,c)\)-quasi-isometry.
    Since $\X$ and $\Y$ are uniformly hyperbolic, Proposition~\ref{prop:vertex_spaces_quasiconvex} gives us a constant $k \geq 0$ such that for all $v \in V(\Gamma)$, $v' \in V(\Delta)$, the inclusions $X_v \into \abs\X$ and $X'_{v'} \into \abs\Y$ are $(1, k)$-quasi-isometric embeddings.
    
    We first show that $\varphi(X_v)$ lands arbitrarily deep inside vertex spaces of $\Y$. 

    \begin{claim*}
        For each $a > 0$, there is $u \in V(\Delta)$ such that 
        $$
        \varphi(X_v) \cap \left(X_u' - B_{X_u'}(\lk_\Y(u); a) \right) \neq \emptyset.
        $$
    \end{claim*}

    \begin{proof}[Proof of claim.]
        For any \(x \in X_v\), there is $x' \in \abs\Y$ in a vertex space, say $x' \in X'_u$, such that $\dist_\Y(x', \varphi(x)) \leq 1$ (projecting to one side of an edge cylinder if necessary). 
        Let $r = r(a + c + k, \Y)$ be as in Lemma~\ref{lem:not-trapped-by-link} so that, given \(x' \in X'_u\), there is $y' \in X_u'$ with
        \begin{equation*}
            \dist_{X_u'}(x',y') \leq r \quad \textnormal{ and } \quad \dist_{X_u'}(y', \lk_\Y(u)) > a + c + k. 
        \end{equation*}
        Moreover, any \(y' \in \abs\Y\) is a distance at most \(c\) from the image of \(\varphi\), as \(\varphi\) is \(c\)-coarsely surjective.
        Thus by the triangle inequality and the fact that the inclusion $X'_{u} \into \abs\Y$ is a $(1, k)$-quasi-isometric embedding, for any \(x \in X_v\) there is \(y \in \abs\Y\) with
        \begin{equation}
        \label{eq:props_of_y_given_x}
            \dist_{X_u'}(y, \lk_\Y(u)) > a \quad \textnormal{ and } \dist_{\Y}(y,\varphi(x)) \leq 1 + r + c.
        \end{equation}
        
        We show that for \(x \in X_v\) far enough from \(\lk_\X(v)\), the \(y\) obtained as above lies in \(\varphi(X_v)\).
        To this end, take  \(R = \lambda(1+r+2c) + k\).
        By Lemma~\ref{lem:LB-implies-vertex-space-bigger-than-link}, there is $x \in X_v$ such that 
        \(
            \dist_{X_v}(x, \lk_\X(v)) > R.
        \)
        
        Let $z \in \varphi^{-1}(y)$. 
        We verify that $\dist_\X(x,z) < \dist_\X(x, \lk_\X(v))$, which implies that $z \in X_v$. 
        Using that \(\varphi\) is a quasi-isometry and (\ref{eq:props_of_y_given_x}),
        \[
            \dist_\X(x, z) \leq \lambda \left( \dist_\Y(\varphi(x),y) + c\right) \leq \lambda (1 + r + 2c) = R - k.
        \]
        To conclude, again note that since \(X_v \hookrightarrow \abs\X\) is a \((1,k)\)-quasi-isometric embedding, the above equation implies
        \[
            \dist_\X(x,z) \leq R - k < \dist_{X_v}(x, \lk_\X(v)) - k \leq \dist_\X(x, \lk_\X(v))
        \]
        as required.
    \end{proof}

    Next, we use (\ref{eq:LB}) to show that $\varphi(X_v)$ cannot contain points too far away from $X_u'$. 
    This is one of the two needed inclusions to prove the lemma.
    
    \begin{claim*}
        There exists $A = A(\lambda, c, \X,\Y) \geq 0$ and \(u \in V(\Delta)\) such that
        \[\varphi(X_v) \subset B_{\Y}(X'_u;A).\] 
        Moreover, the vertex \(u\) is unique.
    \end{claim*}

    \begin{proof}[Proof of claim.]
        Suppose otherwise, so that for any \(A \geq 0\) and \(u \in V(\Delta)\), there is \(y \in X_v\) with \(\dist_\Y(\varphi(y),X'_u) > A\).
        By the first claim, there is $u \in V(\Delta)$ such that $\varphi(X_v) \cap \left(X_u' - B_{X_u'}(\lk_\Y(u); A) \right) \neq \emptyset$. 
        That is, there is $x \in X_v$ such that $\varphi(x) \in X_u'$ and $\dist_{\Y}(\varphi(x), \lk_\Y(u)) > A$. 
        By definition, we have that $\lk_\Y(u)$ separates $\varphi(x)$ from $\varphi(y)$. 
        In particular, for any $\kappa\geq 0$ we know that $\varphi(x), \varphi(y)$ are in different $\kappa$-coarse components of $\abs\Y -  B_\X(\lk_\Y(u),\kappa)$.  
        Now applying Lemma~\ref{lem:effective_coarse_separation},
        we obtain constants $\rho=\rho(\lambda, c, \kappa)$
        and $L > 0$, depending only on $\kappa, \lambda$ and $c$, such that the set 
        \[
            B = B_\X(\psi(\lk_\Y(u)); L)
        \]
        separates $\psi\varphi(x)$ from $\psi\varphi(y)$ in $\abs\X$, so long as \(A \geq \lambda L + \lambda c\). 
        Setting $Z = B \cap X_v$, then, we have that $x$ and $y$ lie in distinct $(\rho+2Q)$-coarse components %\lawk{\emph{\((\rho+2Q)\)-coarse} components?}
         of $X_v - Z$, when equipped with the subspace metric induced from~$\abs\X$, as \(\dist_\infty(\psi\varphi,\id_\X) \leq Q\).

        Recall that all vertex spaces in $\X$ and $\Y$ are $(1, k)$-quasi-isometrically embedded in \(\abs\X\) and \(\abs\Y\).
        By Lemma~\ref{lem:effective_coarse_separation}, we know that there exists $\rho^\prime=\rho^{\prime}(1, \rho+2Q+k)$ such that $x, y$ lie in different $\rho^\prime$-coarse components of $X_v - Z$, when equipped with the metric of~$X_v$. Note that~$\rho^\prime$ diverges as~$\kappa$ tends to~$\infty$.
        
        Moreover, $Z$  as a subspace of \((X_v,\dist_{X_v})\), admits a 
        $(\lambda, c+\lambda k)$-quasi-isometric embedding into $\lk_\Y(u)$.
        Let $K = K(\lambda, c + \lambda k)$ be as in the definition of property (\ref{eq:LB}). 
        Now if \(A \geq \lambda(K+L) + c\), we have
        \begin{align*}
            \dist_{X_v}(x, Z) &\geq \dist_{\X}(x, B) \\
            &\geq \dist_{\X}(x, \psi(\lk_\Y(u))) - L \\
            &\geq \frac 1 \lambda (\dist_{\Y}(\varphi(x), \lk_\Y(u)) - c ) - L\\
            &> \frac 1 \lambda (A - c ) - L \geq K
        \end{align*}
        Identical reasoning also shows that $\dist_{X_v}(y, Z) > K$.

    Observe that we can do this for any~$\kappa\geq 0$, so that there is no~$\rho^\prime$ such that at most one of the $\rho^\prime$-coarse components of~$X_v-Z$ is at bounded distance from~$Z$. 
        This contradicts property (\ref{eq:LB}), and thus the first statement in the claim is shown.
        That the vertex is unique follows immediately from Lemma~\ref{lem:LB-implies-vertex-space-bigger-than-link}.
    \end{proof}

    Finally, we conclude the proof of the lemma by symmetry.
    Let \(A' = A(\lambda, c, \Y,\X)\) and \(v' \in V(\Gamma)\)
    be as in second claim applied to \(\psi \colon \abs\Y \to \abs\X\), so that
    \[
        \psi(X_u') \subset B_{\X}(X_{v'};A').
    \]
    We first argue that $v' = v$. 
    Indeed, it follows that \(\psi\varphi(X_v)\) is contained in a bounded neighbourhood of \(X_{v'}\).
    Since \(\psi\) is quasi-inverse to \(\varphi\), this implies \(X_v\) is contained in a bounded neighbourhood of \(X_{v'}\).
    Hence \(v = v'\) by Lemma~\ref{lem:LB-implies-vertex-space-bigger-than-link}.

    Now, as \(\varphi\) is a \((\lambda,c)\)-quasi-isometry, \(\varphi\psi(X'_u) \subseteq B_{\Y}(\varphi(X_v);\lambda A' + c)\).
    Since \(\dist \varphi\psi,\id_\Y) \leq Q\), we have \(\dHaus_\Y(\varphi\psi(X'_u),X'_u) \leq Q\).
    Combining these shows that \(\varphi|_{X_v}\) is \(A''\)-coarsely surjective onto \(X'_u\), where \(A'' = \lambda A' + c + Q\).
    Setting \(C = \max\{A,A',A''\}\) concludes the proof.
\end{proof}

\begin{remark}
    Uniform hyperbolicity of \(\X\) is essential in the above, as it provides a bound on the distortion of the vertex spaces in \(\abs\X\). 
    In principle, one could forgo this assumption, formulating property (\ref{eq:LB}) in terms of the global metric \(\dist_\X\) as opposed to the local metrics \(\dist_{X_v}\). 
    However, such a condition would be unwieldy -- potentially very difficult -- to verify in practice.
\end{remark}

As a consequence of the above, quasi-isometries of a link bottlenecked graph of spaces also coarsely preserve the edge cylinders.

\begin{lemma}\label{lem:qi-almost-fixes-edge-spaces-new}
    Let $\X = (\Gamma, X_-, Y_-, \alpha_-)$ be a uniformly hyperbolic, link bottlenecked graph of spaces.
    Let $\varphi \colon \abs\X \to \abs\X$ be a $(\lambda,c)$-quasi-isometry such that $\Phi(\varphi) = \id_\Gamma$. 
    There is $T = T(\lambda,c, \X) \geq 0$ such that for any $e \in E(\Gamma)$,
    \[
        \dHaus_{\mathbf X} (\varphi(Z_e), Z_e) \leq T,
    \]
    where \(Z_e \subseteq \abs\X\) is the edge cylinder corresponding to \(e\).
\end{lemma}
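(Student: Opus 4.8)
The plan is to deduce this from Lemma~\ref{lem:vertex-spaces-map-to-vertex-spaces} (applied with \(\Y = \X\)) together with the description of neighbourhood intersections in Lemma~\ref{lem:intersections-of-nbhds-of-vertex-spaces}. First I would fix a quasi-inverse \(\psi\) of \(\varphi\); after enlarging \(c\) we may take \(\psi\) to be a \((\lambda,c)\)-quasi-isometry with \(\dist_\infty(\psi\varphi,\id_\X),\dist_\infty(\varphi\psi,\id_\X) \leq Q\) for \(Q = Q(\lambda,c)\). Since \(\varphi\) coarsely fixes every vertex space (by Lemma~\ref{lem:vertex-spaces-map-to-vertex-spaces}, as \(\Phi(\varphi) = \id_\Gamma\)) and \(\psi\varphi\) is close to the identity, \(\psi\) coarsely fixes every vertex space too, so \(\Phi(\psi) = \id_\Gamma\) and Lemma~\ref{lem:vertex-spaces-map-to-vertex-spaces} applies to \(\psi\) as well; let \(C = C(\lambda,c,\X)\) be a common constant with \(\dHaus_\X(X_w,\varphi(X_w)) \leq C\) and \(\dHaus_\X(X_w,\psi(X_w)) \leq C\) for all \(w \in V(\Gamma)\). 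Fix \(e \in E(\Gamma)\) and write \(u = \iota(e)\), \(v = \tau(e)\); since \(\Gamma\) is simplicial, \(e\) is the unique edge joining \(u\) and \(v\), so \(\dist_\Gamma(u,v) = 1\) and case~(1) of Lemma~\ref{lem:intersections-of-nbhds-of-vertex-spaces} applies with this \(Z_e\). Every point of \(Z_e = Y_e \times [0,1]\) lies within \(1\) of \(Y_e \times \{1\} \subseteq X_u\) and within \(1\) of \(Y_e \times \{0\} \subseteq X_v\), so \(Z_e \subseteq B_\X(X_u;1) \cap B_\X(X_v;1)\); pushing forward and using that \(\varphi\) coarsely fixes \(X_u\) and \(X_v\),
\[
    \varphi(Z_e) \subseteq B_\X(X_u;r) \cap B_\X(X_v;r), \qquad r := \lambda + c + C,
\]
and symmetrically \(\psi(Z_e) \subseteq B_\X(X_u;r) \cap B_\X(X_v;r)\).

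The heart of the argument is to show \(\varphi(Z_e) \subseteq B_\X(Z_e;T_0)\) for a constant \(T_0 = T_0(\lambda,c,\X)\). The key point is that \(Z_e\) is coarsely connected — \(Y_e\) is a length space, hence \(1\)-coarsely connected, hence so is \(Z_e\) — and therefore \(\varphi(Z_e)\) is \((\lambda+c)\)-coarsely connected. Set \(r' = r + \lambda + c\) and let \(R' = R(r',\delta,C)\) be the constant of Lemma~\ref{lem:intersections-of-nbhds-of-vertex-spaces}. Given two points of \(\varphi(Z_e)\) at distance at most \(\lambda+c\), the geodesic between them (which exists by Proposition~\ref{prop:graph-of-spaces-is-proper-geodesic}) stays within \(r'\) of both \(X_u\) and \(X_v\), so the two points lie in the same connected component of \(B_\X(X_u;r') \cap B_\X(X_v;r')\); chaining along a \((\lambda+c)\)-path shows that all of \(\varphi(Z_e)\) lies in a single connected component \(V\) of \(B_\X(X_u;r') \cap B_\X(X_v;r')\). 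By Lemma~\ref{lem:intersections-of-nbhds-of-vertex-spaces}(1), either \(V \subseteq B_\X(Z_e;R')\), whence \(\varphi(Z_e) \subseteq B_\X(Z_e;R')\), or \(\diam_\X(V) \leq R'\); in the latter case \(\varphi(Z_e) \subseteq V\) is bounded, which is impossible for the quasi-isometry \(\varphi\) as \(Z_e\) is unbounded. So \(T_0 = R'\) works.

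For the reverse inclusion I would run the same argument for \(\psi\), whose constants again depend only on \(\lambda, c, \X\), to get \(\psi(Z_e) \subseteq B_\X(Z_e;T_0)\); applying \(\varphi\) and using \(\dist_\infty(\varphi\psi,\id_\X) \leq Q\) gives
\[
    Z_e \subseteq B_\X(\varphi\psi(Z_e);Q) \subseteq B_\X(\varphi(Z_e); \lambda T_0 + c + Q),
\]
so that \(\dHaus_\X(\varphi(Z_e),Z_e) \leq T\) with \(T = \lambda T_0 + c + Q\). I expect the main obstacle to be the step in the second paragraph: confining \(\varphi(Z_e)\) to a single connected component of the neighbourhood intersection and excluding the "small, far-away" components appearing in the second alternative of Lemma~\ref{lem:intersections-of-nbhds-of-vertex-spaces}(1). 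Coarse connectedness of \(Z_e\) provides the confinement, and unboundedness of the edge cylinders forces the surviving component to be the large one lying uniformly close to \(Z_e\); everything else is bookkeeping with quasi-isometry constants and with the functorial behaviour of \(\Phi\).
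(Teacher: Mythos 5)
Your proof follows the same strategy as the paper's: characterise $Z_e$ (up to bounded error) as the intersection $B_\X(X_u;1) \cap B_\X(X_v;1)$ of tubular neighbourhoods of the incident vertex spaces, push forward under $\varphi$ using Lemma~\ref{lem:vertex-spaces-map-to-vertex-spaces}, appeal to Lemma~\ref{lem:intersections-of-nbhds-of-vertex-spaces} to confine the image near $Z_e$, and then symmetrise via a quasi-inverse $\psi$ (after correctly noting that $\Phi(\psi) = \id_\Gamma$, which the paper leaves implicit). Where you differ is that you are rather more careful in the middle step. Lemma~\ref{lem:intersections-of-nbhds-of-vertex-spaces}(1) only says that each \emph{connected component} of $B_\X(X_u;r) \cap B_\X(X_v;r)$ is either of bounded diameter or contained in $B_\X(Z_e;R)$; it does not say the entire set sits inside $B_\X(Z_e;R)$. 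The paper's proof asserts the stronger containment directly, quietly glossing over the possibility of small far-away components. Your coarse-connectedness argument --- $Z_e$ is a length space, so $\varphi(Z_e)$ is uniformly coarsely connected, hence must lie in a single connected component of a slightly fattened intersection, which can be pinned down by Lemma~\ref{lem:intersections-of-nbhds-of-vertex-spaces}(1) --- genuinely closes this gap. The cost is that your final dichotomy (``the component is either near $Z_e$ or bounded, and the latter is impossible'') invokes unboundedness of the edge cylinder $Z_e$, a hypothesis not stated in Lemma~\ref{lem:qi-almost-fixes-edge-spaces-new} itself, though it is explicitly present in every place the lemma is subsequently used (Proposition~\ref{prop:LR-gives-phi-injective}, Theorem~\ref{thm:strong-qi-rigidity-theorem}). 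So your proof is correct in the regime the paper actually needs, and in fact suggests that the lemma statement ought to carry the unbounded-edge-spaces hypothesis to make the deduction from Lemma~\ref{lem:intersections-of-nbhds-of-vertex-spaces} watertight.
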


\begin{proof}
    Let $\varphi \colon \abs\X \to \abs\X$ be a $(\lambda,c)$-quasi-isometry such that $\Phi(\varphi) = \id_\Gamma$.
    Let $Z_e$ be an edge cylinder connecting vertex spaces $X_u$ and $X_v$. 
    Note that, by assumption, $u = \Phi(\varphi)(u)$ and $v = \Phi(\varphi)(v)$.
    We have that 
    \[
        Z_e = B_{\X}(X_u;1) \cap B_{\X}(X_v;1). 
    \]
    And so, since \(\varphi\) is a \((\lambda,c)\)-quasi-isometry,
    \[
        \varphi(Z_e) \subseteq B_{\X}(\varphi(X_u);\lambda + c) \cap B_{\X}(\varphi(X_u); \lambda + c).
    \]
    
    Let $C = C(\lambda,c, \X, \X)$ be as in Lemma~\ref{lem:vertex-spaces-map-to-vertex-spaces}. 
    The lemma tells us that \(\varphi(X_w)\) is a Hausdorff distance of at most \(C\) from \(X_w\) for all \(w \in V(\Gamma)\), as \(w = \Phi(\varphi)(w)\).
    Therefore,
    \[
        \varphi(Z_e)  \subseteq B_{\X}(X_u;\lambda + c + C) \cap B_{\X}(X_v;\lambda + c + C).
    \]
    Now, by Lemma~\ref{lem:intersections-of-nbhds-of-vertex-spaces}, the set on the right-hand side is contained in $B_{\X}(Z_e;K)$, for some $K > 0$ depending only on $\lambda$, $c$, and the uniform hyperbolicity constants of \(\X\). 
    Thus the above inclusion yields 
    \[
        \varphi(Z_e) \subseteq B_{\X}(Z_e;K).
    \]
    Applying the same argument to a quasi-inverse $\psi$ of $\varphi$ shows that $\dHaus_{\mathbf X} (\varphi(Z_e), Z_e)$ is bounded by an constant $T \geq 0$ depending only on $\lambda$, $c$, and $\X$.
\end{proof}

We can say more about the function \(\Phi\) from Lemma~\ref{lem:vertex-spaces-map-to-vertex-spaces}.

\begin{lemma}\label{lem:edge-spaces-map-to-edge-spaces}
    Let $\X$, $\Y$ be a totally link bottlenecked pair of uniformly hyperbolic graphs of spaces. 
    For any quasi-isometry $\varphi \colon \abs\X \to \abs\Y$, the map $\Phi(\varphi) \colon V(\Gamma) \to V(\Delta)$ is a bijection. 
    Moreover, if $\X$ and \(\Y\) have unbounded edge spaces, then $\Phi(\varphi)$ is a graph isomorphism and \(\Phi\) is a group homomorphism \(\QI(\X,\Y) \to \Isom(\Gamma,\Delta)\).
\end{lemma}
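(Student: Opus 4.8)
The plan is to bootstrap everything from Lemma~\ref{lem:vertex-spaces-map-to-vertex-spaces}, which already produces the map \(\Phi(\varphi)\colon V(\Gamma)\to V(\Delta)\) characterised by \(\dHaus_\Y(X'_{\Phi(\varphi)(v)},\varphi(X_v))\le C\), and to combine it with two facts: the statement of Lemma~\ref{lem:LB-implies-vertex-space-bigger-than-link} that distinct vertex spaces lie at infinite Hausdorff distance, and the neighbourhood-intersection bound of Lemma~\ref{lem:intersections-of-nbhds-of-vertex-spaces}. The one place where genuine geometry — and the unboundedness hypothesis — enters is the adjacency-preservation step; everything else is bookkeeping with Hausdorff distances.

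For bijectivity I would fix a quasi-inverse \(\psi\colon\abs\Y\to\abs\X\) of \(\varphi\). Since \((\X,\Y)\) is \emph{totally} link bottlenecked, Lemma~\ref{lem:vertex-spaces-map-to-vertex-spaces} applies equally to \(\psi\), giving \(\Phi(\psi)\colon V(\Delta)\to V(\Gamma)\). Then I would chase Hausdorff distances: for \(v\in V(\Gamma)\), the set \(\psi\varphi(X_v)\) is within bounded Hausdorff distance of \(\psi(X'_{\Phi(\varphi)(v)})\) (apply the coarsely Lipschitz map \(\psi\) to the bound \(\dHaus_\Y(\varphi(X_v),X'_{\Phi(\varphi)(v)})\le C\)), which in turn is within bounded Hausdorff distance of \(X_{\Phi(\psi)(\Phi(\varphi)(v))}\); but \(\psi\varphi\) is a bounded distance from \(\id_{\abs\X}\), so \(\psi\varphi(X_v)\) is also within bounded Hausdorff distance of \(X_v\). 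Hence \(X_v\) and \(X_{\Phi(\psi)(\Phi(\varphi)(v))}\) are at finite Hausdorff distance, so equal by Lemma~\ref{lem:LB-implies-vertex-space-bigger-than-link}. This gives \(\Phi(\psi)\circ\Phi(\varphi)=\id_{V(\Gamma)}\), and the symmetric computation gives \(\Phi(\varphi)\circ\Phi(\psi)=\id_{V(\Delta)}\); so \(\Phi(\varphi)\) is a bijection with inverse \(\Phi(\psi)\).

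Next, assuming the edge spaces of \(\X\) and \(\Y\) are unbounded, I would show \(\Phi(\varphi)\) preserves adjacency. Suppose \(u,v\in V(\Gamma)\) are joined by an edge \(e\), with cylinder \(Z_e=Y_e\times[0,1]\), and put \(u'=\Phi(\varphi)(u)\), \(v'=\Phi(\varphi)(v)\); as \(\Gamma\) is simplicial \(u\ne v\), so \(u'\ne v'\) by injectivity. From \(Z_e=B_\X(X_u;1)\cap B_\X(X_v;1)\) and Lemma~\ref{lem:vertex-spaces-map-to-vertex-spaces}, the image \(\varphi(Z_e)\) is contained in \(B_\Y(X'_{u'};r)\cap B_\Y(X'_{v'};r)\) for some \(r\) depending only on the quasi-isometry constants and the ambient data. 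Were \(u'\) and \(v'\) non-adjacent, then \(\dist_\Delta(u',v')>1\) and Lemma~\ref{lem:intersections-of-nbhds-of-vertex-spaces} would force this intersection to have bounded diameter; but \(Z_e\) is unbounded, hence so is \(\varphi(Z_e)\) — a contradiction. Thus \(u'\sim v'\), and running the same argument for \(\psi\) (using \(\Phi(\psi)=\Phi(\varphi)^{-1}\) and the unboundedness of the edge spaces of \(\Y\)) gives the converse implication, so \(\Phi(\varphi)\) is a graph isomorphism \(\Gamma\to\Delta\).

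For the homomorphism assertion I would first check that \(\Phi\) is constant on \(\sim\)-classes: if \(\dist_\infty(\varphi,\varphi')<\infty\) then \(\varphi(X_v)\) and \(\varphi'(X_v)\) are at finite Hausdorff distance, hence so are \(X'_{\Phi(\varphi)(v)}\) and \(X'_{\Phi(\varphi')(v)}\), forcing equality by Lemma~\ref{lem:LB-implies-vertex-space-bigger-than-link}. Compatibility with composition is the same chase: \(\varphi_2\varphi_1(X_v)\) lies boundedly close to \(\varphi_2(X'_{\Phi(\varphi_1)(v)})\), which lies boundedly close to the vertex space indexed by \(\Phi(\varphi_2)(\Phi(\varphi_1)(v))\), and comparison with the vertex space indexed by \(\Phi(\varphi_2\varphi_1)(v)\) gives \(\Phi(\varphi_2\varphi_1)=\Phi(\varphi_2)\circ\Phi(\varphi_1)\); also \(\Phi(\id_{\abs\X})=\id_\Gamma\) is immediate. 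Together these make \(\Phi\) a homomorphism in the sense of the statement, specialising to a group homomorphism \(\QI(\X)\to\Aut(\Gamma)\) when \(\X=\Y\). The main obstacle is precisely the adjacency step: it is here that unbounded edge spaces are indispensable, since otherwise \(\varphi(Z_e)\) could be bounded and Lemma~\ref{lem:intersections-of-nbhds-of-vertex-spaces} would yield no contradiction — one should also be careful throughout that it is the \emph{total} link bottleneck hypothesis that licenses applying Lemma~\ref{lem:vertex-spaces-map-to-vertex-spaces} to \(\varphi\) and to its quasi-inverse simultaneously.
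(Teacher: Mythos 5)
Your proof is correct and follows essentially the same approach as the paper, resting on the same two ingredients (Lemma~\ref{lem:LB-implies-vertex-space-bigger-than-link} for distinctness of vertex spaces, and Lemma~\ref{lem:intersections-of-nbhds-of-vertex-spaces} plus unboundedness of edge spaces for the adjacency step). The only notable difference is that you establish bijectivity by exhibiting \(\Phi(\psi)\) as an explicit two-sided inverse via a quasi-inverse \(\psi\) of \(\varphi\), whereas the paper argues surjectivity and injectivity directly; this is cosmetic, though your version is arguably more transparent than the paper's rather terse surjectivity remark, and your explicit check that \(\Phi\) is constant on \(\sim\)-classes is a detail the paper leaves implicit.
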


\begin{proof}
    We first show that $\Phi(\varphi)$ is surjective. 
    If not, $\varphi$ cannot be coarsely surjective by Lemma~\ref{lem:LB-implies-vertex-space-bigger-than-link}. 
    Next, note that $\Phi(\varphi)$ must also be injective, as otherwise two vertex spaces would lie at finite Hausdorff distance, which again contradicts Lemma~\ref{lem:LB-implies-vertex-space-bigger-than-link}. 

    Now, suppose $\X$ has unbounded edge spaces. 
    Then by Lemma~\ref{lem:intersections-of-nbhds-of-vertex-spaces} we have that the intersection of any tubular neighbourhoods of any two distinct vertex spaces $X_u$ and $X_v$ is infinite if and only if $u$ and $v$ are adjacent in $\Gamma$. 
    This implies that $\Phi(\varphi)(v)$ and $\Phi(\varphi)(u)$ are adjacent if and only if $u$ and $v$ are adjacent. 

    The above shows that \(\Phi \colon \QI(\X,\Y) \to \Isom(\Gamma,\Delta)\) is a well-defined function.
    It only remains to check that it is a homomorphism.
    Let \(\varphi,\varphi' \colon \abs\X \to \abs\X\) be quasi-isometries.
    Applying Lemma~\ref{lem:vertex-spaces-map-to-vertex-spaces}, we see that \(\varphi\varphi'(X_v)\) is Hausdorff close to the vertex space \(X_{\Phi(\varphi\varphi'(v))}\) on one hand.
    However, the same lemma implies that \(\varphi'(X_v)\) is close to \(X_{\Phi(\varphi\varphi')(v)}\).
    On the other hand, then, we have \(\varphi\varphi'(X_v)\) is close to \(X_{\Phi(\varphi)\Phi(\varphi')(v)}\).
    Therefore Lemma~\ref{lem:LB-implies-vertex-space-bigger-than-link} gives that \(\Phi(\varphi\varphi')(v) = \Phi(\varphi)\Phi(\varphi')(v)\) for all \(v\in V(\Gamma)\), as required. 
\end{proof}

\begin{corollary}\label{prop:QI-gos-have-iso-graphs}
     Let $\X = (\Gamma, X_-, Y_-, \alpha_-)$, $\Y = (\Delta, X'_-, Y'_-, \beta_-)$ be uniformly hyperbolic graphs of spaces with unbounded edge spaces, such that the pair $(\X, \Y)$ is totally link bottlenecked. 
     If $\X$ and $\Y$ are quasi-isometric then $\Gamma$ and $\Delta$ are isometric. 
\end{corollary}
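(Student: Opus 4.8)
The plan is to deduce this as an essentially immediate consequence of Lemma~\ref{lem:edge-spaces-map-to-edge-spaces}. First I would fix a quasi-isometry $\varphi \colon \abs\X \to \abs\Y$, which exists by hypothesis. Since $(\X, \Y)$ is a totally link bottlenecked pair of uniformly hyperbolic graphs of spaces and both $\X$ and $\Y$ have unbounded edge spaces, all the hypotheses of Lemma~\ref{lem:edge-spaces-map-to-edge-spaces} are satisfied. That lemma then tells us that the induced map $\Phi(\varphi) \colon V(\Gamma) \to V(\Delta)$ is a graph isomorphism.

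The only remaining point is to observe that a graph isomorphism between connected simplicial graphs induces an isometry of their geometric realisations (equivalently, an isometry of their vertex sets equipped with the combinatorial path metric). Since $\Phi(\varphi)$ is a bijection on vertex sets carrying edges to edges and non-edges to non-edges, it sends edge-paths in $\Gamma$ to edge-paths in $\Delta$ of the same combinatorial length and hence preserves combinatorial distance; extending $\Phi(\varphi)$ affinely across each edge then gives an isometry $\abs\Gamma \to \abs\Delta$. This shows $\Gamma$ and $\Delta$ are isometric.

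I do not expect any real obstacle here, since all the substantive work has already been carried out in Lemmas~\ref{lem:vertex-spaces-map-to-vertex-spaces}, \ref{lem:intersections-of-nbhds-of-vertex-spaces}, and~\ref{lem:edge-spaces-map-to-edge-spaces}. The one subtlety worth flagging explicitly is the role of the \emph{unbounded edge spaces} hypothesis: it is precisely what allows adjacency of vertices of $\Gamma$ to be detected coarsely inside $\abs\X$ (via the dichotomy in Lemma~\ref{lem:intersections-of-nbhds-of-vertex-spaces} between finite and infinite intersections of tubular neighbourhoods of vertex spaces), and this is already built into the conclusion of Lemma~\ref{lem:edge-spaces-map-to-edge-spaces} that $\Phi(\varphi)$ is a graph isomorphism rather than merely a bijection of vertex sets.
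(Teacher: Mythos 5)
Your proof is correct and is exactly how the paper intends this corollary to be read: it is an immediate consequence of Lemma~\ref{lem:edge-spaces-map-to-edge-spaces} (which yields the graph isomorphism $\Phi(\varphi)$), followed by the standard observation that a simplicial graph isomorphism extends affinely to an isometry of geometric realisations. The paper gives no separate proof, so there is nothing further to compare.
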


Let $\iota \colon \Isom(\X) \to \QI(\X)$ denote the natural map, and let us write
$
\Psi = \Phi \circ \iota,
$
where \(\Phi\) is the map of Lemma~\ref{lem:vertex-spaces-map-to-vertex-spaces}.
We have the following commutative diagram:
\begin{equation}\tag{$\dagger$}\label{eq:diagram}
\begin{tikzcd}[cramped]
	{\Isom(\X)} && {\QI(\X)} \\
	& {\Isom(\Gamma)}
	\arrow["\iota", from=1-1, to=1-3]
	\arrow["\Psi"', from=1-1, to=2-2]
	\arrow["\Phi", from=1-3, to=2-2]
\end{tikzcd}
\end{equation}
Throughout the rest of this section, we will continue to refer to $\Psi$, $\Phi$, and $\iota$ as above.

\subsection{Local congruence}

Our second condition provides a criterion for isometries of the underlying graph to lift to isometries of the whole graph of spaces. 

\begin{definition}[Locally congruent]
    Let $\X = (\Gamma, X_-, Y_-, \alpha_-)$ be a graph of spaces. We say that $\X$ is \emph{locally congruent} if the following holds:
    \begin{equation}\tag{LC}\label{eq:LC}
        \parbox{4.3in}{
        for all $f \in \Isom(\Gamma)$, $v \in V(\Gamma)$, there exists an isometry $\tilde f_v : X_v \to X_{f(v)}$ such that for all $e \in \mathrm{In}(v)$ we have that
        $
        \tilde f_v(\alpha_e(Y_e)) = \alpha_{f(e)}(Y_{f(e)})
        $.
        }
    \end{equation}
\end{definition}

Intuitively, a graph of spaces is locally congruent if, given two vertices of the underlying graph in the same automorphic orbit, the corresponding vertex spaces and their links look identical. 

\begin{proposition}\label{prop:LC-gives-surjective}
    Let $\X = (\Gamma, X_-, Y_-, \alpha_-)$ be a locally congruent graph of spaces. Then there exists an injective function     
    $
    \chi \colon \Isom(\Gamma) \into \Isom(\X). 
    $
    
    Further, if $\X$ is uniformly hyperbolic, link bottlenecked, and has unbounded edge spaces, then 
    $
    \Psi \circ \chi = \id_{\Isom(\Gamma)}$.
    In particular, $\Psi$ and $\Phi$ are surjective. 
\end{proposition}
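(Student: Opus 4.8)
The plan is to build the map $\chi$ directly from the data supplied by local congruence, check it is a homomorphic section of $\Psi$, and then read off surjectivity of $\Psi$ and $\Phi$ from the commutative diagram~\eqref{eq:diagram}. First I would fix, for each $f \in \Isom(\Gamma)$ and each $v \in V(\Gamma)$, an isometry $\tilde f_v \colon X_v \to X_{f(v)}$ as provided by \eqref{eq:LC}, with $\tilde f_v(\alpha_e(Y_e)) = \alpha_{f(e)}(Y_{f(e)})$ for every $e \in \In(v)$. The subtlety is that $\tilde f_v$ and $\tilde f_{\iota(e)}$ need not agree on $\alpha_e(Y_e)$, only map it to the same target edge space; so to glue these vertex isometries into an isometry of $\abs\X$ one must also choose an isometry of each edge space making the relevant square commute. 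Concretely, for each edge $e$ one has two self-isometries of $Y_e$, namely $\alpha_e^{-1}\circ \tilde f_{\tau(e)}\circ\alpha_e$ composed with $\alpha_{f(e)}^{-1}$ appropriately, and similarly for $\bar e$; one builds $\chi(f)$ on the edge cylinder $Z_e = Y_e\times[0,1]$ by interpolating between the edge-space isometry forced at the $\tau(e)$-end and the one forced at the $\iota(e)$-end. Since $Y_e$ is a length space, postcomposing the $[0,1]$-coordinate with nothing and letting the $Y_e$-coordinate follow the isometry varying along $[0,1]$ — i.e. using a path of isometries of $Y_e$ — does not in general give an isometry of the product. So instead I would argue more carefully: local congruence lets us choose the $\tilde f_v$ coherently enough that on each edge space the two induced isometries coincide, or else absorb the discrepancy by replacing the chosen $\tilde f_v$ at one endpoint. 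The cleanest route is: process the vertices one at a time along a spanning tree of $\Gamma$, at each step modifying the already-chosen $\tilde f_v$ by a self-isometry of $X_v$ that fixes its link setwise (postcomposing with an element stabilising all incident edge spaces) so that it matches the neighbour's choice on the shared edge space; then handle the finitely-many-per-vertex remaining non-tree edges — here one genuinely needs that the isometry of $Y_e$ forced at the two ends agree, which is where one invokes that the construction is being set up precisely so that they do. (If the paper intends \eqref{eq:LC} to be used together with some link-rigidity hypothesis to make these choices unique, I would use that instead — but as stated, one argues that a consistent family of choices exists.) Having made consistent choices, define $\chi(f)$ piece by piece: on $X_v$ it is $\tilde f_v$, on $Z_e$ it is $(\phi_e, \mathrm{id})$ where $\phi_e\colon Y_e\to Y_{f(e)}$ is the common induced isometry, with orientation of $f(e)$ chosen to match. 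These glue to a well-defined bijection $\abs\X\to\abs\X$ because the gluing relations $(x,0)\sim\alpha_e(x)$, $(x,1)\sim\alpha_{\bar e}(x)$ are respected by construction; it is an isometry because it carries strings to strings of equal length and hence preserves $\dist_\X$ (Definition~\ref{def:gos_pseudo_metric}), being an isometry on every piece.

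Next I would check $\chi$ is injective and that $\Psi\circ\chi = \id_{\Isom(\Gamma)}$. Injectivity: if $\chi(f) = \chi(g)$ then they induce the same permutation of the pieces of $\abs\X$, so $f$ and $g$ act identically on $V(\Gamma)$ and on $E(\Gamma)$, hence $f = g$. (One only needs here that distinct vertices give distinct vertex spaces as subsets of $\abs\X$, which holds since $\Gamma$ is simplicial and the pieces embed.) For the identity $\Psi\circ\chi = \id$: recall $\Psi = \Phi\circ\iota$, and $\Phi$ is defined in Lemma~\ref{lem:vertex-spaces-map-to-vertex-spaces} by sending a quasi-isometry to the map on vertices determined by which vertex space the image of each vertex space stays close to. By construction $\chi(f)$ maps $X_v$ isometrically onto $X_{f(v)}$, in particular $\dHaus_\X(\chi(f)(X_v), X_{f(v)}) = 0$. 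By the \emph{uniqueness} clause in Lemma~\ref{lem:vertex-spaces-map-to-vertex-spaces} (which requires $\X$ uniformly hyperbolic, link bottlenecked, with unbounded edge spaces), the vertex $\Phi(\iota(\chi(f)))(v)$ is the unique vertex whose space is Hausdorff-close to $\chi(f)(X_v)$, so it equals $f(v)$. Hence $\Psi(\chi(f)) = \Phi(\iota(\chi(f))) = f$ as elements of $\Isom(\Gamma)$, i.e. $\Psi\circ\chi = \id_{\Isom(\Gamma)}$.

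Finally, surjectivity of $\Psi$ and $\Phi$ is immediate: $\Psi\circ\chi = \id_{\Isom(\Gamma)}$ forces $\Psi$ to be surjective, and then surjectivity of $\Phi$ follows from the factorisation $\Psi = \Phi\circ\iota$ in diagram~\eqref{eq:diagram}, since the image of $\Psi$ is contained in the image of $\Phi$.

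I expect the main obstacle to be the gluing step — producing, from the per-vertex isometries $\tilde f_v$ furnished by \eqref{eq:LC}, a globally coherent family (vertex isometries together with edge-space isometries) that actually assembles into a well-defined self-map of $\abs\X$. The per-vertex data only pins down the image of each link, not the map on the link, so one must either exploit additional rigidity to make the choices canonical, or argue by a spanning-tree induction that compatible choices can be made; verifying that the finitely many non-tree edges at each vertex can also be made to match is the delicate point. Once the map is built, everything else (it being an isometry, $\chi$ injective, $\Psi\circ\chi=\id$, and the two surjectivity conclusions) is routine given the results already established, especially the uniqueness statement of Lemma~\ref{lem:vertex-spaces-map-to-vertex-spaces}.
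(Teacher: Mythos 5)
Your overall route matches the paper's: build $\chi$ directly from the per-vertex isometries supplied by~\eqref{eq:LC}, check injectivity by noting distinct graph isometries give distinct piece permutations, obtain $\Psi\circ\chi = \id$ from the uniqueness clause of Lemma~\ref{lem:vertex-spaces-map-to-vertex-spaces} applied to $\dHaus_\X(\chi(f)(X_v),X_{f(v)}) = 0$, and deduce surjectivity of $\Psi$ and then $\Phi$ from the factorisation $\Psi = \Phi\circ\iota$. Those pieces of your argument are all correct and are essentially what the paper does.

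The interesting point is the gluing subtlety you flag, which the paper's own proof does not explicitly address. The paper defines $\chi(f)$ on the edge cylinder $Z_e = Y_e\times[0,1]$ by $\chi(f)(y,t) = (\alpha_{f(e)}^{-1}\tilde f_{\tau(e)}\alpha_e(y),\, t)$ and then only remarks that $\tilde f_{\tau(e)}(\alpha_e(Y_e)) = \alpha_{f(e)}(Y_{f(e)})$ ensures the image lands in $Z_{f(e)}$. But for the map to descend to $\abs\X$ one must also check that the $\{1\}$-end of the cylinder matches up with $\tilde f_{\iota(e)}$ on $\alpha_{\bar e}(Y_e)$, i.e.\ that
\[
\alpha_{\overline{f(e)}}\circ\alpha_{f(e)}^{-1}\circ\tilde f_{\tau(e)}\circ\alpha_e \;=\; \tilde f_{\iota(e)}\circ\alpha_{\bar e}
\]
as maps $Y_e\to X_{f(\iota(e))}$. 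As you observe, \eqref{eq:LC} as literally stated only pins down the \emph{image} of each link under $\tilde f_v$, not the induced isometry $Y_e\to Y_{f(e)}$, so the two sides need not agree. This is a genuine omission in the paper's proof as written. Your spanning-tree repair handles the tree case, and the worry about non-tree edges (a cocycle obstruction valued in $\Isom(Y_e)$) is the right concern. Two things resolve it in the intended application: first, in the construction of Section~\ref{sec:construction} every edge space is $Y_e = [0,\infty)$, which has trivial isometry group, so any two isometries $Y_e\to Y_{f(e)}$ coincide and the consistency condition is automatic; second, in that construction one may in fact take every $\tilde f_v$ to be the identity. For the proposition to hold at the stated level of generality, however, either \eqref{eq:LC} should be read as asserting a coherent family of isometries (with the above compatibility built in) or an extra hypothesis such as triviality of $\Isom(Y_e)$ or link rigidity should be invoked. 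You were right to flag this as the delicate step.
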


\begin{proof}
    Let $f \in \Isom(\Gamma)$. 
    We construct an isometry $\chi(f) \in \Isom(\X)$ as follows.
    Given $v \in V(\Gamma)$, we define $\tilde f$ on $X_v$ via
    $
    \chi(f) |_{X_v} = i_v \circ \tilde f_v
    $, 
    where $\tilde f_v$ is the isometry instantiated by (\ref{eq:LC}), and $i_v \colon X_v \into \X$ is the natural inclusion.
    Now (\ref{eq:LC}) implies that there is an isometry $Y_e \to Y_{f(e)}$ for any edge $e \in E(\Gamma)$. 
    We use this to extend our definition of $\chi(f)$ to the edge cylinders \(Z_e = Y_e \times [0,1]\), so that 
    \[
        \chi(f)(y,t) = ({\alpha_{f(e)}}^{-1}\tilde f_v \alpha_e(y),t)
    \]
    for any \(e \in E(\Gamma), y \in Y_e,\) and \(t \in [0,1]\). 
    The condition that $\tilde f_v(\alpha_e(Y_e)) = \alpha_{f(e)}(Y_{f(e)})$ ensures that this extension is well-defined as a map \(Z_e \to Z_{f(e)}\). 
    The constructed map $\chi(f)$ is an isometry of $\X$, as it maps preserves the length of strings in \(\X\). 
    We have thus defined a function \(\chi \colon \Isom(\Gamma) \to \Isom(\X)\).

    If \(f\) and \(g\) are distinct isometries of \(\Gamma\), then they disagree on a vertex or edge of \(\Gamma\).
    Hence \(\chi(f)\) and \(\chi(g)\) disagree on a vertex space or edge space of \(\X\).
    That is, \(\chi\) is injective.
    Now suppose that $\X$ is uniformly hyperbolic, link bottlenecked, and has unbounded edge spaces.
    By construction \(\chi(f)(X_v) = X_{f(v)}\), so Corollary~\ref{prop:QI-gos-have-iso-graphs} tells us that $\Psi(\chi(f))(v) = f(v)$, for all \(v \in V(\Gamma)\).
    Therefore \(\Psi \circ \chi = \id_{\Isom(\Gamma)}\) as required.
\end{proof}

\subsection{Local rigidity}

The third and final property we discuss provides control over the kernel of the homomorphism $\Phi \colon \QI(\X) \to \Isom(\Gamma)$. 

\begin{definition}[Link rigidity]
    Let $\X = (\Gamma, X_-, Y_-, \alpha_-)$. We say that $\X$ is \emph{link rigid} if the following assertion holds:
    \begin{equation}\tag{LR}\label{eq:LR}
        \parbox{4.3in}{
        for all $v \in V(\Gamma)$, if $\psi \colon X_v \to X_v$ is an isometry such that 
        \[
            \hspace{-15mm}\dHaus_{X_v}(\alpha_e(Y_e), \psi(\alpha_e(Y_e))) < \infty,
        \]
        for all $e \in \In(v)$, then $\psi=\id_{X_v}$.}
    \end{equation}
\end{definition}

That is to say, in a link rigid graph of spaces, any isometry that coarsely preserves the link of a vertex space is trivial. 
We will appeal to Proposition~\ref{prop:finite-set-with-trivial-pstab-visibility} to ensure this condition in practice.

\begin{proposition}\label{prop:LR-gives-phi-injective}
    Let $\X = (\Gamma, X_-, Y_-, \alpha_-)$ be a uniformly hyperbolic, link bottlenecked, link rigid graph of spaces with unbounded edge spaces. If the vertex spaces of \(\X\) are uniformly strongly QI-rigid, then the map $\Phi \colon \QI(\X) \to \Isom(\Gamma)$ of Lemma~\ref{lem:vertex-spaces-map-to-vertex-spaces} is injective.
\end{proposition}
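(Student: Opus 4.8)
The plan is to prove that $\Phi$ has trivial kernel; since $\Phi \colon \QI(\X) \to \Isom(\Gamma)$ is a group homomorphism by Lemma~\ref{lem:edge-spaces-map-to-edge-spaces}, this is enough. So let $\varphi \colon \abs\X \to \abs\X$ be a $(\lambda,c)$-quasi-isometry with $\Phi(\varphi) = \id_\Gamma$; the goal is to show $\dist_\infty(\varphi, \id_\X) < \infty$, so that $[\varphi]$ is trivial in $\QI(\X)$. By Lemma~\ref{lem:vertex-spaces-map-to-vertex-spaces} there is $C = C(\lambda,c,\X) \geq 0$ with $\dHaus_\X(X_v, \varphi(X_v)) \leq C$ for every $v \in V(\Gamma)$, and by Lemma~\ref{lem:qi-almost-fixes-edge-spaces-new} there is $T = T(\lambda,c,\X) \geq 0$ with $\dHaus_\X(Z_e, \varphi(Z_e)) \leq T$ for every $e \in E(\Gamma)$. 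For each $v$ I would fix a function $\varphi_v \colon X_v \to X_v$ with $\dist_\X(\varphi_v(x), \varphi(x)) \leq C$ for all $x \in X_v$, which exists since $\varphi(X_v) \subseteq B_\X(X_v;C)$. Using that the inclusion $X_v \hookrightarrow \abs\X$ is a $(1,k)$-quasi-isometric embedding (Proposition~\ref{prop:vertex_spaces_qi_embed}) together with the two-sided Hausdorff bound from Lemma~\ref{lem:vertex-spaces-map-to-vertex-spaces}, a routine estimate shows that $\varphi_v$ is a $(\lambda',c')$-quasi-isometry of $X_v$ with $\lambda', c'$ depending only on $\lambda, c, \X$ — in particular not on $v$.

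Next I would invoke uniform strong QI-rigidity of the vertex spaces (Definition~\ref{def:strong_qi_rigid}): this yields $\mu = \mu(\lambda',c')$, uniform over all vertex spaces, and isometries $f_v \in \Isom(X_v)$ with $\dist_\infty(\varphi_v, f_v) \leq \mu$. The key claim is that each $f_v$ coarsely preserves $\lk_\X(v)$. Indeed, fix $e \in \In(v)$ and write $Y$ for the image of $Y_e$ inside $X_v$; since $Z_e \subseteq B_\X(Y;1)$ and $\dHaus_\X(Z_e, \varphi(Z_e)) \leq T$, we get $\dHaus_\X(Y, \varphi(Y)) \leq T + \lambda + c$, and transferring this through $\varphi_v$ (costing $C$) and back into the intrinsic metric of $X_v$ (costing $k$, via Proposition~\ref{prop:vertex_spaces_qi_embed}) gives a uniform bound on $\dHaus_{X_v}(Y, \varphi_v(Y))$, hence on $\dHaus_{X_v}(Y, f_v(Y))$. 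In particular this Hausdorff distance is finite for every $e \in \In(v)$, so link rigidity~(\ref{eq:LR}) forces $f_v = \id_{X_v}$ for all $v \in V(\Gamma)$.

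Finally I would assemble the pieces. Since $f_v = \id_{X_v}$, any $x$ lying in a vertex space $X_v$ satisfies $\dist_\X(\varphi(x), x) \leq \dist_\X(\varphi(x), \varphi_v(x)) + \dist_\X(\varphi_v(x), x) \leq C + \dist_{X_v}(\varphi_v(x), f_v(x)) \leq C + \mu$. Every point of $\abs\X$ lies within $\tfrac12$ of some vertex space, since a point $(y,t)$ of an edge cylinder $Z_e = Y_e \times [0,1]$ is within $\min\{t,1-t\}$ of a vertex space; combining this with the quasi-isometry inequality for $\varphi$ along a short path then gives a uniform bound on $\dist_\X(\varphi(z), z)$ for all $z \in \abs\X$. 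Hence $\dist_\infty(\varphi, \id_\X) < \infty$, so $\ker\Phi$ is trivial and $\Phi$ is injective. I expect the main obstacle to be the first step: passing from the ambient quasi-isometry $\varphi$ to the family of vertex-space quasi-isometries $\varphi_v$ with constants that remain uniform in $v$, which is precisely where the honest two-sided Hausdorff statements of Lemmas~\ref{lem:vertex-spaces-map-to-vertex-spaces} and~\ref{lem:qi-almost-fixes-edge-spaces-new}, and the quasi-isometric embedding of Proposition~\ref{prop:vertex_spaces_qi_embed}, are needed to control the discrepancy between $\dist_\X$ and the local metrics $\dist_{X_v}$.
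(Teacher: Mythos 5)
Your proof is correct and follows essentially the same route as the paper: reduce to trivial kernel, use Lemma~\ref{lem:vertex-spaces-map-to-vertex-spaces} and Lemma~\ref{lem:qi-almost-fixes-edge-spaces-new} to get uniform Hausdorff control of vertex spaces and edge cylinders, produce local quasi-isometries of each $X_v$, apply uniform strong QI-rigidity to get nearby isometries, kill them via (\ref{eq:LR}), and assemble. The only cosmetic difference is that the paper first constructs a single global modification $\varphi'$ of $\varphi$ with $\varphi'(X_v) \subseteq X_v$ and then restricts, whereas you work directly with per-vertex maps $\varphi_v$ and handle points in edge cylinders separately at the end -- both are fine.
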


\begin{proof}
    We will show that the kernel of $\Phi$ is trivial. Let $\varphi \colon \abs\X \to \abs\X$ be a quasi-isometry such that $\Phi(\varphi) = \id_\Gamma$. 
    By Lemma~\ref{lem:vertex-spaces-map-to-vertex-spaces}, there is \(C \geq 0\) such that
    \[
        \dHaus(\varphi(X_v),X_v) \leq C \; \textnormal{ for all } v \in V(\Gamma).
    \]
    Therefore, there is a \((\lambda,c+C)\)-quasi-isometry \(\varphi' \colon \abs\X \to \abs\X\) with \(\dist_\infty(\varphi,\varphi') \leq C\) with the property that \(\varphi'(X_v) \subseteq X_v\) for all \(v \in V(\Gamma)\).
    
    By Proposition~\ref{prop:vertex_spaces_qi_embed}, there is \(k \geq 0\) such that the inclusion $X_v \hookrightarrow \abs\X$ is a \((1,k)\)-quasi-isometric embedding for all \(v \in V(\Gamma)\), where \(k\) depends only on the uniform hyperbolicity constants of \(\X\).
    Hence \(\varphi'\) restricts to a \((\lambda',c')\)-quasi-isometry \(\varphi'_v \colon X_v \to X_v\), where \(\lambda' \geq 1\) and \(c' \geq 0\) depend only on \(\lambda, c\), and the uniform hyperbolicity constants of \(\X\).
    By uniform strong QI-rigidity of~$X_v$, there is an isometry~$f_v\in\Isom(X_v)$ and a constant $M=M(\lambda^\prime, c^\prime, \delta)$ with $\dist_{\infty, X_v}(f, \varphi_v^\prime)\leq M$.
    
    Furthermore, Lemma~\ref{lem:qi-almost-fixes-edge-spaces-new} tells us that there is \(T = T(\lambda',c',\X) \geq 0\) such that \(\dHaus_\X(\varphi'(Z_e),Z_e) \leq T\) for all \(e \in E(\Gamma)\).
    It follows that
    \[
        \dHaus_{X_v}(\alpha_e(Y_e), f_v(\alpha_e(Y_e))) \leq T+M+k \; \textnormal{ for all } e \in \In(v)
    \]
    as well. 
    Applying property~(\ref{eq:LR}), we have $f_v=\id_{X_v}$ and thus $\dist_{\infty, X_v}(\varphi_v^\prime, \id_{X_v})\leq M$ for all \(v \in V(\Gamma)\).
    Again using the fact that each vertex space is \((1,k)\)-quasi-isometrically embedded, we conclude that \(\dist_\infty(\varphi',\id_\X) \leq M+k\), so that \(\varphi'\) is a representative of the trivial class in \(\QI(\X)\).
    Since \(\dist_\infty(\varphi,\varphi') < \infty\), the same is true for \(\varphi\), and so \(\Phi\) is injective.
\end{proof}

We conclude this section by summarising the results with the following theorem statement, which we note includes the statement of Theorem~\ref{thm:strong-qi-rigidity-theorem}. 

\begin{theorem}\label{thm:conclusion-of-rigidity-section}
    Let $\X = (\Gamma, X_-, Y_-, \alpha_-)$ be a uniformly hyperbolic graph of spaces. Suppose that \(\X\) is locally congruent, link bottlenecked, and link rigid. If the vertex spaces of~$\X$ are uniformly strongly QI-rigid, then:
    \begin{enumerate}
        \item\label{itm:sec5-conc-1} $\abs\X$ is strongly QI-rigid;

        \item\label{itm:sec5-conc-2} there is an isomorphism $\Phi \colon \QI(\X) \to \Isom(\Gamma)$;

        \item\label{itm:sec5-conc-3} if $\Y = (\Delta, X'_-, Y'_-, \beta_-)$ is another uniformly hyperbolic graph of spaces such that $\X$ is quasi-isometric to $\Y$ and $(\X, \Y)$ is a totally link bottlenecked pair, then $\Gamma$ is isometric to $\Delta$. 
    \end{enumerate}
\end{theorem}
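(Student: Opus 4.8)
The plan is to deduce all three conclusions by assembling the results of this section and of Section~\ref{sec:uniformly-hyperbolic-gos}, organised around the commutative triangle~\eqref{eq:diagram}. As in Theorem~\ref{thm:strong-qi-rigidity-theorem}, I take $\X$ (and, where relevant, $\Y$) to have unbounded edge spaces.

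First I would prove conclusion~\ref{itm:sec5-conc-2}. Taking $\Y = \X$ in Lemma~\ref{lem:edge-spaces-map-to-edge-spaces}, the uniform hyperbolicity, link bottleneckedness, and unboundedness of edge spaces of $\X$ give that $\Phi \colon \QI(\X) \to \Isom(\Gamma)$ is a well-defined group homomorphism. Its injectivity is precisely Proposition~\ref{prop:LR-gives-phi-injective}, which is where link rigidity~\eqref{eq:LR} and the uniform strong QI-rigidity of the vertex spaces enter. For surjectivity, local congruence lets us apply Proposition~\ref{prop:LC-gives-surjective} to obtain an injection $\chi \colon \Isom(\Gamma) \into \Isom(\X)$ with $\Psi \circ \chi = \id_{\Isom(\Gamma)}$; since $\Psi = \Phi \circ \iota$ by~\eqref{eq:diagram}, the map $\iota \circ \chi$ is a section of $\Phi$, so $\Phi$ is onto. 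Hence $\Phi$ is an isomorphism.

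Conclusion~\ref{itm:sec5-conc-1} then follows quickly: I must show $\iota \colon \Isom(\X) \to \QI(\X)$ is surjective. Given $[\varphi] \in \QI(\X)$, put $f = \Phi([\varphi]) \in \Isom(\Gamma)$ and consider $\chi(f) \in \Isom(\X)$. By commutativity of~\eqref{eq:diagram} together with $\Psi \circ \chi = \id$, we have $\Phi(\iota(\chi(f))) = \Psi(\chi(f)) = f = \Phi([\varphi])$, and injectivity of $\Phi$ forces $\iota(\chi(f)) = [\varphi]$. So $[\varphi]$ lies in the image of $\iota$, and as $[\varphi]$ was arbitrary, $\abs\X$ is strongly QI-rigid. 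Conclusion~\ref{itm:sec5-conc-3} is immediate from Corollary~\ref{prop:QI-gos-have-iso-graphs}: if $\Y$ is uniformly hyperbolic with unbounded edge spaces, $(\X,\Y)$ is totally link bottlenecked, and $\X$ is quasi-isometric to $\Y$, then $\Gamma$ and $\Delta$ are isometric.

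The proof is thus largely bookkeeping, the geometric content all sitting in the earlier lemmas; the point that needs attention is that $\chi$ descends to a genuine section of $\Phi$ rather than merely a right inverse of $\Psi$ at the level of isometry groups, which is why injectivity of $\Phi$ — and hence Proposition~\ref{prop:LR-gives-phi-injective}, where the interplay of link rigidity and uniform strong QI-rigidity of the vertex spaces is used — is doing the real work. I would also flag the implicit hypothesis that $\X$ (and $\Y$ in~\ref{itm:sec5-conc-3}) have unbounded edge spaces, which Lemma~\ref{lem:edge-spaces-map-to-edge-spaces} and Corollary~\ref{prop:QI-gos-have-iso-graphs} require.
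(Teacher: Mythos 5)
Your argument is correct and follows essentially the same route as the paper: surjectivity of $\Phi$ via Proposition~\ref{prop:LC-gives-surjective} and the diagram~\eqref{eq:diagram}, injectivity via Proposition~\ref{prop:LR-gives-phi-injective}, surjectivity of $\iota$ by the commutativity, and conclusion~\ref{itm:sec5-conc-3} by Corollary~\ref{prop:QI-gos-have-iso-graphs}. Your flag about the unbounded edge spaces hypothesis is well taken --- both Propositions~\ref{prop:LC-gives-surjective} and~\ref{prop:LR-gives-phi-injective} require it, it appears explicitly in the abstract form (Theorem~\ref{thm:strong-qi-rigidity-theorem}) of which this theorem is the summary, and it should be regarded as an implicit hypothesis here.
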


\begin{proof}
    Let \(\iota, \Phi,\) and \(\Psi\) be as in diagram (\ref{eq:diagram}). 
    By Proposition~\ref{prop:LC-gives-surjective}, we know that $\Psi$ and therefore also $\Phi$ are surjective.
    By Proposition~\ref{prop:LR-gives-phi-injective}, we have that $\Phi$ is injective, and thus $\Phi$ is an isomorphism. 
    It follows then that $\iota$ must be surjective, by the commutativity of (\ref{eq:diagram}).
    The third item follows directly from Corollary~\ref{prop:QI-gos-have-iso-graphs}.
\end{proof}

\begin{remark}\label{rem:uniform_strong_QI_rigidity}
    Any visual proper hyperbolic metric space that is strongly QI-rigid is uniformly strongly QI-rigid (cf. \cite[Lemme 9.11]{pansu1989metriques}).
    This is not a heavy restriction, since any hyperbolic space admitting a cocompact isometric group action is visual.
\end{remark}

%%%%%%%%%%%%%%%%%%%%%%%%%%%%%%%%%%%
%%%%%%%%%%%%%%%%%%%%%%%%%%%%%%%%%%%
%%%%%%%%%%%%%%%%%%%%%%%%%%%%%%%%%%%

\section{Proof of the main theorem}\label{sec:construction}

We now apply the framework of the previous sections to construct explicit examples of spaces and prove our main theorem.
In particular, we construct graphs of spaces whose vertex spaces are copies of the quaternionic hyperbolic plane $\mathbb{H}\mathbf{H}^2$, and whose edge spaces will be geodesic rays.
The properties of Section~\ref{sec:bottlenecks} will be relatively straightforward to establish for these graphs of spaces. 

The quaternionic hyperbolic plane is a uniformly strongly QI-rigid rank-one symmetric space. It is a contractible eight-dimensional manifold with pinched negative sectional curvature \(-4 \leq \kappa \leq -1\).
It may be thought of as a less symmetric version of the hyperbolic plane. 
By Remark~\ref{rem:hyp_constant_cat-1}, it is $\log(3)$-hyperbolic.

We choose the edge space embeddings of our graph of spaces in such a way to ensure (\ref{eq:LR}). 
To do so, we appeal to Proposition~\ref{prop:finite-set-with-trivial-pstab-visibility}.
Let $\Omega$ be the set obtained by applying Proposition~\ref{prop:finite-set-with-trivial-pstab-visibility} to \(\HH^2\), so $\Omega \subset \partial \mathbb{H}\mathbf{H}^2$ has cardinality $|\Omega| = 9$ and has trivial pointwise stabiliser in $\Isom(\HH^2)$. 

Fix some basepoint $x_0 \in \mathbb{H}\mathbf{H}^2$. 
For each $p \in \Omega$, let $\gamma_p$ be a geodesic ray based at $x_0$ and tending to $p$. 
Let $\Gamma$ be any connected, simplicial, vertex-free, 9-regular graph.
We define the graph of spaces $\X_\Gamma = (\Gamma, X_-, Y_-, \alpha_-)$ as follows:
\begin{enumerate}
    \item every vertex space $X_v$ is a copy of the quaternionic hyperbolic plane $\mathbb{H}\mathbf{H}^2$;

    \item every edge space $Y_e$ is a copy of the half-line $[0,\infty)$;

    \item let $\{v_1 : i \in I \ldots\}$ be some choice of orbit representatives for the action of $\Isom(\Gamma)$ on $V(\Gamma)$. For each $i \in I$, fix a bijection $\sigma_i \colon  \In(v_i)\to \Omega $. 

    Now, given any $v = g \cdot v_i \in V(\Gamma)$, where $g \in \Isom(\Gamma)$, and any $e \in \In(v)$, let $\alpha_e : [0,\infty) \to \mathbb{H}\mathbf{H}^2$ denote the (unique) isometry between $[0,\infty)$ and the ray
    $\gamma_p \subset \mathbb{H}\mathbf{H}^2$, where $p = \sigma_i(g^{-1}\cdot e)$. This is well-defined as the action of $\Isom(\Gamma)$ is free, and so the isometry $g$ is unique.
\end{enumerate}
This concludes the construction of $\X_\Gamma$. See Figure~\ref{fig:gos-cartoon-new} for an illustration of part of this space. 
We now verify that \(\X_\Gamma\) satisfies the hypotheses of Theorem~\ref{thm:conclusion-of-rigidity-section}.

\begin{figure}[ht]
    \centering
    \input{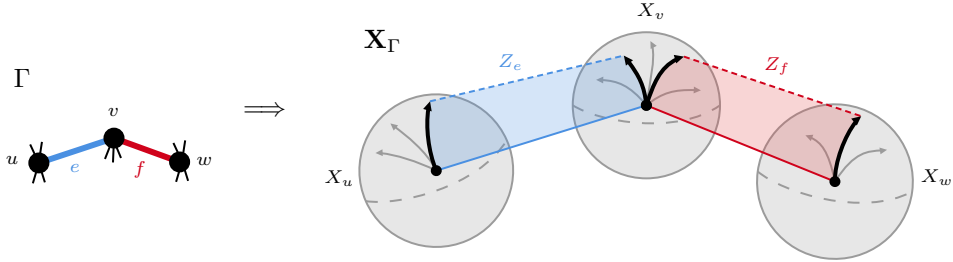}
    \caption{Illustration of a section of our graph of spaces $\X_\Gamma$.}
    \label{fig:gos-cartoon-new}
\end{figure} 

\begin{lemma}
\label{lem:gos_unif_hyp}
    The graph of spaces \(\X_\Gamma\) is uniformly hyperbolic, locally congruent, and has unbounded edge spaces.
\end{lemma}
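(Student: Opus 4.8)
The plan is to verify the three assertions separately, each directly from the construction of $\X_\Gamma$; the only delicate point will be the uniform-divergence clause of uniform hyperbolicity, so I will address that first. Condition (1) of the definition of uniform hyperbolicity holds with $\delta = \log 3$: since $\HH^2$ has sectional curvature at most $-1$ it is $\mathrm{CAT}(-1)$ by Cartan--Hadamard comparison, hence $\log 3$-hyperbolic by Remark~\ref{rem:hyp_constant_cat-1}, so every vertex space is $\delta$-hyperbolic with this uniform constant. For condition (2) I would take $y_e = 0 \in [0,\infty) = Y_e$ for each edge $e$. By construction, for any $v \in V(\Gamma)$ and any $e \in \In(v)$, the isometric embedding of $Y_e$ into $X_v$ (namely $\alpha_e$ if $v = \tau(e)$ and $\alpha_{\bar e}$ if $v = \iota(e)$) has image a geodesic ray $\gamma_p$ based at the fixed point $x_0 \in \HH^2$, where $p \in \Omega$; thus $y_e$ maps to $x_0$, and the compatibility clause $\alpha_e(y_e) = \alpha_{e'}(y_{e'})$ for $\tau(e) = \tau(e')$ holds automatically with $x_0$ serving as the basepoint of $X_v$. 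The key point is that distinct edges $e \neq e'$ incident to a common vertex $v = g \cdot v_i$ map to \emph{distinct} rays: since $\Isom(\Gamma)$ acts freely we have $g^{-1}e \neq g^{-1}e'$ in $\In(v_i)$, and $\sigma_i$ is a bijection, so $\sigma_i(g^{-1}e) \neq \sigma_i(g^{-1}e')$. Because two distinct geodesic rays issuing from a point of a $\mathrm{CAT}(-1)$ space diverge, $\langle \gamma_p, \gamma_{p'} \rangle_{x_0}$ is finite for $p \neq p'$; and as $\Omega$ is finite I may set
\[
    C = \max\{ \langle \gamma_p, \gamma_{p'} \rangle_{x_0} : p, p' \in \Omega, \ p \neq p' \} < \infty .
\]
Both parts of condition (2) then reduce to the inequality $\langle \gamma_p, \gamma_{p'} \rangle_{x_0} \leq C$ for the two distinct rays from $x_0$ appearing in a common vertex space, so $\X_\Gamma$ is $(\log 3, C)$-uniformly hyperbolic.

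For local congruence, given $f \in \Isom(\Gamma)$ and $v \in V(\Gamma)$, I would write $v = g \cdot v_i$ with $v_i$ its orbit representative and $g$ the (unique, by freeness of the action) coordinate, so that $f(v) = (fg) \cdot v_i$ has coordinate $fg$ in the same orbit. For every $e \in \In(v)$ we have $f(e) \in \In(f(v))$ and $(fg)^{-1} f(e) = g^{-1} e$, so the image of $Y_e$ in $X_v$ and the image of $Y_{f(e)}$ in $X_{f(v)}$ are the \emph{same} ray $\gamma_{\sigma_i(g^{-1}e)}$, placed respectively in the two copies $X_v$ and $X_{f(v)}$ of $\HH^2$. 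Hence the canonical isometry $\tilde f_v \colon X_v \to X_{f(v)}$ induced by these identifications sends $\gamma_p \subseteq X_v$ onto $\gamma_p \subseteq X_{f(v)}$ for every $p \in \Omega$ simultaneously, and therefore $\tilde f_v(\alpha_e(Y_e)) = \alpha_{f(e)}(Y_{f(e)})$ for all $e \in \In(v)$, which is precisely condition~(\ref{eq:LC}). Finally, that $\X_\Gamma$ has unbounded edge spaces is immediate since each $Y_e$ is a copy of $[0,\infty)$.

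The only genuine obstacle in this scheme is justifying the finiteness of the Gromov products $\langle \gamma_p, \gamma_{p'} \rangle_{x_0}$, which rests on $\mathrm{CAT}(-1)$-geometry — or equivalently on $\HH^2$ being a visibility manifold, so that distinct boundary points are joined by a bi-infinite geodesic passing within bounded distance of $x_0$. Everything else amounts to routine bookkeeping with orbit coordinates and with the two sides $\iota(e)$, $\tau(e)$ of each edge.
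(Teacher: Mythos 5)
Your proof is correct and follows essentially the same approach as the paper: take $\delta = \log 3$ from the $\mathrm{CAT}(-1)$ comparison, take $y_e = 0$ so all basepoints land on $x_0$, bound the Gromov products of pairs of rays using finiteness of $\Omega$, and observe that local congruence follows because the arrangement of rays in $X_v$ depends only on the orbit coordinate $g$. Two small points worth noting: you are more careful than the paper in restricting the maximum defining $C$ to pairs $p \neq p'$ (the paper writes $\max\{\langle p,q\rangle_{x_0} : p,q \in \Omega\}$ without this restriction, which would formally be infinite), and your explicit verification of local congruence via $(fg)^{-1}f(e) = g^{-1}e$ makes precise what the paper states as ``every vertex space and link is an identical isometric copy.''
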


\begin{proof}
    Following Remark~\ref{rem:hyp_constant_cat-1}, every vertex space of \(\X_\Gamma\) is hyperbolic with hyperbolicity constant \(\delta = \log(3)\).
    Taking \(C = \max\{\langle p,q \rangle_{x_0}  :  p, q \in \Omega\} < \infty\), we see that \(\X_\Gamma\) is \((\delta,C)\)-uniformly hyperbolic, using the point \(0 \in [0,\infty)\) as the basepoint in each edge space.
    By construction, every vertex space and link is an identical isometric copy of the same space, so that \(\X_\Gamma\) is locally congruent.
    Of course, each edge space is \([0,\infty)\) is unbounded.
\end{proof}

\begin{lemma}
\label{lem:gos_LR}
    The graph of spaces \(\X_\Gamma\) is link rigid.
\end{lemma}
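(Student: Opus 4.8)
Here is the plan. The idea is to push the hypothesis out to the visual boundary of \(\HH^2\) and then appeal directly to the defining property of the set \(\Omega\), exactly as in Lemma~\ref{lem:Omega-has-trivial-point-stabiliser-visibility}.

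First I would unwind the construction of \(\X_\Gamma\) to identify the link of a vertex space. Fix \(v \in V(\Gamma)\) and an isometry \(\psi \colon X_v \to X_v\) with \(\dHaus_{X_v}(\alpha_e(Y_e), \psi(\alpha_e(Y_e))) < \infty\) for every \(e \in \In(v)\). Writing \(v = g \cdot v_i\) with \(g \in \Isom(\Gamma)\), as \(e\) ranges over \(\In(v)\) the edge \(g^{-1} e\) ranges over \(\In(v_i)\); since \(\sigma_i \colon \In(v_i) \to \Omega\) is a bijection and, by definition, \(\alpha_e\) is the unique isometry of \([0,\infty)\) onto the ray \(\gamma_{\sigma_i(g^{-1}e)}\), the family \(\{\alpha_e(Y_e) : e \in \In(v)\}\) is precisely the family \(\{\gamma_p : p \in \Omega\}\) of geodesic rays based at \(x_0\). (This is where freeness of the \(\Isom(\Gamma)\)-action is used, to make \(g\), and hence each \(\alpha_e\), well defined.) So the hypothesis says exactly that \(\psi\) coarsely preserves each of the rays \(\gamma_p\), \(p \in \Omega\).

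Next, since \(\HH^2\) is a rank-one symmetric space of noncompact type, it is a visibility manifold, so \(\psi\) induces a homeomorphism \(\partial\psi\) of the visual boundary \(\partial\HH^2\). For each \(p \in \Omega\), the ray \(\psi(\gamma_p)\) is again a geodesic ray and lies at finite Hausdorff distance from \(\gamma_p\), hence is asymptotic to it, so the two rays share an endpoint in \(\partial\HH^2\). The endpoint of \(\gamma_p\) is \(p\) and that of \(\psi(\gamma_p)\) is \(\partial\psi(p)\), giving \(\partial\psi(p) = p\) for all \(p \in \Omega\). Thus \(\psi\) lies in the pointwise stabiliser of \(\Omega\) in \(\Isom(\HH^2)\), which is trivial by Proposition~\ref{prop:finite-set-with-trivial-pstab-visibility} (applied, as in the construction, to \(M = \HH^2\)). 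Therefore \(\psi = \id_{X_v}\), and as \(v\) was arbitrary this establishes link rigidity.

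There is no serious obstacle here: the only step needing care is the bookkeeping in the first paragraph — confirming that the link of \emph{every} vertex realises \emph{all} of \(\Omega\) rather than a proper subset, which relies on \(\Gamma\) being \(9\)-regular and vertex-free — together with the two standard facts that isometries of a visibility manifold act on its visual boundary and that asymptotic geodesic rays have the same endpoint.
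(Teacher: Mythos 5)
Your argument is exactly the one the paper gives: unwind the construction to see that the incident edge spaces in each \(X_v\) are precisely the rays \(\gamma_p\), \(p\in\Omega\), note that coarsely preserving each ray forces \(\partial\psi\) to fix \(\Omega\) pointwise, and then invoke Proposition~\ref{prop:finite-set-with-trivial-pstab-visibility}. The only difference is that you spell out the bookkeeping (freeness of the \(\Isom(\Gamma)\)-action, \(\sigma_i\) being a bijection) that the paper leaves implicit.
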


\begin{proof}
    Consider an isometry $f \in \Isom(\mathbb{H}\mathbf{H}^2)$. 
    If $f$ coarsely fixes all the incoming edge spaces, then the action of $f$ on the boundary $\partial \mathbb{H}\mathbf{H}^2$ fixes the set $\Omega$ pointwise. 
    By Proposition~\ref{prop:finite-set-with-trivial-pstab-visibility}, we have that $f = \id_{\mathbb{H}\mathbf{H}^2}$. 
    Thus $\X_\Gamma$ is link rigid.  
\end{proof}

\begin{lemma}
\label{lem:gos_LB}
    The graph of spaces \(\X_\Gamma\) is link bottlenecked.
\end{lemma}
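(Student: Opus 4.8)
The plan is to reduce the statement to a single test case and then to exploit the contrast between the ``coarsely one‑dimensional'' set $Z$ and the eight‑dimensional Hadamard manifold $\HH^2$: a coarsely one‑dimensional subset of a manifold of dimension $\ge 3$ cannot coarsely separate it. Since \eqref{eq:LB} depends only on the isometry types of the vertex spaces and of their links (Remark~\ref{rmk:totally-lbp}), and since in $\X_\Gamma$ every vertex space is a copy of $\HH^2$ while every link $\lk_{\X_\Gamma}(u)$ is isometric to the fixed set $L := \bigcup_{p\in\Omega}\gamma_p$ --- a union of nine geodesic rays emanating from $x_0$, hence of linear growth --- it suffices to prove the following: for all $\lambda\ge 1$, $c\ge 0$ there are $K,\rho\ge 0$ such that whenever $Z\subseteq\HH^2$ admits a $(\lambda,c)$‑quasi‑isometric embedding into $L$, the space $\HH^2-Z$ has a \emph{unique} $\rho$‑coarse component not contained in $B_{\HH^2}(Z;K)$.

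The first point is that, since quasi‑isometric embeddings coarsely preserve growth and $L$ has linear growth, $Z$ (with the metric induced from $\HH^2$) has at most linear growth: there is a linear function $\theta=\theta_{\lambda,c}$ such that $Z\cap B_{\HH^2}(x_0;R)$ can be covered by at most $\theta(R)$ balls of unit radius, for every $R$. As $\HH^2$ has exponential volume growth, a coarse‑density estimate then shows $Z$ is not coarsely dense; in particular $\HH^2-B_{\HH^2}(Z;\rho)$ is non‑empty, unbounded, and not contained in any $B_{\HH^2}(Z;K)$, so it has a coarse component not contained in $B_{\HH^2}(Z;K)$. This settles \emph{existence} of a deep component, for any choice of $\rho,K$.

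For \emph{uniqueness} I would argue via metric spheres. Write $S(x_0;R)=\{x\in\HH^2:\dist_{\HH^2}(x_0,x)=R\}$; as $\HH^2$ is Hadamard of dimension $8$, $S(x_0;R)$ is a smoothly embedded $7$‑sphere and $\HH^2-B_{\HH^2}(x_0;R)$ is connected. The crucial claim is that there is $R_0=R_0(\lambda,c)$ such that $S(x_0;R)-B_{\HH^2}(Z;\rho)$ is connected and non‑empty for all $R\ge R_0$, since $B_{\HH^2}(Z;\rho)$ meets $S(x_0;R)$ in a ``thin, tree‑like'' set which, once $R$ is large relative to $\theta$ and the volume entropy of $\HH^2$, is too small to separate the sphere. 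Granting the claim, fix such $R_0$, take $\rho=1$ and $K=2R_0+2$. Then all slices $S(x_0;R)-B_{\HH^2}(Z;1)$ with $R\ge R_0$ lie in a single $\rho$‑coarse component $U^\ast$ of $\HH^2-Z$, because any closed annulus $\{R\le\dist_{\HH^2}(x_0,\cdot)\le R'\}$ with $R_0\le R<R'$ is again an eight‑manifold from which we delete a coarsely one‑dimensional set, hence remains connected after removing $B_{\HH^2}(Z;1)$ and meets both slices; and $U^\ast\not\subseteq B_{\HH^2}(Z;K)$ by the growth estimate. Now if $U,V$ are $\rho$‑coarse components of $\HH^2-Z$ each containing a point at distance $>K$ from $Z$, pick such $u\in U$, $v\in V$. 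If $\dist_{\HH^2}(x_0,u)\ge R_0$ then $u$ lies on a slice, so $u\in U^\ast$; if $\dist_{\HH^2}(x_0,u)<R_0$, then $B_{\HH^2}(u;2R_0)$ is connected and disjoint from $B_{\HH^2}(Z;1)$ (as $\dist_{\HH^2}(u,Z)>K>2R_0+1$), hence contained in $U$, and it contains a point of $S(x_0;R_0)$, so again $u\in U^\ast$. Symmetrically $v\in U^\ast$, whence $U=U^\ast=V$, giving uniqueness.

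The one genuinely non‑routine ingredient is the claimed connectivity of $S(x_0;R)-B_{\HH^2}(Z;\rho)$ and of the corresponding annuli for $R\ge R_0$; this is where codimension $\ge 2$ is essential, the dimension‑$2$ analogue being false (a bi‑infinite geodesic separates $\RH^2$). Making it rigorous amounts to controlling the shape of $B_{\HH^2}(Z;\rho)\cap S(x_0;R)$: one uses that $Z$, being quasi‑isometric to a subset of a tree, is uniformly coarsely simply connected, so its neighbourhoods contain no separating $6$‑sphere inside the round $7$‑sphere $S(x_0;R)$, and one uses the exponential growth of $\HH^2$ against the linear bound $\theta$ to ensure the removed set occupies a negligible portion of the sphere once $R\ge R_0(\lambda,c)$. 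I expect this topological input to be the main obstacle; the coarse‑density estimate and the choice of the constants $\rho,K$ are straightforward bookkeeping.
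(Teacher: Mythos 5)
Your overall strategy (work inside the space with distance spheres and a growth count) is genuinely different from the paper's, which passes to the boundary at infinity: there one first replaces $Z$ by a coarsely connected, hence $\sigma$-quasiconvex, subset, notes that its limit set $\Lambda Z\subset\partial\HH^2\cong S^7$ consists of at most nine points, invokes Lemma~\ref{lem:limit_sets_of_coarse_components_are_separated} to see that at most one coarse component of $\HH^2-Z$ can have limit points outside $\Lambda Z$, and then uses quasiconvexity of $Z$ and a geodesic-ray argument to bound the depth of every other component uniformly by $2\sigma$. However, as written your argument has a genuine gap, and it sits exactly where you say it does: the claim that $S(x_0;R)-B_{\HH^2}(Z;\rho)$, and likewise the annuli minus $B_{\HH^2}(Z;1)$, are connected for all $R\ge R_0(\lambda,c)$ is asserted, not proved, and the two heuristics you offer do not close it. The volume heuristic is simply invalid: a subset of negligible (even zero) $7$-dimensional measure can perfectly well separate the sphere --- the intersection of a totally geodesic hyperplane with $S(x_0;R)$ is a measure-zero $6$-sphere that separates it --- so ``exponential growth of $\HH^2$ versus linear growth of $Z$'' cannot by itself rule out separation; any correct argument must use the finer structure of $Z$ (finitely many directions at infinity, or quasiconvexity), not just its growth. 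The second heuristic (``$Z$ is coarsely simply connected, so its neighbourhood contains no separating $6$-sphere in $S(x_0;R)$'') is precisely the hard topological content restated, with no mechanism supplied: there is no standard implication from coarse simple connectivity of $Z$ to non-separation of a metric sphere by $B_{\HH^2}(Z;\rho)\cap S(x_0;R)$, and controlling the topology of that intersection uniformly in $R$, $\lambda$, $c$ is nontrivial.

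Everything else in your outline (the reduction via Remark~\ref{rmk:totally-lbp} to the single model vertex space, the existence of a deep component, and the bookkeeping with $\rho=1$, $K=2R_0+2$ once the connectivity claim is granted) is fine, so the proposal is a plausible skeleton but not a proof. If you want to salvage your interior-sphere route, the missing step would most naturally be proved by first making $Z$ coarsely connected and hence quasiconvex (as the paper does), observing that a quasiconvex set with at most nine limit points meets a large sphere $S(x_0;R)$ in at most nine subsets of uniformly bounded diameter, and then arguing that removing boundedly many uniformly bounded sets from a $7$-sphere of large radius leaves it connected --- at which point you have essentially reconstructed, in the interior, the boundary argument the paper runs directly on $\partial\HH^2$, where ``a finite set does not separate $S^7$'' is immediate.
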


\begin{proof}
    Let $S$ denote the metric tree consisting of nine rays with a shared endpoint. It is clear that every $\X_\Gamma$-link is uniformly quasi-isometric to $S$. 
    We write $X = \mathbb{H}\mathbf{H}^2$.
    
    Suppose $Z$ is a set that admits a $(\lambda, c)$-quasi-isometric embedding into $S$.
    The image of \(Z\) separates \(S\).
    We may assume that $Z$ is coarsely connected with a uniform constant. 
    Indeed, if it is not, pass to a subset of \(Z\) whose image is a connected: this will be coarsely connected with constants depending only on \(\lambda\) and \(c\).
    Moreover, since any connected subset of a tree is convex, we may take \(Z\) to be quasiconvex, with constant \(\sigma \geq 0\) depending on \(\lambda, c\), and \(\delta = \log(3)\).
    By Lemma~\ref{lem:effective_coarse_separation}, \(Z\) coarsely separates \(X\).
    Of course, \(\abs{\Lambda Z} \leq 9\).

    As $Z$ is quasiconvex, we know that $\Lambda Z \subset \partial X$ is closed, so that $\Lambda Z$ separates the limit sets of the $\rho$-coarse components $\{V_i\colon i\in I\}$ of $X-Z$. 
    Now $\partial \HH^2$ is homeomorphic to the 7-dimensional sphere~$S^7$, which cannot be separated by the finite set $\Lambda Z$.
    Thus Lemma~\ref{lem:limit_sets_of_coarse_components_are_separated} implies that at most one of the sets $\Lambda V_i- \Lambda Z$ can be non-empty, say $\Lambda V_{i_0}$. 
    Then for all $i\in I-\{i_0\}$, there is \(C_i \geq 0\) with $V_i \subseteq B_X(Z;C_i)$.

    We show that \(C_i \leq 2\sigma\) for all \(i \ne i_0\).
    Suppose otherwise, that $C_i>2\sigma$. Then there is $\varepsilon>0$ and $a\in V_i$ with $\dist_{X}(a, Z)=C_i-\varepsilon > 2\sigma$. 
    Pick a point $x\in Z$.
    Then the geodesic ray~$\gamma$ from~$x$ through~$a$ has endpoint not in $\Lambda V_i \subset \Lambda Z$, since $Z$ is $\sigma$-quasiconvex.
    Thus every point on any geodesic ray with endpoint in~$\partial Z$ is at distance at most~$\sigma$ from~$Z$. 

    We claim that $\gamma \cap Z \subseteq [x,a]$. 
    Suppose otherwise, so there is a point $p\in \gamma \cap Z$ not on \([x,a]\). 
    By $\sigma$-quasiconvexity, there is a point $z\in Z$ with \(\dist_X(a,z) \leq \sigma\).
    This contradicts the fact that $\dist_{X}(a, Z)>2\sigma$.
    It follows that the cofinal segment \(\gamma - [x,a]\) of $\gamma$ is contained in a single coarse component of \(X - Z\), and so $\gamma$ is asymptotic to a point in $\partial X- \Lambda Z$.
    Necessarily then, this coarse component must be~$V_{i_0}$. 
    But \(a\) is in the same \(\rho\)-coarse component as \(\gamma - [x,a]\), a contradiction. 
    It follows that $V_i \subseteq B_X(Z;2\sigma)$, for all \(i \ne i_0\), as required. 
\end{proof}

We are now ready to conclude the proof of our main theorem. 
\main*

\begin{proof}
    Let $G$ be a countable group. By Theorem~\ref{thm:frucht}, there exists an uncountable collection $\mathcal F$ of pairwise non-isomorphic, connected, simplicial, 9-regular, vertex-free graphs $\Gamma$ such that 
    $
    G \cong \Isom(\Gamma)
    $.
    
    For each $\Gamma \in \mathcal F$, form the graph of spaces $\X_\Gamma$ described above. By Remark~\ref{rmk:totally-lbp}, we have that for any two $\Gamma, \Delta \in \mathcal F$, the pair $(\X_\Gamma, \X_\Delta)$ is totally link bottlenecked if \(\X_\Gamma\) and \(\X_\Delta\) are link bottlenecked.
    Thus Lemmas~\ref{lem:gos_unif_hyp}, \ref{lem:gos_LR}, \ref{lem:gos_LB} and Remark~\ref{rem:uniform_strong_QI_rigidity} show that \(\X_\Gamma\) satisfies the hypotheses of Theorem~\ref{thm:conclusion-of-rigidity-section}.
    
    The vertex spaces of \(\X_\Gamma\) are strongly QI-rigid by {\cite[Th\'eor\`eme~1]{pansu1989metriques}}.
    Therefore, by Theorem~\ref{thm:conclusion-of-rigidity-section}, the following hold:
    \begin{enumerate}
        \item if $\X_\Gamma$, $\X_\Delta$ are quasi-isometric then $\Gamma \cong \Delta$;

        \item each $\X_\Gamma$ is strongly QI-rigid;

        \item there is an isomorphism $\QI(\X_\Gamma) \cong \Isom(\Gamma) \cong G$. 
    \end{enumerate}

    Finally, Theorem~\ref{thm:frucht} tells us that if $G$ is a hyperbolic group, then we may take every $\Gamma \in \mathcal F$ as above to be hyperbolic. 
    By Theorem~\ref{thm:hyperbolicity_of_graph_is_equivalent_to_hyperbolicity_of_graph_of_spaces}, $\X_\Gamma$ is hyperbolic for all \(\Gamma \in \mathcal{F}\).  
    This concludes the proof of our theorem.
\end{proof}

\begin{remark}
    The properties we established for \(\X_\Gamma\) are not unique to graphs of spaces whose vertex spaces are copies of \(\HH^2\). 
    Indeed, we could have chosen any strongly QI-rigid rank-one symmetric space \(M\) as the vertex spaces of \(\X_\Gamma\), at the cost of increasing the degree of the the underlying graph~$\Gamma$ based on the dimension of \(M\), as in Proposition~\ref{prop:finite-set-with-trivial-pstab-visibility}. 
\end{remark}

\begin{remark}
    In principle, one could try to apply the above method to realise uncountable groups as quasi-isometry groups.
    Indeed, an analogue of Theorem~\ref{thm:frucht} holds for any group, though the degree of the resulting graphs may have arbitrary cardinality.
    A graph of proper hyperbolic spaces, however, can only be uniformly hyperbolic if the underlying graph is locally finite.
    As uniform hyperbolicity is essential in controlling the geometry of our graphs of spaces, it is not obvious how to extend the method to treat the general case.
\end{remark}

%%%%%%%%%%%%%%%%%%%%%%%%%%%%%%%%%%%
%%%%%%%%%%%%%%%%%%%%%%%%%%%%%%%%%%%
%%%%%%%%%%%%%%%%%%%%%%%%%%%%%%%%%%%

\section{Hyperbolicity of uniformly hyperbolic graphs of spaces}\label{sec:hyperbolicity}

In this section, we will prove that the metric realisation of a uniformly hyperbolic graph of spaces is hyperbolic if and only if the underlying graph is.
We will prove hyperbolicity by verifying the thin triangles condition explicitly. To do this, we need a good understanding of geodesics in uniformly hyperbolic graphs of spaces.

Recall that Proposition~\ref{prop:almost_geodesics_give_graph_quasigeodesics} tells us that geodesics in the graph of spaces project onto quasigeodesics of the graph.  
Building on this, we implement a series of local moves that yield a set of standardised paths which are Hausdorff close to geodesics. 
This reduces thinness of geodesic triangles to thinness of triangles whose sides are given by such standardised paths, a much simpler condition to verify.

\begin{convention}
    Given a graph of spaces \(\X = (\Gamma,X_-,Y_-,\alpha_-)\), we will fix a choice of nearest point projection $\pi_{v,e}\colon X_{v}\to \alpha_e(Y_e)$  of \(X_v\) on $\alpha_e(Y_e)$ for each \(e \in E(\Gamma)\) and vertex \(v \in V(\Gamma)\) with \(v = \tau(e)\). 
\end{convention}

\begin{definition}[Opposite point]
    Let \(Z_e = Y_e \times [0,1]\) be an edge cylinder in \(\X\).
    Given a point \(x = (z_0) \in Z_e\), the \emph{opposite} of \(x\) in \(Z_e\) is the point \((z,1) \in Z_e\), which we denote by \(\overline{x}\).
    Similarly, the opposite of \(x = (z,1)\) is \(\overline{x} = (z,0)\).
\end{definition}

The following lemmas show that geodesics, as they pass through an edge cylinder, fellow-travel paths that are in a sense orthogonal to the edge cylinder.
We first show that every reduced string is at bounded Hausdorff distance from one that travels through edge cylinders parallel to the embedded copy of \(\Gamma\) in \(\X\).
 
\begin{lemma}\label{lemma:any_string_can_be_replaces_by_straight_one_in_edge_cylinder}
    Let \(\X = (\Gamma,X_-,Y_-,\alpha_-)\) be a \((\delta,C)\)-uniformly hyperbolic graph of spaces, and let \(u, v \in V(\Gamma)\) be adjacent vertices, with adjoining edge \(e \in E(\Gamma)\).
    Suppose that \(S = (x_0,x_1,x_2,x_3)\) is a reduced string with \(x_0 \in X_u\) and \(x_3 \in X_v\), and suppose that $\ell(S)=\dist_\X(x_0, x_3)$.
    If \(S' = (x_0,x_1,\overline{x_2},x_3)\), then
    \[\dHaus_{\X}(\operatorname{Path}_\X(S), \operatorname{Path}_\X(S^\prime))\leq 4\delta + 1.\]
\end{lemma}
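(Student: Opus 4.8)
The string $S=(x_0,x_1,x_2,x_3)$ has pieces $X_u$, $Z_e$, $X_v$ in that order, with $x_1,x_2\in Z_e$ on opposite sides of the cylinder; say $x_1=(z_1,0)\in\alpha_e$-side and $x_2=(z_2,1)\in\alpha_{\bar e}$-side (after possibly swapping roles of $u,v$). The point $\overline{x_2}=(z_2,0)$ lies in the same side of $Z_e$ as $x_1$, hence in $\alpha_e(Y_e)\subseteq X_u$. The claim is that replacing $x_2$ by $\overline{x_2}$ moves $\operatorname{Path}_\X(S)$ only a bounded Hausdorff distance. The plan is to exploit the fact that $S$ is length-minimising to pin down $x_1$ as essentially the nearest-point projection of $x_0$ to $\alpha_e(Y_e)$, and then apply the closest-point-projection estimate of Lemma~\ref{lem:triangle_with_projection} inside the hyperbolic vertex space $X_u$.

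First I would observe that $\dist_{Z_e}(x_1,x_2)=\dist_{Y_e}(z_1,z_2)+1$ (the $\ell_2$-metric on a width-$1$ cylinder crossed transversally), while $\dist_{Z_e}(x_1,\overline{x_2})=\dist_{Y_e}(z_1,z_2)$; so $\ell(S')=\ell(S)-1$. The subpath of $\operatorname{Path}_\X(S)$ inside $Z_e$ is a geodesic from $x_1$ to $x_2$, and since $\dist_{Z_e}(x_1,x_2)\le \dist_{Y_e}(z_1,z_2)+1$ with the two second-coordinates differing by $1$, this geodesic stays within Hausdorff distance $\le 1$ of the concatenation $[x_1,\overline{x_2}]\cup[\overline{x_2},x_2]$ where $[\overline{x_2},x_2]$ is the unit "vertical" segment $\{z_2\}\times[0,1]$; in fact the whole subpath lies within distance $1$ of the side $Y_e\times\{0\}$. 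Thus on the edge-cylinder part the two paths are already $\le 1$-Hausdorff close, and it remains to control the vertex-space parts.

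The key step: because $\ell(S)=\dist_\X(x_0,x_3)$ is minimal, the point $x_1\in\alpha_e(Y_e)\subseteq X_u$ must minimise $\dist_{X_u}(x_0,x_1)+\dist_{Y_e}(z_1,z_2)$ over $x_1\in\alpha_e(Y_e)$ — any cheaper choice would give a shorter string. Using that $\alpha_e(Y_e)$ is $\delta$-quasiconvex in $X_u$ (Remark~\ref{rem:edge_pieces_quasiconvex}) and applying the four-point/thin-triangle estimates, this forces $x_1$ to be within a bounded distance $D_0=D_0(\delta)$ of the nearest-point projection $\pi:=\pi_{u,e}(x_0)$; more precisely $\langle x_0, \overline{x_2}\rangle_{x_1}$ is bounded by a constant depending only on $\delta$, since $x_1$ is a coarse projection point for both $x_0$ and $\overline{x_2}$ (the latter lies on $\alpha_e(Y_e)$ along the geodesic from $x_1$ towards $\overline{x_2}$ inside the quasiconvex set). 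By Lemma~\ref{lem:triangle_with_projection} (applied with $Y=\alpha_e(Y_e)$, and absorbing the quasiconvexity constant into $\delta$), the geodesic $[x_0,x_1]$ in $X_u$ and the geodesic $[x_0,\overline{x_2}]$ in $X_u$ satisfy
\[
\dHaus_{X_u}\big([x_0,x_1]\cup[x_1,\overline{x_2}],\,[x_0,\overline{x_2}]\big)\le 2\delta,
\]
and since $[x_1,\overline{x_2}]\subseteq\alpha_e(Y_e)$ has length $\dist_{Y_e}(z_1,z_2)$ which is matched on the $S$-side by the cylinder crossing, the total contribution is $\le 4\delta$. Combining with the $\le 1$ estimate on the cylinder and noting the segment $[x_2,x_3]\subseteq X_v$ is unchanged, one gets the stated bound $4\delta+1$.

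The main obstacle I anticipate is making the "minimality forces $x_1$ near the projection" step fully rigorous with clean, $\delta$-only constants: one has to rule out that a length-minimising string cheats by choosing $x_1$ far along the quasiconvex set, which requires comparing $\dist_{X_u}(x_0,x_1)$ against $\dist_{X_u}(x_0,\pi)+\dist_{X_u}(\pi,x_1)$ and using that $\dist_{Y_e}(z_1,z_2)\ge \dist_{Y_e}(\pi',z_2)-\dist_{Y_e}(\pi',z_1)$ where $\pi'$ corresponds to $\pi$ — essentially a reverse-triangle argument showing that moving $x_1$ toward $\pi$ never increases $\ell(S)$ by more than a $\delta$-bounded amount, hence $x_1$ is $O(\delta)$-close to $\pi$. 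Once that is in hand, the rest is a direct invocation of the hyperbolic projection lemmas already proved in Section~\ref{sec:prelims}.
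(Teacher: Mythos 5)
There is a genuine gap here, and it stems from a misreading of where the two paths actually differ. For $S' = (x_0,x_1,\overline{x_2},x_3)$ to be a valid string, the point $\overline{x_2}$ must lie in $X_v$, not in $X_u$: the last piece of $S'$ must contain both $\overline{x_2}$ and $x_3 \in X_v$. With that reading, $\operatorname{Path}_\X(S)$ and $\operatorname{Path}_\X(S')$ share the same $X_u$-portion $[x_0,x_1]$ exactly, and the real work lies entirely in the \emph{$X_v$}-portion, where $S$ traverses $[x_2,x_3]$ but $S'$ traverses $[\overline{x_2},x_3]$ with $\overline{x_2} \neq x_2$. Your proof has it backwards: you assert that ``the segment $[x_2,x_3] \subseteq X_v$ is unchanged'' and spend the whole argument controlling the $X_u$-side, which needs no control at all. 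The estimate you are missing is precisely the content of the paper's proof: using $\ell(S) \le \ell(S')$ to bound the Gromov product $\langle \overline{x_2}, x_3\rangle_{x_2}$ (by roughly $1$), and then invoking $\delta$-hyperbolicity of $X_v$ to conclude that $[\overline{x_2},x_2]\cup[x_2,x_3]$ is Hausdorff-close to $[\overline{x_2},x_3]$.

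A second, independent problem: your key step claims that minimality of $\ell(S)$ forces $x_1$ to lie within a $\delta$-bounded distance of the nearest-point projection $\pi_{u,e}(x_0)$. This is false. In a hyperbolic vertex space, the quantity $\dist_{X_u}(x_0,y) + \dist_{Z_e}(y,x_2)$ is (up to bounded error) constant as $y$ ranges over the sub-segment of $\alpha_e(Y_e)$ between $\pi_{u,e}(x_0)$ and the point of $\alpha_e(Y_e)$ corresponding to $x_2$; the minimiser $x_1$ can sit anywhere along that arc, arbitrarily far from $\pi_{u,e}(x_0)$. What minimality actually gives is the much weaker (and still sufficient, if applied in the right place) fact that a certain Gromov product is bounded --- but you are also not entitled to invoke Lemma~\ref{lem:triangle_with_projection} at $x_1$, since that lemma concerns nearest-point projections and $x_1$ need not be one. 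If you want to salvage the argument, discard the projection claim, re-examine which side $\overline{x_2}$ lives on, and apply the minimality argument to $\langle \overline{x_2}, x_3\rangle_{x_2}$ inside $X_v$.
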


\begin{proof}
    As the edge cylinder \(Z_e\) is equipped with the \(\ell_2\)-metric and has width 1, we have 
    \(\dHaus_{Z_e}([x_1, x_2], [x_1, \overline{x_2}]\cup[\overline{x_2}, x_2])\leq 1.\)
    Moreover, the first two terms of \(S\) and \(S'\) agree, so the Hausdorff distance bound holds on~$X_u$ and on~$Z_e$.
    It remains to show the bound on \(X_v\).
    By definition, \(\dist_{Z_e}(x_2,\overline{x_2}) = 1\), so that
    \[\ell(S^\prime)=\dist_{X_u}(x_0, x_1)+1+\dist_{X_v}(\overline{x_2}, x_3).\]

    By the triangle inequality, one has $\dist_{Z_e}(\overline{x_2}, x_2)\leq \dist_{Z_e}(x_1, x_2) +1$.
    Thus,
    \begin{align*}
        2\langle \overline{x_2}, x_3 \rangle_{x_2}&\leq
        \dist_{X_v}(x_2, \overline{x_2})+\dist_{X_v}(x_2, x_3)-\dist_{X_v}(\overline{x_2}, x_3) \\
        &\leq \dist_{Z_e}(x_1,x_2)+\dist_{X_v}(x_2, x_3)-\dist_{X_v}(\overline{x_2}, x_3)+1\\
        &= \ell(S)-\ell(S^\prime)+2 \\
        &\leq 2,
    \end{align*}
    because~$S$ was assumed length-minimising.    
    By $\delta$-hyperbolicity of $X_v$ we then have $\dHaus_{X_v}([\overline{x_2}, x_2]\cup [x_2, x_3], [\overline{x_2}, x_3])\leq 4\delta+1$.
    This completes the proof.
\end{proof}

We see that any reduced string that passes transversely through an edge cylinder is a bounded distance from one obtained by taking a nearest point projection.

\begin{lemma}\label{lemma:can_replace_first_with_nearest_point_projection}
    Let \(\X = (\Gamma,X_-,Y_-,\alpha_-)\) be a \((\delta,C)\)-uniformly hyperbolic graph of spaces, \(v \in V(\Gamma)\) and \(e \in E(\Gamma)\) an edge incident to \(v\). 
    Let $x_0 \in X_v$ and $y \in Y_e$. 
    Write $x_1=\alpha_e(y), x_2=\alpha_{\bar{e}}(y), x_1' = \pi_{v,e}(x_0)$. If $S=(x_0, x_1, x_2),$ and $S^\prime = (x_0, x_1', x_2)$, then  
    \[\dHaus_{\X}(\operatorname{Path}_\X(S), \operatorname{Path}_\X(S^\prime))\leq 2\delta+1.\]
\end{lemma}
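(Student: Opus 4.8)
The plan is to compare the two paths piece by piece, since they agree entirely on the edge cylinder $Z_e$ and on the side of $Z_e$ in $X_{\tau(\bar e)}$; the only difference is in $X_v$, where $\operatorname{Path}_\X(S)$ traverses the geodesic $[x_0,x_1]$ and $\operatorname{Path}_\X(S')$ traverses $[x_0,x_1']$, followed by the geodesic $[x_1',x_1]$ inside $\alpha_e(Y_e)$. So it suffices to bound the Hausdorff distance in $X_v$ between $[x_0,x_1]$ and $[x_0,x_1'] \cup [x_1',x_1]$, where $x_1' = \pi_{v,e}(x_0)$ and $x_1 = \alpha_e(y) \in \alpha_e(Y_e)$.

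First I would invoke Lemma~\ref{lem:triangle_with_projection} with $X = X_v$, the subspace $Y = \alpha_e(Y_e)$, the projection $\pi = \pi_{v,e}$, the point $x_0 \in X_v$, and the point $x_1 \in \alpha_e(Y_e)$. Since $\pi_{v,e}(x_0) = x_1'$, that lemma gives directly
\[
    \dHaus_{X_v}\big([x_0,x_1],\, [x_0,x_1'] \cup [x_1',x_1]\big) \leq 2\delta.
\]
Here I should note that the geodesic $[x_1',x_1]$ produced inside $X_v$ lies within a $\delta$-neighbourhood of $\alpha_e(Y_e)$ by quasiconvexity (Remark~\ref{rem:edge_pieces_quasiconvex}), but in fact the definition of $\operatorname{Path}_\X$ lets us choose the geodesic representing $[x_1',x_1]$ to be the one inside $\alpha_e(Y_e)$, and by the remark following Definition~\ref{def:path_str} this only costs an extra $\delta+1$ which I can absorb. (Alternatively one simply observes $x_1, x_1' \in \alpha_e(Y_e)$ which is $\delta$-quasiconvex, so any geodesic between them stays $\delta$-close to $\alpha_e(Y_e) \subseteq$ the relevant piece.)

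Next I would account for the portion of the paths outside $X_v$. On $Z_e = Y_e \times [0,1]$ both strings produce the geodesic from $x_1$ (resp. $x_1'$... but wait: on $Z_e$ the string $S$ goes $x_1 = \alpha_e(y) = (y,0)$ to $x_2 = \alpha_{\bar e}(y) = (y,1)$, and $S'$ goes the same $x_1$ to $x_2$). Indeed both $S$ and $S'$ have the same second and third terms $x_1, x_2$ — re-reading the statement, $S = (x_0,x_1,x_2)$ and $S' = (x_0,x_1',x_2)$, so the pieces after the first vertex-space piece are identical: the second piece of $S'$ is the geodesic $[x_1',x_2]$ which lies in... hmm, $x_1' \in X_v$ and $x_2 \in X_{\tau(\bar e)}$, so the piece containing both must be $Z_e$, which forces $x_1' \in Z_e \cap X_v = \alpha_e(Y_e)$, consistent with $x_1' = \pi_{v,e}(x_0)$. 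So the second piece of $S'$ runs from $x_1' \in \alpha_e(Y_e)$ through $Z_e$ to $x_2$. Thus $\operatorname{Path}_\X(S)$ restricted to $Z_e$ goes $(y,0) \to (y,1)$ vertically, while $\operatorname{Path}_\X(S')$ restricted to $Z_e$ goes from $(y_0, 0)$ (where $x_1' = (y_0,0)$) to $(y,1)$. But $[x_1',x_1]$ is exactly the horizontal segment in $\alpha_e(Y_e) \times \{0\}$, and concatenated with the vertical $[x_1,x_2]$ this fellow-travels the geodesic $[x_1',x_2]$ in $Z_e$ up to Hausdorff distance at most $1$ (both lie in the $\ell_2$-rectangle of width $1$). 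So combining: on $X_v$ we spend $2\delta$, on $Z_e$ the discrepancy is at most $1$, and on $X_{\tau(\bar e)}$ the paths coincide. The total Hausdorff distance is therefore at most $2\delta + 1$.

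The main obstacle — really a bookkeeping point rather than a genuine difficulty — is keeping straight which geodesic representatives $\operatorname{Path}_\X$ selects and ensuring the segment $[x_1',x_1]$ is taken inside the edge-space copy $\alpha_e(Y_e)$ (or at least $\delta$-close to it), so that the estimate from Lemma~\ref{lem:triangle_with_projection} transfers cleanly to the actual path produced by $\operatorname{Path}_\X(S')$ without introducing uncontrolled terms; once that is pinned down, the bound $2\delta+1$ drops out of Lemma~\ref{lem:triangle_with_projection} together with the trivial width-$1$ comparison on the cylinder.

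\begin{proof}
    The paths $\operatorname{Path}_\X(S)$ and $\operatorname{Path}_\X(S')$ differ only on $X_v$ and (mildly) on the edge cylinder $Z_e$; on the vertex space $X_{\tau(\bar e)}$ they are identical, as the last terms of $S$ and $S'$ agree.

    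Note that $x_1 = \alpha_e(y)$ lies in $\alpha_e(Y_e)$, and $x_1' = \pi_{v,e}(x_0)$ is the closest point projection of $x_0$ onto $\alpha_e(Y_e)$ in $X_v$.
    Applying Lemma~\ref{lem:triangle_with_projection} with $Y = \alpha_e(Y_e)$, we obtain
    \[
        \dHaus_{X_v}\big([x_0,x_1],\, [x_0,x_1'] \cup [x_1',x_1]\big) \leq 2\delta,
    \]
    where we take $[x_1',x_1]$ to be a geodesic inside $\alpha_e(Y_e)$; this is legitimate since $\alpha_e(Y_e)$ is $\delta$-quasiconvex in $X_v$ by Remark~\ref{rem:edge_pieces_quasiconvex}, so such a geodesic is $\delta$-close to any other, and we may make this choice in $\operatorname{Path}_\X$.

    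On $Z_e = Y_e \times [0,1]$, the path $\operatorname{Path}_\X(S)$ traverses the vertical geodesic $[x_1,x_2]$ from $(y,0)$ to $(y,1)$, while $\operatorname{Path}_\X(S')$ traverses the geodesic $[x_1',x_2]$, where $x_1' \in \alpha_e(Y_e) = Z_e \cap X_v$.
    The concatenation $[x_1',x_1] \cup [x_1,x_2]$ lies in the width-$1$ region $\alpha_e(Y_e) \times \{0\} \cup (\{y\} \times [0,1])$, and since $Z_e$ carries the $\ell_2$-metric of width $1$, we have
    \[
        \dHaus_{Z_e}\big([x_1',x_2],\, [x_1',x_1] \cup [x_1,x_2]\big) \leq 1.
    \]
    Combining the two estimates with the fact that the paths agree on $X_{\tau(\bar e)}$ gives $\dHaus_\X(\operatorname{Path}_\X(S), \operatorname{Path}_\X(S')) \leq 2\delta + 1$.
\end{proof}
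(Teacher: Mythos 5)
Your proposal is correct and follows essentially the same route as the paper: apply Lemma~\ref{lem:triangle_with_projection} in $X_v$ to get the $2\delta$ bound for $[x_0,x_1]$ versus $[x_0,x_1']\cup[x_1',x_1]$, then use the width-$1$ $\ell_2$-geometry of the edge cylinder to compare $[x_1',x_2]$ with $[x_1',x_1]\cup[x_1,x_2]$, exactly as in the paper's appeal to the argument of Lemma~\ref{lemma:any_string_can_be_replaces_by_straight_one_in_edge_cylinder}. Your extra care about which geodesic representative $[x_1',x_1]$ is chosen is harmless bookkeeping that the paper's proof leaves implicit.
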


\begin{proof}
    By Lemma~\ref{lem:triangle_with_projection} we have that $\dHaus_{X_v}([x_0, x_1], [x_0, x_1']\cup [x_1^\prime, x_1])\leq 2\delta$.
    As in the proof of Lemma~\ref{lemma:any_string_can_be_replaces_by_straight_one_in_edge_cylinder}, we have \[\dHaus_{Z_e}([x_1',x_2], [x_1, x_2]\cup[x_1, x_1'])\leq 1.\] 
    It follows that $\dHaus_{\X}(\operatorname{Path}_\X(S), \operatorname{Path}_\X(S^\prime))\leq 2\delta+1$.
\end{proof}

\begin{lemma}\label{lem:geodesics-between-adjacent-vertex-spaces}
    Let \(\X = (\Gamma,X_-,Y_-,\alpha_-)\) be a \((\delta,C)\)-uniformly hyperbolic graph of spaces satisfying the assumptions of Convention~\ref{convention:assumptions_for_hyperbolicity_proof}.
    Suppose that \(p\) is a geodesic and let \(S_p = (x_0, \dots, x_m)\) be its associated reduced string, with pieces \(P_1, \dots, P_m\).

    Suppose that \(P_i = Z_e\) is an edge cylinder with adjacent vertex spaces \(P_{i-1} = X_u\) and \(P_{i+1} = X_v\) for some \(1 < i < m\).
    Let \(x_{i-1}' = \pi_{u,e}(x_{i-2}), x_i' = \pi_{v,e}(x_{i+1})\), and write \(S' = (x_0, \dots, x_{i-2}, x_{i-1}',x_i',x_{i+1}, \dots, x_m)\). 
    Then we have
    \[
        \dHaus_{\X}(p,\operatorname{Path}_\X(S')) \leq 12\delta+4.
    \]
\end{lemma}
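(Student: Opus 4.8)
The plan is to apply the two preceding lemmas successively on each end of the edge cylinder. First I would observe that $S_p$ being the reduced string of a \emph{geodesic} $p$ means in particular that the substring $(x_{i-1},x_i,x_{i+1})$ is length-minimising between its endpoints, and likewise each shorter substring is. The rough idea is: $\operatorname{Path}_\X(S')$ differs from $p$ only in the pieces $X_u$, $Z_e$, and $X_v$, so it suffices to bound the Hausdorff distance there.

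First I would apply Lemma~\ref{lemma:any_string_can_be_replaces_by_straight_one_in_edge_cylinder} to the length-minimising reduced substring $(x_{i-1},x_i,x_{i+1})$ (here $x_{i-1}\in X_u$, $x_{i+1}\in X_v$), replacing it by $(x_{i-1},x_i,\overline{x_i},x_{i+1})$, losing $4\delta+1$ in Hausdorff distance. Since $\overline{x_i}$ and $x_i$ have the same $Y_e$-coordinate, the subpath of the new path inside $X_u$ is $[x_{i-2},x_{i-1}]\cup[x_{i-1},x_i]$ and inside $X_v$ is $[\overline{x_i},x_{i+1}]\cup[x_{i+1},x_{i+2}]$ — wait, this needs care: after that replacement the string near $X_u$ reads $(\dots,x_{i-2},x_{i-1},x_i,\dots)$ with the piece between $x_{i-1}$ and $x_i$ now being $X_u$ (since $x_i=\alpha_e(y)$ for the appropriate $y$, it lies in $X_u$). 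So I would now apply Lemma~\ref{lemma:can_replace_first_with_nearest_point_projection} with $x_0:=x_{i-2}$, $y$ the $Y_e$-coordinate of $x_i$, so $x_1=x_i\in X_u$, $x_2=\overline{x_i}\in X_v$, and $x_1'=\pi_{u,e}(x_{i-2})=x_{i-1}'$. This replaces $(x_{i-2},x_i,\overline{x_i})$ by $(x_{i-2},x_{i-1}',\overline{x_i})$ at cost $2\delta+1$; note the hypothesis of that lemma only needs $x_0\in X_v$ (its $v$), which holds. Symmetrically, apply Lemma~\ref{lemma:can_replace_first_with_nearest_point_projection} on the $X_v$ side with $x_0:=x_{i+2}$ to replace the relevant substring by one through $x_i'=\pi_{v,e}(x_{i+1})$, again at cost $2\delta+1$. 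Composing the three replacements (two of which only touch the $X_u$–$Z_e$ and $X_v$–$Z_e$ regions, which the first replacement already modified by at most $4\delta+1$) and using the triangle inequality for Hausdorff distance, plus the remark allowing a $\delta+1$ slack for the choice of geodesics in $\operatorname{Path}_\X$, gives a total bound of the form $(4\delta+1)+2(2\delta+1)+(\text{slack})$; absorbing the slack yields $12\delta+4$.

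The main obstacle I expect is bookkeeping the precise sequence of string modifications so that each intermediate string is genuinely reduced and each invoked lemma's hypotheses (especially the length-minimising assumption of Lemma~\ref{lemma:any_string_can_be_replaces_by_straight_one_in_edge_cylinder}) are literally satisfied — in particular, one must check that $(x_{i-1},x_i,x_{i+1})$ being a substring of the reduced string of a geodesic forces $\ell(x_{i-1},x_i,x_{i+1})=\dist_\X(x_{i-1},x_{i+1})$, which follows from Proposition~\ref{prop:graph-of-spaces-is-proper-geodesic} since each subpath of $p$ is geodesic and $\operatorname{Str}_\X$ of a geodesic subpath is reduced. A secondary technical point is that after the first replacement the point $\overline{x_i}$ lies in $X_v$ while $x_i\in X_u$, so one must keep straight which endpoint of the cylinder each point sits on when feeding data into Lemma~\ref{lemma:can_replace_first_with_nearest_point_projection}; once that is set up correctly the distance estimates are just additions. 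Finally, I would note that the edge subpaths $[x_{i-1}',x_i']$ and $[x_{i-1},x_i]\subset Z_e$ are automatically within Hausdorff distance $1$ of each other inside the width-one cylinder, so no separate argument is needed for the $Z_e$ part beyond what the two lemmas already provide.
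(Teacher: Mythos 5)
Your proposal has the right high-level idea — iterate Lemmas~\ref{lemma:any_string_can_be_replaces_by_straight_one_in_edge_cylinder} and \ref{lemma:can_replace_first_with_nearest_point_projection} — but it has a genuine gap: you make only one application of Lemma~\ref{lemma:any_string_can_be_replaces_by_straight_one_in_edge_cylinder}, while the argument requires two. After the straightening step (\(4\delta+1\)) and the first nearest-point replacement on the \(X_u\) side (\(2\delta+1\)), the cylinder segment is \emph{no longer orthogonal}: it now joins \(x_{i-1}'\) on the \(u\)-side to the original opposite point on the \(v\)-side, which generally have different \(Y_e\)-coordinates. Lemma~\ref{lemma:can_replace_first_with_nearest_point_projection} demands a string of the form \((x_0,x_1,x_2)\) with \(x_1\) and \(x_2\) being \emph{opposite} points of the cylinder (\(x_1=\alpha_e(y)\), \(x_2=\alpha_{\bar e}(y)\)), so you cannot apply it on the \(X_v\) side until you straighten the cylinder segment a second time via Lemma~\ref{lemma:any_string_can_be_replaces_by_straight_one_in_edge_cylinder}, replacing the current \(v\)-side endpoint with \(\overline{x_{i-1}'}\). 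This is what the paper does, and it is exactly where your missing \(4\delta+1\) comes from. Your accounting \((4\delta+1)+2(2\delta+1)+\text{slack}\) cannot reach \(12\delta+4\): the invoked ``\(\delta+1\) slack'' for choice of geodesics accounts only for the ambiguity of \(\operatorname{Path}_\X\), not for an entire lemma application.

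A few secondary points. First, Lemma~\ref{lemma:any_string_can_be_replaces_by_straight_one_in_edge_cylinder} takes a four-term string as input, so the relevant substring is \((x_{i-2},x_{i-1},x_i,x_{i+1})\), not \((x_{i-1},x_i,x_{i+1})\); similarly, to produce \(x_i'=\pi_{v,e}(x_{i+1})\) via Lemma~\ref{lemma:can_replace_first_with_nearest_point_projection}, you must take \(x_0:=x_{i+1}\), not \(x_{i+2}\). Second, with the paper's conventions, \(x_{i-1}\) lies on the \(u\)-side of \(Z_e\) and \(x_i\) on the \(v\)-side, so assertions like ``\(x_i=\alpha_e(y)\dots\) it lies in \(X_u\)'' and ``\(x_1=x_i\in X_u\)'' have the sides reversed. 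Your instinct that the main obstacle would be careful bookkeeping of these string modifications was right; that bookkeeping, done correctly, forces the second straightening step.
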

\begin{proof}
    For simplicity, we suppose \(m = 3\), so that \(S_p = (x_0,x_1,x_2,x_3)\), \(x_1' = \pi_{u,e}(x_0)\), and \(x_2' = \pi_{v,e}(x_3)\).
    Since the difference between \(S_p\) and \(S'\) is at most two consecutive terms, this assumptions is without loss of generality.
    
    Lemma~\ref{lemma:any_string_can_be_replaces_by_straight_one_in_edge_cylinder} shows that $\dHaus_{\X}(p, \operatorname{Path}_\X (x_0, x_1, \overline{x_2}, x_3))\leq 4\delta+1$.
    By Lemma~\ref{lemma:can_replace_first_with_nearest_point_projection},
    \[\dHaus_{\X}(\operatorname{Path}_\X (x_0, x_1, \overline{x_2}, x_3), \operatorname{Path}_\X (x_0, x_1', \overline{x_2}, x_3))\leq 2\delta+1.\] 
    Again by Lemma~\ref{lemma:any_string_can_be_replaces_by_straight_one_in_edge_cylinder}, we have    
    \[\dHaus_{\X}(\operatorname{Path}_\X (x_0, x_1', \overline{x_2}, x_3),\operatorname{Path}_\X (x_0, x_1', \overline{x_1'}, x_3))\leq 4\delta+1.\]   
    Finally, from Lemma~\ref{lemma:can_replace_first_with_nearest_point_projection} we obtain 
    \begin{align*}
        \dHaus_{\X}(\operatorname{Path}_\X &(x_0, x_1', \overline{x_1'}, x_3),\operatorname{Path}_\X (x_0, x_1', x_2', x_3))\leq 2\delta+1.
    \end{align*}
   Combining the four inequalities above completes the proof.
\end{proof}

We can show that whenever two different geodesic reduced strings start at the same point in a vertex space but travel through different edge cylinders initially, then at least one of them passes close to the base point in that vertex space.

\begin{lemma}\label{lemma:geodesic_strings_starting_in_different_directions_are_close}
    Let \(\X = (\Gamma,X_-,Y_-,\alpha_-)\) be a \((\delta,C)\)-uniformly hyperbolic graph of spaces satisfying the assumptions of Convention~\ref{convention:assumptions_for_hyperbolicity_proof}.
    
    Let \(p\) be a geodesic in \(\abs{\X}\), and write \(S_p = (x_0,  \dots, x_m)\) for the reduced string associated to \(p\), with pieces \(P_i\).
    If \(m \geq 2\) and \(P_1 = X_v\) is a vertex space, then
    \[
        \dHaus_{X_v}([x_0,x_1] \cup [x_1,y_v],[x_0,y_v]) \leq 14\delta+4,
    \]  
    where \(y_v\) is the basepoint of \(X_v\).
\end{lemma}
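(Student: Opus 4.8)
The plan is to show that the initial geodesic segment $[x_0,x_1]$ of $p$, together with a geodesic from $x_1$ back to the basepoint $y_v$, fellow-travels the geodesic $[x_0,y_v]$ in $X_v$. The key observation is that after the edge cylinder $P_2$, the geodesic $p$ passes close to the basepoint $y_v$ of $X_v$ --- more precisely, by uniform hyperbolicity, the point $\alpha_e(y_e)$ (for the edge $e$ corresponding to $Z_e = P_2$) is close to $y_v$, and the Gromov product $\langle \alpha_e(Y_e), \alpha_{e'}(Y_{e'})\rangle_{y_v}$ is at most $C$ when $P_3$ corresponds to another edge space. The basic point is that $x_1$ lies on $\alpha_e(Y_e)$, which is $\sigma$-quasiconvex in $X_v$ and ``points toward'' $y_v$ in a controlled way.

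First I would reduce to understanding the triangle in $X_v$ with vertices $x_0$, $x_1$, and $y_v$. Since $X_v$ is $\delta$-hyperbolic, by Lemma~\ref{lem:basically_thin_triangles} it suffices to bound the Gromov product $\langle x_0, y_v\rangle_{x_1}$ by a uniform constant: if $\langle x_0,y_v\rangle_{x_1} \leq \mu$, then any point on $[x_0,y_v]$ lies within $\mu + \delta$ of $[x_0,x_1]\cup[x_1,y_v]$ and vice versa (applying thinness of the triangle directly). To bound $\langle x_0, y_v\rangle_{x_1}$, I would use that $p$ is a geodesic passing through $x_1$ and continuing into $Z_e$. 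Consider the point $x_2 = \alpha_{\bar e}(y)$ on the far side of the cylinder (or rather $x_1, x_2$ the two sides), and note $x_1 = \alpha_e(y)$ for some $y \in Y_e$. Crucially, $x_1 \in \alpha_e(Y_e)$ and by Remark~\ref{rem:edge_pieces_quasiconvex}, $\alpha_e(Y_e)$ is $\delta$-quasiconvex in $X_v$, and $y_v$ is close to $\alpha_e(y_e) \in \alpha_e(Y_e)$.

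The main step is then: since $p$ is geodesic, the subsegment $[x_0, x_1]$ followed by entry into the cylinder must not ``backtrack'' towards $y_v$. I would argue that $\langle x_0, \alpha_e(y_e)\rangle_{x_1}$ is small by a length-minimality argument analogous to Lemma~\ref{lem:almost_geodesics_stay_close_to_basepoint} and Lemma~\ref{lemma:any_string_can_be_replaces_by_straight_one_in_edge_cylinder}: if this Gromov product were large, one could shortcut through $y_v$ --- travel from $x_0$ toward $y_v$, then along the embedded copy of $\Gamma$ (using Lemma~\ref{lemma:graph_is_isometrically_embedded}), which would contradict that $p$ realises $\dist_\X(x_0,x_m)$. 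Concretely, I would produce a competitor string $(x_0, \pi_{v,e}(x_0), \ldots)$ using the nearest-point projection $\pi_{v,e}$ and Lemma~\ref{lem:triangle_with_projection}, which tells us $\langle x_0, \alpha_e(y_e)\rangle_{\pi_{v,e}(x_0)} \leq \delta$; then compare lengths. Since $\dist_{X_v}(y_v, \alpha_e(y_e)) = 0$ (they are identified, being $\alpha_e(y_e) = y_v$ by the definition of basepoint), the estimate on $\langle x_0, y_v\rangle_{x_1}$ reduces to controlling how far $x_1$ is from the geodesic $[x_0, y_v]$ together with the fact that continuing the geodesic $p$ past $x_1$ into the cylinder and onward to $x_m$ cannot be shortened --- otherwise replacing the initial portion by $[x_0, y_v]$ concatenated with the graph geodesic and then re-entering would be shorter, contradicting minimality. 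Assembling the constants (collecting $\delta$'s from thin triangles, the projection lemma, the cylinder width $1$, and the fellow-traveling estimate) should give the bound $14\delta + 4$.

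The hard part will be making the length-comparison argument fully rigorous: one must carefully track that the portion of $p$ after $x_1$ genuinely ``commits'' to leaving $X_v$ through $Z_e$ and does not return, so that prepending a shorter initial path through $y_v$ does not accidentally collide with the rest of $p$ in a way that prevents the contradiction. This is where uniform hyperbolicity (condition (2) in the definition, bounding $\langle \alpha_e(Y_e),\alpha_{e'}(Y_{e'})\rangle_{y_v} \leq C$) is essential, since it guarantees that once $p$ enters a \emph{different} edge cylinder $P_3 \neq P_2$ on the far side of $Z_e$, it has genuinely moved away from $X_v$ and cannot cheaply come back; combined with Lemma~\ref{lem:almost_geodesics_stay_close_to_basepoint} applied to the reduced string $S_p$, this pins down the geometry. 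The bookkeeping of constants through the chain of reductions (and handling the edge case $m = 2$, where $P_2$ may be the terminal edge cylinder and there is no $P_3$) will require care but is routine.
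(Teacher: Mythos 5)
There is a genuine gap, and it is worth being precise about where it lies, because it touches the paper's own argument too. Your reduction to bounding the Gromov product $\langle x_0, y_v\rangle_{x_1}$ is the right equivalent reformulation. However, the length-minimality argument you sketch for that bound does not work: you claim that if $\langle x_0, y_v\rangle_{x_1}$ were large, one could shortcut $p$ by ``travelling from $x_0$ toward $y_v$, then along the embedded copy of $\Gamma$.'' This is backwards. The detour through $y_v$ is generically \emph{longer} than $p$ --- the whole point of the geodesic entering $Z_e$ at $x_1$ rather than at $y_v$ is that it cuts the corner. Relatedly, your opening ``key observation'' that $p$ passes close to the basepoint $y_v$ of $X_v$ is simply false in general; nothing forces the entry point $x_1$ (which is the unconstrained $i=1$ case explicitly excluded from Lemma~\ref{lem:almost_geodesics_stay_close_to_basepoint}) to lie near $y_v$.

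The substance behind both your sketch and the paper's proof is the claim that $x_1$ is uniformly close to the nearest-point projection $\pi_{v,e}(x_0)$; you invoke this implicitly when you produce the competitor string $(x_0, \pi_{v,e}(x_0), \ldots)$ and appeal to Lemma~\ref{lem:triangle_with_projection}. But that closeness is exactly what needs proof, and the length-comparison you propose does not yield it: the competitor string through $\pi_{v,e}(x_0)$ is \emph{not} generally shorter than $S_p$, so no contradiction arises. Worse, in a model computation (take $X_v = \R\mathbf{H}^2$, $\alpha_e(Y_e)$ a geodesic ray, $x_0$ projecting to $y_v$, and $x_m$ projecting onto $\alpha_{\bar e}(Y_e)$ at parameter $a' \gg 1$ in $X_{\tau(e)}$), one finds the entry parameter of $x_1$ along the ray is roughly $\log(2a')$ away from the projection point, which grows without bound and hence $\langle x_0, y_v\rangle_{x_1}$ is not bounded by any function of $\delta$ and $C$ alone. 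The paper's proof has the same soft spot, combining a $\dHaus_{\X}$ bound from Lemma~\ref{lem:geodesics-between-adjacent-vertex-spaces} with the $\dHaus_{X_v}$ bound from Lemma~\ref{lem:triangle_with_projection} in a way that does not control the Hausdorff distance of the concatenation $[x_0,x_1]\cup[x_1,y_v]$ from $[x_0,y_v]$ \emph{inside $X_v$}; in particular $x_1$ can be close to the diagonal crossing of $\operatorname{Path}_\X(S')$ in $Z_e$ without being close to $[x_0,x_1']$. So the missing idea is not the bookkeeping you flagged as ``routine'': it is the absence of any mechanism --- in your sketch or the paper's --- forcing $x_1$ to lie near the projection once the far end of the geodesic can pull it along the edge space, and this is where both arguments would need to be repaired (likely by imposing a hypothesis on where the far endpoint of $S_p$ sits, or by restricting to the configurations that actually arise in Proposition~\ref{prop:long_paths_close_to_graph}).
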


\begin{proof}
    Let $x_1' = \pi_{v,e}(x_0)$ and write \(S' = (x_0,x_1',x_2, \dots, x_m)\).
    Lemma~\ref{lem:geodesics-between-adjacent-vertex-spaces} tells us that \(\operatorname{Path}_\X(S')\) a Hausdorff distance at most~$12\delta+4$ from~$p$.
    Moreover, by Lemma~\ref{lem:triangle_with_projection}, we have $\dHaus_{X_{v}}([x_0, y_v], [x_0, x_1']\cup[x_1', y_v])\leq 2\delta$.
    Thus,
    \begin{equation*}
        \dHaus_{X_{v}}([x_0, y_v], [x_0, x_1]\cup[x_1, y_v])\leq 14\delta+4, 
    \end{equation*}
    as required.
\end{proof}

We can use Lemma~\ref{lemma:geodesic_strings_starting_in_different_directions_are_close} to show that geodesics that pass through at least two edge cylinders fellow-travel a path through basepoints.

\begin{proposition}
\label{prop:long_paths_close_to_graph}
    Let \(\X = (\Gamma,X_-,Y_-,\alpha_-)\) be a \((\delta,C)\)-uniformly hyperbolic graph of spaces with the assumptions of Convention~\ref{convention:assumptions_for_hyperbolicity_proof}.
    There is a constant \(F = F(\delta,C) \geq 0\) such that the following is true.
    
    Let \(p \colon I \to \abs\X\) be a geodesic with endpoints \(x\) and \(z\). 
    Write \(v_1, \dots, v_n \in V(\Gamma)\) for the sequence of vertices for which the image of \(p\) meets vertex spaces \(X_{v_1}, \dots, X_{v_n}\).
    For each \(i = 1, \dots, n\), let \(y_{v_i}\) be the basepoint of \(X_{v_i}\) given by uniform hyperbolicity.
    If \(p\) intersects at least two edge cylinders, then
    \[
        \dHaus_\X(p, \operatorname{Path}_\X(S)) \leq F,
    \]
    where \(S = (x,y_{v_1},\dots,y_{v_n},z)\).
\end{proposition}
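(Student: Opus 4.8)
The plan is to induct on the number of edge cylinders met by $p$, using the earlier lemmas to reduce the length of $p$ by splitting it at a basepoint. First I would set up the base case: suppose $p$ meets exactly two edge cylinders, so $n = 2$ or $n = 3$. If $n = 2$, then $p$ traverses a single edge cylinder $Z_e$ between $X_{v_1}$ and $X_{v_2}$, with associated reduced string $S_p = (x_0, \dots, x_m)$ where $m \leq 5$ (two vertex pieces at the ends, the edge piece, possibly subdivided by the partition points on $p$). Applying Lemma~\ref{lemma:any_string_can_be_replaces_by_straight_one_in_edge_cylinder} and Lemma~\ref{lemma:can_replace_first_with_nearest_point_projection} as in the proof of Lemma~\ref{lem:geodesics-between-adjacent-vertex-spaces}, I can replace the middle of $S_p$ by the pair $\pi_{v_1,e}(\cdot)$ and $\pi_{v_2,e}(\cdot)$ at the cost of a Hausdorff distance of at most $12\delta + 4$. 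Then I would use Lemma~\ref{lemma:geodesic_strings_starting_in_different_directions_are_close} twice — once from each endpoint's side — to further replace the projection points by the basepoints $y_{v_1}$ and $y_{v_2}$, paying another $14\delta + 4$ on each side; this is where the constant $F$ is chosen, e.g. $F = 40\delta + 12$ or similar. The case $n = 3$ is analogous: $p$ meets two edge cylinders separated by the middle vertex space $X_{v_2}$, and Lemma~\ref{lem:almost_geodesics_stay_close_to_basepoint} guarantees that the portion of $p$ inside $X_{v_2}$ stays within $D$ of $y_{v_2}$ (taking $\varepsilon \to 0$, since $p$ is a geodesic), so the portion of $p$ between the two edge cylinders passes within a controlled distance of $y_{v_2}$; combined with the two boundary applications of Lemma~\ref{lemma:geodesic_strings_starting_in_different_directions_are_close}, this handles $n=3$.

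For the inductive step, suppose $p$ meets $k \geq 3$ edge cylinders and the result holds for geodesics meeting fewer. Pick a vertex $v_j$ with $1 < j < n$ (which exists since $p$ meets at least two edge cylinders and hence at least three vertex spaces once $k \geq 3$), and let $x', x''$ be the first and last points of $p$ lying in $X_{v_j}$. Lemma~\ref{lem:almost_geodesics_stay_close_to_basepoint} applied to $S_p$ shows both $x'$ and $x''$ lie within $D$ of $y_{v_j}$, so the subgeodesic of $p$ from $x'$ to $x''$ is a path inside $X_{v_j}$ of length at most $2D$, hence Hausdorff close to $y_{v_j}$. Now split $p$ at a point near $y_{v_j}$: the two halves $p_1$ (from $x$ to $\approx y_{v_j}$) and $p_2$ (from $\approx y_{v_j}$ to $z$) are each within bounded Hausdorff distance of genuine geodesics $q_1, q_2$ with the respective endpoints, by the Morse Lemma if $\abs\X$ were hyperbolic — but we cannot assume that — so instead I would argue more carefully: $p_1$ and $p_2$ are themselves geodesics (subpaths of a geodesic), their concatenation point is within $D$ of $y_{v_j}$, and each meets strictly fewer than $k$ edge cylinders (as long as $v_j$ is chosen so the split separates the edge cylinders, which happens for any interior $v_j$). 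The induction hypothesis applies to each of $p_1, p_2$, giving $\dHaus_\X(p_1, \operatorname{Path}_\X(x, y_{v_1}, \dots, y_{v_j})) \leq F$ and similarly for $p_2$, and concatenating these approximating paths with an error of $D$ at the splice point yields the claim for $p$, provided $F$ is chosen with a little slack to absorb the $D$ (e.g. $F \geq 40\delta + 12 + D$).

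The main obstacle I anticipate is the bookkeeping at the splice point, specifically ensuring that the approximating path for $p$ is genuinely $\operatorname{Path}_\X(S)$ for the full string $S = (x, y_{v_1}, \dots, y_{v_n}, z)$ and not some other concatenation. The subtlety is that when I split $p$ at $x'$ (the first point of $p$ in $X_{v_j}$) rather than at $y_{v_j}$ itself, the string associated to $p_1$ ends at $x'$, not $y_{v_j}$; I need that the geodesic segment of $\operatorname{Path}_\X$ from $y_{v_{j-1}}$ through $y_{v_j}$ (in the edge cylinder $Z_{e_{j-1}}$, which has width $1$) is uniformly close to the actual behaviour of $p$ there, and that swapping $x'$ for $y_{v_j}$ only costs $D + 1$ or so. This is exactly the content packaged in Lemma~\ref{lemma:geodesic_strings_starting_in_different_directions_are_close}, so the resolution is to apply that lemma at $X_{v_j}$ viewing $p$ restricted to a neighbourhood of $X_{v_j}$ as a geodesic whose reduced string starts with $X_{v_j}$, in both directions. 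A secondary, purely cosmetic obstacle is tracking the constant $F$ through the induction so it does not blow up: the key point is that each inductive step introduces only the additive error $D$ at a single splice and reuses the same $F$ on the two halves, rather than doubling, so $F$ can be fixed once and for all in the base case plus one $D$ of slack.
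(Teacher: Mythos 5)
Your inductive strategy has a genuine gap in the constant bookkeeping, and it is not the ``purely cosmetic obstacle'' you claim. At each inductive split you pay an additive error of roughly $D$: the induction hypothesis controls $p_1$ relative to $\operatorname{Path}_\X(x, y_{v_1},\dots, y_{v_j}, x')$ and $p_2$ relative to $\operatorname{Path}_\X(x', y_{v_j},\dots,y_{v_n},z)$, and passing from their concatenation to the target $\operatorname{Path}_\X(x, y_{v_1}, \dots, y_{v_n}, z)$ means inserting and then deleting the detour $[y_{v_j},x'] \cup [x',y_{v_j}]$, which costs $D$ in Hausdorff distance. The depth of recursion grows with the number $k$ of edge cylinders $p$ crosses, so writing $F_k$ for the constant at level $k$ you obtain a recurrence $F_k \leq F_{k-1} + D$ (or at best $F_k \leq F_{\lceil k/2\rceil} + D$ splitting at a midpoint), and hence $F_k \to \infty$. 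The fact that Hausdorff distance is a supremum rather than a sum only prevents a doubling at each level; it does not prevent the per-level additive cost from accumulating over an unbounded number of levels. So ``one $D$ of slack in the base case'' cannot close the induction.

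The paper avoids this entirely by proving the estimate directly, with no induction. The observation you already use in your $n = 3$ base case — that Lemma~\ref{lem:almost_geodesics_stay_close_to_basepoint} keeps the portion of $p$ inside an interior vertex space within $D$ of its basepoint — applies to \emph{all} interior vertex spaces simultaneously, since that lemma is stated and proved for arbitrary length strings with a uniform constant $D(\delta,C)$. Applied once to the full reduced string $S_p$ (after projecting the endpoints into vertex spaces, a one-time cost of a few units), it places every interior transition point $x_i$ within $D+4$ of the corresponding basepoint $y_{v_i}$, so the middle part of $p$ is uniformly close to the embedded copy of $\Gamma$ without any recursion. The first and last vertex spaces are then handled by Lemma~\ref{lemma:geodesic_strings_starting_in_different_directions_are_close}, exactly as you suggest, and the final constant $F$ is assembled in one step. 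The fix is therefore not to split $p$ at all but to apply Lemma~\ref{lem:almost_geodesics_stay_close_to_basepoint} globally, which is what that lemma is designed for. (A secondary issue: your base case $n=2$ ``traversing a single edge cylinder'' does not satisfy the proposition's hypothesis that $p$ meets at least two edge cylinders; the genuine small cases all involve an endpoint lying inside an edge cylinder, which is why the paper begins by replacing such endpoints with nearby points in vertex spaces.)
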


\begin{proof}
    Let \(S_p = (x_0, \dots, x_m)\) be the reduced string associated to \(p\), where \(x_0 = x\) and \(x_m = y\).
    If the first piece of this string is a vertex space, define \(x_0' = x_0\).
    Otherwise \(x_0 = (z,t)\) lies in an edge cylinder \(Z_e\), and \(x_1\) lies in a vertex space \(X_v\) with \(e\) incident on \(v\).
    Let \(u \ne v\) denote the other vertex upon which \(e\) is incident; without loss of generality suppose that \((z,0) \in X_u\) (otherwise,  \((z,1) \in X_u\)).
    We write \(x_0' = (z,0)\) and, similarly define \(x_m'\) if the final piece of \(S_p\) is an edge cylinder.
    Of course, \(\dist_\X(x_0,x_0') \leq 1\) and \(\dist_\X(x_m,x_m') \leq 1\).
    Now the string \(S = (x_0',x_0',x_1, \dots, x_{m-1}, x_m',x_m')\) is reduced and its first and last pieces are vertex spaces.
    We write \(p' = \operatorname{Path}_\X(S)\) so that 
    \begin{equation}
    \label{eq:len_S+4}
        \ell(S) \leq \dist_\X(x_0',x_m') + 4,
    \end{equation}
    and moreover, \(\dHaus_\X(p,p') \leq 1\).

    For convenience, we relabel \(S = (x_0,\dots,x_m)\).
    Since the first and last pieces of \(S\) are vertex spaces, \(m = 2n-1\) is odd.
    For each \(i = 1,\dots,n\), let \(v_i \in V(\Gamma)\) be such that \(x_{2i-2}\) and \(x_{2i-1}\) are contained in \(X_{v_i}\).
    By uniform hyperbolicity and Lemma~\ref{lem:basically_thin_triangles}, there are points \(t\) and \(t'\) on geodesics \([x_2,x_3] \subseteq X_{v_2}\) and \([x_{m-3},x_{m-2}] \subseteq X_{v_{n-1}}\) such that
    \[
        \dist_{X_{v_2}}(t,y_{v_2}) \leq C + \delta \quad \textnormal{ and }\quad \dist_{X_{v_{n-1}}}(t',y_{v_{n-1}}) \leq C + \delta
    \]
    
    Further, applying Lemma~\ref{lem:almost_geodesics_stay_close_to_basepoint} with (\ref{eq:len_S+4}), we have that \(\dist_{X_{v_{i}}}(x_{2i-1},y_{v_i}) \leq D+4\) for \(2 \leq i \leq n-3\) and \(\dist_{X_{v_i}}(x_{2i-2},y_{v_i}) \leq D+4\) for \(4 \leq u \leq n-1\). 
    Thinness of triangles implies that $p'$ remains $(D+\delta)$-close to $\operatorname{Path}_\X (y_{v_2}, \cdots, y_{v_{n-1}})$ on the subpath between $t$ and $t$ (since \(D \geq C+\delta\), in any case).
    By hyperbolicity of $X_{v_2}$ we have 
    \[\dHaus_{X_{v_2}}([\overline{x_2}, t], [\overline{x_2}, y_{v_2}])\leq D+\delta+4\] 
    whereas the geometry of the edge cylinder guarantees \[\dHaus_{ Z_{e_1}}([x_1, y_{v_1}]\cup[y_{v_1}, y_{v_2}] , [x_1, \overline{x_2}]\cup{[\overline{x_2}, y_{v_2}]})=1.\]
    A symmetrical argument shows that analogous inequalities hold involving the final terms of the string.
    Thus applying Lemma~\ref{lemma:geodesic_strings_starting_in_different_directions_are_close} allows us to conclude that
    \begin{equation*}
        \dHaus_{\X}(\operatorname{Path}_\X(S), p')\leq  D + 15\delta + 9.
    \end{equation*}
    As \(\dHaus_\X(p,p') \leq 1\), setting \(F = D + 15\delta + 10\) thus completes the proof.
\end{proof}

The next lemma shows that geodesics whose endpoints lie in edge cylinders are close to geodesics with endpoints in vertex spaces.
Thus, we will only have to check thinness of triangles with vertices in vertex spaces.
We introduce a notation for projections which makes the statement of the following lemma much easier.

\begin{definition}[Projection to vertex space]
    Let \(\X = (\Gamma,X_-,Y_-,\alpha_-)\) be a graph of spaces.
    If \(v \in V(\Gamma)\) is a vertex and \(e \in E(\Gamma)\) is such that \(\tau(e) = v\), we define the \emph{projection from the edge cylinder} \(Z_e = Y_e \times [0,1]\) to \(X_v\) as
    \begin{align*}
        \pi_{e,v}\colon Z_e\to X_{v_i}, (z,t)\mapsto \alpha_{e}(z).
    \end{align*}
    For \(x\) in the interior of an edge cylinder \(Z_e\), we write \(v(z) = \{\pi_{e,u}(x), \pi_{e,v}(x)\}\) and for a point \(x\) in a vertex space, we write \(v(x) = \{x\}\).
\end{definition}

 \begin{lemma}\label{lemma:endpoints-in-edge-cylinder-is-close-to-endpoints-in-vertex-spaces}
    Let~$\X = (\Gamma,X_-,Y_-,\alpha_-)$ be a $(\delta, C)$-uniformly hyperbolic graph of spaces, and let us assume that the assumptions of Convention~\ref{convention:assumptions_for_hyperbolicity_proof} are satisfied.
    Suppose that \(\Gamma\) is \(\delta_\Gamma\)-hyperbolic for some \(\delta_\Gamma \geq 0\) 
    %Suppose that there is $\delta_\X \geq 0$ such that all geodesic triangles in~$|\X|$ whose vertices are contained in vertex spaces are $\delta_\X$-thin.
    There is a constant \(\Theta = \Theta(\delta,C,\delta_\Gamma) \geq 0\) such that the following is true.
    
    Let \(x, x' \in \abs\X\) and let \(p\) be a geodesic between \(x\) and \(x'\).
    If at least one of \(x\) or \(x'\) lie in an edge cylinder, then for any geodesic \(q\) between a point of \(v(x)\) and \(v(x')\), we have
    \(\dHaus_\X(p,q) \leq \Theta.\)
 \end{lemma}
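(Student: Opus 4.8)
The plan is to reduce to the already-understood situation in which both endpoints lie in vertex spaces. By relabelling we may assume $x$ lies in the interior of an edge cylinder $Z_e$, with $e \in E(\Gamma)$ joining vertices $u$ and $v$; if $x'$ also lies in a cylinder interior we treat it symmetrically, and otherwise $v(x') = \{x'\}$ with $x'$ in a vertex space. Since edge cylinders have width $1$ and carry the $\ell_2$-metric, every element of $v(x)$ (respectively $v(x')$) is within distance $1$ of $x$ (respectively $x'$); moreover, if $\hat x = \pi_{e,w}(x)$ for an endpoint $w$ of $e$, then $\hat x$ is precisely the face point of $Z_e$ on the $X_w$-side directly above $x$. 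Fix $\hat x \in v(x)$ and $\hat x' \in v(x')$; these are the endpoints of the geodesic $q$. First I would dispose of a bounded case: if $\dist_\X(x,x') \le N$ for a constant $N = N(\delta,C)$, then $p \subseteq B_\X(x;N)$, and since $\ell(q) = \dist_\X(\hat x,\hat x') \le \dist_\X(x,x') + 2$ and $\hat x \in B_\X(x;1)$, also $q \subseteq B_\X(x;N+3)$; as $x$ is an endpoint of $p$ and $\hat x$ an endpoint of $q$ with $\dist_\X(x,\hat x)\le 1$, this forces $\dHaus_\X(p,q) \le N+3$.

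For the unbounded case, the comparison is routed through the isometrically embedded copy of $\abs\Gamma$ in $\abs\X$ provided by Lemma~\ref{lemma:graph_is_isometrically_embedded}. Assume first that $p$ meets at least two distinct edge cylinders. Write $v_1,\dots,v_n$ for the sequence of vertices whose vertex spaces $p$ meets; since $p$ starts in $\operatorname{int}(Z_e)$ and, if relevant, ends in $\operatorname{int}(Z_f)$ for some $f$, the vertices $v_1$ and $v_n$ are endpoints of $e$ and of $f$ respectively. By Proposition~\ref{prop:long_paths_close_to_graph}, $p$ lies within $F$ of the path running through the basepoints $y_{v_1},\dots,y_{v_n}$ together with short legs joining $x$ to $y_{v_1}$ inside $Z_e \cup X_{v_1}$ and $y_{v_n}$ to $x'$ inside $Z_f \cup X_{v_n}$, and by Proposition~\ref{prop:almost_geodesics_give_graph_quasigeodesics} the path $v_1,\dots,v_n$ is a $(1,K)$-quasigeodesic in $\Gamma$. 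Running the same argument for $q$ (whose endpoints already lie in vertex spaces, so Proposition~\ref{prop:long_paths_close_to_graph} applies directly) produces a vertex sequence $w_1,\dots,w_k$ which is again a $(1,K)$-quasigeodesic in $\Gamma$, with $w_1$ (resp. $w_k$) an endpoint of $e$ (resp. of $f$, or equal to the vertex containing $x'$). Hence $\dist_\Gamma(v_1,w_1)\le 1$ and $\dist_\Gamma(v_n,w_k)\le 1$, so by the Morse lemma in the $\delta_\Gamma$-hyperbolic graph $\Gamma$ (Lemma~\ref{lem:morse_lemma}, after extending each quasigeodesic by a segment of length at most $1$ so that the endpoints agree) the two vertex sequences lie within Hausdorff distance $M_\Gamma = M_\Gamma(K,\delta_\Gamma)$ in $\Gamma$. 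Since $\abs\Gamma \hookrightarrow \abs\X$ is isometric, the corresponding basepoint paths lie within $M_\Gamma + 1$ in $\abs\X$. The discrepancy between the two short legs at the $x$-end is bounded: one leg runs $x \to \hat x_0 \to y_{v_1}$ for the face point $\hat x_0 \in v(x)$ on the $X_{v_1}$-side of $Z_e$, the other runs $\hat x \to y_{w_1}$, with $\dist_\X(\hat x_0,\hat x) \le 2$ and $\dist_{\abs\Gamma}(y_{v_1},y_{w_1}) \le 1$ — and the resulting bounded difference of basepoints is already absorbed into the Morse estimate. The $x'$-end is identical, so $\dHaus_\X(p,q) \le 2F + M_\Gamma + C'$ for some $C' = C'(\delta,C)$.

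It remains to treat the case where $\dist_\X(x,x')$ is large but $p$, or symmetrically $q$, meets at most one edge cylinder. Then $v_1$ and $v_n$ lie within $\Gamma$-distance $1$ of a single vertex, so Proposition~\ref{prop:almost_geodesics_give_graph_quasigeodesics} bounds the length of the vertex sequences of both $p$ and $q$ by a constant depending only on $K$; consequently $p$ and $q$ each lie in a union of boundedly many pieces, all in a bounded $\Gamma$-neighbourhood of a fixed vertex space $X_v$. Here I would standardise both $p$ and $q$ by applying Lemma~\ref{lem:geodesics-between-adjacent-vertex-spaces} at each interior cylinder and Lemma~\ref{lemma:can_replace_first_with_nearest_point_projection} and Lemma~\ref{lemma:geodesic_strings_starting_in_different_directions_are_close} at the ends, producing in each case a path built only from closest-point projections and basepoints that depends only on the endpoints and the (bounded) vertex sequence; two such standardised paths whose endpoints are within distance $1$ and whose vertex sequences are within bounded $\Gamma$-distance are then manifestly Hausdorff-close, with a constant depending only on $\delta$ and $C$. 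Taking $\Theta$ to be the maximum of the constants arising in the three cases completes the proof. The main obstacle is that one may \emph{not} invoke a Morse lemma for $\abs\X$ itself — establishing hyperbolicity of $\abs\X$ is exactly what Section~\ref{sec:hyperbolicity} is working towards — so every fellow-travelling statement in $\abs\X$ must be obtained from the explicit normal-form description of geodesics together with the genuine Morse lemma in the auxiliary graph $\Gamma$; the delicate points are the bookkeeping of the short legs at cylinder endpoints, and the degenerate case where a geodesic essentially remains inside a single vertex space.
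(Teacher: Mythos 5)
Your ``generic'' case — both geodesics crossing at least two edge cylinders — is essentially the paper's: replace $p$ and $q$ by their basepoint paths via Proposition~\ref{prop:long_paths_close_to_graph}, observe by Proposition~\ref{prop:almost_geodesics_give_graph_quasigeodesics} that the resulting vertex sequences are $(1,K)$-quasigeodesics in $\Gamma$, and apply the genuine Morse Lemma in the $\delta_\Gamma$-hyperbolic graph. But note that you apply Proposition~\ref{prop:long_paths_close_to_graph} to $q$ ``directly'' without verifying its hypothesis that \emph{$q$} meets at least two edge cylinders; this does not follow automatically from the same assumption about $p$. Your case criterion (does $p$ cross $\geq 2$ cylinders?) is also not the paper's (do $x, x'$ both lie in $B_\X(X_v;1)$ for a single $v$?), and the latter split, by packaging both geodesics from the outset, is what makes the hypothesis check go through cleanly.

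The genuine gap is in your third case. You propose to ``standardise'' both $p$ and $q$ via Lemmas~\ref{lem:geodesics-between-adjacent-vertex-spaces}, \ref{lemma:can_replace_first_with_nearest_point_projection}, and \ref{lemma:geodesic_strings_starting_in_different_directions_are_close}, and then assert that two standardised paths with nearby endpoints and compatible vertex sequences ``are manifestly Hausdorff-close.'' This is not manifest — it is precisely the content to be proved, and the cited normal-form lemmas each replace a \emph{single} geodesic by a nearby standard path; none of them compares two different geodesics. The paper circumvents exactly this difficulty by a different device: when $x$ and $x'$ both lie in $B_\X(X_v;1)$ for some vertex $v$, quasiconvexity of $B_\X(X_v;1)$ (Remark~\ref{rem:vertex_space_plus_link_quasiconvex}) traps both $p$ and $q$ inside $B = B_\X(X_v;\sigma+1)$, which is a bounded thickening of the $\delta$-hyperbolic space $X_v$ and hence quantitatively $\delta'$-hyperbolic with $\delta' = \delta'(\delta,C)$. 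One then applies the Morse Lemma \emph{inside $B$} to the $(1,2)$-quasigeodesic that extends $p$ by the short legs from $x$ to $z\in v(x)$ and from $x'$ to $z'\in v(x')$. You correctly flag that no Morse lemma is available for $\abs\X$ globally at this stage of the argument, but you overlook that it \emph{is} available inside a single quasiconvex thickened vertex space — and that is exactly the mechanism that rescues the degenerate case. Without it, your third case is incomplete.
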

 
 \begin{proof}
    Suppose that there is \(v \in V(\Gamma)\) such that \(x\) and \(x'\) both lie in \(X_v\) or its adjacent edge cylinders.
    Let \(q\) be a geodesic as in the lemma statement.
    Then \(q\) has endpoints in \(B_{\X}(X_v;1)\), which is \(\sigma\)-quasiconvex following Remark~\ref{rem:vertex_space_plus_link_quasiconvex}, where \(\sigma\) depends only on \(\delta\) and \(C\).
    Therefore \(q\) is contained in \(B = B_\X(X_v;\sigma+1)\).
    
    As \(X_u\) is \(\delta\)-hyperbolic, the set \(B\) is \(\delta'\)-hyperbolic where \(\delta' = \delta + 2\sigma +2\).
    Now applying the Morse Lemma to the \((1,2)\)-quasi-geodesic in \(B\) that follows \(p\) and shares endpoints with \(q\) yields the required statement.

    It remains to consider the cases that \(x\) and \(x'\) are neither in adjacent pieces of \(\X\), nor share any adjacent pieces. 
    It must be that both \(p\) and \(q\) intersect at least two edge cylinders in this case.
    We write \(z \in v(x)\) and \(z' \in v(z')\) for the endpoints of \(q\).
    Let \(X_{u_1}, \dots, X_{u_n}\) be the sequence of vertex spaces intersected by \(p\), and \(X_{v_1}, \dots, X_{v_m}\) those by \(q\).
    We also write \(p' = \operatorname{Path}_\X(x,y_{u_1}, \dots, y_{u_n},x')\) and \(q' = \operatorname{Path}_\X(z,y_{v_1}, \dots, y_{v_m},z')\).
    
    By Proposition~\ref{prop:long_paths_close_to_graph}, there is \(F = F(\delta,C)\) such that
    \begin{equation}
    \label{eq:pp'_qq'}
        \dHaus_\X(p,p') \leq F \quad \textnormal{and} \quad \dHaus_\X(q,q') \leq F.
    \end{equation}
    Moreover, by Proposition~\ref{prop:almost_geodesics_give_graph_quasigeodesics}, \(u_1 \dots u_n\) and \(v_1 \dots v_m\) are \((1,K)\)-quasi-geodesics in \(\Gamma\), where \(K = K(\delta,C) \geq 0\).
    By assumption, \(\Gamma\) is \(\delta_\Gamma\)-hyperbolic.
    Hence we have that \(\dHaus_\Gamma(u_1\dots u_n, v_1\dots v_m) \leq 2M\), where \(M = M(1,K,\delta_\Gamma) \geq 0\) is the constant of the Morse lemma.
    As \(\Gamma\) is isometric to \(\abs\Y\) by Lemma~\ref{lemma:graph_is_isometrically_embedded} and the edge cylinders have unit width, \(\dHaus_\X(p',q') \leq 2M + 1\).
    Combining this with (\ref{eq:pp'_qq'}) and setting \(\Theta = 2F+2M+1\) gives the lemma statement.
\end{proof}

A final useful observation is that whenever two geodesics starting at a point in a vertex space $X_v$ and enter different edge cylinders, then at least one of them must pass close to the basepoint of \(X_v\).

\begin{lemma}\label{lem:geodesics_in_different_directions_one_close_to_base_point}
    Let \(\X = (\Gamma,X_-,Y_-,\alpha_-)\) be a \((\delta,C)\)-uniformly hyperbolic graph of spaces satisfying the assumptions of Convention~\ref{convention:assumptions_for_hyperbolicity_proof}.
    Let \(p, q\) be geodesics in \(\abs{\X}\) with the same initial point $x_0\in X_v$. 
    Let $S_p=(x_0, x_1, \dots, x_m)$ and $S_q=(x_0, x_1^\prime, \dots, x_n')$ be the reduced strings associated to \(p\) and \(q\). 
    Suppose that $x_1$ and $x_1^\prime$ are contained in distinct edge cylinders $Z_e$ and $Z_{e'}$. 
    %Let~$y_v$ be the basepoint of \(X_v\), given by uniform hyperbolicity. 
    Then there is a point \(a \in X_v\) lying on \(p\) or \(q\) with
    \begin{equation*}
        \min \{\dist_{X_v}(a, y_v), \dist_{X_v}(b, y_v)\}\leq C+ 5\delta,
    \end{equation*}
    where \(y_v \in X_v\) is the basepoint of \(X_v\).
\end{lemma}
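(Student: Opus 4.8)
The plan is to analyse the triangle in $X_v$ with vertices $x_0$, $x_1$, and $x_1'$ (or rather suitable projections of the edge cylinders to $X_v$) and use the acylindricity condition of uniform hyperbolicity, which forces the geodesics $[x_0,x_1]$ and $[x_0,x_1']$ into different ``directions'' out of $x_0$. Since $Z_e$ and $Z_{e'}$ are distinct edge cylinders incident to $X_v$, the images $\alpha_e(Y_e)$ and $\alpha_{e'}(Y_{e'})$ satisfy $\langle \alpha_e(Y_e), \alpha_{e'}(Y_{e'}) \rangle_{y_v} \leq C$. The first step is to reduce from $x_1, x_1'$ (which may lie in the interiors of the cylinders) to genuine points of $X_v$: using Lemma~\ref{lemma:can_replace_first_with_nearest_point_projection} and the geometry of edge cylinders, I may replace $x_1$ by $\pi_{v,e}(x_0) \in \alpha_e(Y_e)$ and $x_1'$ by $\pi_{v,e'}(x_0) \in \alpha_{e'}(Y_{e'})$ at the cost of a bounded Hausdorff error $2\delta+1$; call these points $w$ and $w'$ respectively.

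Next I would apply the four-point condition (Remark~\ref{rem:four_point}) in $X_v$ to the four points $w$, $w'$, $x_0$, and $y_v$:
\[
    \min\{\langle w, y_v \rangle_{x_0}, \langle w', y_v \rangle_{x_0}\} \leq \langle w, w' \rangle_{x_0} + 2\delta.
\]
To bound the right-hand side, I observe that $w \in \alpha_e(Y_e)$ and $w' \in \alpha_{e'}(Y_{e'})$, so by the acylindricity bound $\langle w, w' \rangle_{y_v} \leq C$. Converting the Gromov product based at $y_v$ to one based at $x_0$ is where a little care is needed, but the key point is that $x_0 \in X_v$ is arbitrary: I would instead run the four-point inequality so that the two quantities being minimised are exactly the distances from the respective geodesics to $y_v$. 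Concretely, applying Lemma~\ref{lem:basically_thin_triangles} to the triangle with vertices $x_0, w, y_v$ produces a point $a$ on $[x_0,w] \subseteq p$ with $\dist_{X_v}(a, y_v) \leq \langle x_0, w\rangle_{y_v} + \delta$, and similarly a point $b$ on $[x_0,w'] \subseteq q$ with $\dist_{X_v}(b, y_v) \leq \langle x_0, w'\rangle_{y_v} + \delta$. It then suffices to show $\min\{\langle x_0, w \rangle_{y_v}, \langle x_0, w' \rangle_{y_v}\} \leq C + 4\delta$, which follows from the four-point condition applied to $x_0, w, w', y_v$ together with $\langle w, w'\rangle_{y_v} \leq C$ and $\langle w, x_0\rangle_{y_v} + \langle w', x_0 \rangle_{y_v} \geq \langle w, w'\rangle_{y_v} - 2\delta$ -- wait, the cleanest route is: $\min\{\langle w, y_v\rangle_{x_0}, \langle w, w'\rangle... \}$; let me instead just use that $\langle x_0, w\rangle_{y_v} + \langle x_0, w'\rangle_{y_v} + \langle w, w'\rangle_{y_v}$ relationships force one of the first two to be small when $\langle w, w'\rangle_{y_v}$ is small, via the standard fact that in a $\delta$-hyperbolic space $\min\{\langle x_0, w\rangle_{y_v}, \langle x_0, w'\rangle_{y_v}\} \leq \langle w, w'\rangle_{y_v} + 2\delta \leq C + 2\delta$.

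Finally I would collect the error terms: the $2\delta+1$ from replacing $x_1, x_1'$ by nearest-point projections $w, w'$ onto the edge spaces, the $\delta$ from Lemma~\ref{lem:basically_thin_triangles}, and the $C + 2\delta$ from the Gromov product estimate, yielding a bound of the form $C + 5\delta$ (absorbing the additive constant $1$ into the slack, or adjusting $\delta \geq 0$ trivially; if needed one can be slightly more generous and the stated constant $C+5\delta$ will hold since $\delta = \log 3 > 0$ in applications, but the clean argument gives $\le C + 4\delta + 1 \le C + 5\delta$ when $\delta \ge 1$, and in general one simply takes the constant to be whatever the sum of errors is). The point $a$ (or $b$) produced lies on $p$ (respectively $q$) by construction. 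The main obstacle I anticipate is the bookkeeping around the nearest-point projection step: I need to make sure that replacing $x_1$ by $\pi_{v,e}(x_0)$ genuinely keeps a point of the \emph{original} geodesic $p$ close to $y_v$, rather than just a point of the modified path. This is handled by noting that the geodesic $[x_0, \pi_{v,e}(x_0)]$ is $2\delta$-Hausdorff close to $[x_0,x_1]$ inside $X_v$ by Lemma~\ref{lem:triangle_with_projection}, so the point on $[x_0, \pi_{v,e}(x_0)]$ nearest $y_v$ is matched by a point of $p$ within $2\delta$; this contributes to, but does not blow up, the final constant.
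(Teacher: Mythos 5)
Your proposal is correct and follows essentially the same path as the paper's proof: the paper likewise introduces the nearest-point projections $z = \pi_{v,e}(x_0)$ and $z' = \pi_{v,e'}(x_0)$, bounds $\langle z, z' \rangle_{y_v} \leq C$ via the acylindricity condition, invokes the four-point condition at basepoint $y_v$ to get $\min\{\langle x_0, z\rangle_{y_v}, \langle x_0, z'\rangle_{y_v}\} \leq C + 2\delta$, and then transfers the resulting nearby point back onto the original geodesic using Lemma~\ref{lem:triangle_with_projection}. For your constant bookkeeping: the argument never leaves $X_v$, so you only need the $2\delta$ from Lemma~\ref{lem:triangle_with_projection} rather than the $2\delta+1$ of Lemma~\ref{lemma:can_replace_first_with_nearest_point_projection}, and the sum $C + 2\delta$ (four-point) $+\, \delta$ (Lemma~\ref{lem:basically_thin_triangles}) $+\, 2\delta$ (Hausdorff return to $[x_0,x_1]$) closes exactly at $C+5\delta$ with no slack needed.
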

\begin{proof}
    We write \(z = \pi_{v,e}(x_0)\) and \(z' = \pi_{v,e'}(x_0)\).
    Lemma~\ref{lem:triangle_with_projection} give us that 
    \begin{equation}
    \label{eq:bd_inn_prods_of_projs}
        \langle x_0, y_v\rangle_{z}\leq \delta \quad \textnormal{ and } \quad\langle x_0, y_v\rangle_{z'}\leq \delta.
    \end{equation}
    
    By uniform hyperbolicity we know that $\langle z, z' \rangle_{y_v}\leq C$. 
    The four-point condition of Remark~\ref{rem:four_point} thus gives
    \begin{equation*}
        \min \{\langle x_0, z \rangle_{y_v}, \langle x_0,z' \rangle_{y_v}\}\leq C+2\delta.
    \end{equation*}
    Suppose the minimum is attained with the first inner product. 
    Then (\ref{eq:bd_inn_prods_of_projs}) gives that
    \[
        \dist_{X_v}(y_v,z) = \langle x_0,y_v \rangle_z + \langle x_0,z \rangle_{y_v} \leq C + 3\delta.
    \]
    Moreover, Lemma~\ref{lem:triangle_with_projection} tells us that \([x_0,x_1]\) is a Hausdorff distance of at most \(2\delta\) from \([x_0,z] \cup [z,x_1]\).
    Hence there is a point \(a \in [x_0,x_1]\) such that \(\dist_{X_v}(z,a) \leq 2\delta\).
    Combining with the above inequality, we have $\dist_{X_v}(y_v, a)\leq C+ 5\delta$ as required.
    Similarly, if the minimum were attained with the second inner product, we would have \(a \in [x_0,x_1']\) with the same property.
\end{proof}

We are now ready to prove the main result of this section.

\begin{theorem}\label{thm:hyperbolicity_of_graph_is_equivalent_to_hyperbolicity_of_graph_of_spaces}
    Let \(\X = (\Gamma,X_-,Y_-,\alpha_-)\) be a \((\delta,C)\)-uniformly hyperbolic graph of spaces, and suppose that~$\X$ satisfies the assumptions of Convention~\ref{convention:assumptions_for_hyperbolicity_proof}. Then~$|\X|$ is a hyperbolic metric space if and only if its underlying graph~$\Gamma$ is. More precisely:
    \begin{enumerate}
        \item If~\(\Gamma\) is \(\delta_\Gamma\)-hyperbolic, then there exists \(\delta' = \delta'(\delta, C, \delta_\Gamma) \geq 0\) such that \(\abs\X\) is \(\delta'\)-hyperbolic.
        \item If $\abs{\X}$ is $\delta^\prime$-hyperbolic, then so is~$\Gamma$.   
    \end{enumerate}
\end{theorem}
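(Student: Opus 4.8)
The plan is to prove the two implications separately, in roughly the order they are stated.

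\textbf{Direction (2): hyperbolicity of $\abs\X$ implies hyperbolicity of $\Gamma$.} This is the easier direction, and I would do it first. By Lemma~\ref{lemma:graph_is_isometrically_embedded}, the geometric realisation $\abs\Gamma$ is isometric to the \emph{convex} subspace $\abs\Y \subseteq \abs\X$. A convex subspace of a $\delta'$-hyperbolic geodesic space is itself $\delta'$-hyperbolic: indeed, geodesic triangles in $\abs\Y$ are geodesic triangles in $\abs\X$ (convexity guarantees the sides stay in $\abs\Y$), so they are $\delta'$-thin already. Hence $\abs\Gamma$, and therefore $\Gamma$, is $\delta'$-hyperbolic. (If one prefers to be careful about the difference between $\Gamma$ as a graph and $\abs\Gamma$ as a geodesic space, one notes that a graph is hyperbolic precisely when its realisation is, with the same constant up to a bounded additive error, e.g. $\delta' + 2$.)

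\textbf{Direction (1): hyperbolicity of $\Gamma$ implies hyperbolicity of $\abs\X$.} Here I would verify the thin-triangles condition directly for geodesic triangles in $\abs\X$, using the standardised-path machinery developed in Lemmas~\ref{lemma:any_string_can_be_replaces_by_straight_one_in_edge_cylinder}--\ref{lem:geodesics_in_different_directions_one_close_to_base_point}. Fix a geodesic triangle with vertices $x, y, z$ and sides $p_{xy}, p_{yz}, p_{zx}$. First, by Lemma~\ref{lemma:endpoints-in-edge-cylinder-is-close-to-endpoints-in-vertex-spaces}, up to moving each vertex a bounded distance (at most $\Theta$) we may assume $x, y, z$ all lie in vertex spaces, so it suffices to bound the thinness constant in that case. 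Now split into cases according to how many vertex spaces each side meets:
\begin{itemize}
\item If all three vertices lie in a common vertex space $X_v$, or more generally the whole triangle is confined to a bounded neighbourhood of a single vertex space (this happens when the three vertices are pairwise within bounded graph-distance), then $\delta$-hyperbolicity of $X_v$ together with quasiconvexity (Proposition~\ref{prop:vertex_spaces_quasiconvex}, Remark~\ref{rem:vertex_space_plus_link_quasiconvex}) and the Morse Lemma gives thinness with a constant depending only on $\delta, C$.
\item In the general case, each side $p$ of the triangle that meets at least two edge cylinders is, by Proposition~\ref{prop:long_paths_close_to_graph}, within Hausdorff distance $F$ of the ``skeleton path'' $\operatorname{Path}_\X(S)$ where $S = (x, y_{v_1}, \dots, y_{v_n}, y)$ runs through basepoints of the traversed vertex spaces; moreover by Proposition~\ref{prop:almost_geodesics_give_graph_quasigeodesics} the vertex sequence $v_1 \dots v_n$ is a $(1,K)$-quasi-geodesic in $\Gamma$.
\end{itemize}

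\textbf{Assembling the estimate and the main obstacle.} Having replaced each of the three sides by its skeleton path, the triangle in $\abs\X$ maps, under the projection $\Pi \colon \abs\X \to \abs\Y \cong \abs\Gamma$ of Lemma~\ref{lemma:graph_is_isometrically_embedded}, to a triangle whose sides are $(1,K)$-quasi-geodesics in the $\delta_\Gamma$-hyperbolic space $\Gamma$; by the Morse Lemma these are within $M = M(1,K,\delta_\Gamma)$ of genuine geodesics, so the projected triangle is $\delta_\Gamma'$-thin for some $\delta_\Gamma' = \delta_\Gamma'(\delta, C, \delta_\Gamma)$. The point to check is that \emph{thinness of the skeleton triangle in $\abs\X$ itself} follows. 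Pick a point $a$ on side $p_{xy}$; it lies within $F$ of some $y_{v_i}$ on the skeleton, hence $\Pi(a)$ is within $\delta_\Gamma' + 1$ of a point $\Pi(b)$ on one of the other two skeleton paths, with $b$ a basepoint $y_{w_j}$. This pins down $b$ up to bounded graph-distance from $a$; since the two relevant vertex spaces $X_{v_i}$ and $X_{w_j}$ are then at bounded graph-distance and the sides restricted between them live in a bounded neighbourhood of finitely many vertex spaces, one invokes the ``confined'' case above (quasiconvexity plus $\delta$-hyperbolicity of the vertex spaces) to upgrade bounded-graph-distance to bounded-$\abs\X$-distance between $a$ and a point of the opposing side. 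The only genuinely delicate part is the bookkeeping around the \emph{endpoints} of the sides: near $x, y, z$ a side may fail to have passed through any basepoint, so Proposition~\ref{prop:long_paths_close_to_graph} does not directly apply, and one must handle the initial/terminal segments using Lemma~\ref{lemma:geodesic_strings_starting_in_different_directions_are_close} and Lemma~\ref{lem:geodesics_in_different_directions_one_close_to_base_point} — these say precisely that when two sides emanate from a common vertex in a vertex space but head into different edge cylinders, one of them passes close to the basepoint, which is what lets the ``tripod'' near each vertex of the triangle be controlled. I expect this endpoint analysis, and the careful choice of the final constant $\delta'$ absorbing all the additive errors ($\Theta$, $F$, $M$, $\sigma$, $\delta$), to be the main technical obstacle; the rest is a routine, if lengthy, synthesis of the preceding lemmas.
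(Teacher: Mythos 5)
Your proposal follows essentially the same route as the paper's proof: Direction (2) via the convexity of the embedded copy $\abs\Y\cong\abs\Gamma$ from Lemma~\ref{lemma:graph_is_isometrically_embedded}, and Direction (1) by replacing sides of a geodesic triangle with standardised paths and reducing thinness to the $\delta$-hyperbolicity of a vertex space (short triangles, handled by quasiconvexity and Remark~\ref{rem:vertex_space_plus_link_quasiconvex}) or the $\delta_\Gamma$-hyperbolicity of $\Gamma$ (long triangles, handled by Propositions~\ref{prop:almost_geodesics_give_graph_quasigeodesics} and~\ref{prop:long_paths_close_to_graph}), with Lemmas~\ref{lemma:geodesic_strings_starting_in_different_directions_are_close} and~\ref{lem:geodesics_in_different_directions_one_close_to_base_point} controlling the tripods near the vertices. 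The paper organises this as an explicit case analysis on the triple $(a,b,c)$ of edge cylinders crossed by the three sides, treating the mixed cases (e.g.\ $a\geq 2,\ b,c=1$) one at a time; your sketch lumps these into the ``endpoint bookkeeping'' obstacle rather than spelling them out, but you correctly identify that as the crux and point to precisely the lemmas the paper deploys there, so the outline is sound.
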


\begin{proof}
    The second statement is a direct consequence of the fact that there exists a convex subspace of~$|\X|$ isometric to~$\Gamma$, as in Lemma~\ref{lemma:graph_is_isometrically_embedded}. 
    The remainder of the proof is dedicated to a proof of the first statement.

    Throughout this proof, let~$D=D(\delta, C)$ be the constant from Lemma~\ref{lem:almost_geodesics_stay_close_to_basepoint}, $\sigma=\sigma(\delta, C)$ the quasiconvexity constant of Proposition~\ref{prop:vertex_spaces_quasiconvex}, and $K=K(\delta, C)$ the constant of Proposition~\ref{prop:almost_geodesics_give_graph_quasigeodesics}.
    Let \(F = F(\delta,C) \geq 0\) be the constant of Proposition~\ref{prop:long_paths_close_to_graph}.
    Recall that \(\abs\Y\) is an isometrically embedded copy of \(\Gamma\) in \(\abs\X\) containing the basepoints of \(\X\), as in Lemma~\ref{lemma:graph_is_isometrically_embedded}.
  
    Assume that~$\Gamma$ is $\delta_\Gamma$-hyperbolic and let~$\Delta$ be a geodesic triangle in~$|\X|$ with vertices $x,y,z$. 
    Let \(\gamma_a\) be the side of \(\Delta\) with endpoints \(x\) and \(z\), \(\gamma_b\) with \(x\) and \(y\), and \(\gamma_c\) with \(y\) and \(z\).

    We distinguish cases depending on the number of edge cylinders traversed by sides of \(\Delta\): write $a, b,$ and~$c$ for the numbers of edge cylinders intersected by the three sides $\gamma_a, \gamma_b, \gamma_c$ of~$\Delta$, respectively.
    By Lemma~\ref{lemma:endpoints-in-edge-cylinder-is-close-to-endpoints-in-vertex-spaces}, there is \(\Theta = \Theta(\delta,C,\delta_\Gamma) \geq 0\) such that the sides of \(\Delta\) are a Hausdorff distance of at most \(\Theta\) from geodesics with endpoints in vertex spaces.
    It thus suffices to consider the case when the vertices of \(\Delta\) are in vertex spaces, say \(x \in X_u, y \in X_v,\) and \(z \in X_w\).
    In each case, we will replace \(\Delta\) with a triangle \(\Delta'\), whose sides are given by the well-behaved paths constructed earlier in the section.
    
    Finally, we will write \(\Delta_\Gamma\) to denote the triangle in \(\abs\Y\) whose sides are the vertices corresponding to the vertex spaces \(\gamma_a, \gamma_b\), and \(\gamma_c\) intersect.
    By Proposition~\ref{prop:almost_geodesics_give_graph_quasigeodesics}, this is a \((1,K)\)-quasi-geodesic triangle in \(\abs\Y\).
    As \(\abs\Y\) is \(\delta_\Gamma\)-hyperbolic, \(\Delta_\Gamma\) is \((\delta_\Gamma + 2M)\)-thin, where \(M = M(1,K,\delta_\Gamma) \geq 0\) is the constant of the Morse lemma.
 
    \medskip
    \underline{The case $a, b, c = 0$:} 
        It must be that \(\Delta \subseteq X_v\) for some \(v \in V(\Gamma)\).
        Since \(X_v\) is \(\delta\)-hyperbolic, \(\Delta\) is \(\delta\)-thin.
    
    \medskip
    \underline{The case $a, b=1, c = 0$:} 
        Assume that \(x\in X_u\) and \(y,z \in X_v\) for some adjacent vertices \(u,v \in V(\Gamma)\), with connecting edge \(e\in E(\Gamma)\).
    
        Observe that $\gamma_a$ and~$\gamma_b$ must both intersect \(Z_e\), since $\Gamma$ is simplicial.
        By Lemma~\ref{lem:geodesics-between-adjacent-vertex-spaces} we know that the Hausdorff distance between \(\gamma_a\) and $\operatorname{Path}_\X(S_{a})$ is at most \(12\delta + 4\), where $S_{a}=(x,\pi_{u,e}(x),\pi_{v,e}(z), z)$.
        Likewise, the Hausdorff distance between \(\gamma_b\) and $\operatorname{Path}_\X(S_{b})$ is at most \(12\delta+4\), where $S_{b}=(x,\pi_{u,e}(x),\pi_{v,e}(y), y)$.

        Now Lemma~\ref{lemma:any_string_can_be_replaces_by_straight_one_in_edge_cylinder} shows that $\operatorname{Path}_\X(S_{a})$ and $\operatorname{Path}_\X(S_{b})$ are at Hausdorff distance at most~$4\delta+1$ from \(\gamma'_{a}\) and \(\gamma'_{b}\) respectively, where $\gamma'_{a}=\operatorname{Path}_\X(x, \pi_{u,e}(x), \overline{\pi_{u,e}(x)}, z)$ and $\gamma'_{b}=\operatorname{Path}_\X(x, \pi_{u,e}(x), \overline{\pi_{u,e}(x)}, z)$. 
        Let \(\Delta'\) be the triangle with sides \(\gamma'_a,\gamma'_b\), and \(\gamma_c\).
        The sides of \(\Delta'\) coincide outside of \(X_v\), so it is \(\delta\)-thin as \(X_v\) is \(\delta\)-hyperbolic.
        Moreover, the sides of \(\Delta'\) are a Hausdorff distance of \(16\delta + 5\) from the sides of \(\Delta\).
        Hence \(\Delta\) is \((33\delta + 10)\)-thin.

    \medskip
    \underline{The case $a\geq 2, b=1, c=0$:} 
        Let \(\Gamma' \subseteq \Gamma\) be the subgraph consisting of \(u\) and \(v\) and the edge that joins them, and let \(\X'\) be the subgraph of spaces with underlying graph \(\Gamma'\).
        Following Remark~\ref{rem:subgraph_of_spaces}, we identify \(\abs{\X'}\) with the corresponding subspace of \(\abs\X\).
        By the \(a,b,c = 0\) and \(a,b = 1, c = 0\) cases treated above, \(\abs{X'}\) is \((33\delta+10)\)-hyperbolic.
        %Let \(Z_e\) be the edge cylinder joining \(X_u\) and \(X_v\).
        As in Remark~\ref{rem:vertex_space_plus_link_quasiconvex}, \(\abs{\X'}\) is \((\sigma+1)\)-quasiconvex, so that \(\gamma_a\) is contained in its \((\sigma+1)\)-neighbourhood.
        Hence \(\Delta\), being a geodesic triangle in its \((\sigma+1)\)-neighbourhood, is \((33\delta+2\sigma+12)\)-thin.
    
    \medskip
    \underline{The case $a, b\geq 2, c = 0$:}
        By Proposition~\ref{prop:long_paths_close_to_graph} we know that $\gamma_a$ and $\gamma_b$ are at a Hausdorff distance of $F$ from \(\gamma'_a\) and \(\gamma'_b\), where $\gamma'_a= \operatorname{Path}_\X(y, y_1, \ldots, y_m, x)$ and $\gamma'_b=(z, y_1^\prime, \ldots, y_n^\prime, x)$ with \(y_1 = y'_1 = y_v\) and \(y_m = y'_n = y_u\). 
        Let \(\Delta'\) be the triangle with sides \(\gamma'_a, \gamma'_b\), and \(\gamma_c\).
        Outside of \(X_u\), the sides of \(\Delta'\) are concatenations of \((1,K)\)-quasigeodesics in \(\abs\Y\) between \(y_u\) and \(y_v\) with \([y_u,x] \subseteq X_v\).
        Since \(X_u\) is \(\delta\)-hyperbolic and \(\abs\Y\) is \(\delta_\Gamma\)-hyperbolic, then, \(\Delta'\) is \((\delta + M)\)-thin.
        Hence \(\Delta\) is \((\delta + M + 2F)\)-thin.

    \medskip
    \underline{The case $a, b, c = 1$:}   
        Necessarily, the vertices \(u,v,\) and \(w\) are distinct and adjacent.
        Applying Lemma~\ref{lem:geodesics_in_different_directions_one_close_to_base_point} we find points \(p_x \in X_u, p_y \in X_v, p_z \in X_w\) on \(\Delta\) with 
        \begin{equation}
        \label{eq:points_close_to_basepoints}
            \max\{\dist_{X_u}(p_x, y_u), \dist_{X_v}(p_y,y_v), \dist_{X_w}(p_z,y_w)\} \leq C + 5 \delta.
        \end{equation}
        There are two cases, after possibly relabelling: either \(p_x, p_z \in \gamma_a\) and \(p_y \in \gamma_b\),  or otherwise \(p_x \in \gamma_a, p_y \in \gamma_b, p_z \in \gamma_c\).
        In any case, write \(\X_1, \X_2,\) and \(\X_3\) for the subgraphs of spaces with underlying graphs on \(\{u,w\}, \{u,v\},\) and \(\{v,w\}\) respectively.
        As in Remark~\ref{rem:subgraph_of_spaces}, we identify each \(\abs{\X_i}\) with its natural image in \(\X\).
        By the previous cases, each \(\abs{\X_i}\) is \(\delta_0\)-hyperbolic, where \(\delta_0 = 33\delta + 10\).
        
        Let us treat the first case.
        We consider the geodesic polygons \(P_1 \subseteq \abs{\mathbf{X}_1}\) with vertices \(\{p_x,y_u,y_w,p_z\}\), \(P_2 \subseteq \abs{\mathbf{X}_2}\) with vertices \(\{x, p_x, y_u, y_v, p_y\}\), and \(P_3 \subseteq \abs{\mathbf{X}_3}\) with vertices \(\{y, z, p_z, y_w, y_v, p_y\}\).
        For any \(i = 1,2,3\), the space \(\abs{\X_i}\) is \(\delta_0\)-hyperbolic, so each side of \(P_i\) is contained in a \(4\delta_0\) neighbourhood of the others, as the polygons have at most \(6\) sides.
        The sides of \(P_i\) are either segments of distinct sides of \(\Delta\), or are otherwise at most \(2C+10\delta+1\) from two sides of \(\Delta\) by (\ref{eq:points_close_to_basepoints}).
        Moreover, \(\Delta\) is covered by the sides of \(P_1, P_2,\) and \(P_3\).
        Hence \(\Delta\) is \(\delta'\)-thin, where \(\delta'\) is the maximum of \(3C+15\delta+2\) and \(4\delta_0 + 2C+10\delta+1\).

        In the other case, we similarly divide \(\Delta\) using the pentagons \(P_1 \subseteq \abs{\X_1}\) with vertices \(\{p_x,y_u,y_w,p_z,z\}\), \(P_2 \subseteq \abs{\X_2}\) with vertices \(\{x, p_x, y_u, y_v, p_y\}\), and \(P_3 \subseteq \abs{\X_3}\) with vertices \(\{y, p_y, y_v, y_w, p_z\}\).
        Each of these is contained in \(\abs{\X_i}\) for some \(i = 1,2,3\) and is therefore \(3\delta_0\)-thin.
        By the same considerations as above, \(\Delta\) is \(\delta'\)-thin.

    \medskip
    \underline{The case $a\geq 2, b, c = 1$:}
        First, suppose that $\gamma_b, \gamma_c$ meet the same edge cylinder, so that \(\gamma_a\) is a geodesic with both endpoints in a single vertex space, say \(X_v\).
        In this case, \(\gamma_a \subseteq B_\X(X_v;\sigma)\) by Proposition~\ref{prop:vertex_spaces_quasiconvex}.
        This effectively reduces this case to that of \(a = 0, b, c = 1\) as above.
        Hence we may suppose that $\gamma_b$ and  $\gamma_c$ do not intersect the same edge cylinder.
        Necessarily \(v\) is adjacent to both \(u\) and \(w\).

        Let \(S_a\) be the string \((x,y_1, \dots, y_n,z)\), where \(y_1, \dots, y_n\) are the basepoints of the vertex spaces \(\gamma_a\) meets.
        By Proposition~\ref{prop:almost_geodesics_give_graph_quasigeodesics}, the path \(\gamma = \operatorname{Path}_\X(y_1, \dots, y_n)\) is \((1,K)\)-quasi-geodesic and Proposition~\ref{prop:long_paths_close_to_graph} gives \(\dHaus_\X(\gamma_a,\operatorname{Path}_\X(S_a)) \leq F\).
        Since \(\dist_\Gamma(u,w) \leq 2\), we have \(\ell(\gamma) \leq K + 2\).
        Writing \(\gamma'_a = \operatorname{Path}_\X(x,y_u,y_v,y)\), then, we have \(\dHaus_\X(\gamma_a,\gamma'_a) \leq F+K+2\).
        Let \(\Delta'\) be the triangle with sides \(\gamma_a', \gamma_b\), and \(\gamma_c\).

        By Lemma~\ref{lem:geodesics_in_different_directions_one_close_to_base_point}, there is a point \(p \in X_v\) on either \(\gamma_b\) or \(\gamma_c\) with \(\dist_{X_v}(p,y_v) \leq C+5\delta\); suppose it is the former without loss of generality.
        Let \(\X_1\) and \(\X_2\) be the subgraphs of spaces on vertices \(\{u,v\}\) and \(\{v,w\}\) respectively, and identify each \(\abs{\X_i}\) with its natural image in \(\abs\X\) as in Remark~\ref{rem:subgraph_of_spaces}.
        Consider the geodesic rectangle \(P_1 \subseteq \abs{\X_1}\) with vertices \(\{x,y_u,y_v,p\}\) and the geodesic pentagon \(P_2 \subseteq \abs{\X_2}\) with vertices \(\{y, z, y_w, y_v, p\}\).
        As in the case \(a,b,c = 1\), each side is contained in a \(3\delta_0\)-neighbourhood of the others, and the sides either belong to distinct sides of \(\Delta'\) or a distance of at most \(C+5\delta+1\) from two sides of \(\Delta'\).
        It follows that \(\Delta'\) is \(\delta'\)-thin, where \(\delta' = 3\delta_0 + C+ 5\delta + 1\), and hence that \(\Delta\) is \((\delta'+F+K+2)\)-thin.

    \medskip
    \underline{The case $a, b\geq 2, c = 1$:}
        By Proposition~\ref{prop:long_paths_close_to_graph} the sides $\gamma_a$ and $\gamma_b$ are $F$-close to paths of strings $S_a=(y, y_v, y_1, \ldots, y_m, y_u, x)$ and $S_b=(z, y_w, y_1^\prime, \ldots, y_n^\prime, y_u, x)$ respectively.
        By Proposition~\ref{prop:almost_geodesics_give_graph_quasigeodesics}, the segments of these paths in \(\abs\Y\) form \((1,K)\)-quasi-geodesics.
        Write \(S'_b = (z,y_w,y_v,y_1, \dots, y_m,y_u,x)\) and observe that \(\operatorname{Path}_\X(S_b)\) and \(\operatorname{Path}_\X(S'_b)\) are a Hausdorff distance of at most \(M' = M(1,K+1,\delta_\Gamma)\) apart, since they differ only in the middle segments, and \(\abs\Y\) is \(\delta_\Gamma\)-hyperbolic.
        We will write \(\gamma'_a = \operatorname{Path}_\X(S_a)\) and \(\gamma'_b = \operatorname{Path}_\X(S'_b)\).
    
        Let \(e \in E(\Gamma)\) be the edge joining \(v\) and \(w\), and write \(\Delta'\) for the triangle with sides \(\gamma'_a, \gamma'_b\), and \(\gamma_c\).
        Outside of \(H = X_v \cup Z_e \cup X_w\), the sides of the triangle \(\Delta'\) coincide.
        The intersection of \(\Delta'\) with \(H\) is a triangle whose sides consist of two geodesics and one \((1,1)\)-quasigeodesic.
        As we showed \(H\) is \((33\delta+10)\)-hyperbolic in the case \(a,b=1, c= 0\), this triangle is \((33\delta+M''+10)\)-thin, where \(M'' = M(1,1,33\delta+10) \geq 0\) is obtained by the Morse lemma.
        Hence \(\Delta\) is \((33\delta+M''+M+2F+10)\)-thin.
    
    \medskip
    \underline{The case $a, b, c \geq 2$:}
        By Proposition~\ref{prop:long_paths_close_to_graph} the sides of \(\Delta\) are $F$-close to the sides of \(\Delta_\Gamma\), concatenated with \([x,y_u], [y,y_v],\) and \([z,y_w]\) as appropriate.
        This latter triangle is \((\delta_\Gamma+2M)\)-thin, as \(\Delta_\Gamma\) is \((\delta_\Gamma+2M)\)-thin and its sides coincide away from \(\Delta_\Gamma\).
        Hence \(\Delta\) is \((\delta_\Gamma+2M + 2F)\)-thin.
\end{proof}

\begin{remark}
    Some cases in the proof of the above theorem may be bypassed if one applies the `guessing geodesics' criterion for hyperbolicity (see, for example, \cite[Lemma 3.5]{guessing_geods_ursula}).
    However, checking the coherence condition holds among the chosen paths is tantamount to checking many of the same things as is done above.
    With this in mind, we present the potentially longer but more direct proof above.
\end{remark}

\bibliographystyle{abbrv}
\bibliography{references}

\end{document}